\documentclass{amsart}

\pagestyle{headings}


\setlength{\parindent}{2ex}

\addtolength{\hoffset}{-0.5in}
\addtolength{\textwidth}{1cm}

  \usepackage{verbatim}
\usepackage{mathrsfs}
\usepackage{latexsym}
\usepackage{epsfig}
\usepackage{amsmath}
\usepackage{bbm}
\usepackage{graphics}
\usepackage{amssymb}
\usepackage{amsrefs}
\usepackage{amsthm}
\usepackage{amsopn}
\usepackage{amscd}
\usepackage[all,knot]{xy}
\xyoption{all}
\usepackage{rotating}
\usepackage{hyperref}

\theoremstyle{plain}
\newtheorem{thm}{Theorem}[section]
\newtheorem{lem}[thm]{Lemma}
\newtheorem{prop}[thm]{Proposition}
\newtheorem{cor}[thm]{Corollary}

\newtheorem*{thm*}{Theorem}
\newtheorem*{lem*}{Lemma}
\newtheorem*{prop*}{Proposition}
\newtheorem*{cor*}{Corollary}

\theoremstyle{definition}
\newtheorem{defn}[thm]{Definition}
\newtheorem*{defn*}{Definition}

{}
\newtheorem{rem}[thm]{Remark}
\newtheorem*{rem*}{Remark}
\newtheorem{notation}[thm]{Notation}{}
\newtheorem{convention}[thm]{Convention}{}
{}
\newtheorem*{ack}{Acknowledgements}

\theoremstyle{remark}
{}
{}
{}

\def\cie{\subseteq}

\def\iso{\cong}

\def\Un{\bigcup}

\def\intersec{\cap}

\def\to{\longrightarrow}

\def\nat{\mathbb{N}}

\def\int{\mathbb{Z}}
\def\real{\mathbb{R}}

\def\str{\mathcal{O}}

\def\a{\alpha}
\def\b{\beta}

\def\l{\lambda}
\def\r{\rho}
\def\s{\sigma}
\def\S{\Sigma}
\def\t{\tau}

\def\module{\mathrm{mod}}

\def\Module{\mathrm{Mod}}

\DeclareMathOperator{\depth}{depth}

\DeclareMathOperator{\Spec}{Spec}
\DeclareMathOperator{\Sing}{Sing}

\DeclareMathOperator{\Spc}{Spc}

\DeclareMathOperator{\supp}{supp}

\DeclareMathOperator{\codim}{codim}

\DeclareMathOperator{\rk}{rk}
\DeclareMathOperator{\id}{id}

\DeclareMathOperator{\Hom}{Hom}
\DeclareMathOperator{\Ext}{Ext}
\DeclareMathOperator{\Tor}{Tor}

\DeclareMathOperator{\hocolim}{hocolim}
\DeclareMathOperator{\Coh}{Coh}
\DeclareMathOperator{\QCoh}{QCoh}

\DeclareMathOperator{\Flat}{Flat}

\DeclareMathOperator{\GInj}{GInj}
\DeclareMathOperator{\Inj}{Inj}

\DeclareMathOperator{\pdim}{pd}
\DeclareMathOperator{\idim}{id}

\DeclareMathOperator{\gldim}{gl. dim}

\DeclareMathOperator{\cx}{cx}

\bibliographystyle{alpha}

\title[Subcategories of singularity categories]{Subcategories of singularity categories via tensor actions}
\author{Greg Stevenson}
\address{Universit\"at Bielefeld, Fakult\"at f\"ur Mathematik, BIREP Gruppe, Postfach 10\,01\,31, 33501 Bielefeld, Germany.}
\email{gstevens@math.uni-bielefeld.de}

\begin{document}

\subjclass[2000]{ 
14M10, 18E30  
(14F05, 13C40)} 

\keywords{Singularity category, complete intersection, localizing subcategory, triangular geometry, telescope conjecture}

\begin{abstract}
\noindent We obtain, via the formalism of tensor actions, a complete classification of the localizing subcategories of the stable derived category of any affine scheme that has hypersurface singularities or is a complete intersection in a regular scheme; in particular this classifies the thick subcategories of the singularity categories of such rings. The analogous result is also proved for certain locally complete intersection schemes.  It is also shown that from each of these classifications one can deduce the (relative) telescope conjecture.
\end{abstract}

\maketitle

\tableofcontents

\section{Introduction}
Since the pioneering work of Devinatz, Hopkins, and Smith \cite{DevinatzHopkinsSmith} and Neeman \cite{NeeChro} there has been sustained interest in classifying localizing and thick subcategories of triangulated categories. Since these results several new examples have been understood, for example the work of Benson, Carlson, and Rickard \cite{BCR} on modular representations, Thomason \cite{Thomclass} on perfect complexes over quasi-compact quasi-separated schemes, Friedlander and Pevtsova \cite{FriedlanderPevtsova} on finite group schemes, and many others. Recently there has also been very interesting work, namely the tensor triangular geometry of Balmer \cite{BaSpec}, \cite{BaSSS}, and Balmer-Favi \cite{BaRickard}, and the stratifications of Benson, Iyengar, and Krause \cite{BIK}, \cite{BIKstrat}, \cite{BIKstrat2}, providing conceptual frameworks uniting these classification theorems.

The purpose of this paper is two-fold: to dissect another class of triangulated categories and lay bare the structure of their lattice of localizing subcategories and to provide the promised application of the formalism developed in \cite{StevensonActions}. Let us be explicit that this really is an application of ``relative tensor triangular geometry'' and not a restatement of results that one knows can be proved with the current technology; there is no obvious, at least to the author, tensor structure on the categories we consider and some of our results have a non-affine character that, at least at the current time, seems to place them out of reach of the theory of stratifications.

We now give some details about our results. Suppose $X$ is a noetherian separated scheme. Then one defines a category
\begin{displaymath}
D_{\mathrm{Sg}}(X) := D^b(\Coh X) / D^{\mathrm{perf}}(X),
\end{displaymath}
where $D^b(\Coh X)$ is the bounded derived category of coherent sheaves on $X$ and $D^\mathrm{perf}(X)$ is the full subcategory of complexes locally isomorphic to bounded complexes of finitely generated projectives. This category measures the singularities of $X$. In particular, $D_{\mathrm{Sg}}(X)$ vanishes if and only if $X$ is regular, it is related to other measures of the singularities of $X$ for example maximal Cohen-Macaulay modules (see \cite{Buchweitzunpub}), and its properties reflect the severity of the singularities of $X$. The particular category which will concern us is the stable derived category of Krause \cite{KrStab}, namely
\begin{displaymath}
S(X) := K_\mathrm{ac}(\Inj X)
\end{displaymath}
the homotopy category of acyclic complexes of injective quasi-coherent $\str_X$-modules. We slightly abuse standard terminology by calling $S(X)$ the \emph{singularity category} of $X$. The singularity category is a compactly generated triangulated category whose compact objects are equivalent to $D_{\mathrm{Sg}}(X)$ up to summands.

We show that the unbounded derived category of quasi-coherent sheaves of $\str_X$-modules, which we denote $D(X)$, acts on the singularity category $S(X)$. As in \cite{StevensonActions} this gives rise to a theory of supports for objects of $S(X)$, and hence $D_{\mathrm{Sg}}(X)$, taking values in $X$. In particular the formalism gives rise to assignments
\begin{displaymath}
\s(\mathcal{L}) = \supp \mathcal{L} = \{x \in X \; \vert \; \mathit{\Gamma}_x \mathcal{L} \neq 0\}
\end{displaymath}
and 
\begin{displaymath}
\t(W) = \{A \in S(X) \; \vert \; \supp A \cie W\}
\end{displaymath}
for an action closed localizing subcategory $\mathcal{L}\cie S(X)$ and a subset $W\cie X$. We note that, as one would expect, the support actually takes values in $\Sing X$ the singular locus of $X$.

The first main result extends work of Takahashi \cite{TakahashiMCM} who has classified thick subcategories of $D_{\mathrm{Sg}}(R)$ when $R$ is a local hypersurface. We are able to extend this result to cover all localizing subcategories of $S(R)$ as well as removing the hypothesis that $R$ be local by using the action of $D(R)$:

\begin{thm*}[\ref{cor_hyper_min}]
If $R$ is a noetherian ring which is locally a hypersurface then there is an order preserving bijection
\begin{displaymath}
\left\{ \begin{array}{c}
\text{subsets of}\; \Sing R 
\end{array} \right\}
\xymatrix{ \ar[r]<1ex>^{\tau} \ar@{<-}[r]<-1ex>_{\sigma} &} \left\{
\begin{array}{c}
\text{localizing subcategories of} \; S(R) \\
\end{array} \right\}. 
\end{displaymath}
It follows that there are also order preserving bijections
\begin{displaymath}
\left\{ \begin{array}{c}
\text{specialization closed} \\ \text{subsets of}\; \Sing R 
\end{array} \right\}
\xymatrix{ \ar[r]<1ex>^{\tau} \ar@{<-}[r]<-1ex>_{\sigma} &} \left\{
\begin{array}{c}
\text{localizing subcategories of} \; S(R)\; \\
\text{generated by objects of} \; S(R)^c
\end{array} \right\} 
\end{displaymath}
and
\begin{displaymath}
\left\{ \begin{array}{c}
\text{specialization closed} \\ \text{subsets of}\; \Sing R 
\end{array} \right\}
\xymatrix{ \ar[r]<1ex> \ar@{<-}[r]<-1ex> &} \left\{
\begin{array}{c}
\text{thick subcategories of} \; D_{\mathrm{Sg}}(R) \\
\end{array} \right\}.
\end{displaymath}
\end{thm*}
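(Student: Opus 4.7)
The plan is to invoke the abstract machinery of tensor actions from \cite{StevensonActions}: the action of $D(R)$ on $S(R)$ already manufactures the candidate pair $(\sigma,\tau)$, and promoting them to mutually inverse bijections amounts to checking two standard inputs, namely a local-to-global principle for the action, and, for each singular prime $x\in\Sing R$, that the stratum $\Gamma_x S(R)$ is a minimal nonzero localizing subcategory (has no proper nonzero localizing subcategories).

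The local-to-global principle is the easier half. It should follow from the compatibility of the action with Zariski localization, together with the standard observation that the local cohomology objects $\Gamma_x R \in D(R)$, taken over all $x\in\Spec R$, collectively build the unit of $D(R)$. Since $S(R_\mathfrak{p})$ vanishes whenever $R_\mathfrak{p}$ is regular, the support of any object of $S(R)$ automatically lands in $\Sing R$. Feeding this into the formalism of \cite{StevensonActions} reduces the theorem to the minimality statement at each $x\in\Sing R$. By localizing, and then if needed completing, at $x$, both of which are compatible with the action and preserve the stratum at $x$, one may assume that $R$ is replaced by a local hypersurface $(A,\mathfrak{m},k)$ and the question becomes whether $\Gamma_\mathfrak{m} S(A)$ is minimal.

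This minimality step is the crux of the proof and the expected main obstacle. I would exploit the defining feature of a hypersurface, namely the $2$-periodicity of complete resolutions coming from matrix factorizations. Concretely, the residue field $k$, regarded via its complete resolution as an object of $S(A)$, is nonzero and lies in $\Gamma_\mathfrak{m} S(A)$, and one expects $k$ to generate $\Gamma_\mathfrak{m} S(A)$ as a localizing subcategory (since every matrix factorization is, up to summands, built from syzygies of $k$). It then suffices to show that any nonzero $B$ with support $\{\mathfrak{m}\}$ admits a nonzero graded morphism from $k$ and that $B$ can be reconstructed in the localizing closure of $k$: detection of nonvanishing is provided by $\Gamma_\mathfrak{m}$, the $\Hom$ groups $\Hom^*_{S(A)}(k,B)$ are controlled by the local cohomology of stable $\Ext$, and $2$-periodicity then lets one weave the resulting maps into a tower that places $k$ in the localizing subcategory generated by $B$. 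Running the two inclusions in tandem yields minimality of $\Gamma_\mathfrak{m} S(A)$.

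The two subsidiary bijections then follow formally. Under $\sigma$, the compactly generated localizing subcategories of $S(R)$ correspond exactly to specialization-closed subsets of $\Sing R$, because the support of a compact object is closed and any specialization-closed subset is a union of closures of points. Finally, Neeman's general bijection between compactly generated localizing subcategories of $S(R)$ and thick subcategories of $S(R)^c$, combined with the fact that $S(R)^c$ is equivalent, up to summands, to $D_{\mathrm{Sg}}(R)$, yields the classification of thick subcategories of the singularity category; as a by-product, the coincidence of smashing and compactly generated localizing subcategories provided by the first bijection delivers the relative telescope conjecture for $S(R)$.
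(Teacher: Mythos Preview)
Your reduction to the minimality of $\mathit{\Gamma}_\mathfrak{m}S(A)$ for a local hypersurface $(A,\mathfrak{m},k)$ is correct and matches the paper exactly: the local-to-global principle, the fact that localizing subcategories are automatically $D(R)$-submodules, and the identification $\mathit{\Gamma}_\mathfrak{p}S(R)\cong\mathit{\Gamma}_\mathfrak{p}S(R_\mathfrak{p})$ are all as you describe, and the paper's Theorem~\ref{thm_bijections} packages the formal consequences just as you do.

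The gap is in the minimality step itself. You invoke $2$-periodicity and propose to ``weave the resulting maps into a tower that places $k$ in the localizing subcategory generated by $B$'', but this is the entire content of the theorem and you have not said what the construction is. Knowing $\Hom^*_{S(A)}(k,B)\neq 0$ gives a nonzero map, not a way to build $k$ from $B$; periodicity tells you the graded $\Hom$ is eventually $2$-periodic but does not by itself produce a filtration of $k$ by copies of $B$. An argument along these lines can be made to work (it is close in spirit to the BIK/Takahashi approach via cohomological support varieties), but it requires a genuine input you have not supplied, and the proposal as written is a promissory note rather than a proof.

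The paper's route is quite different and worth knowing. It proves minimality of $\mathit{\Gamma}_\mathfrak{m}\underline{\GInj}A$ by \emph{induction on $\dim A$}. The base case is an artinian local hypersurface $A\cong S/(x^n)$ with $S$ a DVR: every module is a direct sum of cyclics, so every localizing subcategory is compactly generated, and the Auslander--Reiten sequences among the cyclics force the thick subcategory generated by any nonzero compact to be everything. For the inductive step one chooses a regular element $x\in\mathfrak{m}\setminus\mathfrak{m}^2$, so that $A/(x)$ is again a hypersurface of smaller dimension. Given $0\neq G\in\mathit{\Gamma}_\mathfrak{m}\underline{\GInj}A$, the short exact sequence $0\to\Hom_A(A/(x),G)\to G\xrightarrow{x}G\to 0$ exhibits in $\langle G\rangle_{\mathrm{loc}}$ the Gorenstein injective envelope of a nonzero $A/(x)$-module; by the inductive hypothesis this forces $\langle G\rangle_{\mathrm{loc}}$ to contain the image of the transfer functor $\underline{\pi_*}\colon\underline{\GInj}(A/(x))\to\underline{\GInj}A$, in particular $G_A(k)$, which generates $\mathit{\Gamma}_\mathfrak{m}\underline{\GInj}A$. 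No periodicity or matrix factorizations enter.

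A smaller point: for the second bijection you assert that compact objects of $S(R)$ have closed support. This is true but not automatic; the paper proves it (Proposition~\ref{prop_compactsupp}) using that $R$ is Gorenstein and an argument with Bass invariants to show $\supp a=\{\mathfrak{p}:L_{\mathcal{Z}(\mathfrak{p})}a\neq 0\}$. You should flag that this step needs an argument.
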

From this result we are able to deduce the telescope conjecture for $S(R)$ when $R$ is locally a hypersurface.

We next approach the problem of understanding the structure of $S(X)$ where $X$ is a noetherian separated scheme with hypersurface singularities. Let us say that a localizing subcategory of $S(X)$ is a \emph{localizing submodule} if it is closed under the $D(X)$ action. We are able to globalise our results on affine hypersurfaces to give a classification of localizing submodules of $S(X)$. We summarise our main results on non-affine hypersurfaces in the following theorem which is a combination of the results of \ref{thm_hyper_nonaffine_bijections}, \ref{cor_ample_stable}, and \ref{thm_nonaffine_hyper_tele}.

\begin{thm*}
Suppose $X$ is a noetherian separated scheme with only hypersurface singularities. Then there is an order preserving bijection
\begin{displaymath}
\left\{ \begin{array}{c}
\text{subsets of}\; \Sing X 
\end{array} \right\}
\xymatrix{ \ar[r]<1ex>^{\tau} \ar@{<-}[r]<-1ex>_{\sigma} &} \left\{
\begin{array}{c}
\text{localizing submodules of} \; S(X) \\
\end{array} \right\} .
\end{displaymath}
This restricts to the equivalent  bijections
\begin{displaymath}
\left\{ \begin{array}{c}
\text{specialization closed} \\ \text{subsets of}\; \Sing X 
\end{array} \right\}
\xymatrix{ \ar[r]<1ex>^{\tau} \ar@{<-}[r]<-1ex>_{\sigma} &} \left\{
\begin{array}{c}
\text{submodules of} \; S(X)\; \text{generated} \\
\text{by objects of} \; S(X)^c
\end{array} \right\} 
\end{displaymath}
and
\begin{displaymath}
\left\{ \begin{array}{c}
\text{specialization closed} \\ \text{subsets of}\; \Sing X 
\end{array} \right\}
\xymatrix{ \ar[r]<1ex> \ar@{<-}[r]<-1ex> &} \left\{
\begin{array}{c}
\text{thick} \; D^{\mathrm{perf}}(X)\text{-submodules of} \; D_{\mathrm{Sg}}(X) \\
\end{array} \right\}.
\end{displaymath}
Furthermore, if $X$ is the zero scheme of a regular section of an ample line bundle on a regular scheme then every localizing subcategory is a localizing submodule and this gives a complete description of the lattice of localizing subcategories. In either case the relative telescope conjecture is satisfied.
\end{thm*}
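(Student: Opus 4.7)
The overall strategy is a local-to-global reduction: leverage the action of $D(X)$ on $S(X)$ to stratify $S(X)$ along points of $X$, then invoke the already-established affine classification \ref{cor_hyper_min} fibrewise. The tensor-action machinery of \cite{StevensonActions} supplies, for each $x\in X$, a local-section functor $\mathit{\Gamma}_x$ together with a support theory on $S(X)$, and \emph{submodules} are by definition the localizing subcategories stable under the $D(X)$-action, hence compatible with $\mathit{\Gamma}_x$.

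First I would prove the fundamental bijection. Define $\sigma(W) = \{A\in S(X)\mid \supp A \cie W\}$ and $\tau(\mathcal{L}) = \supp \mathcal{L}$. The containment $\tau\sigma(W)\cie W$ is immediate, and equality on subsets of $\Sing X$ follows from a realisation result: for each $x\in\Sing X$ there exists $A\in S(X)$ with $\supp A = \{x\}$, produced by pulling a residue-field-like object back through the restriction to an affine open $\Spec R\ni x$. The harder direction $\sigma\tau = \id$ is the local-global principle: a submodule is determined by its local pieces. For each $x\in\Sing X$, compatibility of the $D(X)$-action with restriction along an affine open $\Spec R\ni x$ reduces the fibre at $x$ to the submodules of $S(R)$ supported at $x$; since $R_x$ is a hypersurface, \ref{cor_hyper_min} tells us the fibre at $x$ has no proper nonzero submodules. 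Thus membership in a submodule is pinned down pointwise by the support. I expect this step to carry the most technical weight, since it requires verifying that restriction along an open immersion is compatible with both the action and the local-global decomposition of the identity functor.

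The remaining two bijections follow by now-standard arguments. A compact object of $S(X)$ has closed support in $\Sing X$, so a submodule generated by compacts has specialization closed total support; conversely, given a specialization closed $W = \bigcup_i Z_i$ with each $Z_i$ closed, the compact objects supported on the $Z_i$ generate the corresponding submodule. Since the compact objects of $S(X)$ are, up to summands, equivalent to $D_{\mathrm{Sg}}(X)$, the passage to thick $D^{\mathrm{perf}}(X)$-submodules of $D_{\mathrm{Sg}}(X)$ is the usual identification of compactly generated submodules with their compact parts.

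For the ample case, let $\mathcal{L}$ be the ample line bundle on the regular ambient scheme $Y$ with $X$ the zero scheme of a regular section. Tensoring with $\mathcal{L}|_X$ is an autoequivalence of $S(X)$ and therefore preserves every localizing subcategory. Ampleness of $\mathcal{L}$ on $Y$, combined with regularity of $Y$ (so that $D^{\mathrm{perf}}(Y)$ generates $D(Y)$), allows one to express the action of an arbitrary object of $D(X)$ on any $A\in S(X)$ as a homotopy colimit built from shifts and cones of twists of $A$ by powers of $\mathcal{L}|_X$. Since localizing subcategories are closed under all of these operations, every localizing subcategory is automatically a submodule, yielding the full classification. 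The relative telescope conjecture is then immediate: smashing submodules correspond to subsets $W$ for which $\sigma(W)$ is smashing, and tracing through the bijection forces such $W$ to be specialization closed, so every smashing submodule is already compactly generated.
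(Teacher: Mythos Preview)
Your overall strategy for the first three bijections is essentially the paper's: reduce to the affine case via the open-cover machinery of \cite{StevensonActions} (specifically Theorem~A.8.11), invoke the affine classification (Theorem~\ref{cor_hyper_min}), and use that compact objects have closed support (Proposition~\ref{prop_nonaffine_specclosed}) to handle the restricted bijections. One cosmetic point: you have swapped the names $\sigma$ and $\tau$ relative to the paper's conventions (Definition~\ref{defn_vis_sigmatau}); $\sigma$ takes a subcategory to its support and $\tau$ takes a subset to the subcategory it cuts out.

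There is, however, a genuine gap in your treatment of the ample case. You write that ``tensoring with $\mathcal{L}\vert_X$ is an autoequivalence of $S(X)$ and therefore preserves every localizing subcategory.'' This inference is invalid: an autoequivalence sends localizing subcategories to localizing subcategories, but there is no reason it should send each one to \emph{itself}. The whole difficulty is precisely that a localizing subcategory which is not a priori a submodule need not be stable under twisting by line bundles. The paper closes this gap with a computation (Lemma~\ref{lem_lolcool}): using the two-term Koszul resolution of $\str_X$ coming from the defining section, one shows that $\mathbf{L}i^*i_*F$ is perfect for any sheaf $F$, and the resulting triangle in $S(X)$ forces
\[
i^*\mathcal{L}^{-1}\odot A \;\iso\; \Sigma^{-2}A
\]
for every $A\in S(X)$ (every object being the image of a Gorenstein injective sheaf under $I_\lambda Q_\rho$ since $X$ is Gorenstein). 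In other words, twisting by $i^*\mathcal{L}$ is \emph{naturally isomorphic to a suspension}, and suspensions do preserve every localizing subcategory. Only then does ampleness, via the generating set $\{\Sigma^m i^*\mathcal{L}^n\}$ for $D(X)$, finish the argument as you describe. Without this periodicity computation your argument does not go through.

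Your sketch for the telescope conjecture is also too thin. ``Tracing through the bijection forces such $W$ to be specialization closed'' is exactly what has to be proved, and it is not automatic from the bijection alone. The paper instead checks the hypotheses of the abstract criterion Theorem~\ref{thm_rel_tele}: compact objects have closed support (Proposition~\ref{prop_nonaffine_specclosed}) and every irreducible closed subset of $\Sing X$ is realised as the support of some compact object (Lemma~\ref{lem_closed_compact_exists}). You should either verify these hypotheses or supply an independent argument that smashing submodules have specialization closed support.
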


Our last main result concerns schemes which are locally complete intersections. By a theorem of Orlov \cite{OrlovSing2}, which has been extended by Burke and Walker \cite{BurkeMF2}, if $X$ is a noetherian separated local complete intersection scheme, admitting a suitable embedding into a regular scheme, the category $D_{\mathrm{Sg}}(X)$ is equivalent to $D_{\mathrm{Sg}}(Y)$ for a hypersurface $Y$ which is given explicitly. We prove that this equivalence extends to an equivalence of $S(X)$ and $S(Y)$. We are thus able to reduce the classification problem for such local complete intersection schemes to the corresponding classification problem for hypersurfaces, which we have already solved (for submodules at least). The classification this gives rise to can be found in Theorem \ref{thm_general_bijections}. As a special case we are able to completely classify the localizing subcategories of $S(R)$ when $R$ is a local (non-abstract) complete intersection ring.

\begin{thm*}[\ref{cor_ci_win}]
Suppose $(R,\mathfrak{m},k)$ is a local complete intersection. Then there is an order preserving bijection
\begin{displaymath}
\left\{ \begin{array}{c}
\text{subsets of}\; \\ \coprod_{\mathfrak{p}\in \Sing R}\limits \mathbb{P}^{c_\mathfrak{p} -1}_{k(\mathfrak{p})}
\end{array} \right\}
\xymatrix{ \ar[r]<1ex>^{\tau} \ar@{<-}[r]<-1ex>_{\sigma} &} \left\{
\begin{array}{c}
\text{localizing subcategories of} \; S(R) \\
\end{array} \right\} 
\end{displaymath}
where $c_\mathfrak{p}$ is the codimension of the singularity at the closed point of $R_\mathfrak{p}$. Furthermore, the telescope conjecture holds for $S(R)$.
\end{thm*}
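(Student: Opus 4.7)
The strategy is to reduce to the already-established hypersurface classification via the extension (mentioned in the introduction) of the Orlov-Burke-Walker equivalence of singularity categories to the level of the stable category $S(-)$. Choose a minimal presentation $R = Q/(f_1,\ldots,f_c)$ with $Q$ a regular local ring, so that $c$ equals the codimension of the complete intersection. Following Orlov and Burke-Walker, let $Y$ be the projective hypersurface in $Z = \Spec Q \times_k \mathbb{P}^{c-1}_k$ cut out by the section $\sum_i f_i t_i$ of $\str_Z(1)$, where the $t_i$ are homogeneous coordinates on $\mathbb{P}^{c-1}_k$; the extended equivalence yields $S(R)\simeq S(Y)$, so it suffices to classify localizing subcategories of $S(Y)$.

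First, I would compute $\Sing Y$. The Jacobian criterion applied to $\sum_i f_i t_i$ shows that $(\mathfrak{p},[t_1:\cdots:t_c])\in Y$ is singular iff $\mathfrak{p}\in\Sing R$ and $(t_i)$ lies in the kernel of the linear map $k(\mathfrak{p})^{\oplus c}\to \mathfrak{p} R_\mathfrak{p}/\mathfrak{p}^2 R_\mathfrak{p}$ sending $e_i\mapsto f_i$, a kernel of dimension $c_\mathfrak{p}$. This identifies $\Sing Y \cong \coprod_{\mathfrak{p}\in\Sing R}\mathbb{P}^{c_\mathfrak{p}-1}_{k(\mathfrak{p})}$.

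Second, I would apply the hypersurface classification. Since $Y$ is the zero scheme of a regular section of $\str_Z(1)$, which is ample relative to $\Spec Q$, the ample part of the hypersurface theorem (Corollary~\ref{cor_ample_stable}) identifies the localizing subcategories of $S(Y)$ with arbitrary subsets of $\Sing Y$ and gives the telescope conjecture. Transporting along the equivalence $S(R)\simeq S(Y)$ then yields the stated bijection and the telescope conjecture for $S(R)$.

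The main obstacle is verifying that Corollary~\ref{cor_ample_stable} truly applies to $Y$: the ambient regular scheme $Z = \Spec Q\times\mathbb{P}^{c-1}$ is only projective over the affine base $\Spec Q$ rather than absolutely projective, so one must confirm that $\str_Z(1)$ satisfies the ``ample'' hypothesis in the form used earlier, or equivalently that every localizing subcategory of $S(Y)$ is automatically a $D(Y)$-submodule in this relative setting. Once this subtlety is settled, the identification of $\Sing Y$ and the transport of the classification across the singularity-category equivalence are routine.
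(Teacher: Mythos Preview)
Your overall strategy---reduce via the extended Orlov--Burke--Walker equivalence to the hypersurface $Y$ and then invoke the hypersurface classification---is exactly the paper's approach. A couple of corrections and one genuine difference are worth noting.

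First, two slips. The ambient regular scheme should be $\mathbb{P}^{c-1}_Q$, not $\Spec Q \times_k \mathbb{P}^{c-1}_k$; there is no base field in the hypotheses (this is precisely the point of the Burke--Walker extension). Also, your linear map cannot land in $\mathfrak{p} R_\mathfrak{p}/\mathfrak{p}^2 R_\mathfrak{p}$, since each $f_i$ vanishes in $R$; the correct target is $\mathfrak{q} Q_\mathfrak{q}/\mathfrak{q}^2 Q_\mathfrak{q}$ where $\mathfrak{q}$ is the preimage of $\mathfrak{p}$ in $Q$, and then the kernel indeed has dimension $c_\mathfrak{p}$.

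Your ampleness worry is not an obstacle: since $Q$ is affine, $\str(1)$ on $\mathbb{P}^{c-1}_Q$ is ample in the ordinary sense (its powers generate $D(\mathbb{P}^{c-1}_Q)$), so Corollary~\ref{cor_ample_stable} applies directly. The paper simply asserts this.

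The one genuine divergence is in computing $\Sing Y$. You propose a direct Jacobian-style argument, which works once one replaces ``Jacobian criterion'' by the statement that a hypersurface in a regular local ring is singular exactly when the defining element lies in $\mathfrak{m}^2$ (the differential form of the Jacobian criterion is delicate without a base field). The paper instead argues more indirectly: Lemmas~\ref{lem_sing_Y} and~\ref{lem_sing_m} use complexity estimates to pin down $\Sing Y$ over the closed point, and then Proposition~\ref{prop_sing_computed} invokes the embedding-independence results of Section~\ref{sec_indep} to reduce the fibre over each $\mathfrak{p}$ to the closed-point computation for $R_\mathfrak{p}$. Your route is shorter and more geometric; the paper's route has the virtue of making the role of embedding independence explicit and avoids any local computation beyond the maximal ideal.
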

This gives a different, more general, proof of a similar result for local complete intersection rings announced by Iyengar \cite{IyengarLCI}. We note that there is also a classification for thick subcategories of $D_{\mathrm{Sg}}(R)$ and hence for the stable category of maximal Cohen-Macaulay modules over $R$. However, some more notation is required to state this classification and so we do not give it here (it can be found in Corollary \ref{cor_ci_win}).

We now sketch the layout of the paper. Section \ref{sec_prelims_actions} contains brief preliminaries on tensor actions. In Section \ref{sec_singcat} we introduce the categories we will study and then recall in Section \ref{sec_prelims_alg} a few facts and definitions from commutative algebra. Then in Section \ref{sec_affine} we begin to study these categories in the affine case; the main result here is a technical one identifying the image of the assignment $\t$ from which our classification results will follow. Section \ref{sec_hypersurface} is devoted to proving the classification theorem for affine schemes with hypersurface singularities. We show in Section \ref{sec_schemes} how to globalise our affine results  and give the classification for schemes with hypersurface singularities. The last three sections contain the generalization, via a theorem of Orlov, to local complete intersection schemes, the proof that the choices made in applying Orlov's theorem do not matter, and an explicit treatment of the case of local complete intersection rings.

\begin{ack}
This paper contains material from my PhD thesis written at the Australian National University under the supervision of Amnon Neeman. I would like to thank Amnon for his support and innumerable valuable comments on many earlier versions of this work. I would like to thank the anonymous examiners of my thesis for their helpful comments. Thanks are also due to Paul Balmer, Jesse Burke, and Srikanth Iyengar for comments on a preliminary version of this manuscript.
\end{ack}

\section{Preliminaries on tensor actions}\label{sec_prelims_actions}
Our purpose here is to highlight some of the main results of \cite{StevensonActions} which will be required to prove our classification results. Of course our summary is not exhaustive, nor even sufficient to cover all of the technical parts of the formalism that will be required. We thus adopt the following convention:

\begin{convention}
To avoid endlessly repeating phrases such as ``Lemma x.y of \cite{StevensonActions}'' we adopt the convention that ``Lemma A.x.y'' refers to Lemma x.y of \cite{StevensonActions}.
\end{convention}

To begin let us take the opportunity to fix some notation and terminology which will be used throughout the paper.

\begin{notation}\label{notation1}
Let $X$ be a topological space (we note that all our spaces will be spectral in the sense of Hochster \cite{Hochsterspectral}). For $x\in X$ we set
\begin{displaymath}
\mathcal{V}(x) = \overline{\{x\}}.
\end{displaymath}
We will say $x$ \emph{specializes} to $y$ if $y\in \mathcal{V}(x)$ and we say a subset $\mathcal{V}\cie X$ is \emph{specialization closed} if $x\in \mathcal{V}$ implies $\mathcal{V}(x)\cie \mathcal{V}$. We denote by
\begin{displaymath}
\mathcal{Z}(x) = \{y\in X \; \vert \; x \notin \mathcal{V}(y)\}
\end{displaymath}
the set of points not specializing to $x$ and denote its complement by
\begin{displaymath}
\mathcal{U}(x) = \{y\in X \; \vert \; x \in \mathcal{V}(y)\}.
\end{displaymath}
\end{notation}

\begin{notation}
Let $\mathcal{T}$ be a triangulated category with all small coproducts. We will denote by $\mathcal{T}^c$ the subcategory of compact objects of $\mathcal{T}$ and we recall that this is a thick triangulated subcategory of $\mathcal{T}$.
\end{notation}

Let us now define the class of triangulated categories whose actions we will consider.

\begin{defn}
Throughout by a \emph{rigidly-compactly generated tensor triangulated category} $(\mathcal{T},\otimes,\mathbf{1})$ we mean a compactly generated triangulated category $\mathcal{T}$ together with a symmetric monoidal structure such that the monoidal product $\otimes$ is a coproduct preserving exact functor in each variable. We also require that the compact objects form a rigid tensor triangulated subcategory $(\mathcal{T}^c,\otimes,\mathbf{1})$. We recall that $\mathcal{T}^c$ is rigid if for all $x$ and $y$ in $\mathcal{T}^c$, setting $x^{\vee} = \hom(x,\mathbf{1})$, the natural map
\begin{displaymath}
x^{\vee}\otimes y \to \hom(x,y)
\end{displaymath}
is an isomorphism, where $\hom(-,-)$ denotes the internal hom which is guaranteed to exist in this case by Brown representability
\end{defn}

We will assume that $\mathcal{T}$ is a rigidly-compactly tensor triangulated category and that $\Spc \mathcal{T}^c$, as defined in \cite{BaSpec}, is a noetherian topological space throughout this section (the reader should not fear if she is unfamiliar with this spectrum, in all our applications it will be explicitly described as the space underlying some given noetherian scheme). Suppose $\mathcal{K}$ is a compactly generated triangulated category. Let us indicate what it means for $\mathcal{T}$ to act on $\mathcal{K}$.

\begin{defn}
A \emph{left action} of $\mathcal{T}$ on $\mathcal{K}$ is a functor 
\begin{displaymath}
*\colon \mathcal{T}\times \mathcal{K} \to \mathcal{K}
\end{displaymath}
which is exact and coproduct preserving in each variable together with natural isomorphisms
\begin{displaymath}
a\colon *\;(\otimes \times \id_\mathcal{K}) \stackrel{\sim}{\to} *\;(\id_\mathcal{T}\times *)
\end{displaymath}
and
\begin{displaymath}
l\colon \mathbf{1}* \stackrel{\sim}{\to} \id_\mathcal{K},
\end{displaymath}
compatible with the biexactness of $(-)*(-)$, such that the diagrams one would expect to commute are commutative; for full details see Definition A.3.2. We only consider left actions and so we shall neglect to include the left from this point forward.

We say that a localizing subcategory $\mathcal{L}$ of $\mathcal{K}$ is a localizing $\mathcal{T}$\emph{-submodule} if it is closed under the action of $\mathcal{T}$.
\end{defn}

Now let us recall some of the theory of \cite{BaRickard} that we will use. Given a specialization closed subset $\mathcal{V}\cie \Spc \mathcal{T}^c$ we denote by $\mathcal{T}^c_\mathcal{V}$ the thick subcategory of compact objects supported (in the sense of \cite{BaSpec}) on $\mathcal{V}$. We let $\mathcal{T}_\mathcal{V}$ be the localizing subcategory generated by $\mathcal{T}^c_\mathcal{V}$ and note that $\mathcal{T}_\mathcal{V}$ is smashing as it is generated by compact objects of $\mathcal{T}$. Let us spend a little time spelling out the consequences of this fact. The subcategory $\mathcal{T}_\mathcal{V}$ gives rise to a smashing localization sequence
\begin{displaymath}
\xymatrix{
\mathit{\mathit{\Gamma}}_\mathcal{V}\mathcal{T} \ar[r]<0.5ex>^(0.6){i_*} \ar@{<-}[r]<-0.5ex>_(0.6){i^!} & \mathcal{T} \ar[r]<0.5ex>^(0.5){j^*} \ar@{<-}[r]<-0.5ex>_(0.5){j_*} & L_\mathcal{V}\mathcal{T}
}
\end{displaymath}
i.e., all four functors are exact and coproduct preserving, $i_*$ and $j_*$ are fully faithful, $i^!$ is right adjoint to $i_*$, and $j_*$ is right adjoint to $j^*$. In particular there are associated coproduct preserving acyclization and localization functors given by $i_*i^!$ and $j_*j^*$ respectively. As in \cite{HPS} Definition 3.3.2 this gives rise to Rickard idempotents which we denote by $\mathit{\mathit{\Gamma}}_\mathcal{V}\mathbf{1}$ and $L_\mathcal{V}\mathbf{1}$ with the property that 
\begin{displaymath}
i_*i^! \iso \mathit{\mathit{\Gamma}}_\mathcal{V}\mathbf{1}\otimes(-) \quad \text{and} \quad j_*j^* \iso L_\mathcal{V}\mathbf{1}\otimes(-).
\end{displaymath}
It follows that they are $\otimes$-orthogonal by the usual properties of localization and acyclization functors. We will also sometimes write $\mathit{\mathit{\Gamma}}_\mathcal{V}\mathcal{T}$ for the category associated to $\mathcal{V}$.

Let us now suppose that we have an action of $\mathcal{T}$ on $\mathcal{K}$. The localization sequences we have just recalled give rise to a theory of supports on $\mathcal{K}$ (mimicking \cite{BaRickard} where the case of $\mathcal{T}$ acting on itself is considered). For each $x\in \Spc\mathcal{T}^c$ there is a $\otimes$-idempotent object $\mathit{\Gamma}_x\mathbf{1}$ defined by 
\begin{displaymath}
\mathit{\Gamma}_x\mathbf{1} = \mathit{\Gamma}_{\mathcal{V}(x)}\mathbf{1} \otimes L_{\mathcal{Z}(x)}\mathbf{1}.
\end{displaymath}
For an object $A$ of $\mathcal{K}$ we set
\begin{displaymath}
\supp A = \{x\in \Spc\mathcal{T}^c \; \vert \; \mathit{\Gamma}_x\mathbf{1}*A \neq 0\}.
\end{displaymath}
Usually, as the action is clear, we supress it and simply write $\mathit{\Gamma}_xA$. One then hopes that, by judiciously choosing the correct action, this support gives a classification of the localizing subcategories of $\mathcal{K}$. The following definition, inspired by \cite{BIKstrat2}, describes the situation one wishes to begin with to prove such classification theorems.

\begin{defn}\label{defn_ltg}
We say $\mathcal{T}\times\mathcal{K} \stackrel{*}{\to} \mathcal{K}$ satisfies the \emph{local-to-global principle} if for each $A$ in $\mathcal{K}$
\begin{displaymath}
\langle A \rangle_* = \langle \mathit{\Gamma}_x A \; \vert \; x\in \Spc \mathcal{T}^c\rangle_*
\end{displaymath}
where $\langle A \rangle_*$ and $\langle \mathit{\Gamma}_x A \; \vert \; x\in \Spc \mathcal{T}^c\rangle_*$ are the smallest localizing subcategories of $\mathcal{K}$ containing $A$ or the $\mathit{\Gamma}_x A$ respectively and closed under the action of $\mathcal{T}$.
\end{defn}
Fortunately this seemingly strong hypothesis on the action turns out only to depend on $\mathcal{T}$ and to hold rather generally.
\begin{thm}[Theorem A.6.9]\label{thm_general_ltg}
Suppose $\mathcal{T}$ is a rigidly-compactly generated tensor triangulated category with a monoidal model and that $\Spc \mathcal{T}^c$ is noetherian. Then the following statements hold:
\begin{itemize}
\item[$(i)$] The local-to-global principle holds for the action of $\mathcal{T}$ on itself;
\item[$(ii)$] The associated support function detects vanishing of objects i.e., $A \in \mathcal{T}$ is zero if and only if $\supp A = \varnothing$;
\item[$(iii)$] For any chain $\{\mathcal{V}_i\}_{i\in I}$ of specialization closed subsets of $\Spc \mathcal{T}^c$ with union $\mathcal{V}$ there is an isomorphism
\begin{displaymath}
\mathit{\Gamma}_\mathcal{V}\mathbf{1} \iso \hocolim \mathit{\Gamma}_{\mathcal{V}_i}\mathbf{1}
\end{displaymath}
where the structure maps are the canonical ones.
\end{itemize}
Furthermore, the relative versions of (i) and (ii) hold for any action of $\mathcal{T}$ on a compactly generated triangulated category $\mathcal{K}$.
\end{thm}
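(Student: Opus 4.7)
The plan is to establish (iii) first, then use it to drive a noetherian Zornification argument for (i), and obtain (ii) together with the relative statements as essentially formal consequences.

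For (iii), I would exploit the monoidal model on $\mathcal{T}$ to write each $\Gamma_\mathcal{V}\mathbf{1}$ as a concrete homotopy colimit. When $\mathcal{V}=\supp c$ for a single compact $c$, the idempotent $\Gamma_\mathcal{V}\mathbf{1}$ admits a Koszul-type model as a hocolim of iterated cones on trace maps $c^{\vee}\otimes\mathbf{1}\to\mathbf{1}$; here the model structure is what lets us make these constructions functorial. For general specialization closed $\mathcal{V}$, filtering $\mathcal{T}^c_\mathcal{V}$ by its compact objects exhibits $\Gamma_\mathcal{V}\mathbf{1}$ as a filtered hocolim of such Koszul objects. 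Given a chain $\{\mathcal{V}_i\}$ with union $\mathcal{V}$, I would then analyse the comparison map $\hocolim_i\Gamma_{\mathcal{V}_i}\mathbf{1}\to\Gamma_\mathcal{V}\mathbf{1}$: the source lies in $\mathcal{T}_\mathcal{V}$ since each term does and $\mathcal{T}_\mathcal{V}$ is closed under coproducts and cones; and testing against an arbitrary compact $c\in\mathcal{T}^c_\mathcal{V}$ shows the cone vanishes, because $\supp c$ is quasi-compact and is therefore already contained in some $\mathcal{V}_i$.

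For (i), fix $A\in\mathcal{T}$ and set $\mathcal{L}=\langle\Gamma_x A\mid x\in\Spc\mathcal{T}^c\rangle_\otimes$. Consider the collection $\mathcal{P}$ of specialization closed subsets $\mathcal{W}\cie\Spc\mathcal{T}^c$ for which $\Gamma_\mathcal{W}\mathbf{1}\otimes A\in\mathcal{L}$. Part (iii) ensures $\mathcal{P}$ is closed under unions of chains, so Zorn's lemma gives a maximal element $\mathcal{W}_0$. Suppose for contradiction $\mathcal{W}_0\neq\Spc\mathcal{T}^c$: noetherianness of $\Spc\mathcal{T}^c$ yields DCC on specializations, so the complement contains a point $x$ minimal under specialization, giving $\mathcal{V}(x)\setminus\mathcal{W}_0=\{x\}$. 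The octahedral axiom applied to the map of idempotents $\Gamma_{\mathcal{W}_0}\mathbf{1}\to\Gamma_{\mathcal{W}_0\cup\mathcal{V}(x)}\mathbf{1}$, tensored with $A$, produces a distinguished triangle whose third vertex has support exactly $\{x\}$ and is identified with $\Gamma_x A$. Both outer vertices lie in $\mathcal{L}$, hence so does the middle one, contradicting the maximality of $\mathcal{W}_0$. Therefore $\mathcal{W}_0=\Spc\mathcal{T}^c$ and $A\iso\Gamma_{\mathcal{W}_0}\mathbf{1}\otimes A\in\mathcal{L}$. Statement (ii) is then immediate: $\supp A=\varnothing$ gives $\Gamma_x A=0$ for every $x$, whence $A\in\langle 0\rangle_\otimes=0$. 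The relative versions over a compactly generated $\mathcal{K}$ follow by running the same two arguments with $\otimes$ replaced by the action $*$: the Rickard idempotents $\Gamma_\mathcal{V}\mathbf{1}$ act on $\mathcal{K}$, the octahedral triangle is preserved by $(-)*A$, and the support of $A\in\mathcal{K}$ is defined through this action.

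The main obstacle is the identification step in (i): showing that the third vertex $\Gamma_{\mathcal{V}(x)}\mathbf{1}\otimes L_{\mathcal{W}_0}\mathbf{1}\otimes A$ of the octahedral triangle is isomorphic to $\Gamma_x A$. This reduces to the tensor identity $\Gamma_{\mathcal{V}(x)}\mathbf{1}\otimes L_{\mathcal{W}_0}\mathbf{1}\iso\Gamma_{\mathcal{V}(x)}\mathbf{1}\otimes L_{\mathcal{Z}(x)}\mathbf{1}=\Gamma_x\mathbf{1}$, which is made possible by the minimality of $x$: since all proper specializations of $x$ lie in $\mathcal{W}_0$, after smashing with $\Gamma_{\mathcal{V}(x)}\mathbf{1}$ the idempotents $L_{\mathcal{W}_0}\mathbf{1}$ and $L_{\mathcal{Z}(x)}\mathbf{1}$ act indistinguishably. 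It is exactly this step that forces both the noetherian hypothesis on $\Spc\mathcal{T}^c$ (to produce the minimal point $x$) and the monoidal model (to make the hocolim manipulations of (iii) rigorous so that the Zorn argument of (i) can close).
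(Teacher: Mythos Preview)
This theorem is not proved in the present paper: by the convention stated at the start of Section~\ref{sec_prelims_actions}, the label ``Theorem~A.6.9'' means Theorem~6.9 of the companion paper \cite{StevensonActions}, and the statement is quoted here without proof. So there is no proof in this paper to compare against.

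That said, your outline is essentially the standard argument and is almost certainly close to what appears in \cite{StevensonActions}. The logical flow --- establish the hocolim formula (iii) using the model to make filtered homotopy colimits behave, then run a Zorn argument over specialization closed $\mathcal{W}$ with $\mathit{\Gamma}_\mathcal{W}A\in\mathcal{L}$, using noetherianity to find a minimal $x$ in the complement and the Balmer--Favi identification $\mathit{\Gamma}_{\mathcal{V}(x)}\mathbf{1}\otimes L_{\mathcal{W}_0}\mathbf{1}\cong\mathit{\Gamma}_x\mathbf{1}$ to enlarge $\mathcal{W}_0$ --- is exactly the shape of the proof in the tensor-triangular literature (Balmer--Favi, BIK, Stevenson). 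Your deduction of (ii) and of the relative statements is correct.

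One small imprecision worth tightening: in your argument for (iii) you say that $\supp c$ is quasi-compact and therefore contained in some $\mathcal{V}_i$. Quasi-compactness alone does not give this, since the $\mathcal{V}_i$ are specialization closed rather than open. The correct justification is that $\supp c$ is closed in the noetherian space $\Spc\mathcal{T}^c$, hence has finitely many generic points; each lies in some $\mathcal{V}_i$, the chain condition puts them all in a common $\mathcal{V}_{i_0}$, and specialization closedness of $\mathcal{V}_{i_0}$ then swallows all of $\supp c$. With that fix, your sketch is sound.
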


While trying to prove the classification theorem one may as well be greedy and ask for even more; the following definition formalises one of the obvious items on the wish list.

\begin{defn}\label{defn_relative_tele}
We say the \emph{relative telescope conjecture} holds for $\mathcal{K}$ with respect to the action of $\mathcal{T}$ if every smashing $\mathcal{T}$-submodule $\mathcal{S}\cie \mathcal{K}$ (this means $\mathcal{S}$ is a localizing submodule with an associated coproduct preserving localization functor) is generated, as a localizing subcategory, by compact objects of $\mathcal{K}$. 
\end{defn}
We give below a sufficient condition for the relative telescope conjecture to hold for the action of $\mathcal{T}$ on $\mathcal{K}$. In order to state this result we first need to introduce assignments relating subsets of $\Spc \mathcal{T}^c$ and localizing submodules of $\mathcal{K}$. To have to introduce these assignments is certainly no great burden as they are precisely the maps we would like to prove give a classification of localizing subcategories in our application.
\begin{defn}\label{defn_vis_sigmatau}
There are order preserving assignments
\begin{displaymath}
\left\{ \begin{array}{c}
\text{subsets of}\; \Spc\mathcal{T}^c 
\end{array} \right\}
\xymatrix{ \ar[r]<1ex>^\t \ar@{<-}[r]<-1ex>_\s &} \left\{
\begin{array}{c}
\text{localizing submodules} \; \text{of} \; \mathcal{K} \\
\end{array} \right\} 
\end{displaymath}
where for a localizing submodule $\mathcal{L}$ we set
\begin{displaymath}
\s(\mathcal{L}) = \supp \mathcal{L} = \{x \in \Spc\mathcal{T}^c \; \vert \; \mathit{\Gamma}_x\mathbf{1}*\mathcal{L} \neq 0\}
\end{displaymath}
and for a subset $W$ of $\Spc \mathcal{T}^c$ we let
\begin{displaymath}
\t(W) = \{A \in \mathcal{K} \; \vert \; \supp A \cie W\}.
\end{displaymath}
We take both the subsets and subcategories to be ordered by inclusion.
\end{defn}
Our theorem is:
\begin{thm}[Theorem A.7.15]\label{thm_rel_tele}
Suppose $\mathcal{T}$ is rigidly-compactly generated and has a monoidal model. Let $\mathcal{T}$ act on a compactly generated triangulated category $\mathcal{K}$ so that the support of any compact object of $\mathcal{K}$ is a specialization closed subset of $\s\mathcal{K}$ and for each irreducible closed subset $\mathcal{V}$ in $\s\mathcal{K}$ there exists a compact object whose support is precisely $\mathcal{V}$. Furthermore, suppose the assignments $\s$ and $\t$ give a bijection between localizing submodules of $\mathcal{K}$ and subsets of $\s\mathcal{K}$. Then the relative telescope conjecture holds for $\mathcal{K}$ i.e., every smashing $\mathcal{T}$-submodule of $\mathcal{K}$ is generated by objects compact in $\mathcal{K}$.
\end{thm}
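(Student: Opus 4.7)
The plan is to fix a smashing $\mathcal{T}$-submodule $\mathcal{S}\cie \mathcal{K}$, with associated coproduct preserving localization $L$ and acyclization $\Gamma$, and set $W=\s(\mathcal{S})$. I will first argue that $W$ is specialization closed. Given this, the second hypothesis supplies for each $x\in W$ a compact $C_x\in\mathcal{K}^c$ with $\supp C_x=\mathcal{V}(x)\cie W$; by the assumed $\s,\t$-bijection this forces $C_x\in\mathcal{S}$. Let $\mathcal{L}$ be the localizing subcategory of $\mathcal{K}$ generated by $\{T*C_x\mid T\in\mathcal{T}^c,\,x\in W\}$. These generators are compact in $\mathcal{K}$ (via the rigid dual $T^\vee$), and a standard argument using density of $\mathcal{T}^c$ in $\mathcal{T}$ shows that $\mathcal{L}$ is in fact a $\mathcal{T}$-submodule. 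Clearly $\mathcal{L}\cie\mathcal{S}$, and $\s(\mathcal{L})\supseteq\bigcup_{x\in W}\mathcal{V}(x)=W=\s(\mathcal{S})$, so the bijection yields $\mathcal{L}=\mathcal{S}$, which is the relative telescope conjecture.

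The main step, and in my view the principal obstacle, is to show $W$ is specialization closed. Fix $x\in W$ and, by the second hypothesis, a compact $C$ with $\supp C=\mathcal{V}(x)$. Since $\mathcal{S}$ is smashing, a direct check using the adjunction of $L$ with the inclusion $\mathcal{S}^\perp\hookrightarrow\mathcal{K}$ together with coproduct preservation of $L$ shows that $LC$ is compact in $\mathcal{K}$; the triangle $\Gamma C\to C\to LC$ then exhibits $\Gamma C$ as compact too. The first hypothesis therefore makes $\supp\Gamma C$ a specialization closed subset of $\s\mathcal{K}$, and since $\Gamma C\in\mathcal{S}$ we also have $\supp\Gamma C\cie W$.

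It remains to check that $x\in\supp\Gamma C$. To this end I would first observe that for any $y\in\s\mathcal{K}$ the submodule $\t(\{y\})\intersec\mathcal{S}$ has support contained in $\{y\}$, so by the bijection it is either $0$ or all of $\t(\{y\})$; the latter alternative holds precisely when $y\in W$ (given any witness $A\in\mathcal{S}$ with $\Gamma_y A\neq 0$, the object $\Gamma_y A$ lies in $\mathcal{S}$ with support exactly $\{y\}$, and so generates a submodule that the bijection identifies with $\t(\{y\})$). Because $\mathcal{S}$ is a submodule, $\Gamma$ commutes with the $\mathcal{T}$-action, and since $x\in W$ implies $\t(\{x\})\cie\mathcal{S}$ we have $\Gamma(\Gamma_x C)=\Gamma_x C$; as $x\in\supp C$ forces $\Gamma_x C\neq 0$, this gives $x\in\supp\Gamma C$. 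Specialization closure of $\supp\Gamma C$ then delivers $\mathcal{V}(x)\cie\supp\Gamma C\cie W$, and since $x\in W$ was arbitrary $W$ is specialization closed, completing the proof.
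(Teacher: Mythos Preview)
The paper does not actually prove this theorem; it is quoted from \cite{StevensonActions} (Theorem~7.15 there) and only \emph{used} in the present paper. So there is no proof here to compare against, and I will simply assess your argument on its own merits.

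Your overall strategy is sound: once you know $W=\s(\mathcal{S})$ is specialization closed, the compacts $C_x$ with $\supp C_x=\mathcal{V}(x)\subseteq W$ lie in $\mathcal{S}$, the submodule $\mathcal{L}$ they generate has $\s(\mathcal{L})=W$, and the bijection forces $\mathcal{L}=\mathcal{S}$. Your verification that $x\in\supp\Gamma C$ via minimality of $\t(\{x\})$ is also correct and rather clean.

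The gap is in your proof that $W$ is specialization closed. You assert that for a smashing $\mathcal{T}$-submodule $\mathcal{S}$ with localization $L$ and a compact $C\in\mathcal{K}^c$, the object $LC$ is again compact in $\mathcal{K}$. This is false. The adjunction argument you sketch shows only that the image of $C$ in the quotient $\mathcal{K}/\mathcal{S}\simeq\mathcal{S}^\perp$ is compact \emph{there}; the inclusion $\mathcal{S}^\perp\hookrightarrow\mathcal{K}$ need not preserve compactness, because its right adjoint need not preserve coproducts. A concrete counterexample already occurs for $\mathcal{T}=\mathcal{K}=D(\int)$ acting on itself: take $\mathcal{S}=\mathit{\Gamma}_{\mathcal{V}(p)}D(\int)$, which is compactly generated (by $\int/p$) and hence smashing, and $C=\int$. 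Then $LC\simeq\int[1/p]$, which is not a perfect complex and hence not compact in $D(\int)$; indeed its support $\Spec\int\setminus\{(p)\}$ is not closed. Consequently $\Gamma C$ is not compact either, and you cannot invoke the hypothesis that compacts have specialization closed support to control $\supp\Gamma C$.

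So you need a different route to specialization closedness of $W$. One ingredient you have already implicitly used, and which is genuinely available, is that $\mathcal{S}^\perp$ is again a localizing $\mathcal{T}$-submodule (closure under $\mathcal{T}^c$ via duals, then pass to all of $\mathcal{T}$); the bijection then identifies $\s(\mathcal{S}^\perp)$ with $\s\mathcal{K}\setminus W$ and both $\Gamma$ and $L$ are $\mathcal{T}$-linear. The missing step is to leverage this, together with the existence of compacts realizing each $\mathcal{V}(x)$, to force $W$ to be specialization closed without ever claiming $\Gamma C$ or $LC$ is compact. As written, your argument does not do this.
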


\section{The singularity category of a scheme}\label{sec_singcat}

We begin by introducing, for a noetherian separated scheme $X$, an infinite completion $S(X)$, as in Krause's \cite{KrStab}, of the usual singularity category $D_{\mathrm{Sg}}(X)$ (\cite{Buchweitzunpub}, \cite{OrlovSing}). This completion has a natural action of the unbounded derived category of $X$. We can thus bring the machinery of \cite{StevensonActions} to bear on the problem of determining the structure of the lattice of localizing subcategories of $S(X)$.

Throughout we will denote by $X$ a separated noetherian scheme. We will use the following notation
\begin{displaymath}
D(X) := D(\QCoh X), \quad \text{and} \quad K(X) := K(\QCoh X)
\end{displaymath}
where $\QCoh X$ is the category of quasi-coherent sheaves of $\str_X$-modules. We write $D^{\mathrm{perf}}(X)$ for the thick subcategory of perfect complexes and recall that this is precisely the subcategory of compact objects of $D(X)$. We denote by $\Inj X$ the category of injective quasi-coherent sheaves of $\str_X$-modules, the category of coherent $\str_X$-modules by $\Coh X$ and the category of flat $\str_X$-modules by $\Flat X$.

\begin{rem*}
We have defined $\Inj X$ to be the category of injective objects in $\QCoh X$, but we could just as well have taken it to be the category of those injective objects in the category of all $\str_X$-modules which are quasi-coherent. This fact can be found as Lemma 2.1.3 of \cite{ConradDuality}. We thus feel free to speak either of quasi-coherent injective $\str_X$-modules or injective quasi-coherent $\str_X$-modules as they are the same thing when $X$ is (locally) noetherian which is the only case we consider.
\end{rem*}

Given a noetherian separated scheme $X$ we set
\begin{displaymath}
D_{\mathrm{Sg}}(X) = D^b(\Coh X)/ D^{\mathrm{perf}}(X).
\end{displaymath}
This category, usually called the singularity category, provides a measure of the singularities of the scheme $X$. Throughout we will prefer to work with an infinite completion of $D_{\mathrm{Sg}}(X)$ and will reserve the term singularity category for this larger category:

\begin{thm}[\cite{KrStab} Theorem 1.1]\label{thm_recol}
Let $X$ be a noetherian separated scheme.
\begin{itemize}
\item[$(1)$] There is a recollement 
\begin{displaymath}
\xymatrix{
S(\QCoh X) \ar[rr]^{I} && K(\Inj X) \ar[rr]^{Q} \ar@/^1pc/[ll]^{I_\r} \ar@/_1pc/[ll]_{I_{\l}} &&  D(X) \ar@/^1pc/[ll]^{Q_\r} \ar@/_1pc/[ll]_{Q_{\l}}
}
\end{displaymath}
where each functor is right adjoint to the one above it. We call $S(\QCoh X) = K_{\mathrm{ac}}(\Inj X)$, the homotopy category of acyclic complexes of injective quasi-coherent $\str_X$-modules, the \emph{singularity category} of $X$.
\item[$(2)$] The triangulated category $K(\Inj X)$ is compactly generated, and $Q$ induces an equivalence
\begin{displaymath}
K(\Inj X)^{\mathrm{c}} \to D^b(\Coh X).
\end{displaymath}
\item[$(3)$] The sequence
\begin{displaymath}
\xymatrix{
D(X) \ar[r]^(0.5){Q_\l} & K(\Inj X) \ar[r]^(0.5){I_{\l}} & S(\QCoh X)
}
\end{displaymath}
is a localization sequence. Therefore $S(\QCoh X)$ is compactly generated, and $I_\l \circ Q_\r$ induces (up to direct factors) an equivalence
\begin{displaymath}
D_{\mathrm{Sg}}(X) \to S(\QCoh X)^{\mathrm{c}}.
\end{displaymath}
\end{itemize}
\end{thm}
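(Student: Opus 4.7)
The plan is to follow Krause's strategy from \cite{KrStab}. The geometric content is the construction of a fully faithful functor $i \colon D^b(\Coh X) \to K(\Inj X)$ by injective resolutions, together with the identification of its essential image, up to summands, with the compact objects of $K(\Inj X)$.

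First I would establish that $K(\Inj X)$ admits arbitrary set-indexed coproducts. Since $X$ is noetherian, $\QCoh X$ is a locally noetherian Grothendieck category, and so small coproducts of injectives remain injective; hence the coproduct in $K(\QCoh X)$ of complexes of injectives stays in $K(\Inj X)$. The functor $i$ is then defined by choosing, for each $F \in D^b(\Coh X)$, a quasi-isomorphism $F \to iF$ with $iF$ a bounded-below complex of injectives; full faithfulness is standard homological algebra, being the statement that $\Hom$ into a bounded-below complex of injectives in $K(\QCoh X)$ computes $\Hom$ in $D(X)$.

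The central and most delicate step is showing that each $iF$ (for $F$ coherent) is compact in $K(\Inj X)$, i.e., that
\begin{displaymath}
\Hom_{K(\Inj X)}\Bigl(iF, \coprod_\alpha Y_\alpha \Bigr) \cong \coprod_\alpha \Hom_{K(\Inj X)}(iF, Y_\alpha).
\end{displaymath}
This is subtle because coherent sheaves are generally not compact in $D(X)$ on a singular scheme, so the identification cannot be pulled back from $D(X)$. The main obstacle lies here: one must exploit the interaction between the bounded-below structure of $iF$, the noetherianity of $X$, and the finite type of $F$. Concretely, a cochain map $iF \to \coprod_\alpha Y_\alpha$ lands, in each degree, in finitely many summands by finite type of $F$; a careful homotopy argument, using noetherian induction on truncations of $iF$ together with the Mittag-Leffler-type behavior of maps into complexes of injectives, then collapses this to a single finite sub-coproduct up to chain homotopy. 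Joint conservativity of the family $\Hom_{K(\Inj X)}(iF, -[n])$ indexed by $F \in \Coh X$ and $n \in \int$ — equivalent to the observation that a complex of injectives with vanishing Ext against every coherent sheaf is contractible — then shows that these $iF$ compactly generate.

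For the remaining assertions: Neeman's thick subcategory theorem combined with idempotent completeness of $D^b(\Coh X)$ upgrades the full faithfulness of $i$ to the equivalence $D^b(\Coh X) \simeq K(\Inj X)^c$ of part (2). For the recollement in (1), Brown representability applied to the compactly generated category $K(\Inj X)$ yields the adjoints $Q_\l, Q_\r$ to the Verdier quotient $Q \colon K(\Inj X) \to D(X)$; since $\ker Q = S(\QCoh X)$, the general machinery of smashing localization sequences supplies adjoints $I_\l, I_\r$ to the inclusion. For (3), the essential image of $Q_\l$ is the localizing subcategory of $K(\Inj X)$ generated by the compacts $D^{\mathrm{perf}}(X) \subseteq D^b(\Coh X) = K(\Inj X)^c$; Neeman's localization theorem then identifies $S(\QCoh X)^c$ with the Verdier quotient $D^b(\Coh X)/D^{\mathrm{perf}}(X) = D_{\mathrm{Sg}}(X)$ up to summands, as required.
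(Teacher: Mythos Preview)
The paper does not prove this theorem; it is quoted from Krause \cite{KrStab} and used as a black box throughout, so there is no proof in the paper to compare against. Your outline is broadly faithful to Krause's own argument, and the parts concerning Brown representability for the adjoints and Neeman--Thomason localization for part~(3) are correct. However, the compactness step contains a specific confusion worth straightening out.

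You write that a cochain map $iF \to \coprod_\alpha Y_\alpha$ lands degreewise in finitely many summands ``by finite type of $F$''. This is not right as stated: the terms of $iF$ are injective objects, typically enormous, and there is no reason a map out of $(iF)^n$ should factor through a finite sub-coproduct. The finite-type hypothesis on $F$ cannot be invoked until one has reduced to maps out of $F$ itself. The missing step (this is essentially Krause's Lemma~2.1, which the present paper also cites later) is that for $F$ a bounded complex and $Y$ any complex of injectives, precomposition with the resolution map $F \to iF$ induces an isomorphism
\begin{displaymath}
\Hom_{K(\Inj X)}(iF, Y) \;\cong\; \Hom_{K(\QCoh X)}(F, Y).
\end{displaymath}
This holds because the cone of $F\to iF$ is bounded below and acyclic, and an elementary inductive construction of a null-homotopy, using only injectivity of the $Y^n$, shows that any chain map from such a cone to $Y$ is null-homotopic. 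Once this reduction is in hand the argument is clean: $F$ is bounded with noetherian, hence finitely presented, terms, so each $\Hom(F^n,-)$ commutes with coproducts, and since only finitely many degrees are involved one gets $\Hom_K(F,\coprod_\alpha Y_\alpha)\cong\coprod_\alpha\Hom_K(F,Y_\alpha)$ directly. No noetherian induction on truncations or Mittag--Leffler input is needed, and the generation statement then follows exactly as you say.
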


\begin{notation}
As in the theorem we call $S(\QCoh X)$ the \emph{singularity category} of $X$ and we shall denote it by $S(X)$. By (3) of the theorem $S(X)$ essentially contains $D_{\mathrm{Sg}}(X)$; the closure under summands of the image of $D_{\mathrm{Sg}}(X)$ in $S(X)$ is $S(X)^c$, the thick subcategory of compact objects, so it is reasonable to call $S(X)$ the singularity category.
\end{notation}

Now let us prove there is an action 
\begin{displaymath}
D(X) \times S(X) \stackrel{\odot}{\to} S(X)
\end{displaymath}
in the sense of \cite{StevensonActions}.

Recall from above that $S(X)$ is a compactly generated triangulated category. Consider $E = \coprod_\l E_\l$ where $E_\l$ runs through a set of representatives for the isomorphism classes of compact objects in $S(X)$. We define a homological functor $H\colon K(\Flat X) \to \mathrm{Ab}$ by setting, for $F$ an object of $K(\Flat X)$,
\begin{displaymath}
H(F) = H^0(F\otimes_{\str_X} E)
\end{displaymath}
where the tensor product is taken in $K(X)$. This is a coproduct preserving homological functor since we are merely composing the exact coproduct preserving functor $(-)\otimes_{\str_X} E$ with the coproduct preserving homological functor $H^0$.

We remind the reader of the notion of pure acyclicity. In \cite{MurfetTAC} a complex $F$ in $K(\Flat X)$ is defined to be \emph{pure acyclic} if it is exact and has flat syzygies. Such complexes form a triangulated subcategory of $K(\Flat X)$ which we denote by $K_{\mathrm{pac}}(\Flat X)$ and we say that a morphism with pure acyclic mapping cone is a \emph{pure quasi-isomorphism}. We recall that when $X$ is noetherian the tensor product of a complex of flats with a complex of injectives is again a complex of injectives. Note that tensoring a pure acyclic complex of flats with a complex of injectives yields a contractible complex. This can be checked locally using \cite{NeeFlat} Corollary 9.7, see for example \cite{MurfetThesis} Lemma 8.2. In particular every pure acyclic complex lies in the kernel of $H$.

\begin{defn}
With notation as above we denote by $A_{\otimes}(\Inj X)$ the quotient \\ $\ker(H)/K_{\mathrm{pac}}(\Flat X)$, where
\begin{displaymath}
\ker(H) = \{F\in K(\Flat X) \; \vert \; H(\Sigma^i F) = 0 \; \forall i \in \int\},
\end{displaymath}
and by $N(\Flat X)$ the quotient $K(\Flat X)/K_{\mathrm{pac}}(\Flat X)$.
\end{defn}

\begin{lem}
An object $F$ of $K(\Flat X)$ lies in $\ker(H)$ if and only if the exact functor
\begin{displaymath}
F\otimes_{\str_X} (-) \colon K(\Inj X) \to K(\Inj X)
\end{displaymath}
restricts to
\begin{displaymath}
F\otimes_{\str_X} (-) \colon S(X) \to S(X).
\end{displaymath}
In particular, $A_\otimes(\Inj X)$ consists of the pure quasi-isomorphism classes of objects which act on $S(X)$.
\end{lem}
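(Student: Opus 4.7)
The plan is to prove the two implications separately, with the nontrivial direction relying on the fact that $E$ is constructed to be a compact generator of $S(X)$ together with a standard localizing subcategory argument.

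For the ``only if'' direction: suppose $F\otimes_{\str_X}(-)$ restricts to an endofunctor of $S(X)$. Since $E=\coprod_\l E_\l$ is a coproduct of compact objects of $S(X)$, in particular $E$ itself lies in $S(X)$, so $F\otimes_{\str_X} E$ must lie in $S(X)$, i.e.\ be acyclic. Hence $H^i(F\otimes_{\str_X} E)=H(\Sigma^{-i} F)=0$ for every $i\in\int$, so $F\in\ker(H)$.

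For the ``if'' direction: assume $F\in\ker(H)$, so $F\otimes_{\str_X} E$ is acyclic. Because the tensor product commutes with coproducts,
\begin{displaymath}
F\otimes_{\str_X} E \iso \coprod_\l \bigl(F\otimes_{\str_X} E_\l\bigr),
\end{displaymath}
and a coproduct of complexes of injectives is acyclic if and only if each summand is. Thus every $F\otimes_{\str_X} E_\l$ lies in $S(X)$. Now consider
\begin{displaymath}
\mathcal{L} = \{\, Y \in K(\Inj X) \;\vert\; F\otimes_{\str_X} Y \in S(X)\,\}.
\end{displaymath}
This is a localizing subcategory of $K(\Inj X)$: the functor $F\otimes_{\str_X}(-)$ is exact and coproduct preserving on $K(\Inj X)$ (it lands in $K(\Inj X)$ since $X$ is noetherian and tensor of flat with injective is injective), and $S(X)=K_{\mathrm{ac}}(\Inj X)$ is itself closed under triangles, summands, and small coproducts inside $K(\Inj X)$. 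By the previous step $\mathcal{L}$ contains every $E_\l$, hence it contains the localizing subcategory generated by $\{E_\l\}$, which is all of $S(X)$ since $\{E_\l\}$ is a set of compact generators (Theorem~\ref{thm_recol}(3)). Therefore $F\otimes_{\str_X}(-)$ sends $S(X)$ into $S(X)$.

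The final sentence is then immediate from the definition of $A_\otimes(\Inj X)=\ker(H)/K_{\mathrm{pac}}(\Flat X)$, once one notes that pure quasi-isomorphic complexes of flats induce isomorphic endofunctors of $S(X)$ under $\otimes_{\str_X}$, because pure acyclic $\otimes$ injective is contractible (as recalled just before the definition). The only real content is the localizing subcategory argument above; I do not anticipate any serious obstacle beyond making sure $\mathcal{L}$ really is localizing, which comes down to the observation that the inclusion $S(X)\hookrightarrow K(\Inj X)$ preserves coproducts and triangles.
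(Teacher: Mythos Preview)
Your proof is correct and follows essentially the same route as the paper's: both reduce to showing that $F\otimes_{\str_X} E$ acyclic forces $F\otimes_{\str_X}(-)$ to preserve acyclicity on all of $S(X)$ via a localizing subcategory argument using that the $E_\l$ generate $S(X)$. One cosmetic point: you have the ``if'' and ``only if'' labels reversed (the statement is ``$F\in\ker(H)$ if and only if the functor restricts'', so assuming the functor restricts and deducing $F\in\ker(H)$ is the \emph{if} direction).
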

\begin{proof}
Since $F$ is in $\ker(H)$ if and only if $F\otimes_{\str_X} E$ is acyclic it is sufficient to show that $F\otimes_{\str_X} E$ is acyclic if and only if $F\otimes_{\str_X}(-)$ preserves acyclity of complexes of injectives. The if part of this statement is trivial.

On the other hand if $F\otimes_{\str_X} E$ is acyclic then it follows that $F\otimes_{\str_X}(-)$ preserves acyclicity of complexes in the localizing subcategory $\langle E \rangle_{\mathrm{loc}}$ of $K(\Inj X)$. But this is precisely $S(X)$; clearly $\langle E \rangle_{\mathrm{loc}}$ contains a compact generating set for $S(X)$ as it is localizing and hence closed under splitting idempotents so contains all compact objects of $S(X)$.
\end{proof}

\begin{defn}
We say that a complex of flat $\str_X$-modules $F$ is \emph{K-flat} provided $F \otimes_{\str_X}(-)$ sends quasi-isomorphisms to quasi-isomorphisms (or equivalently if $F\otimes_{\str_X} E$ is an exact complex for any exact complex of $\str_X$-modules $E$).
\end{defn}

\begin{lem}
There is a fully faithful, exact, coproduct preserving functor 
\begin{displaymath}
D(X) \to A_{\otimes}(\Inj X).
\end{displaymath}
\end{lem}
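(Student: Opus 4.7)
\medskip

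The plan is to realize $D(X)$ inside $A_{\otimes}(\Inj X)$ via K-flat resolutions of flat complexes, exploiting the description of the derived category due to Spaltenstein and Murfet. Concretely, I would use that every object of $D(X)$ admits a K-flat resolution by a complex of flat quasi-coherent sheaves, and that two such resolutions of the same object are comparable by K-flat quasi-isomorphisms. This furnishes a candidate functor $\Phi\colon D(X) \to A_{\otimes}(\Inj X)$ which sends $\mathcal{F}$ to (the pure-quasi-isomorphism class of) a chosen K-flat resolution $F \to \mathcal{F}$.

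First I would check that $F$ actually lives in $\ker(H)$. Since $E$ is a complex of acyclic complexes of injectives, and K-flatness of $F$ means that $F \otimes_{\str_X}(-)$ preserves acyclicity, the complex $F \otimes_{\str_X} E$ is acyclic, so $H(\Sigma^i F) = 0$ for all $i$; hence $F \in \ker(H)$ and thus defines an object of $A_{\otimes}(\Inj X)$. Next I would check that the class of $F$ does not depend on the choice of resolution and is functorial in $\mathcal{F}$. Both statements reduce to the following key technical ingredient (due to Neeman and Murfet, see the references cited earlier around pure acyclicity): \emph{an acyclic K-flat complex of flat quasi-coherent sheaves is pure acyclic}. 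Applied to the cone of a quasi-isomorphism between two K-flat resolutions, this says that such a quasi-isomorphism is a pure quasi-isomorphism, so the induced map in $A_{\otimes}(\Inj X)$ is an isomorphism. A parallel argument handles morphisms by lifting them to K-flat representatives and noting the lift is unique up to pure quasi-isomorphism.

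For full faithfulness I would argue that both $\Hom_{D(X)}(\mathcal{F},\mathcal{G})$ and $\Hom_{A_{\otimes}(\Inj X)}(\Phi\mathcal{F},\Phi\mathcal{G})$ are computed as homotopy classes of chain maps between fixed K-flat resolutions modulo the appropriate notion of weak equivalence, and that in both cases the weak equivalences in question coincide with pure quasi-isomorphisms of K-flat complexes. Exactness follows because the mapping cone of a map of K-flat complexes is again K-flat, so triangles in $D(X)$ lift to triangles of K-flat complexes which descend to triangles in $A_{\otimes}(\Inj X)$. Preservation of coproducts is immediate once K-flat resolutions can be chosen compatibly with coproducts, using that coproducts of K-flats (resp.\ pure acyclics) are K-flat (resp.\ pure acyclic).

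The main obstacle is the pure-acyclicity of acyclic K-flat complexes of flat sheaves on a separated noetherian scheme; this is where one really uses the non-trivial results of Neeman and Murfet on flat complexes (and implicitly the existence of K-flat resolutions on $X$, which on a noetherian separated scheme follows from Spaltenstein's construction combined with local-to-global arguments in Murfet's thesis). Once this input is granted, everything else amounts to standard bookkeeping with resolutions.
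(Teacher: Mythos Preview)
Your approach is correct and essentially the same as the paper's: both construct the functor via K-flat resolutions and rely on the Neeman--Murfet result that acyclic K-flat complexes of flats are pure acyclic. The paper packages the verification more efficiently by first citing Murfet's thesis for a fully faithful, exact, coproduct preserving functor $D(X)\to N(\Flat X)$ and then observing that its image lands in the full subcategory $A_{\otimes}(\Inj X)\subseteq N(\Flat X)$, from which all the desired properties follow at once.
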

\begin{proof}
There is, by the proof of Theorem 5.5 of \cite{MurfetThesis}, a fully faithful, exact, coproduct preserving functor $D(X)\to N(\Flat X)$ given by taking K-flat resolutions and inducing an equivalence
\begin{displaymath}
D(X) \iso {}^{\perp}N_{\mathrm{ac}}(\Flat X).
\end{displaymath}
This functor given by taking resolutions factors via $A_\otimes(\Inj X)$ since K-flat complexes send acyclics to acyclics under the tensor product.
\end{proof}

Taking K-flat resolutions and then tensoring gives the desired action
\begin{displaymath}
(-) \odot (-) \colon D(X) \times S(X) \to S(X)
\end{displaymath}
 by an easy argument: K-flat resolutions are well behaved with respect to the tensor product so the necessary compatibilities follow from those of the tensor product of complexes.

\begin{rem}\label{rem_boundedabove}
Recall that every complex in $K^{-}(\Flat X)$, the homotopy category of bounded above complexes of flat $\str_X$-modules, is K-flat. 
 Thus when acting by the subcategory $K^{-}(\Flat X)$ there is an equality $\odot = \otimes_{\str_X}$.
\end{rem}

The tensor triangulated category $(D(X),\otimes,\str_X)$ is rigidly-compactly generated (where by $\otimes$ we of course mean the left derived tensor product). Thus we can apply all of the machinery we have developed for actions of rigidly-compactly generated triangulated categories. Recall from \cite{Thomclass} that $\Spc D(X)^c = \Spc D^{\mathrm{perf}}(X) \iso X$; henceforth we will identify these spaces. Via this identification we can associate to every specialization closed subset $\mathcal{V}$ of $X$ a localization sequence of submodules
\begin{displaymath}
\xymatrix{
\mathit{\Gamma}_\mathcal{V}S(X) \ar[r]<0.5ex> \ar@{<-}[r]<-0.5ex> & S(X) \ar[r]<0.5ex> \ar@{<-}[r]<-0.5ex>  & L_\mathcal{V}S(X),
}
\end{displaymath}
where $\mathit{\Gamma}_\mathcal{V}S(X)$ is generated by objects compact in $S(R)$, by Corollary A.4.11 and Lemma A.4.4.  Since $X$ is noetherian we get for every $x\in X$ objects $\mathit{\Gamma}_x\str_X$ which allow us to define supports on $S(X)$ with values in $X$; we will denote the support of an object $A$ of $S(X)$ simply by $\supp A$ as the action giving rise to this support will always be clear. 
We also wish to note that by Lemma A.4.6 the action restricts to the level of compact objects 
\begin{displaymath}
D^{\mathrm{perf}}(X) \times S(X)^{\mathrm{c}} \stackrel{\odot}{\to} S(X)^{\mathrm{c}}.
\end{displaymath}
and as the category $D(X)$ has a model the local-to-global principle (Theorem \ref{thm_general_ltg}) holds.

\subsection{Some first observations in the affine case}\label{ssec_affineactions}
Now let us restrict to the case $X = \Spec R$ for a noetherian ring $R$ and make a few simple observations about the $D(R)$ action. It is possible to give an explicit description of the Rickard idempotents associated to certain specialization closed subsets of $\Spec R$.  First we fix notation for the relevant complexes of $R$-modules.

\begin{defn}\label{notation_koszul}
Given an element $f\in R$ we define the \emph{stable Koszul complex} $K_\infty(f)$ to be the complex concentrated in degrees 0 and 1
\begin{displaymath}
\cdots \to 0 \to R \to R_f \to 0 \to \cdots
\end{displaymath}
where the only non-zero morphism is the canonical map to the localization. Given a sequence of elements $\mathbf{f} = \{f_1,\ldots,f_n\}$ of $R$ we set
\begin{displaymath}
K_\infty(\mathbf{f}) = K_\infty(f_1) \otimes \cdots \otimes K_\infty(f_n).
\end{displaymath}
We define the \emph{\^Cech complex} of $\mathbf{f}$ to be the suspension of the kernel of the canonical morphism $K(\mathbf{f}) \to R$. This is a degreewise split epimorphism and so we get a triangle in $K(A)$
\begin{displaymath}
K_\infty(\mathbf{f}) \to R \to \check{C}(\mathbf{f}) \to \S K_\infty(\mathbf{f}).
\end{displaymath}
Explicitly we have
\begin{displaymath}
\check{C}(\mathbf{f})^t = \bigoplus_{i_0<\cdots<i_t}R_{f_{i_0}\cdots f_{i_t}}
\end{displaymath}
for $0\leq t \leq n-1$ and $K_\infty(\mathbf{f})$ is degreewise the same complex desuspended and with $R$ in degree 0. For an ideal $I$ of $R$ we define $K(I)$ and $\check{C}(I)$ by choosing a set of generators for $I$; the complex obtained is independent of the choice of generators up to quasi-isomorphism in $D(R)$ and hence up to pure quasi-isomorphism in $A_\otimes(\Inj R)$. We note that these complexes are K-flat.
\end{defn}

Using these complexes we can give the following well known explicit descriptions of the Rickard idempotents corresponding to some specialization closed subsets.

\begin{prop}\label{prop_explicit_functors}
For an ideal $I\cie R$ and $\mathfrak{p}\in \Spec R$ a prime ideal there are natural isomorphisms in $D(R)$:
\begin{itemize}
\item[$(1)$] $\mathit{\Gamma}_{\mathcal{V}(I)}R \iso K_{\infty}(I)$;
\item[$(2)$] $L_{\mathcal{V}(I)}R \iso \check{C}(I)$;
\item[$(3)$] $L_{\mathcal{Z}(\mathfrak{p})}R \iso R_\mathfrak{p}$.
\end{itemize}
In particular the objects $\mathit{\Gamma}_\mathfrak{p}R = \mathit{\Gamma}_{\mathcal{V}(\mathfrak{p})}R\otimes L_{\mathcal{Z}(\mathfrak{p})}R$ giving rise to supports on $D(R)$ and $S(R)$ are naturally isomorphic to $K_{\infty}(\mathfrak{p})\otimes R_\mathfrak{p}$.
\end{prop}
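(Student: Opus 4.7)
The plan is to exploit the characterization of Rickard idempotents by their associated smashing subcategory: given a triangle $A \to R \to B$ in $D(R)$, one has $A \cong \mathit{\Gamma}_{\mathcal{V}}R$ and $B \cong L_{\mathcal{V}}R$ as soon as $A$ lies in $D(R)_{\mathcal{V}}$ and $B$ is right-orthogonal to $D(R)_{\mathcal{V}}$. For (1) and (2) I would therefore take the triangle $K_\infty(I) \to R \to \check{C}(I)$ supplied by Definition~\ref{notation_koszul} and verify these two conditions with $\mathcal{V} = \mathcal{V}(I)$.

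To place $K_\infty(I)$ in $D(R)_{\mathcal{V}(I)}$, fix generators $\mathbf{f} = (f_1, \ldots, f_k)$ of $I$. For a single element one has the standard identification $K_\infty(f) \simeq \hocolim_n K(f^n)$, where the transition map $K(f^n) \to K(f^{n+1})$ is the identity in degree $0$ and multiplication by $f$ in degree $1$; tensoring across the $f_i$ yields $K_\infty(I) \simeq \hocolim_n K(f_1^n, \ldots, f_k^n)$. Each $K(\mathbf{f}^n)$ is perfect with cohomology killed by a power of $I$, so its Balmer support lies in $\mathcal{V}(I)$, placing $K_\infty(I)$ in the localizing subcategory $D(R)_{\mathcal{V}(I)}$.

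For the orthogonality of $\check{C}(I)$, Thomason's classification identifies $K(\mathbf{f})$ as a single compact generator of $D(R)_{\mathcal{V}(I)}$ (its Balmer support is precisely $\mathcal{V}(I)$), so it suffices to check $K(\mathbf{f}) \otimes \check{C}(I) = 0$. Each term of $\check{C}(I)$ is a localization $R_{f_{i_0}\cdots f_{i_t}}$ at a product that inverts some $f_j$, and the factor $K(f_j)$ of $K(\mathbf{f})$ tensored with such a localization has invertible differential, hence is contractible. A successive extension argument on the bounded complex $\check{C}(I)$ then forces the total complex $K(\mathbf{f}) \otimes \check{C}(I)$ to vanish.

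For (3) I would verify directly that the canonical map $R \to R_\mathfrak{p}$ realises the Rickard localisation at $\mathcal{Z}(\mathfrak{p})$. Any perfect $C$ with support in $\mathcal{Z}(\mathfrak{p})$ has $C_\mathfrak{p} = 0$ in $D(R_\mathfrak{p})$, so $\Hom_{D(R)}(C, R_\mathfrak{p}) = \Hom_{D(R_\mathfrak{p})}(C_\mathfrak{p}, R_\mathfrak{p}) = 0$, placing $R_\mathfrak{p}$ in $D(R)_{\mathcal{Z}(\mathfrak{p})}^\perp$. Since $R_\mathfrak{p}$ is the filtered colimit of the $R$-modules $R_g$ for $g\notin\mathfrak{p}$, which computes $\hocolim_{g \notin \mathfrak{p}} R_g$ in $D(R)$, the fibre of $R \to R_\mathfrak{p}$ is $\hocolim_{g \notin \mathfrak{p}} K_\infty(g)$; by (1) each $K_\infty(g)$ lies in $D(R)_{\mathcal{V}(g)} \subseteq D(R)_{\mathcal{Z}(\mathfrak{p})}$, the inclusion holding because $g \notin \mathfrak{p}$ forces $\mathcal{V}(g) \subseteq \mathcal{Z}(\mathfrak{p})$, and so does the hocolim. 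Uniqueness of the Rickard triangle then gives $R_\mathfrak{p} \cong L_{\mathcal{Z}(\mathfrak{p})}R$. The main obstacle I anticipate is the orthogonality step in (2): identifying $K(\mathbf{f})$ as a compact generator of $D(R)_{\mathcal{V}(I)}$ via Thomason and then carrying through the bounded-bicomplex calculation cleanly. The remaining steps are largely formal consequences of the local-to-global machinery already in place.
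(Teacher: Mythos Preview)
Your argument is correct and considerably more explicit than what the paper does. The paper's own proof simply defers: for (1) and (2) it cites Hartshorne's \emph{Local Cohomology} and Greenlees' Lemma~5.8, and for (3) it observes in one line that the essential image of $D(R_\mathfrak{p})\hookrightarrow D(R)$ is exactly the full subcategory of complexes with homological support in $\mathcal{U}(\mathfrak{p})$, which identifies the Verdier quotient (hence the Bousfield localization) with $D(R_\mathfrak{p})$.

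Your route---verifying directly that the triangle $K_\infty(I)\to R\to\check{C}(I)$ has its left term in $D(R)_{\mathcal{V}(I)}$ and its right term in the orthogonal, then invoking uniqueness of the Rickard triangle---is the standard way one would actually \emph{prove} the cited statements, so there is no real divergence in spirit. Two small remarks on the step you flag as the obstacle. First, passing from $K(\mathbf{f})\otimes\check{C}(I)=0$ to $\check{C}(I)\in D(R)_{\mathcal{V}(I)}^\perp$ uses rigidity: for compact $c$ one has $\Hom(c,X)\cong H^*(c^\vee\otimes X)$, and $K(\mathbf{f})^\vee$ is a shift of $K(\mathbf{f})$, so tensor-vanishing and Hom-vanishing coincide here. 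Second, you do not need the full bicomplex bookkeeping: since $\check{C}(I)$ is a finite iterated extension of shifts of the modules $R_{f_{i_0}\cdots f_{i_t}}$, and $K(\mathbf{f})\otimes R_{f_{i_0}\cdots f_{i_t}}=0$ because the factor $K(f_{i_0})$ becomes contractible after inverting $f_{i_0}$, the vanishing of the whole tensor product is immediate by induction on the filtration. The argument for (3) via $\hocolim_{g\notin\mathfrak{p}}K_\infty(g)$ is fine; the paper's one-liner is shorter only because it appeals to the well-known identification of $D(R_\mathfrak{p})$ inside $D(R)$ rather than reassembling the fibre.
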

\begin{proof}
Statements (1) and (2) are already essentially present in \cite{HartshorneLC}; in the form stated here they can be found as special cases of \cite{GreenleesTate} Lemma 5.8. For the third statement simply observe that the full subcategory of complexes with homological support in $\mathcal{U}(\mathfrak{p})$ is the essential image of the inclusion of $D(R_\mathfrak{p})$.
\end{proof}

We are now in a position to obtain, very cheaply, a couple of results about the singularity category and the action of $D(R)$ on it. We first observe that all localizing subcategories of $S(R)$ are $D(R)$-submodules.

\begin{lem}\label{lem_locpreserving}
Every localizing subcategory of $S(R)$ is stable under the action of $D(R)$.
\end{lem}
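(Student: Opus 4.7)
The plan is very short: use that the action of $D(R)$ on $S(R)$ is exact and coproduct-preserving in each variable, combined with the fact that $R$ is a compact generator of $D(R)$.

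Fix a localizing subcategory $\mathcal{L} \cie S(R)$ and any object $A\in \mathcal{L}$. I would consider the full subcategory
\begin{displaymath}
\mathcal{C}_A = \{X\in D(R) \; \vert \; X\odot A \in \mathcal{L}\} \cie D(R).
\end{displaymath}
Since $\odot$ is exact and coproduct-preserving in the first variable, the functor $(-)\odot A\colon D(R)\to S(R)$ is a coproduct-preserving exact functor, and its preimage of the localizing subcategory $\mathcal{L}$ is again a localizing subcategory. Hence $\mathcal{C}_A$ is a localizing subcategory of $D(R)$.

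Using the unit isomorphism $l\colon R\odot(-)\stackrel{\sim}{\to}\id_{S(R)}$ that is part of the data of the action, one has $R\odot A\iso A \in \mathcal{L}$, so $R\in \mathcal{C}_A$. Now $R$ is a compact generator of $D(R)$, so the smallest localizing subcategory of $D(R)$ containing $R$ is $D(R)$ itself. Therefore $\mathcal{C}_A = D(R)$, which is to say $X\odot A\in \mathcal{L}$ for every $X\in D(R)$. Since $A\in \mathcal{L}$ was arbitrary, $\mathcal{L}$ is closed under the $D(R)$-action, i.e., it is a submodule.

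There is no real obstacle here; the only subtlety is invoking the unit law of the action to get $R\odot A\iso A$, after which the argument is forced by the definition of ``localizing'' together with the fact that $D(R)$ is generated, as a localizing subcategory, by the tensor unit $R$.
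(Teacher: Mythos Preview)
Your argument is correct and is essentially the same approach as the paper's: the paper simply invokes Lemma A.3.13 of \cite{StevensonActions} after observing that $D(R)$ is the smallest localizing subcategory containing the tensor unit $R$, and your proof is precisely an unpacking of that lemma in this special case.
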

\begin{proof}
As $D(R)$ is the smallest localizing subcategory containing the tensor unit $R$ Lemma A.3.13 applies.
\end{proof}

Using the explicit description in Proposition \ref{prop_explicit_functors} of certain Rickard idempotents in $D(R)$ we are able to give representatives for the objects resulting from their action on objects of $S(R)$.

\begin{prop}\label{prop_subcomplex}
For each object $A$ of $S(R)$ and ideal $I\cie R$ the complex $\mathit{\Gamma}_{\mathcal{V}(I)}A$ is homotopic to a complex whose degree $i$ piece is the summand of $A^i$ consisting of those indecomposable injectives corresponding to primes in $\mathcal{V}(I)$.
\end{prop}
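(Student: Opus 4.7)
My plan is to prove that $\mathit{\Gamma}_{\mathcal{V}(I)}A$ is homotopy equivalent, in $K(\Inj R)$, to the $I$-torsion subcomplex $A_{\mathcal{V}(I)}\cie A$, where $A_{\mathcal{V}(I)}^i$ denotes the sum of those indecomposable injective summands of $A^i$ attached to primes in $\mathcal{V}(I)$.  The starting observation is that every element of $E(R/\mathfrak{p})$ is annihilated by a power of $\mathfrak{p}$, so the $I$-torsion submodule of any injective is precisely the sum of the $E(R/\mathfrak{p})$ with $I\cie \mathfrak{p}$; $R$-linearity of $d_A$ then makes $A_{\mathcal{V}(I)}$ a genuine subcomplex whose degree-$i$ term agrees with the description in the statement.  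By Proposition \ref{prop_explicit_functors}(1) together with Remark \ref{rem_boundedabove}, $\mathit{\Gamma}_{\mathcal{V}(I)}A \iso K_\infty(I)\otimes A$ since $K_\infty(I)$ is a bounded complex of flats, so it is enough to produce a homotopy equivalence $\alpha\colon A_{\mathcal{V}(I)}\to K_\infty(I)\otimes A$.

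I will take $\alpha$ to be the inclusion $A_{\mathcal{V}(I)} \hookrightarrow A = R\otimes A$ into the degree-zero column of the total complex $K_\infty(I)\otimes A$.  The only contribution to the total differential beyond $d_A$ sends $x\otimes 1$ to the tuple of localizations $(x_{f_i})_i$, which vanish for $x\in A_{\mathcal{V}(I)}$ because each $f_i\in I$ annihilates $x$ up to a power; hence $\alpha$ is a chain map.

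The main step is to show $\alpha$ is a homotopy equivalence in $K(\Inj R)$.  Both $A\mapsto A_{\mathcal{V}(I)}$ and $A\mapsto K_\infty(I)\otimes A$ are coproduct-preserving triangulated endofunctors of $K(\Inj R)$: the first picks out a direct summand termwise, and the second lands in $\Inj R$ because over a noetherian ring localizations of injectives are injective (each $E(R/\mathfrak{p})_f$ is either $E(R/\mathfrak{p})$ or zero).  The transformation $\alpha$ is strictly natural in $A$, and both functors descend to $K(\Inj R)$.  On an indecomposable $E(R/\mathfrak{p})$ placed in degree zero, $\alpha$ is a homotopy equivalence by direct computation: if $\mathfrak{p}\in\mathcal{V}(I)$ then every $E(R/\mathfrak{p})_{f_i}$ vanishes, $K_\infty(I)\otimes E(R/\mathfrak{p})=E(R/\mathfrak{p})$ is concentrated in degree zero, and $\alpha$ is the identity; if $\mathfrak{p}\notin\mathcal{V}(I)$, pick any $f\in I\setminus\mathfrak{p}$, so multiplication by $f$ is invertible on $E(R/\mathfrak{p})$ and $K_\infty(f)\otimes E(R/\mathfrak{p})=[E(R/\mathfrak{p})\xrightarrow{\sim} E(R/\mathfrak{p})]$ is contractible, whence $K_\infty(I)\otimes E(R/\mathfrak{p})$ is contractible and $\alpha$ connects two zero objects.

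To finish, the class of $A\in K(\Inj R)$ for which $\alpha$ is a homotopy equivalence is triangulated and closed under coproducts.  By Theorem \ref{thm_recol}(2) a compact generating family for $K(\Inj R)$ is given by K-injective resolutions of objects of $D^b(\Coh R)$, which are bounded-below complexes of injectives; each such complex is the homotopy colimit of its brutal truncations, each truncation is built from its terms by finitely many triangles, and each term is a direct sum of indecomposable injectives.  Thus indecomposable injectives generate $K(\Inj R)$ as a localizing subcategory, and $\alpha$ is a homotopy equivalence for every $A$.  The main obstacle is this final natural-equivalence step, which crucially uses both the noetherianness of $R$ and the fact that the compact generators are assembled from indecomposable injectives; the remaining parts are essentially bookkeeping.
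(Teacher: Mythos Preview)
Your argument is correct and establishes the result, but it takes a genuinely different route from the paper's proof.  The paper proceeds by induction on the number of generators of $I$: after reducing to $I=(f)$, it observes that the canonical map $A\to R_f\otimes_R A$ is a degreewise split epimorphism (since $E(R/\mathfrak{p})_f$ is either $E(R/\mathfrak{p})$ or $0$), and that its termwise kernel, which is exactly the desired summand, sits in the localization triangle and hence computes $K_\infty(f)\otimes_R A$ up to homotopy.  Your approach instead builds a natural transformation $\alpha\colon A_{\mathcal{V}(I)}\to K_\infty(I)\otimes A$ between two coproduct-preserving triangulated endofunctors of $K(\Inj R)$, verifies it on indecomposable injectives, and then invokes a generation argument.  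The paper's proof is shorter and entirely elementary; yours is more categorical, proves the statement uniformly for all of $K(\Inj R)$ rather than just $S(R)$, and makes the comparison map explicit.  One small point: when $\mathfrak{p}\notin\mathcal{V}(I)$ you should choose $f$ to be one of the fixed generators $f_j\notin\mathfrak{p}$ (which exists since $I\not\subseteq\mathfrak{p}$), so that $K_\infty(I)$ literally has $K_\infty(f_j)$ as a tensor factor and contractibility follows; as written, ``pick any $f\in I\setminus\mathfrak{p}$'' leaves a tiny gap since $K_\infty(I)$ is tied to the chosen generators.
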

\begin{proof}
Let us fix an ideal $I$ and choose generators $I=(f_1,\ldots,f_n)$. By Proposition \ref{prop_explicit_functors} we have
\begin{displaymath}
\mathit{\Gamma}_{\mathcal{V}(I)}A \iso K_\infty(f_1,\ldots,f_n) \odot A \iso K_\infty(f_n)\odot(K_\infty(f_{n-1}) \odot \cdots (K_\infty(f_1)\odot A)\cdots ).
\end{displaymath}
We can thus reduce to the case that $I = (f)$. By Proposition \ref{prop_explicit_functors} again we have 
\begin{displaymath}
L_{\mathcal{V}((f))}A \iso \check{C}(f)\otimes_R A \iso R_f \otimes_R A
\end{displaymath}
where the last isomorphism uses the explicit description of the \^Cech complex given in Definition \ref{notation_koszul}.  The canonical map $A\to R_f\otimes_R A$ is a degreewise split epimorphism in the category of chain complexes which fits into the localization triangle
\begin{displaymath}
K_\infty(f)\otimes_R A \to A \to R_f \otimes_R A \to \S K_{\infty}(f)\otimes_R A
\end{displaymath}
in $S(R)$.  So up to homotopy $K_\infty(f)\odot A$ is the kernel of this split epimorphism. The kernel in each degree is precisely the summand consisting of those indecomposable injectives corresponding to primes in $\mathcal{V}(I)$ which proves the claim.
\end{proof}

\section{Some commutative algebra}\label{sec_prelims_alg}
We give here a brief summary of some commutative algebra definitions and results concerning Gorenstein homological algebra and local complete intersections. Our main reference for Gorenstein homological algebra is \cite{EnochsJenda}, particularly Chapters 10 and 11. 

\subsection{Gorenstein Homological Algebra}\label{ssec_GInj}

Let us denote by $R$ a noetherian ring.

\begin{defn}\label{defn_ginj}
An $R$-module $G$ is \emph{Gorenstein injective} if there exists an exact sequence
\begin{displaymath}
E = \cdots \to E_1 \to E_0 \to E^0 \to E^1 \to \cdots
\end{displaymath}
of injective $R$-modules such that $\Hom(I,E)$ is exact for every injective module $I$ and $G = \ker( E^0 \to E^1) = Z^0E$ is the zeroth syzygy of $E$. We say that the complex $E$ is \emph{totally acyclic} and call it a \emph{complete resolution} of $G$. We denote the full subcategory of Gorenstein injective $R$-modules by $\GInj R$.
\end{defn}

\begin{defn}
We denote by $K_{\mathrm{tac}}(\Inj R)$ the homotopy category of totally acyclic complexes of injective $R$-modules. It is a full triangulated subcategory of $S(R) = K_{\mathrm{ac}}(\Inj R)$ and the two coincide when $R$ is Gorenstein by \cite{KrStab} Proposition 7.13.
\end{defn}

We now recall the notion of envelopes with respect to a class of $R$-modules.

\begin{defn}
Let $R$ be a ring and fix some class $\mathcal{G}$ of $R$-modules. A $\mathcal{G}$-\emph{preenvelope} of an $R$-module $M$ is a pair $(G,f)$ where $G$ is a module in $\mathcal{G}$ and $f\colon M \to G$ is a morphism such that for any $G' \in \mathcal{G}$ and $f'\in \Hom(M,G')$ there exists a morphism making the triangle
\begin{displaymath}
\xymatrix{
M \ar[r]^f \ar[dr]_{f'} & G \ar@{-->}[d]^{\exists} \\
& G'
}
\end{displaymath}
commute. We say that a preenvelope $(G,f)$ is a $\mathcal{G}$-\emph{envelope} of $M$ if, when we consider the diagram
\begin{displaymath}
\xymatrix{
M \ar[r]^f \ar[dr]_{f} & G \ar@{-->}[d]^{\exists} \\
& G
}
\end{displaymath}
every choice of morphism $G\to G$ making the triangle commute is an automorphism of $G$. When speaking of envelopes we shall omit the morphism from the notation and refer to $G$ as the $\mathcal{G}$-envelope of $M$.
\end{defn}

It is clear that $\mathcal{G}$-envelopes, when they exist, are unique up to isomorphism. In the case $\mathcal{G} = \Inj R$, the class of injective $R$-modules, we see that the $\Inj R$-envelope of an $R$-module is precisely its injective envelope. In the case $\mathcal{G} = \GInj R$, the class of Gorenstein injective $R$-modules, we get the notion of \emph{Gorenstein injective envelope}. As every injective $R$-module is Gorenstein injective it can be shown that whenever $(G,f)$ is a Gorenstein injective envelope of $M$ the morphism $f$ is injective.

\begin{notation}
For an $R$-module $M$ we shall denote its Gorenstein injective envelope, if it exists, by $G_R(M)$.
\end{notation}

In many cases Gorenstein injective envelopes exist and have certain nice properties.

\begin{thm}[\cite{EnochsJenda} 11.3.2, 11.3.3]\label{thm_env_exist}
If $R$ is Gorenstein of Krull dimension $n$, then every $R$-module $M$ admits a Gorenstein injective envelope $M\to G$ such that if
\begin{displaymath}
0 \to M \to G \to L \to 0
\end{displaymath}
is exact then $\idim_R L \leq n-1$ whenever $n \geq 1$. Furthermore, $\idim_R M < \infty$ if and only if $M\to G$ is an injective envelope.
\end{thm}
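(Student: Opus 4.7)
The plan is to establish the three assertions in sequence: first the existence of a Gorenstein injective preenvelope of $M$ with cokernel of bounded injective dimension, then the upgrade of this preenvelope to an envelope, and finally the characterization of when the envelope is classical. Throughout, the crucial input is that over an $n$-Gorenstein ring both $\pdim_R R\leq n$ and $\idim_R R\leq n$, which forces $\Ext^{>n}_R(-,-)=0$ on suitable classes of modules and makes the cotorsion pair $(\mathcal{X},\GInj R)$ complete, where $\mathcal{X}$ is the class of modules $X$ with $\Ext^1_R(X,G)=0$ for every $G\in \GInj R$.

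For existence of the short exact sequence with the injective dimension bound, I would begin by showing that $\mathcal{X}$ coincides with the class of modules of finite injective dimension, and that over an $n$-Gorenstein ring this dimension is automatically at most $n$. The key calculation is that if $\idim_R X<\infty$, then by dimension shifting against a complete resolution one finds $\Ext^{\geq 1}_R(X,G)=0$ for every Gorenstein injective $G$, and conversely if $\Ext^1_R(X,G)=0$ for all $G\in\GInj R$, applying this to the Gorenstein injective syzygies of $X$ produces the bound $\idim_R X\leq n$. Completeness of the cotorsion pair then yields an exact sequence
\begin{displaymath}
0\to M\to G\to L\to 0
\end{displaymath}
with $G\in\GInj R$ and $L\in\mathcal{X}$. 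Refining the construction, by taking an injective resolution of $M$ of length $\leq n$ and splicing backwards (using $\Ext$-vanishing to extend by injective modules and verify total acyclicity), the cosyzygy construction lets one sharpen the bound to $\idim_R L\leq n-1$ whenever $n\geq 1$.

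Upgrading the preenvelope to an envelope is where I expect the main technical obstacle to lie. Since $\GInj R$ is closed under arbitrary products (easily verified from the definition of a complete resolution, using that a product of totally acyclic complexes remains totally acyclic) and under direct summands, I would invoke Wakamatsu's lemma together with Xu's theorem on envelope existence: once $\GInj R$-preenvelopes with cokernels in $\mathcal{X}$ exist and $\GInj R$ is closed under direct summands and products, one can trim a preenvelope $M\to G$ by successively removing any direct summand $G'\subseteq G$ with $M\cap G'=0$, and the resulting stabilization $M\to G_R(M)$ is a genuine $\GInj R$-envelope. The bound $\idim_R L\leq n-1$ is preserved under this trimming because the new cokernel is a quotient of the old one by a Gorenstein injective summand, and short exact sequences with outer terms of finite injective dimension force the middle term to have finite injective dimension at most the maximum of the outer dimensions.

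For the final statement, the reverse implication is immediate: if $M\to G$ is an injective envelope, then the sequence $0\to M\to G\to L\to 0$ has $G$ injective and $\idim_R L\leq n-1$, so $\idim_R M\leq n$. For the forward implication, suppose $\idim_R M<\infty$ and let $M\to E$ be the ordinary injective envelope. Then $E/M$ has finite injective dimension, so $E/M\in\mathcal{X}$ and hence $\Ext^1_R(E/M,G')=0$ for every Gorenstein injective $G'$; this makes $M\to E$ a $\GInj R$-preenvelope. By the universal property of the envelope, $G_R(M)$ is a direct summand of $E$, and since the injective envelope is an essential extension no proper summand of $E$ can contain $M$, so $G_R(M)=E$ is itself an injective envelope.
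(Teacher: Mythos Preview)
The paper does not prove this theorem; it is quoted verbatim from \cite{EnochsJenda} (Theorems 11.3.2 and 11.3.3) as background in Section~\ref{ssec_GInj} and is used throughout only as a black box. There is therefore no proof in the paper to compare your proposal against.

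That said, your outline is broadly the standard cotorsion-theoretic argument one finds in Enochs--Jenda, and the overall architecture is sound: identify $({}^\perp\GInj R,\GInj R)$ with the pair (finite injective dimension, Gorenstein injectives), produce a special preenvelope, upgrade to an envelope, and read off the final characterization. Two places deserve tightening. First, the passage from preenvelope to envelope is not literally a matter of ``removing summands disjoint from $M$''; the actual argument requires a minimality/Zorn step or an appeal to a result guaranteeing envelopes from special preenvelopes, and you should be explicit about which theorem you are invoking and why its hypotheses are met here. Second, in the final paragraph your summand argument is roundabout: once you know $M\to E$ is a special $\GInj$-preenvelope and $M\subseteq E$ is essential, any endomorphism of $E$ fixing $M$ is injective (essentiality), hence split injective ($E$ injective), hence an isomorphism (essentiality again); so $M\to E$ is already a $\GInj$-envelope and uniqueness finishes it. Your claim that ``$G_R(M)$ is a direct summand of $E$ by the universal property of the envelope'' is not the right direction of the universal property and should be replaced by this direct verification.
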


\begin{prop}[\cite{EnochsJenda} 10.4.25, 11.3.9]\label{prop_ginj_dsums}
Let $R$ be a Gorenstein ring of finite Krull dimension. Then the class of Gorenstein injective $R$-modules is closed under small coproducts and summands and if $M_i \to G_i$ is a Gorenstein injective envelope of the $R$-module $M_i$ for each $i\in I$, then
\begin{displaymath}
\oplus_i M_i \to \oplus_i G_i
\end{displaymath}
is a Gorenstein injective envelope.
\end{prop}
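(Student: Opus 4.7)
The plan is to address the three claims in order, using Theorem \ref{thm_env_exist} and Bass's theorem (coproducts of injectives are injective over a noetherian ring) as the principal inputs. For closure of $\GInj R$ under small coproducts, given $G_i \in \GInj R$ with complete resolutions $E_i$, the termwise coproduct $E = \bigoplus_i E_i$ is an exact complex whose terms $\bigoplus_i E_i^k$ remain injective by noetherianness, and $Z^0 E = \bigoplus_i G_i$. The key technical point is to verify that $\Hom_R(J, E)$ remains exact for every injective $J$. I would do this by first establishing the Ext-vanishing characterization of Gorenstein injectivity over an $n$-Gorenstein ring, namely that $G \in \GInj R$ iff $\Ext^{\geq 1}_R(L, G) = 0$ for every $L$ of finite injective dimension, and then showing it is stable under coproducts. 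The characterization itself follows from Theorem \ref{thm_env_exist} by iteratively splicing Gorenstein injective envelopes (which reduce to ordinary injective envelopes whenever the target has finite injective dimension) into a two-sided resolution, using the finite Krull dimension bound.

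Closure under summands is then immediate from the characterization, since $\Ext$ splits over finite direct sums. For the envelope statement I would upgrade Theorem \ref{thm_env_exist} to a recognition criterion: a monomorphism $M \hookrightarrow G'$ with $G' \in \GInj R$ is a Gorenstein injective envelope precisely when $\idim(G'/M) \leq n-1$. The non-trivial direction of this criterion uses uniqueness of envelopes together with a splitting argument in the exact sequence $0 \to G_R(M)/M \to G'/M \to G'/G_R(M) \to 0$, exploiting the Gorenstein injectivity of $G_R(M)$ to extract $G_R(M)$ as a direct summand of $G'$.

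With the criterion in hand, the envelope claim is straightforward: given envelopes $M_i \hookrightarrow G_i$ with cokernels $L_i$ of injective dimension $\leq n-1$, the coproduct short exact sequence $0 \to \bigoplus M_i \to \bigoplus G_i \to \bigoplus L_i \to 0$ has middle term in $\GInj R$ by the first part, and $\idim(\bigoplus L_i) \leq n-1$ by noetherianness of $R$. The recognition criterion identifies $\bigoplus M_i \hookrightarrow \bigoplus G_i$ as a Gorenstein injective envelope.

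The main obstacle I anticipate is the coproduct–Ext interaction underlying the coproduct closure: $\Ext^k_R(L,-)$ does not commute with arbitrary coproducts when $L$ is not finitely generated, so bare Ext-vanishing for each $G_i$ cannot be pushed term-by-term through $\bigoplus_i G_i$. Overcoming this requires the finite Krull dimension hypothesis to produce bounded injective resolutions of every test module $L$, together with a careful $\Hom$-level argument using that every injective $R$-module decomposes as a coproduct of indecomposable injectives $E(R/\mathfrak{p})$ whose interaction with coproducts is controllable. This is the step where the $n$-Gorenstein hypothesis is used in an essential, non-formal way.
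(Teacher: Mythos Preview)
The paper does not prove this proposition; it is quoted from \cite{EnochsJenda} without argument, so there is no proof to compare against. Your outline is largely reasonable, but it contains one unnecessary detour and one genuine gap.

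The detour concerns closure under coproducts. You worry about pushing Ext-vanishing through an infinite direct sum, and you are right that $\Ext^k(J,-)$ need not commute with coproducts for $J$ injective. But this obstacle disappears once you use the fact, recorded in the paper just after Proposition~\ref{prop_ginj_sing_equiv}, that over a Gorenstein ring of finite Krull dimension every acyclic complex of injectives is already totally acyclic (\cite{KrStab} Proposition~7.13; the proof is a dimension shift using that any injective $J$ has $\pdim J\le n$, so $\Ext^1(J,Z^mC)\cong\Ext^{n+1}(J,Z^{m-n}C)=0$). Hence $\bigoplus_i E_i$ is acyclic with injective terms by Bass, therefore totally acyclic, and $\bigoplus_i G_i$ is its zeroth syzygy. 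No Ext--coproduct interaction is needed, and the ``controllable interaction with $E(R/\mathfrak{p})$'' you anticipate never has to be made precise.

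The gap is in your recognition criterion for envelopes. The implication ``$G'\in\GInj R$ and $\idim(G'/M)\le n-1$ $\Rightarrow$ $M\hookrightarrow G'$ is an envelope'' is false. Take any $M$ with Gorenstein injective envelope $G_R(M)$, any nonzero injective $I$, and set $G'=G_R(M)\oplus I$ with $M$ included in the first summand. Then $G'$ is Gorenstein injective and $G'/M\cong (G_R(M)/M)\oplus I$ has injective dimension $\le n-1$, yet the projection killing $I$ fixes $M$ and is not an automorphism. Your splitting argument does correctly extract $G_R(M)$ as a summand of $G'$, but it does not force the complement to vanish; what it actually shows is that any special Gorenstein injective preenvelope decomposes as $G_R(M)\oplus C$ with $C$ injective. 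To finish the coproduct-of-envelopes statement you must still argue that this injective complement is zero when $G'=\bigoplus_i G_i$, and for that you have to use that each $M_i\to G_i$ is an envelope, not merely a special preenvelope. That step is the actual content of \cite{EnochsJenda}~11.3.9 and is missing from your sketch.
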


As mentioned above we denote by $\GInj R$ the category of Gorenstein injective $R$-modules. It is a Frobenius category i.e., it is an exact category with enough projectives and enough injectives and the projective and injective objects coincide. The exact structure comes from taking the exact sequences to be those exact sequences of $R$-modules whose terms are Gorenstein injective. 

The \emph{stable category} of $\GInj R$ denoted $\underline{\GInj}R$ is the category whose objects are those of $\GInj R$ and whose hom-sets are
\begin{align*}
\underline{\Hom}(G,H) &:= \Hom_{\underline{\GInj} R}(G,H) \\
&\;= \Hom_R(G,H)/\{f \; \vert \; f \; \text{factors via an injective module}\}.
\end{align*}
This category is triangulated with suspension functor given by taking syzygies of complete resolutions and triangles coming from short exact sequences (see for example \cite{Happeltricat} Chapter 1 for details).

The following result shows that we can study part of the singularity category $S(R)$ by working with Gorenstein injective $R$-modules.

\begin{prop}\label{prop_ginj_sing_equiv}
For a noetherian ring $R$ there is an equivalence
\begin{displaymath}
\xymatrix{
K_{\mathrm{tac}}(\Inj R) \ar[r]<0.5ex>^(0.6){Z^0} \ar@{<-}[r]<-0.5ex>_(0.6){c} & \GInj \underline{R}
}
\end{displaymath}
where $Z^0$ takes the zeroth syzygy of a complex of injectives and $c$ sends a Gorenstein injective $R$-module to a complete resolution.
\end{prop}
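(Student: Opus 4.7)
The plan is to check that $Z^0$ descends to a well-defined functor into $\underline{\GInj}\,R$, to construct $c$ as a quasi-inverse via chosen complete resolutions, and then verify that the two compositions are (naturally isomorphic to) the identity.

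First, $Z^0$ lands in $\GInj R$ on objects essentially by definition: a totally acyclic complex of injectives is, tautologically, a complete resolution of its zeroth syzygy. For morphisms I would observe that if $f\colon E\to F$ is null-homotopic, say $f=sd+ds$ with $s^{i}\colon E^{i}\to F^{i-1}$, then on $Z^0(E)=\ker(d_E^{0})$ the first summand of $f^{0}=s^{1}d_E^{0}+d_F^{-1}s^{0}$ vanishes, so $Z^0(f)=d_F^{-1}\circ s^{0}|_{Z^0(E)}$ factors through the injective $R$-module $F^{-1}$. Since the projective-injective objects of the Frobenius category $\GInj R$ are exactly the usual injective $R$-modules, $Z^0(f)$ becomes zero in $\underline{\GInj}\,R$, so $Z^{0}$ descends to the stable category.

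To build $c$, I would pick (by choice) for each $G\in\GInj R$ a complete resolution $c(G)$ with $Z^0(c(G))=G$. Given $\varphi\colon G\to H$, I would construct a lift to a chain map $c(G)\to c(H)$ by induction: in non-negative degrees one uses the usual comparison theorem between an injective coresolution of $G$ and the right half of $c(H)$, while in negative degrees one inductively lifts a map $c(G)^{i}\to Z^{i+1}(c(H))$ along the surjection $c(H)^{i}\twoheadrightarrow Z^{i+1}(c(H))$. Here the Hom-exactness clause of Definition~\ref{defn_ginj}, applied with $I=c(G)^{i}$ (which is injective), produces exactly the required lift. A parallel inductive argument using the same Hom-exactness shows that any two lifts of $\varphi$ differ by a null-homotopy, so passing to $K_{\mathrm{tac}}(\Inj R)$ yields a well-defined functor $c\colon\underline{\GInj}\,R\to K_{\mathrm{tac}}(\Inj R)$.

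The equality $Z^{0}\circ c=\Ident$ holds by construction. For $c\circ Z^{0}\cong\Ident$, note that for any $E\in K_{\mathrm{tac}}(\Inj R)$ both $E$ and $c(Z^{0}(E))$ are complete resolutions of the same module $Z^{0}(E)$; applying the lifting-and-uniqueness procedure to the identity of $Z^{0}(E)$ in both directions yields mutually inverse chain maps between them, and naturality of this comparison in $E$ gives the required natural isomorphism.

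The main obstacle is the uniqueness half of the morphism lifting, i.e.\ producing a contracting homotopy for a chain map $c(G)\to c(H)$ that vanishes on $Z^{0}$. On the non-negative side this is standard injective-resolution bookkeeping, but on the negative side it requires the Hom-exactness of $c(H)$ tested against the injectives $c(G)^{i}$, which is precisely the content of ``totally acyclic''. Once this is established, compatibility with the triangulated structures is routine, since both sides inherit triangles from short exact sequences of Gorenstein injectives and their resolutions.
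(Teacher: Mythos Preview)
Your argument is correct and follows exactly the standard approach the paper sketches (the paper explicitly calls the result standard and only outlines well-definedness of $Z^0$ and $c$, uniqueness of complete resolutions and lifts up to homotopy, and that the composites are naturally isomorphic to identities). One small point to make explicit: uniqueness of lifts up to homotopy only shows $c$ is well defined as a functor on $\GInj R$, not yet on $\underline{\GInj}\,R$; to descend you also need that a morphism factoring through an injective lifts to a null-homotopic chain map, which is immediate since a complete resolution of an injective module is contractible.
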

\begin{proof}
The result is standard so we only sketch the proof. The functor $c$ is well defined since complete resolutions are unique up to homotopy equivalence, exist by definition for every Gorenstein injective module, and morphisms of modules lift uniquely up to homotopy to morphisms of complete resolutions. The zeroth syzygy of any totally acyclic complex of injectives is by definition Gorenstein injective. It is clear that, up to injectives, the Gorenstein injective $R$-module obtained by applying $Z^0$ to a totally acyclic complex of injectives only depends on its homotopy class so that $Z^0$ is well defined.

It is easy to check that the requisite composites are naturally isomorphic to the corresponding identity functors.
\end{proof}

For $R$ Gorenstein every acyclic complex of injectives is totally acyclic by \cite{KrStab} Proposition 7.13 so there is an equivalence between $S(R)$ and $\underline{\GInj}R$. We thus obtain an action of $D(R)$ on $\underline{\GInj} R$ via this equivalence and we use the same notation to denote this action.

\begin{cor}\label{cor_envelope}
Let $R$ be a Gorenstein ring and $M$ an $R$-module. There is an isomorphism in $\underline{\GInj} R$
\begin{displaymath}
Z^0 I_\l Q_\r M \iso G_R(M).
\end{displaymath}
\end{cor}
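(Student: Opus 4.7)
The plan is to exhibit an isomorphism $I_\l Q_\r M \iso \tilde G$ in $S(R) = K_{\mathrm{tac}}(\Inj R)$, where $\tilde G$ is a complete resolution of $G_R(M)$; the desired conclusion then follows by applying the equivalence $Z^0\colon S(R) \to \underline{\GInj R}$ of Proposition~\ref{prop_ginj_sing_equiv}. I would begin by invoking Theorem~\ref{thm_env_exist} to obtain a short exact sequence $0 \to M \to G_R(M) \to L \to 0$ with $\idim_R L < \infty$ and choose a complete resolution $\tilde G$ with $Z^0 \tilde G = G_R(M)$. Its brutal truncation $\sigma^{\geq 0}\tilde G$ is then a bounded below injective resolution of $G_R(M)$ and hence represents $Q_\r G_R(M)$ in $K(\Inj R)$; lifting $M \hookrightarrow G_R(M)$ through injective resolutions produces a chain map $\varphi\colon Q_\r M \to \tilde G$ that factors as $Q_\r M \to \sigma^{\geq 0}\tilde G \hookrightarrow \tilde G$.

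The central task is to verify that $\varphi$ is, up to isomorphism, the unit of the adjunction $I_\l \dashv I$ at $Q_\r M$, which amounts to showing that the cone of $\varphi$ lies in $\ker I_\l = {}^{\perp} S(R) \cie K(\Inj R)$. The octahedral axiom applied to the above factorisation, combined with the triangle $Q_\r M \to Q_\r G_R(M) \to Q_\r L$ obtained from $Q_\r$ applied to the short exact sequence, realises this cone as fitting in a triangle with outer terms $Q_\r L$ and $\sigma^{<0}\tilde G$. Since $\ker I_\l$ is thick in $K(\Inj R)$, it suffices to show that both of these lie there. The complex $Q_\r L$ is represented by a bounded complex of injectives as $\idim_R L < \infty$, and $\sigma^{<0}\tilde G$ is by construction bounded above, so the entire problem reduces to the single claim that every bounded-above complex of injectives belongs to ${}^{\perp} S(R)$.

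This last claim is where the Gorenstein hypothesis enters essentially. For an injective $E$ (placed in any degree $n$) and any $Y \in S(R) = K_{\mathrm{tac}}(\Inj R)$, total acyclicity of $Y$ guarantees that $\Hom_R(E, Y)$ is exact, hence $\Hom_K(E[n], Y) = 0$; an induction on the number of nonzero terms using thickness of $\ker I_\l$ then handles arbitrary bounded complexes of injectives. For a bounded-above $X$, one writes $X = \colim_{k \to -\infty}\sigma^{\geq k}X$ as a filtered colimit of inclusions of bounded subcomplexes and invokes the Milnor exact sequence relating $\Hom_K(X, Y)$ to $\lim$ and $\lim^1$ of the groups $\Hom_K(\sigma^{\geq k}X, Y[n])$, all of which vanish by the bounded case. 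Once the claim is in hand, the resulting isomorphism $I_\l Q_\r M \iso \tilde G$ in $S(R)$ yields, on applying $Z^0$, the required isomorphism in $\underline{\GInj R}$. The principal obstacle is the bounded-above step of the key claim via the Milnor $\lim^1$ argument; everything else is a formal manipulation within the recollement.
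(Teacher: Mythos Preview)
Your proposal is correct but takes a more hands-on route than the paper.  The paper simply applies the composite functor $Z^0 I_\l Q_\r$ to the triangle $M \to G_R(M) \to L$ in $D(R)$ coming from the short exact sequence of Theorem~\ref{thm_env_exist}, observes that $L$ (having finite projective dimension) is perfect and is therefore annihilated by $I_\l Q_\r$, and then invokes Krause's \cite{KrStab} Corollary~7.14 to identify $Z^0 I_\l Q_\r G_R(M)$ with the image of $G_R(M)$ in $\underline{\GInj}R$.  Your argument instead builds an explicit chain map $Q_\r M \to \tilde G$ and checks that its cone lies in ${}^{\perp}S(R)$ by an octahedral decomposition together with a direct verification (using total acyclicity and a Milnor $\lim^1$ argument) that bounded-above complexes of injectives are left orthogonal to $S(R)$.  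In effect you are reproving inside the argument the relevant special cases of Krause's result rather than citing it; this buys self-containment at the cost of length, whereas the paper's version is a two-line application of the existing recollement machinery.
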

\begin{proof}
By Theorem \ref{thm_env_exist} there is a short exact sequence of $R$-modules
\begin{displaymath}
0 \to M \to G_R(M) \to L \to 0
\end{displaymath}
where $L$ has finite projective dimension. Considering this as a triangle in $D(R)$ we obtain a triangle in $\underline{\GInj}R$
\begin{displaymath}
Z^0I_\l Q_\r M \to Z^0 I_\l Q_\r G_R(M) \to 0 \to \S Z^0 I_\l Q_\r M
\end{displaymath}
where $L$ is sent to zero as it is a perfect complex when viewed as an object of $D(R)$. By \cite{KrStab} Corollary 7.14 the object $Z^0 I_\l Q_\r G_R(M)$ is naturally isomorphic to the image of $G_R(M)$ in $\underline{\GInj} R$ under the canonical projection which proves the claim.
\end{proof}

\subsection{Complete Intersections and Complexity}\label{ssec_cx}
We now give a very brief recollection of some facts about local complete intersections and growth rates of minimal free resolutions over local rings. First of all let us recall the definition of the rings which will be of most interest to us.

\begin{defn}\label{defn_CIring}
Let $(R,\mathfrak{m},k)$ be a noetherian local ring. We say $R$ is a \emph{complete intersection} if its $\mathfrak{m}$-adic completion $\hat{R}$ can be written as the quotient of a regular ring by a regular sequence. The minimal length of the regular sequence in such a presentation of $\hat{R}$ is the \emph{codimension} of $R$.

A not necessarily local ring $R$ is a locally complete intersection if $R_\mathfrak{p}$ is a complete intersection for each  $\mathfrak{p}\in \Spec R$.

If $R$ is a complete intersection of codimension $1$ we say that it is a \emph{hypersurface}. Similarly if $R$ is a complete intersection of codimension at most $1$ when localized at each prime ideal in $\Spec R$ we say that $R$ is \emph{locally a hypersurface}.
\end{defn}

\begin{rem}
Rings satisfying the conditions of the above definition are sometimes called \emph{abstract} complete intersections to differentiate them from those local rings which are quotients of regular rings by regular sequences without the need to complete. We use the term complete intersection as in the definition above. When we need to impose that $R$ itself is a quotient of a regular ring by a regular sequence we will make this clear.
\end{rem}

\begin{rem}
The property of being a complete intersection is stable under localization. Furthermore, the property of being a complete intersection is intrinsic (cf.\ \cite{MatsuRing} Theorem 21.2) and if $(R,\mathfrak{m},k)$ is a complete intersection then any presentation of $\hat{R}$ as a quotient of a regular local ring has kernel generated by a regular sequence.
\end{rem}

Let $(R,\mathfrak{m},k)$ be a local ring. Given a finitely generated $R$-module $M$ we denote by $\b_i(M)$ the $i$th \emph{Betti number} of $M$
\begin{displaymath}
\b_i(M) = \dim_k \Tor_i(M,k) = \dim_k \Ext^i(M,k).
\end{displaymath}

The asymptotic behaviour of the Betti numbers is expressed by the \emph{complexity} of $M$. For $M$ in $R\text{-}\module$ the \emph{complexity} of $M$, $\cx_R(M)$ (or just $\cx M$ if the ring is clear), is defined to be
\begin{displaymath}
\cx(M) = \inf\{c\in \nat \; \vert \; \text{there exists}\; a\in \real \; \text{such that} \; \b_n(M) \leq an^{c-1} \;\; \text{for} \; n\gg 0\}.
\end{displaymath}


By a result of Gulliksen (\cite{Gulliksen} Theorem 2.3) the complexity of the residue field $k$ detects whether or not $R$ is a complete intersection. If $R$ is a complete intersection then $\cx_R k$ is equal to the codimension of $R$. 

\section{Affine schemes}\label{sec_affine}
Recall from Definition \ref{defn_vis_sigmatau} that the action of $D(R)$ on $S(R)$ gives rise to order preserving assignments
\begin{displaymath}
\left\{ \begin{array}{c}
\text{subsets of}\; \Spec R 
\end{array} \right\}
\xymatrix{ \ar[r]<1ex>^\t \ar@{<-}[r]<-1ex>_\s &} \left\{
\begin{array}{c}
\text{localizing subcategories} \; \text{of} \; S(R) \\
\end{array} \right\} 
\end{displaymath}
where for a localizing subcategory $\mathcal{L}$ we set
\begin{displaymath}
\s(\mathcal{L}) = \supp \mathcal{L} = \{\mathfrak{p} \in \Spec R \; \vert \; \mathit{\Gamma}_\mathfrak{p}\mathcal{L} \neq 0\}
\end{displaymath}
and for a subset $W\cie \Spec R$ we set
\begin{displaymath}
\t(W) = \{A \in S(R) \; \vert \; \supp A \cie W\}.
\end{displaymath}
Here we have used Lemma \ref{lem_locpreserving} to replace submodules by localizing subcategories. As $D(R)$ satisfies the local-to-global principle one can say a little more.

\begin{prop}\label{prop_imtau}
Given a subset $W\cie \Spec R$ there is an equality of subcategories
\begin{displaymath}
\t(W) = \langle \mathit{\Gamma}_\mathfrak{p}S(R) \; \vert \; \mathfrak{p}\in W\rangle_\mathrm{loc}.
\end{displaymath}
\end{prop}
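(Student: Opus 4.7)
The plan is to prove the two inclusions separately. For the easy direction $\supseteq$, observe first that $\t(W)$ is itself a localizing subcategory of $S(R)$: coproducts, suspensions, summands, and distinguished triangles all preserve the support-containment condition, since for each $\mathfrak{p}$ the functor $\mathit{\Gamma}_\mathfrak{p}(-) \iso \mathit{\Gamma}_\mathfrak{p}\str_R \odot (-)$ is exact and coproduct preserving. Thus it suffices to show that each object in $\mathit{\Gamma}_\mathfrak{p}S(R)$ has support contained in $\{\mathfrak{p}\}\cie W$. This is immediate from the $\otimes$-orthogonality of the Rickard idempotents recalled in Section \ref{sec_prelims_actions}: for $\mathfrak{q}\neq\mathfrak{p}$ and any $A\in S(R)$ one has
\[
\mathit{\Gamma}_\mathfrak{q}\mathit{\Gamma}_\mathfrak{p}A \iso \bigl(\mathit{\Gamma}_\mathfrak{q}\str_R \otimes \mathit{\Gamma}_\mathfrak{p}\str_R\bigr)\odot A = 0.
\]

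For the opposite inclusion, pick $A\in\t(W)$. Since $D(R)$ admits a monoidal model, Theorem \ref{thm_general_ltg} applies and yields
\[
A \in \langle \mathit{\Gamma}_\mathfrak{p}A \mid \mathfrak{p}\in \Spec R\rangle_{*},
\]
where $\langle - \rangle_{*}$ denotes the smallest localizing $D(R)$-submodule of $S(R)$ containing the indicated objects. By Lemma \ref{lem_locpreserving}, however, every localizing subcategory of $S(R)$ is automatically a $D(R)$-submodule, so $\langle - \rangle_{*}$ coincides with $\langle - \rangle_{\mathrm{loc}}$. Since $\supp A \cie W$ forces $\mathit{\Gamma}_\mathfrak{p}A=0$ for every $\mathfrak{p}\notin W$, we may discard the trivial generators and conclude
\[
A\in \langle \mathit{\Gamma}_\mathfrak{p}A \mid \mathfrak{p}\in W\rangle_{\mathrm{loc}} \cie \langle \mathit{\Gamma}_\mathfrak{p}S(R) \mid \mathfrak{p}\in W\rangle_{\mathrm{loc}}.
\]

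The only non-formal ingredient is the appeal to the local-to-global principle; everything else is routine manipulation of the Rickard idempotents together with the fact that every localizing subcategory of $S(R)$ is automatically stable under the $D(R)$-action. There is no real obstacle here, since both Theorem \ref{thm_general_ltg} and Lemma \ref{lem_locpreserving} have already been put in place.
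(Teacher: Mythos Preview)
Your argument is correct. The paper's own proof is a one-line citation to Lemma A.6.2 of \cite{StevensonActions}, and what you have written is precisely an unpacking of that lemma in this setting: the $\supseteq$ direction via $\otimes$-orthogonality of the $\mathit{\Gamma}_\mathfrak{p}$, and the $\subseteq$ direction via the local-to-global principle together with Lemma~\ref{lem_locpreserving} to identify $\langle - \rangle_*$ with $\langle - \rangle_{\mathrm{loc}}$. So the approach is the same, only spelled out rather than cited.
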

\begin{proof}
This is just a restatement of Lemma A.6.2.

\end{proof}

We next note that, as one would expect, $S(R)$ is supported on the singular locus $\Sing R$ of $\Spec R$.

\begin{lem}
There is a containment $\s S(R) \cie \Sing R$.
\end{lem}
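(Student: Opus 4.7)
The plan is to prove the contrapositive: if $\mathfrak{p}\in\Spec R$ is such that $R_\mathfrak{p}$ is regular, then $\mathit{\Gamma}_\mathfrak{p}A=0$ for every $A\in S(R)$, so $\mathfrak{p}\notin\s S(R)$. The strategy is to identify $\mathit{\Gamma}_\mathfrak{p}A$ up to homotopy with a subcomplex of $A$ whose terms are sums of copies of $E(R/\mathfrak{p})$, and then to recognise this subcomplex as an acyclic complex of injective $R_\mathfrak{p}$-modules, which must vanish by regularity of $R_\mathfrak{p}$.

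First I would use Proposition \ref{prop_explicit_functors} to rewrite
\[
\mathit{\Gamma}_\mathfrak{p}A \;\iso\; \mathit{\Gamma}_{\mathcal{V}(\mathfrak{p})}R \otimes L_{\mathcal{Z}(\mathfrak{p})}R \odot A \;\iso\; \mathit{\Gamma}_{\mathcal{V}(\mathfrak{p})}\!\bigl(R_\mathfrak{p}\odot A\bigr).
\]
Acting first by $R_\mathfrak{p}$ (which is K-flat, so the action is the ordinary tensor product by Remark \ref{rem_boundedabove}) kills all indecomposable summands $E(R/\mathfrak{q})$ appearing in the injective decomposition of each $A^i$ with $\mathfrak{q}\notin\mathcal{U}(\mathfrak{p})$, leaving precisely those with $\mathfrak{q}\cie\mathfrak{p}$. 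Applying Proposition \ref{prop_subcomplex} then cuts down further to those summands with $\mathfrak{q}\in\mathcal{V}(\mathfrak{p})$. Intersecting the two conditions forces $\mathfrak{q}=\mathfrak{p}$, so up to homotopy $\mathit{\Gamma}_\mathfrak{p}A$ is a complex $A^{(\mathfrak{p})}$ whose degree $i$ term is a (possibly infinite) coproduct of copies of $E(R/\mathfrak{p})$.

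Next I would observe that $A^{(\mathfrak{p})}$ is acyclic, since it represents an object of $S(R)=K_\mathrm{ac}(\Inj R)$, and that $E(R/\mathfrak{p})=E_{R_\mathfrak{p}}(R_\mathfrak{p}/\mathfrak{p}R_\mathfrak{p})$ is injective as an $R_\mathfrak{p}$-module. Thus $A^{(\mathfrak{p})}$ is an acyclic complex of injective $R_\mathfrak{p}$-modules, i.e.\ an object of $S(R_\mathfrak{p})$. Now the assumption that $R_\mathfrak{p}$ is regular local of finite Krull dimension ensures it has finite global dimension, whence every $R_\mathfrak{p}$-module has finite injective dimension and $S(R_\mathfrak{p})=K_\mathrm{ac}(\Inj R_\mathfrak{p})=0$ by a standard splicing argument. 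Consequently $A^{(\mathfrak{p})}$ is contractible over $R_\mathfrak{p}$.

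Finally, since $A^{(\mathfrak{p})}$ consists entirely of $R_\mathfrak{p}$-modules, any contracting homotopy is automatically $R$-linear, so $A^{(\mathfrak{p})}$ is contractible over $R$ as well and $\mathit{\Gamma}_\mathfrak{p}A=0$ in $S(R)$. The main point of care is the bookkeeping in the second paragraph: one must verify that successively applying $L_{\mathcal{Z}(\mathfrak{p})}$ and $\mathit{\Gamma}_{\mathcal{V}(\mathfrak{p})}$ really does isolate only the $E(R/\mathfrak{p})$-summands, which relies on the explicit descriptions in Propositions \ref{prop_explicit_functors} and \ref{prop_subcomplex} together with the Matlis-theoretic behaviour of indecomposable injectives under localisation. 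Once this identification is in hand the rest is essentially the fact that $S$ vanishes on regular rings.
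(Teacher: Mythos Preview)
Your proof is correct and follows essentially the same approach as the paper: show that for regular $\mathfrak{p}$ the object $\mathit{\Gamma}_\mathfrak{p}A$ lands in $S(R_\mathfrak{p})=0$. The paper's argument is more direct, however: it simply observes that $\mathit{\Gamma}_\mathfrak{p}A \iso R_\mathfrak{p}\otimes_R(\mathit{\Gamma}_{\mathcal{V}(\mathfrak{p})}R\odot A)$ is an acyclic complex of injective $R_\mathfrak{p}$-modules (since localizing injectives gives injectives), without invoking Proposition~\ref{prop_subcomplex} or isolating the $E(R/\mathfrak{p})$-summands explicitly.
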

\begin{proof}
If $\mathfrak{p}\in \Spec R$ is a regular point then $S(R_\mathfrak{p}) = 0$. Thus for any object $A$ of $S(R)$
\begin{displaymath}
\mathit{\Gamma}_\mathfrak{p} A \iso R_\mathfrak{p}\otimes_R (\mathit{\Gamma}_{\mathcal{V}(\mathfrak{p})}R\odot A) \iso 0
\end{displaymath}
as it is an acyclic complex of injective $R_\mathfrak{p}$-modules.
\end{proof}

It is clear that $D(R)$ also acts, by K-flat resolutions, on itself and on $K(\Inj R)$. It will be convenient for us to show that these actions are compatible with each other and the action on $S(R)$ in an appropriate sense. We write $\otimes$ for the action of $D(R)$ on itself and $\odot$ for the action of $D(R)$ on $K(\Inj R)$ which extends the action on $S(R)$.

\begin{prop}\label{prop_action_compatible}
These actions of $D(R)$ are compatible with the localization sequence
\begin{displaymath}
\xymatrix{
D(R) \ar[rr]^{Q_\l} && K(\Inj R) \ar[rr]^{I_\l} \ar@/^1pc/[ll]^{Q} &&  S(R) \ar@/^1pc/[ll]^{I}
}
\end{displaymath}
in the sense that, up to natural isomorphism, the action commutes with each of the functors in the diagram. Explicitly, for $J\in K(\Inj R)$, $A\in S(R)$, and $E,F\in D(R)$ we have isomorphisms
\begin{displaymath}
\begin{array}{ccc}
Q(E\odot J) \iso E\otimes QJ &, & Q_\l(E\otimes F) \iso E\odot Q_\l F  \\
\\
I(E\odot A) \iso E\odot IA  &, & I_\l(E\odot J) \iso E\odot I_\l J.
\end{array}
\end{displaymath}
\end{prop}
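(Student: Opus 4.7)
The plan is to verify the four natural isomorphisms by exploiting the explicit definition of the action via K-flat resolutions, and then deduce the statements involving the left adjoints $I_\l$ and $Q_\l$ from those involving $I$ and $Q$ by a standard mate-of-adjunction argument.

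First I would fix $E \in D(R)$ and choose a K-flat resolution $F \to E$ in $K(\Flat R)$. By construction, the action on each of the three categories is computed by tensoring with $F$. Concretely, for $J \in K(\Inj R)$ the object $E \odot J$ is literally $F \otimes_R J$, which lies in $K(\Inj R)$ because $R$ is noetherian (tensor of flats with injectives is injective); its restriction to $S(R)$ recovers the action on $S(R)$. Thus the isomorphism $I(E \odot A) \iso E \odot IA$ is tautological: the action on $S(R)$ is by definition the restriction along the inclusion $I$ of the action on $K(\Inj R)$. One needs only to observe that the restriction is well defined (if $A$ is acyclic then $F \otimes_R A$ is acyclic, which is precisely the content of the lemma characterizing $\ker(H)$ applied to $F$).

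For $Q(E \odot J) \iso E \otimes QJ$ one again applies $Q$ to the representative $F \otimes_R J$. In $D(R)$ this is exactly $E \otimes QJ$ because $F$ is K-flat and hence computes the derived tensor product. Naturality in $J$ is clear, and independence of the choice of K-flat resolution follows from the uniqueness of K-flat resolutions up to pure quasi-isomorphism together with the fact that pure quasi-isomorphisms between complexes of flats become homotopy equivalences after tensoring with any complex of injectives, as observed in the discussion preceding the construction of the action.

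With these two isomorphisms in hand, the remaining ones follow from the mate calculus. The adjunction $I_\l \dashv I$ together with the natural isomorphism $I \circ (E \odot -) \iso (E \odot -) \circ I$ yields a mate natural isomorphism $I_\l \circ (E \odot -) \iso (E \odot -) \circ I_\l$, which is exactly the fourth formula. Analogously, the adjunction $Q_\l \dashv Q$ combined with $Q \circ (E \odot -) \iso (E \otimes -) \circ Q$ produces a mate $Q_\l \circ (E \otimes -) \iso (E \odot -) \circ Q_\l$, giving the second formula. A mate of an isomorphism is an isomorphism because its inverse is the mate of the inverse, so no further verification is required on that front.

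The main obstacle I expect is the bookkeeping in step one: confirming that $E \odot J$ is genuinely well defined in $K(\Inj R)$ up to coherent natural isomorphism (i.e.\ that the assignment descends from the choice of K-flat resolution) and that the resulting functor is bi-exact and coproduct-preserving in the sense needed to apply the mate construction. Once these formalities are in place — and they are essentially inherited from the analogous care taken in constructing the action on $S(R)$ itself, as well as from the model-theoretic content underlying the existence of functorial K-flat resolutions — the compatibilities of the proposition reduce to manipulating the canonical isomorphisms furnished by tensoring with a K-flat complex.
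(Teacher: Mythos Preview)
Your treatment of the compatibilities with $I$ and $Q$ is correct and matches the paper's. The gap is in the mate argument for $I_\l$ and $Q_\l$: the assertion that ``a mate of an isomorphism is an isomorphism because its inverse is the mate of the inverse'' is false in general. Mates arise from a bijection between $\mathrm{Nat}(FR_1, R_2G)$ and $\mathrm{Nat}(L_2F, GL_1)$, and this bijection does not preserve invertibility; the Beck--Chevalley condition is precisely the nontrivial demand that a particular mate of an isomorphism be an isomorphism. Unwinding your mate for $I_\l$, one obtains the composite
\begin{displaymath}
I_\l(E\odot J) \xrightarrow{\,I_\l(E\odot \eta_J)\,} I_\l(E\odot II_\l J) \iso I_\l I(E\odot I_\l J) \xrightarrow{\;\epsilon\;} E\odot I_\l J,
\end{displaymath}
and while the last two maps are isomorphisms (since $I$ is fully faithful), the first is an isomorphism only because $E\odot Q_\l QJ$ lies in $Q_\l D(R)$ and is therefore annihilated by $I_\l$. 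In other words, what you actually need is that $E\odot(-)$ preserves the localizing subcategory $Q_\l D(R) \subseteq K(\Inj R)$; this holds because $D(R)$ is generated by its tensor unit (Lemma~A.3.13, cf.\ Lemma~\ref{lem_locpreserving}), but it is an extra input, not a formal consequence of the mate calculus. The same issue arises for $Q_\l$.

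The paper's proof uses exactly this preservation fact, but applies it directly to the localization triangle $Q_\l QJ \to J \to II_\l J$ rather than through mates: acting by $E$ keeps the outer terms in their respective subcategories, so the resulting triangle must agree with the localization triangle for $E\odot J$, giving the identifications at once. Your route can be completed, but only by supplying this same key observation; once you do, the two arguments are essentially equivalent.
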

\begin{proof}
It is obvious that the inclusion $I$ is compatible with the action of $D(R)$. As $D(R)$ acts on $K(\Inj R)$ via K-flat resolutions the action commutes with $Q$; an object $J$ of $K(\Inj R)$ is quasi-isomorphic to $QJ$ so for $E\in D(R)$ the object $E\odot J$ computes the left derived tensor product.

To treat the other two functors let $J$ be an object of $K(\Inj R)$ and consider the localization triangle
\begin{displaymath}
Q_\l QJ \to J \to I I_\l J \to \S Q_\l QJ.
\end{displaymath}
As $D(R)$ is generated by the tensor unit $R$ every localizing subcategory of $K(\Inj R)$ is stable under the action (cf.\ Lemma \ref{lem_locpreserving}). Thus for $E\in D(R)$ we get a triangle,
\begin{displaymath}
E\odot Q_\l QJ \to E\odot J \to E\odot I I_\l J \to \S E\odot Q_\l QJ,
\end{displaymath}
where $E\odot Q_\l QJ \in Q_\l D(R)$ and $E\odot I I_\l J$ is acyclic. Hence this triangle must be uniquely isomorphic to the localization triangle for $E\odot J$ giving
\begin{displaymath}
E\odot Q_\l QJ \iso Q_\l Q(E\odot J) \quad \quad \text{and} \quad \quad E\odot I I_\l J \iso I I_\l (E\odot J).
\end{displaymath}
We already know that the action commutes with $Q$ and $I$ so the remaining two commutativity relations follow immediately.
\end{proof}

We can also say something about compatibility with the right adjoint $Q_\r$ of $Q$.

\begin{lem}\label{lem_action_compatible}
Suppose $E$ and $F$ are objects of $D(R)$ such that $E$ has a bounded flat resolution and $F$ has a bounded below injective resolution. Then 
\begin{displaymath}
E\odot Q_\r F \iso Q_\r(E\otimes F).
\end{displaymath}
\end{lem}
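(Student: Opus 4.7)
The plan is to identify both sides of the claimed isomorphism as the same explicit bounded below complex of injectives. The hypotheses let us replace $E$ and $F$ by tractable representatives: if $\tilde{E}$ is a bounded flat resolution of $E$ then $\tilde{E}$ is K-flat by Remark \ref{rem_boundedabove}, so the action $E \odot Q_\r F$ can be computed as the honest tensor product $\tilde{E} \otimes_R Q_\r F$. Similarly, if $I^\bullet$ is a bounded below injective resolution of $F$, then $I^\bullet$ represents $Q_\r F$ in $K(\Inj R)$, since the essential image of the fully faithful functor $Q_\r$ consists of the K-injective objects of $K(\Inj R)$ and any bounded below complex of injectives is K-injective.

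Next I would observe that $\tilde{E} \otimes_R I^\bullet$ is itself a bounded below complex of injectives: the entries are injective because on a noetherian scheme the tensor product of a flat with an injective is injective (as noted in Section \ref{sec_singcat}), and boundedness below is immediate from $\tilde{E}$ being bounded and $I^\bullet$ being bounded below. Consequently $\tilde{E} \otimes_R I^\bullet$ is K-injective, so it lies in the essential image of $Q_\r$.

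The remainder is formal. Applying the compatibility $Q(E \odot J) \iso E \otimes QJ$ of Proposition \ref{prop_action_compatible} yields $Q(\tilde{E} \otimes_R I^\bullet) \iso E \otimes F$ in $D(R)$. Since $\tilde{E} \otimes_R I^\bullet$ lies in the essential image of $Q_\r$, the counit $QQ_\r \to \id$ is an isomorphism there, giving $\tilde{E} \otimes_R I^\bullet \iso Q_\r Q(\tilde{E} \otimes_R I^\bullet) \iso Q_\r(E \otimes F)$, which is the required identification. The only mildly subtle point is the representability claim $Q_\r F \iso I^\bullet$, which I would reduce to the classical fact that a bounded below complex of injectives is K-injective in $K(\Inj R)$; everything else is bookkeeping.
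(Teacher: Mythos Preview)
Your proof is correct and follows essentially the same route as the paper: replace $E$ by a bounded K-flat $\tilde{E}$, represent $Q_\r F$ by a bounded below complex of injectives, observe that $\tilde{E}\otimes_R I^\bullet$ is again a bounded below (hence K-injective) complex of injectives, and conclude. The only slip is terminological: the natural map $\tilde{E}\otimes_R I^\bullet \to Q_\r Q(\tilde{E}\otimes_R I^\bullet)$ you invoke is the \emph{unit} of the adjunction $Q\dashv Q_\r$, not the counit, though the conclusion (that it is an isomorphism on the essential image of $Q_\r$) is correct.
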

\begin{proof}
Let $\tilde{E}$ be a bounded flat resolution of $E$ and recall that, by virtue of being bounded, $\tilde{E}$ is K-flat. In \cite{KrStab} Krause identifies $Q_\r$ with taking K-injective resolutions, where the K-injectives are the objects of the colocalizing subcategory of $K(\Inj R)$ generated by the bounded below complexes of injectives (such resolutions exist, see for example \cite{NeeHolim}). Thus $Q_\r F$ is a K-injective resolution of $F$ and so, as bounded below complexes of injectives are K-injective, we may assume it is bounded below as it is homotopic to the bounded below resolution we have required of $F$. We have $Q_\r F \iso F$ in $D(R)$ so there are isomorphisms in the derived category
\begin{displaymath}
E\otimes F \iso E\otimes Q_\r F \iso \tilde{E} \otimes_R Q_\r F.
\end{displaymath}
Hence in $K(\Inj R)$ we have isomorphisms
\begin{align*}
Q_\r(E\otimes F) &\iso Q_\r(E\otimes Q_\r F) \\
&\iso Q_\r(\tilde{E} \otimes_R Q_\r F) \\
&\iso \tilde{E}\otimes_R Q_\r F \\
&\iso E\odot Q_\r F
\end{align*}
where the penultimate isomorphism is a consequence of the fact that $\tilde{E}\otimes_R Q_\r F$ is, by the assumption that $\tilde{E}$ is bounded and $Q_\r F$ is bounded below, a bounded below complex of injectives and hence K-injective.


\end{proof}

Before proceeding let us record the following easy observation for later use.

\begin{lem}\label{lem_canlocalize}
The diagram
\begin{displaymath}
\xymatrix{
D^+(R) \ar[d] \ar[r]^{I_\l Q_\r} & S(R) \ar[d] \\
D^+(R_\mathfrak{p}) \ar[r]_{I_\l Q_\r} & S(R_\mathfrak{p}),
}
\end{displaymath}
where the vertical functors are localization at $\mathfrak{p}\in \Spec R$, commutes.
\end{lem}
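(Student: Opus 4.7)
The strategy is to exploit that $R_\mathfrak{p}$ is a flat $R$-module, view the vertical localization functors as the action of $R_\mathfrak{p}\in D(R)$, and then invoke the compatibility lemmas already proved.

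First I would identify the vertical functors in a form that meshes with the established formalism. On $D^+(R)$, since $R_\mathfrak{p}$ is flat, we have a natural isomorphism $M_\mathfrak{p}\iso R_\mathfrak{p}\otimes_R M \iso R_\mathfrak{p}\otimes M$ in $D(R)$, with no need to derive. On $S(R)$, an object is an acyclic complex of injective $R$-modules, and since $R$ is noetherian, termwise localization sends such a complex to an acyclic complex of injective $R_\mathfrak{p}$-modules, i.e.\ an object of $S(R_\mathfrak{p})$; as $R_\mathfrak{p}$ is a flat (hence K-flat) object of $D(R)$, this termwise localization is precisely $R_\mathfrak{p}\odot(-)$ (cf.\ Remark \ref{rem_boundedabove}), followed by restriction of scalars back to viewing the output as an $R_\mathfrak{p}$-complex.

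With these identifications in hand the proof is a short chain. For $M\in D^+(R)$ we have
\begin{displaymath}
I_\l Q_\r(M_\mathfrak{p}) \iso I_\l Q_\r(R_\mathfrak{p}\otimes M) \iso I_\l(R_\mathfrak{p}\odot Q_\r M) \iso R_\mathfrak{p}\odot I_\l Q_\r M \iso (I_\l Q_\r M)_\mathfrak{p},
\end{displaymath}
where the middle isomorphism applies Lemma \ref{lem_action_compatible} (permissible because $R_\mathfrak{p}$ has the bounded flat resolution given by itself and $M\in D^+(R)$ admits a bounded below injective resolution over the noetherian ring $R$), the next isomorphism is the compatibility of $I_\l$ with the action from Proposition \ref{prop_action_compatible}, and the outer identifications are the ones just described. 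Naturality in $M$ is automatic from the naturality of the isomorphisms being composed.

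The only genuine content, and thus the main thing to be careful about, is the identification of the vertical ``localization at $\mathfrak{p}$'' functor on $S(R)$ with the action $R_\mathfrak{p}\odot(-)$ composed with restriction of scalars; this rests on the noetherian hypothesis (so that localization at $\mathfrak{p}$ preserves injectivity of quasi-coherent sheaves) and the flatness of $R_\mathfrak{p}$. Once that identification is granted, Lemma \ref{lem_action_compatible} and Proposition \ref{prop_action_compatible} do all of the work.
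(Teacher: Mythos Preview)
Your chain of isomorphisms is correct, but it lives entirely over $R$: every $I_\l$ and $Q_\r$ appearing is the one for $R$, and what you actually prove is the commutativity of
\begin{displaymath}
\xymatrix{
D^+(R) \ar[d]_{R_\mathfrak{p}\otimes(-)} \ar[r]^{I_\l Q_\r} & S(R) \ar[d]^{R_\mathfrak{p}\odot(-)} \\
D^+(R) \ar[r]_{I_\l Q_\r} & S(R).
}
\end{displaymath}
To pass from this to the lemma you need not only the identification of the vertical arrows that you flag, but also that the bottom arrow matches $I_\l^{R_\mathfrak{p}} Q_\r^{R_\mathfrak{p}}$ under restriction of scalars, i.e.\ that the fully faithful inclusion $K(\Inj R_\mathfrak{p})\hookrightarrow K(\Inj R)$ intertwines $I_\l^{R_\mathfrak{p}} Q_\r^{R_\mathfrak{p}}$ with $I_\l^R Q_\r^R$. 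For $Q_\r$ this is harmless (a bounded-below injective $R_\mathfrak{p}$-resolution is already a bounded-below injective $R$-resolution), and the paper is equally terse here. For $I_\l$ it is less immediate, and this is precisely the content your argument omits.

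The paper handles the $I_\l$ step differently and more directly: rather than invoking Proposition~\ref{prop_action_compatible}, it observes that the square of inclusions $I$ together with restriction of scalars (going up) obviously commutes, and that tensoring with $R_\mathfrak{p}$ and restriction form an adjoint pair on both $K(\Inj-)$ and $S(-)$; taking left adjoints of that commuting square yields the $I_\l$-square with localization going down. This adjoint trick is exactly what bridges the gap between your internal-to-$R$ computation and the two-ring statement of the lemma. Your approach becomes complete once you append this observation.
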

\begin{proof}
The square
\begin{displaymath}
\xymatrix{
D^+(R) \ar[d] \ar[r]^{Q_\r} & K(\Inj R) \ar[d] \\
D^+(R_\mathfrak{p}) \ar[r]_{Q_\r} & K(\Inj R_\mathfrak{p}),
}
\end{displaymath}
is commutative by Lemma \ref{lem_action_compatible}.

To complete the proof it is sufficient to show that the square
\begin{displaymath}
\xymatrix{
K(\Inj R) \ar[d] \ar[r]^(0.6){I_\l} & S(R) \ar[d] \\
K(\Inj R_\mathfrak{p}) \ar[r]_(0.6){I_\l} & S(R_\mathfrak{p}),
}
\end{displaymath}
also commutes. This follows by observing that the square
\begin{displaymath}
\xymatrix{
K(\Inj R) & \ar[l]_(0.4)I S(R) \\
K(\Inj R_\mathfrak{p}) \ar[u]& \ar[u] \ar[l]^(0.4)I S(R_\mathfrak{p}),
}
\end{displaymath}
commutes and taking left adjoints, where we are using the fact that tensoring and restricting scalars along $R\to R_\mathfrak{p}$ are both exact and preserve injectives so give rise to an adjoint pair or functors between the relevant homotopy categories of injectives and singularity categories.
\end{proof}

\begin{rem}
One cannot expect the last result to hold for unbounded complexes. Indeed in the recent paper \cite{ChenIyengarCEX} an example of a K-injective complex whose localization at some prime is not K-injective is given. I am grateful to the anonymous referees of my PhD thesis for pointing this out.
\end{rem}

Given these compatibilities it is not hard to see that $\s S(R)$ is precisely the singular locus.

\begin{prop}\label{prop_nontriv}
For any $\mathfrak{p}\in \Sing R$ the object $\mathit{\Gamma}_\mathfrak{p}I_\l Q_\r k(\mathfrak{p})$ is non-zero in $S(R)$. Thus $\mathit{\Gamma}_\mathfrak{p}S(R)$ is non-trivial for all such $\mathfrak{p}$ yielding the equality ~$\s S(R) = \Sing R$.
\end{prop}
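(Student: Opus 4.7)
The natural approach is to reduce to the local situation at $\mathfrak{p}$ and then invoke Auslander--Buchsbaum--Serre. I first want to identify $\mathit{\Gamma}_\mathfrak{p} I_\l Q_\r k(\mathfrak{p})$ with $I_\l Q_\r k(\mathfrak{p})$ itself. Using Proposition \ref{prop_explicit_functors} we have $\mathit{\Gamma}_\mathfrak{p}R \iso K_\infty(\mathfrak{p})\otimes R_\mathfrak{p}$, a bounded complex of flats. Since $k(\mathfrak{p})$ has a bounded below injective resolution, Proposition \ref{prop_action_compatible} and Lemma \ref{lem_action_compatible} combine to give
\begin{displaymath}
\mathit{\Gamma}_\mathfrak{p} I_\l Q_\r k(\mathfrak{p}) \iso I_\l Q_\r \bigl(\mathit{\Gamma}_\mathfrak{p}R \otimes k(\mathfrak{p})\bigr).
\end{displaymath}
Now $k(\mathfrak{p})$ is already $\mathfrak{p}$-local and $\mathfrak{p}$-torsion, so $R_\mathfrak{p}\otimes k(\mathfrak{p})\iso k(\mathfrak{p})$ and the derived local cohomology computed by $K_\infty(\mathfrak{p})$ leaves $k(\mathfrak{p})$ unchanged; hence $\mathit{\Gamma}_\mathfrak{p}R\otimes k(\mathfrak{p})\iso k(\mathfrak{p})$ in $D(R)$, and the above isomorphism becomes
\begin{displaymath}
\mathit{\Gamma}_\mathfrak{p} I_\l Q_\r k(\mathfrak{p}) \iso I_\l Q_\r k(\mathfrak{p}).
\end{displaymath}

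Next I localize. By Lemma \ref{lem_canlocalize}, the image of $I_\l Q_\r k(\mathfrak{p})$ under the localization $S(R)\to S(R_\mathfrak{p})$ is $I_\l Q_\r k(\mathfrak{p})$ computed in $S(R_\mathfrak{p})$, where now $k(\mathfrak{p})$ is the residue field of the local ring $R_\mathfrak{p}$. To prove non-vanishing of the original object it suffices to verify non-vanishing after this localization. But $k(\mathfrak{p})$ is a finitely generated $R_\mathfrak{p}$-module, so $I_\l Q_\r k(\mathfrak{p})$ lies in $S(R_\mathfrak{p})^c$, which by Theorem \ref{thm_recol}(3) is, up to summands, $D_{\mathrm{Sg}}(R_\mathfrak{p})$; thus $I_\l Q_\r k(\mathfrak{p})$ vanishes if and only if $k(\mathfrak{p})$ lies in $D^{\mathrm{perf}}(R_\mathfrak{p})$, i.e., has finite projective dimension over $R_\mathfrak{p}$.

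The assumption $\mathfrak{p}\in\Sing R$ says that $R_\mathfrak{p}$ is not regular, so by Auslander--Buchsbaum--Serre $\pdim_{R_\mathfrak{p}} k(\mathfrak{p})=\infty$, giving $I_\l Q_\r k(\mathfrak{p})\neq 0$ in $S(R_\mathfrak{p})$ and therefore $\mathit{\Gamma}_\mathfrak{p} I_\l Q_\r k(\mathfrak{p})\neq 0$ in $S(R)$. This shows $\Sing R\subseteq \s S(R)$, and combined with the reverse containment established in the previous lemma yields the equality $\s S(R)=\Sing R$. The only step requiring care is the identification $\mathit{\Gamma}_\mathfrak{p}R\otimes k(\mathfrak{p})\iso k(\mathfrak{p})$ together with the compatibility of the action with $I_\l Q_\r$, for which the boundedness hypotheses in Lemma \ref{lem_action_compatible} must be checked; everything else is a direct appeal to the preceding results.
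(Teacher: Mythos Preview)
Your proof is correct and follows essentially the same route as the paper's own argument: both use Proposition \ref{prop_action_compatible} together with Lemma \ref{lem_action_compatible} (after checking the boundedness hypotheses via Proposition \ref{prop_explicit_functors}) to identify $\mathit{\Gamma}_\mathfrak{p} I_\l Q_\r k(\mathfrak{p})$ with $I_\l Q_\r k(\mathfrak{p})$, then reduce to $R_\mathfrak{p}$ via Lemma \ref{lem_canlocalize} and invoke Auslander--Buchsbaum--Serre to see that $k(\mathfrak{p})$ is not perfect over $R_\mathfrak{p}$. The only difference is cosmetic: the paper establishes non-vanishing over $R_\mathfrak{p}$ first and then shows the object is fixed by $\mathit{\Gamma}_\mathfrak{p}$, whereas you reverse the order.
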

\begin{proof}
Let $\mathfrak{p}$ be a singular point of $\Spec R$. Applying the last lemma we may check that $I_\l Q_\r k(\mathfrak{p})$ is non-zero over $R_\mathfrak{p}$.  By \cite{MatsuAlg} Section 18 Theorem 41 one has
\begin{displaymath}
\pdim_{R_{\mathfrak{p}}}k(\mathfrak{p}) = \gldim R_{\mathfrak{p}} = \infty
\end{displaymath}
so $k(\mathfrak{p})$ is finitely generated over $R_\mathfrak{p}$ but not perfect. Theorem \ref{thm_recol} then tells us that $I_\l Q_\r k(\mathfrak{p})$ is not zero in $S(R_\mathfrak{p})$. 

We now show $I_\l Q_\r k(\mathfrak{p})$ lies in $\mathit{\Gamma}_\mathfrak{p}S(R)$. By Proposition \ref{prop_action_compatible} there is an isomorphism
\begin{equation}\label{eq_nontrivial}
\mathit{\Gamma}_\mathfrak{p}R \odot I_\l Q_\r k(\mathfrak{p}) \iso I_\l (\mathit{\Gamma}_\mathfrak{p}R\odot Q_\r k(\mathfrak{p})).
\end{equation}
As $\mathit{\Gamma}_\mathfrak{p}R \iso K_\infty(\mathfrak{p})\otimes R_\mathfrak{p}$ (by Proposition \ref{prop_explicit_functors}) is a bounded K-flat complex and the injective resolution of $k(\mathfrak{p})$ is certainly bounded below we can apply Lemma \ref{lem_action_compatible}. This gives us isomorphisms
\begin{displaymath}
I_\l (\mathit{\Gamma}_\mathfrak{p}R \odot Q_\r k(\mathfrak{p})) \iso I_\l Q_\r (\mathit{\Gamma}_\mathfrak{p}R \otimes k(\mathfrak{p})) \iso I_\l Q_\r k(\mathfrak{p}).
\end{displaymath}
Combining these with (\ref{eq_nontrivial}) shows that, up to homotopy, $\mathit{\Gamma}_\mathfrak{p}R\odot(-)$ fixes the non-zero object $I_\l Q_\r k(\mathfrak{p})$ proving that $\mathit{\Gamma}_\mathfrak{p}S(R)$ is non-zero.
\end{proof}

It is thus natural to restrict the support and the assignments $\s$ and $\t$ to subsets of the singular locus. This result then implies that the assignment $\t$ taking a subset of $\Sing R$ to a localizing subcategory of $S(R)$ is injective with left inverse $\s$.

\begin{cor}\label{cor_injective}
For every $W\cie \Sing R$ we have $\s\t(W) = W$. In particular, $\t$ is injective when restricted to subsets of the singular locus.
\end{cor}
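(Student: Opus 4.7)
The plan is to establish $\s\t(W) = W$; the injectivity of $\t$ on subsets of $\Sing R$ then follows formally, since $\t(V) = \t(W)$ would force $V = \s\t(V) = \s\t(W) = W$. The inclusion $\s\t(W) \cie W$ is essentially tautological: any $A \in \t(W)$ has $\supp A \cie W$ by definition of $\t$, and $\s\t(W)$ is the union of these supports as $A$ ranges over $\t(W)$.

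For the reverse containment, I fix $\mathfrak{p} \in W$ (necessarily in $\Sing R$ by hypothesis) and exhibit a witness $A_\mathfrak{p} \in \t(W)$ with $\mathfrak{p} \in \supp A_\mathfrak{p}$. Proposition \ref{prop_nontriv} points to the natural candidate
\[
A_\mathfrak{p} := \mathit{\Gamma}_\mathfrak{p} I_\l Q_\r k(\mathfrak{p}),
\]
which is there shown to be non-zero in $S(R)$. Since $\mathit{\Gamma}_\mathfrak{p}R$ is tensor-idempotent in $D(R)$, the action by it fixes $A_\mathfrak{p}$ up to isomorphism, so $\mathit{\Gamma}_\mathfrak{p} A_\mathfrak{p} \iso A_\mathfrak{p} \neq 0$ and hence $\mathfrak{p} \in \supp A_\mathfrak{p}$.

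It then remains to verify $\supp A_\mathfrak{p} \cie \{\mathfrak{p}\}$. For $\mathfrak{q} \neq \mathfrak{p}$, using associativity of the action together with the compatibilities in Proposition \ref{prop_action_compatible}, there is an isomorphism
\[
\mathit{\Gamma}_\mathfrak{q} A_\mathfrak{p} \iso (\mathit{\Gamma}_\mathfrak{q}R \otimes \mathit{\Gamma}_\mathfrak{p}R) \odot I_\l Q_\r k(\mathfrak{p}),
\]
so the problem reduces to showing $\mathit{\Gamma}_\mathfrak{q}R \otimes \mathit{\Gamma}_\mathfrak{p}R = 0$ in $D(R)$ whenever $\mathfrak{p} \neq \mathfrak{q}$. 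This is the only step with any real content; it can be read off either directly from the explicit formulas $\mathit{\Gamma}_\mathfrak{p}R \iso K_\infty(\mathfrak{p}) \otimes R_\mathfrak{p}$ of Proposition \ref{prop_explicit_functors} by checking that one of the localized Koszul factors is exact, or more conceptually by observing that this tensor product has empty support in $\Spec R$ and invoking the fact from Theorem \ref{thm_general_ltg}(ii) that support detects vanishing on $D(R)$. I expect no genuine obstacle here: the statement is a formal consequence of the existence of the witness $A_\mathfrak{p}$ together with tensor-orthogonality of the Rickard idempotents in $D(R)$.
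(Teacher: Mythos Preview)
Your argument is correct and is essentially the same as the paper's, which simply cites Proposition~A.6.3 from \cite{StevensonActions}: that abstract result packages exactly the mechanism you have written out, namely that once $\mathit{\Gamma}_\mathfrak{p}S(R)\neq 0$ for each $\mathfrak{p}\in\Sing R$ (Proposition~\ref{prop_nontriv}) and the idempotents $\mathit{\Gamma}_\mathfrak{p}R$ are pairwise tensor-orthogonal, the identity $\s\t(W)=W$ is formal. You have unpacked the black-boxed citation rather than taken a different route.
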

\begin{proof}
This is just an application of Proposition A.6.3.
\end{proof}

We now prove some results concerning generators for the subcategories produced via the action of $D(R)$. This will allow us to describe the image of $\t$ as the localizing subcategories which contain certain objects.

The next lemma is an easy modification of an argument of Krause in \cite{KrStab}. We give the details, including those straight from Krause's proof, as it is clearer to present them along with the modifications than to just indicate what else needs to be checked.

\begin{lem}\label{lem_gen1}
Let $\mathcal{V}$ be a specialization closed subset of $\Sing R$. The set of objects
\begin{displaymath}
\{\S^i I_\l Q_\r R/\mathfrak{p} \; \vert \; \mathfrak{p}\in \mathcal{V}, i\in \int\}
\end{displaymath}
is a generating set for $\mathit{\Gamma}_\mathcal{V}S(R)$ consisting of objects which are compact in $S(R)$.
\end{lem}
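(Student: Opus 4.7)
Plan: I verify compactness, support containment, and generation separately.

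Compactness of each $I_\l Q_\r R/\mathfrak{p}$ is immediate from Theorem \ref{thm_recol}(3), since $R/\mathfrak{p} \in D^b(\Coh R)$. For the support, I claim $\supp I_\l Q_\r R/\mathfrak{p} \subseteq \mathcal{V}(\mathfrak{p}) \cap \Sing R$, which is contained in $\mathcal{V}$ by specialization closure. For $\mathfrak{q} \notin \mathcal{V}(\mathfrak{p})$ we have $\mathfrak{p} \not\subseteq \mathfrak{q}$, so $(R/\mathfrak{p})_\mathfrak{q} = 0$; since the complex $\mathit{\Gamma}_\mathfrak{q} R \cong K_\infty(\mathfrak{q}) \otimes R_\mathfrak{q}$ from Proposition \ref{prop_explicit_functors} is bounded K-flat, Lemma \ref{lem_action_compatible} and Proposition \ref{prop_action_compatible} give
\begin{displaymath}
\mathit{\Gamma}_\mathfrak{q}\, I_\l Q_\r\, R/\mathfrak{p} \;\cong\; I_\l Q_\r(\mathit{\Gamma}_\mathfrak{q} R \otimes R/\mathfrak{p}) = 0.
\end{displaymath}

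For generation, let $\mathcal{L}$ denote the localizing subcategory generated by the proposed set; the preceding discussion shows $\mathcal{L} \subseteq \mathit{\Gamma}_\mathcal{V}S(R)$. Since $\mathit{\Gamma}_\mathcal{V}S(R)$ is generated by the compact objects of $S(R)$ with support in $\mathcal{V}$ (Corollary A.4.11 and Lemma A.4.4), the real task is to place any such compact $A$ into $\mathcal{L}$. I will do this via two successive compactness-versus-homotopy-colimit retract arguments.

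First, Theorem \ref{thm_general_ltg}(iii) expresses $\mathit{\Gamma}_\mathcal{V}R$ as a homotopy colimit of $\mathit{\Gamma}_{\mathcal{V}(I_\alpha)}R$ over closed subsets $\mathcal{V}(I_\alpha) \subseteq \mathcal{V}$; since $A \cong \mathit{\Gamma}_\mathcal{V}A$ and $A$ is compact in $S(R)$, $A$ is a retract of $\mathit{\Gamma}_{\mathcal{V}(I)}A$ for some closed $\mathcal{V}(I) \subseteq \mathcal{V}$. Second, by Proposition \ref{prop_explicit_functors}, $\mathit{\Gamma}_{\mathcal{V}(I)}R \cong K_\infty(I)$, itself a homotopy colimit of the finite perfect Koszul complexes $K(f_1^n,\ldots,f_m^n)$ where $I = (f_1,\ldots,f_m)$; writing $A$ as a summand of $I_\l Q_\r C$ for some $C \in D^b(\Coh R)$, compactness of $A$ makes it a summand of
\begin{displaymath}
K(f_1^n,\ldots,f_m^n) \odot I_\l Q_\r C \;\cong\; I_\l Q_\r\bigl(K(f_1^n,\ldots,f_m^n) \otimes_R C\bigr)
\end{displaymath}
for some $n$ (using Proposition \ref{prop_action_compatible} and Lemma \ref{lem_action_compatible} to intertwine the action with $I_\l Q_\r$). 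The bounded coherent complex $K(f_1^n,\ldots,f_m^n) \otimes_R C$ has cohomology set-theoretically supported on $\mathcal{V}(I) \subseteq \mathcal{V}$, so a Nagata-style prime filtration of its cohomology places it in the thick subcategory of $D^b(\Coh R)$ generated by $\{R/\mathfrak{p} : \mathfrak{p} \in \mathcal{V}\}$; applying $I_\l Q_\r$ and closing under retracts yields $A \in \mathcal{L}$.

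The main obstacle is arranging the two retract reductions so that one ends up with an object of $D^b(\Coh R)$ on which the prime filtration technique applies; this forces the reduction first to a closed subset $\mathcal{V}(I)$ and then to a perfect Koszul approximation of $\mathit{\Gamma}_{\mathcal{V}(I)}R$, because the infinite complex $K_\infty(I)$ is not itself a bounded coherent complex.
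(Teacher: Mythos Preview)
Your argument is correct and takes a genuinely different route from the paper's.

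The paper proceeds by direct detection: given a non-zero $A\in\mathit{\Gamma}_\mathcal{V}S(R)$, it picks $n$ with $Z^nA$ non-injective, examines the minimal injective resolution of $Z^nA$, and uses that $A_\mathfrak{q}$ is contractible for $\mathfrak{q}\notin\mathcal{V}$ to force every indecomposable summand of $E^1(Z^nA)$ to be $E(R/\mathfrak{p})$ with $\mathfrak{p}\in\mathcal{V}$. Non-vanishing of the Bass invariant $\mu_1(\mathfrak{p},Z^nA)$ then translates, via adjunction, into a non-zero morphism $\Sigma^{-n-1}I_\l Q_\r R/\mathfrak{p}\to A$. This is a concrete module-theoretic argument, essentially a localized refinement of Krause's original proof that the $I_\l Q_\r R/\mathfrak{p}$ generate all of $S(R)$.

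Your approach instead stays entirely within the tensor-triangular formalism: knowing $\mathit{\Gamma}_\mathcal{V}S(R)$ is generated by $S(R)$-compacts, you use two compactness-versus-hocolim retractions (first from $\mathcal{V}$ to a closed $\mathcal{V}(I)$, then from $K_\infty(I)$ to a finite Koszul complex) to land in $I_\l Q_\r$ of a bounded coherent complex with cohomology set-theoretically supported on $\mathcal{V}(I)$, where a prime filtration finishes the job. One small point to tighten: Theorem~\ref{thm_general_ltg}(iii) is stated for \emph{chains} of specialization closed subsets, so when writing $\mathcal{V}=\bigcup_\alpha\mathcal{V}(I_\alpha)$ you should either pass to a cofinal chain or, more cleanly, bypass this step by observing that the compact generators of $\mathit{\Gamma}_\mathcal{V}S(R)$ furnished by Corollary~A.4.11 are already of the form $c\odot a$ with $c\in D^{\mathrm{perf}}(R)$ having closed support in $\mathcal{V}$, so each already lies in some $\mathit{\Gamma}_{\mathcal{V}(I)}S(R)$. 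What your approach buys is portability: it uses only the action and general compactness, so it would adapt to other settings where one lacks an explicit handle on injective resolutions. The paper's approach buys concreteness and avoids invoking the prior machinery about compact generation of $\mathit{\Gamma}_\mathcal{V}S(R)$.
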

\begin{proof}
%
Let $A$ be a non-zero object of $\mathit{\Gamma}_\mathcal{V}S(R)$. In particular $A$ is a complex of injectives satisfying $H^nA = 0$ for all $n\in \int$. As $A$ is not nullhomotopic we can choose $n$ such that $Z^nA$ is not injective. Consider the beginning of an augmented minimal injective resolution of $Z^nA$
\begin{displaymath}
\xymatrix{
0 \ar[r] & Z^nA \ar[r] & E^0(Z^nA) \ar[r] & E^1(Z^nA).
}
\end{displaymath}
Note that for $\mathfrak{q}\notin \mathcal{V}$ the object $R_\mathfrak{q}\otimes \mathit{\Gamma}_\mathcal{V}R$ is zero in $D(R)$ as the cohomology of $\mathit{\Gamma}_\mathcal{V}R$ is supported in $\mathcal{V}$ by definition.  Thus for $\mathfrak{q}\notin \mathcal{V}$ the complex $A_\mathfrak{q}$ is nullhomotopic by virtue of being in the essential image of $\mathit{\Gamma}_\mathcal{V}R\odot(-)$. So $Z^nA_\mathfrak{q}$ is injective as a nullhomotopic complex is split exact. Since, for modules, localization at a prime sends minimal injective resolutions to minimal injective resolutions (see for example \cite{MatsuRing} Section 18) for any such $\mathfrak{q}$ it holds that $E^1(Z^nA)_\mathfrak{q} = 0$. So writing
\begin{displaymath}
E^1(Z^nA) \iso \bigoplus_i E(R/\mathfrak{p}_i)
\end{displaymath}
we have $\mathfrak{p}\in \mathcal{V}$ for each distinct $\mathfrak{p}$ occurring in the direct sum as otherwise it would not vanish when localized (see for example \cite{BIK} Lemma 2.1). Now fix some $\mathfrak{p}$ such that $E(R/\mathfrak{p})$ occurs in $E^1(Z^nA)$. By \cite{EnochsJenda} Theorem 9.2.4, as the injective envelope of $\mathfrak{p}$ occurs in $E^1(Z^nA)$, we have
\begin{displaymath}
0 \neq \dim_{k(\mathfrak{p})}\Ext^1(k(\mathfrak{p}), Z^nA_\mathfrak{p}) = \dim_{k(\mathfrak{p})}\Ext^1(R/\mathfrak{p}, Z^nA)_\mathfrak{p}.
\end{displaymath}
In particular $\Ext^1(R/\mathfrak{p}, Z^nA)$ is non-zero. Using \cite{KrStab} Lemma 2.1 and the adjunction between $I$ and $I_\l$ there are isomorphisms
\begin{align*}
\Ext^1(R/\mathfrak{p}, Z^nA) &\iso \Hom_{K(R\text{-}\Module)}(R/\mathfrak{p}, \S^{n+1} IA) \\
&\iso \Hom_{K(\Inj R)}(Q_\r R/\mathfrak{p}, \S^{n+1} IA) \\
&\iso \Hom_{S(R)}(\S^{-n-1}I_\l Q_\r R/\mathfrak{p}, A).
\end{align*}
Thus the set in question is certainly generating and it consists of compact objects by Theorem \ref{thm_recol} (3).
\end{proof}

\begin{lem}\label{lem_gen2}
The object $I_\l Q_\r k(\mathfrak{p})$ generates $\mathit{\Gamma}_\mathfrak{p}S(R)$ for every $\mathfrak{p}\in \Sing R$ i.e,
\begin{displaymath}
\mathit{\Gamma}_\mathfrak{p}S(R) = \langle I_\l Q_\r k(\mathfrak{p})\rangle_\mathrm{loc}.
\end{displaymath}
\end{lem}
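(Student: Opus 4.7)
The plan is to deduce the equality from Lemma \ref{lem_gen1} by applying the smashing localization corresponding to $\mathcal{Z}(\mathfrak{p})$ and examining which of the resulting generators survive.

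The containment $\langle I_\l Q_\r k(\mathfrak{p})\rangle_\mathrm{loc} \cie \mathit{\Gamma}_\mathfrak{p}S(R)$ is essentially already in hand: the computation in the proof of Proposition \ref{prop_nontriv} shows $\mathit{\Gamma}_\mathfrak{p}R\odot I_\l Q_\r k(\mathfrak{p}) \iso I_\l Q_\r k(\mathfrak{p})$, so $I_\l Q_\r k(\mathfrak{p})$ lies in $\mathit{\Gamma}_\mathfrak{p}S(R)$, and the latter is localizing.

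For the reverse inclusion, I first note that $\Sing R$ is specialization closed (regularity localizes), so $\mathcal{V}(\mathfrak{p})\cie \Sing R$ and Lemma \ref{lem_gen1} applies to give
\begin{displaymath}
\mathit{\Gamma}_{\mathcal{V}(\mathfrak{p})}S(R) \;=\; \langle \S^i I_\l Q_\r R/\mathfrak{q} \mid \mathfrak{q}\in \mathcal{V}(\mathfrak{p}),\; i\in \int\rangle_\mathrm{loc}.
\end{displaymath}
From the factorization $\mathit{\Gamma}_\mathfrak{p}R \iso \mathit{\Gamma}_{\mathcal{V}(\mathfrak{p})}R \otimes L_{\mathcal{Z}(\mathfrak{p})}R$ one sees that every object of $\mathit{\Gamma}_\mathfrak{p}S(R)$ is of the form $L_{\mathcal{Z}(\mathfrak{p})}R\odot B$ for some $B\in \mathit{\Gamma}_{\mathcal{V}(\mathfrak{p})}S(R)$. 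Since $L_{\mathcal{Z}(\mathfrak{p})}R\odot(-) \iso R_\mathfrak{p}\odot(-)$ is exact and coproduct preserving, it sends the displayed generating set to a generating set for $\mathit{\Gamma}_\mathfrak{p}S(R)$.

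To finish I compute each image. For $\mathfrak{q}\in \mathcal{V}(\mathfrak{p})$ Proposition \ref{prop_action_compatible} gives $R_\mathfrak{p}\odot I_\l Q_\r R/\mathfrak{q} \iso I_\l(R_\mathfrak{p}\odot Q_\r R/\mathfrak{q})$, and Lemma \ref{lem_action_compatible} applies (because $R_\mathfrak{p}$ is flat, hence K-flat as a complex in degree zero, and $R/\mathfrak{q}$ admits a bounded below injective resolution) to yield $R_\mathfrak{p}\odot Q_\r R/\mathfrak{q} \iso Q_\r(R/\mathfrak{q})_\mathfrak{p}$. The module $(R/\mathfrak{q})_\mathfrak{p}$ vanishes unless $\mathfrak{q}\cie \mathfrak{p}$; combined with $\mathfrak{q}\supseteq \mathfrak{p}$ this forces $\mathfrak{q} = \mathfrak{p}$, in which case $(R/\mathfrak{p})_\mathfrak{p} = k(\mathfrak{p})$. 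Thus the only nonzero generators are the shifts of $I_\l Q_\r k(\mathfrak{p})$, completing the proof.

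The main obstacle I anticipate is the bookkeeping around the smashing localization: one must carefully justify that $\mathit{\Gamma}_\mathfrak{p}S(R)$ really is generated by the images under $L_{\mathcal{Z}(\mathfrak{p})}R\odot(-)$ of a generating set of $\mathit{\Gamma}_{\mathcal{V}(\mathfrak{p})}S(R)$, and verify that the boundedness hypotheses of Lemma \ref{lem_action_compatible} are met in the computation above; once those points are secured, the argument reduces to the observation that $(R/\mathfrak{q})_\mathfrak{p}$ vanishes for $\mathfrak{q}\supsetneq \mathfrak{p}$.
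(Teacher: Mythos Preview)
Your argument is correct and follows essentially the same route as the paper: apply Lemma~\ref{lem_gen1} to $\mathcal{V}(\mathfrak{p})$, push the generating set through $L_{\mathcal{Z}(\mathfrak{p})}R\odot(-)\iso R_\mathfrak{p}\odot(-)$, commute with $I_\l Q_\r$ via Proposition~\ref{prop_action_compatible} and Lemma~\ref{lem_action_compatible}, and observe that $(R/\mathfrak{q})_\mathfrak{p}$ collapses to $k(\mathfrak{p})$ or zero. The only cosmetic difference is that where you argue directly that an exact coproduct-preserving functor carries a generating set to a generating set for the image, the paper packages this step as $\langle R_\mathfrak{p}\rangle_\mathrm{loc}\odot\langle I_\l Q_\r R/\mathfrak{q}\rangle_\mathrm{loc} = \langle R_\mathfrak{p}\odot I_\l Q_\r R/\mathfrak{q}\rangle_\mathrm{loc}$ by invoking Lemma~A.3.12.
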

\begin{proof}
By Lemma \ref{lem_gen1} we have an equality
\begin{displaymath}
\mathit{\Gamma}_{\mathcal{V}(\mathfrak{p})}S(R) = \langle I_\l Q_\r R/\mathfrak{q}\; \vert \; \mathfrak{q}\in \mathcal{V}(\mathfrak{p})\rangle_\mathrm{loc}.
\end{displaymath}
Noticing that
\begin{displaymath}
\mathit{\Gamma}_\mathfrak{p}S(R) = L_{\mathcal{Z}(\mathfrak{p})}\mathit{\Gamma}_{\mathcal{V}(\mathfrak{p})}S(R) = \langle L_{\mathcal{Z}(\mathfrak{p})}R \rangle_\mathrm{loc} \odot \mathit{\Gamma}_{\mathcal{V}(\mathfrak{p})}S(R)
\end{displaymath}
we thus get, by Lemma A.3.12, equalities
\begin{align*}
\langle L_{\mathcal{Z}(\mathfrak{p})}R \rangle_\mathrm{loc} \odot \mathit{\Gamma}_{\mathcal{V}(\mathfrak{p})}S(R) &= \langle R_\mathfrak{p} \rangle_\mathrm{loc} \odot \langle I_\l Q_\r R/\mathfrak{q}\; \vert \; \mathfrak{q}\in \mathcal{V}(\mathfrak{p})\rangle_\mathrm{loc} \\
&= \langle R_\mathfrak{p} \odot I_\l Q_\r R/\mathfrak{q}\; \vert \; \mathfrak{q}\in \mathcal{V}(\mathfrak{p})\rangle_\mathrm{loc}
\end{align*}
where we have used Proposition \ref{prop_explicit_functors} to identify $L_{\mathcal{Z}(\mathfrak{p})}R$ with $R_\mathfrak{p}$. Hence, using Proposition \ref{prop_action_compatible} and Lemma \ref{lem_action_compatible} to move the action by $R_\mathfrak{p}$ past $I_\l Q_\r$, we obtain equalities
\begin{displaymath}
\mathit{\Gamma}_\mathfrak{p}S(R) = \langle I_\l Q_\r(R_\mathfrak{p} \otimes R/\mathfrak{q})\; \vert \; \mathfrak{q}\in \mathcal{V}(\mathfrak{p})\rangle_\mathrm{loc} = \langle I_\l Q_\r k(\mathfrak{p})\rangle_\mathrm{loc}
\end{displaymath}
completing the proof.
\end{proof}

Next we consider the behaviour of $\s$ and $\t$ with respect to the collection of subcategories of $S(R)$ generated by objects of $S(R)^c$.  The key observation is that when $R$ is Gorenstein compact objects of $S(R)$ have closed supports. In order to prove this we first show that one can reduce to considering the images of modules.

\begin{lem}\label{lem_reduction}
Let $a$ be a compact object of $S(R)$. Then there exists a finitely generated $R$-module $M$ and integer $i$ such that $a\oplus \S a$ is isomorphic to $\S^i I_\l Q_\r M$. In particular there is an equality
\begin{displaymath}
\supp a = \supp I_\l Q_\r M.
\end{displaymath}
\end{lem}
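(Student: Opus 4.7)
The plan is to leverage an idempotent trick together with a syzygy reduction. By Theorem \ref{thm_recol}(3), the composite $I_\l Q_\r$ restricts to a fully faithful triangulated embedding $\iota\colon D_{\mathrm{Sg}}(R) \hookrightarrow S(R)^c$ whose essential image generates $S(R)^c$ under summands. Hence there is a compact object $a'$ and a bounded complex $c \in D^b(\Coh R)$ with $a \oplus a' \iso I_\l Q_\r c$ in $S(R)$.

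Consider the endomorphism $e\colon I_\l Q_\r c \to I_\l Q_\r c$ which is $0$ on the $a$-summand and the identity on the $a'$-summand. Completing $e$ to a distinguished triangle and decomposing the map as a direct sum of $0\colon a \to a$ and $1\colon a' \to a'$, one finds $\cone(e) \iso a \oplus \S a$ (the identity on $a'$ has zero cone and the zero map on $a$ contributes the direct sum of source and its shift).

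By fully faithfulness of $\iota$, the endomorphism $e$ is the image of some $\tilde e \colon c \to c$ in $D_{\mathrm{Sg}}(R)$. Its cone $d$ lies in $D_{\mathrm{Sg}}(R)$ and, since $\iota$ is triangulated, satisfies $I_\l Q_\r d \iso a \oplus \S a$ in $S(R)$. Choose any bounded complex of finitely generated modules representing $d$. Since $R$ is noetherian, this complex admits a bounded-above resolution by finitely generated projective modules; truncating the resolution far enough to the left exhibits $d$ as isomorphic in $D_{\mathrm{Sg}}(R)$ to $\S^i M$ for an appropriate syzygy module $M$ and integer $i$, because the perfect cut-off dies in the quotient by $D^{\mathrm{perf}}(R)$. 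Applying $\iota$ gives the desired isomorphism $a \oplus \S a \iso \S^i I_\l Q_\r M$ in $S(R)$.

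The supports equality is then immediate since $\supp$ is additive over finite coproducts and commutes with $\S$, so $\supp a = \supp(a \oplus \S a) = \supp \S^i I_\l Q_\r M = \supp I_\l Q_\r M$. The main technical delicacy lies in justifying the syzygy reduction cleanly---in particular, that truncating an unbounded projective resolution produces a bounded complex representing a syzygy module, with the complementary piece perfect---and in confirming the cone computation in the second paragraph genuinely follows from the triangulated axioms applied to the direct sum decomposition.
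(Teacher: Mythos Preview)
Your proof is correct and follows essentially the same approach as the paper's, which simply cites \cite{NeeCat} Corollary 4.5.12 for the fact that $a\oplus\Sigma a$ lies in the image of $I_\l Q_\r$ (your cone-of-idempotent argument is precisely what underlies that result) and \cite{OrlovSing} Lemma 1.11 for the syzygy reduction you spell out. The support equality is handled identically via the standard properties of support under suspension and finite sums.
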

\begin{proof}
By Theorem \ref{thm_recol} $I_\l Q_\r$ induces an equivalence up to summands between $D_{\mathrm{Sg}}(R)$ and $S(R)^c$ so $a\oplus \S a$ is in the image of $I_\l Q_\r$ by \cite{NeeCat} Corollary 4.5.12. By the argument of \cite{OrlovSing} Lemma 1.11 every object of $D_\mathrm{Sg}(R)$ is, up to suspension, the image of a finitely generated $R$-module so we can find a finitely generated $M$ as claimed.

The statement about supports follows from the properties of the support given in Proposition A.5.7.
\end{proof}

\begin{lem}\label{lem_bigsupp}
If $a$ is an object of $S(R)^c$ then the set
\begin{displaymath}
\{\mathfrak{p}\in \Sing R\; \vert \; L_{\mathcal{Z}(\mathfrak{p})}R\odot a \neq 0\}
\end{displaymath}
is closed in $\Spec R$.
\end{lem}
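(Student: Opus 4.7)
The plan is to reduce to the case $a = I_\l Q_\r M$ for a finitely generated $R$-module $M$, translate the condition $L_{\mathcal{Z}(\mathfrak{p})}R\odot a \neq 0$ into a statement about the projective dimension of $M_\mathfrak{p}$, and then establish closedness of the resulting locus by a syzygy argument.

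By Lemma \ref{lem_reduction} there is a finitely generated $R$-module $M$ and an integer $i$ with $a\oplus \Sigma a \iso \Sigma^i I_\l Q_\r M$. Since $L_{\mathcal{Z}(\mathfrak{p})}R\odot(-)$ is exact, the set in the statement is unchanged when $a$ is replaced by $a\oplus \Sigma a$ or by a shift, so one may assume $a = I_\l Q_\r M$. Proposition \ref{prop_explicit_functors} identifies $L_{\mathcal{Z}(\mathfrak{p})}R$ with $R_\mathfrak{p}$ in $D(R)$, and applying Lemma \ref{lem_canlocalize} to the bounded-below complex $M$ yields
\[
L_{\mathcal{Z}(\mathfrak{p})}R\odot I_\l Q_\r M \iso I_\l Q_\r(M_\mathfrak{p}),
\]
the right-hand side being an object of $S(R_\mathfrak{p})$ pulled back to $S(R)$ along restriction of scalars. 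By Theorem \ref{thm_recol}(3) this object vanishes precisely when $M_\mathfrak{p}$ is perfect over $R_\mathfrak{p}$, equivalently when $\pdim_{R_\mathfrak{p}} M_\mathfrak{p} < \infty$. Hence the set in the lemma equals
\[
W = \{\mathfrak{p}\in \Spec R \;\vert\; \pdim_{R_\mathfrak{p}} M_\mathfrak{p} = \infty\},
\]
which is automatically contained in $\Sing R$ since $R_\mathfrak{p}$ regular forces $\pdim_{R_\mathfrak{p}} M_\mathfrak{p}<\infty$.

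It remains to check that $W$ is closed in $\Spec R$. I would fix a resolution $F_\bullet \to M$ by finitely generated projectives and write $\Omega^n M$ for the $n$-th syzygy. Since $\pdim_{R_\mathfrak{p}} M_\mathfrak{p}<\infty$ if and only if $(\Omega^n M)_\mathfrak{p}$ is projective for some $n\geq 0$, one may rewrite
\[
W = \bigcap_{n\geq 0}\{\mathfrak{p}\in \Spec R \;\vert\; (\Omega^n M)_\mathfrak{p}\;\text{is not projective}\}.
\]
Each set on the right is closed, since the locus at which a finitely generated module over a noetherian ring is locally projective (equivalently locally flat) is open; thus $W$ is an intersection of closed sets and hence closed. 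The only genuinely non-formal step is the middle identification, where the action-theoretic vanishing condition has to be matched with the finiteness of projective dimension; this hinges on Lemma \ref{lem_canlocalize} and the description of $L_{\mathcal{Z}(\mathfrak{p})}R$, after which the closedness assertion is standard commutative algebra.
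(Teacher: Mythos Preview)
Your argument is correct and follows essentially the same route as the paper's own proof: reduce to $a=I_\l Q_\r M$ via Lemma~\ref{lem_reduction}, identify $L_{\mathcal{Z}(\mathfrak{p})}R\odot a$ with $I_\l Q_\r M_\mathfrak{p}$ (the paper cites Proposition~\ref{prop_action_compatible} and Lemma~\ref{lem_action_compatible} directly rather than Lemma~\ref{lem_canlocalize}, but this is the same content), and conclude that the set in question is the infinite-projective-dimension locus of $M$. The only difference is that the paper simply asserts this locus is closed ``as $M$ is finitely generated'', whereas you supply the syzygy argument explicitly; both are fine.
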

\begin{proof}
Clearly we may, by applying Lemma \ref{lem_reduction}, suppose $a$ is $I_\l Q_\r M$ where $M$ is a finitely generated $R$-module. By the compatibility conditions of \ref{prop_action_compatible} and \ref{lem_action_compatible} we have an isomorphism
\begin{displaymath}
L_{\mathcal{Z}(\mathfrak{p})}R \odot I_\l Q_\r M \iso I_\l Q_\r M_\mathfrak{p}
\end{displaymath}
as $L_{\mathcal{Z}(\mathfrak{p})}R \iso R_\mathfrak{p}$ by Proposition \ref{prop_explicit_functors}. 

By considering the diagram of Lemma \ref{lem_canlocalize} and noting that the module $M_\mathfrak{p}$ is finitely generated over $R_\mathfrak{p}$ we see the object $I_\l Q_\r M_\mathfrak{p}$ is zero precisely when $M_\mathfrak{p}$ has finite projective dimension.

Thus
\begin{displaymath}
\{\mathfrak{p}\in \Sing R\; \vert \; L_{\mathcal{Z}(\mathfrak{p})}R\odot I_\l Q_\r M \neq 0\} = \{\mathfrak{p}\in \Sing R \; \vert \; \pdim_{R_\mathfrak{p}}M_\mathfrak{p} = \infty\}
\end{displaymath}
and this latter set is closed as $M$ is finitely generated.
\end{proof}

\begin{lem}\label{lem_compactsupp}
Let $R$ be Gorenstein and let $a$ be a compact object of $S(R)$. Then $\mathfrak{p}\in \Sing R$ is in the support of $a$ if and only if $L_{\mathcal{Z}(\mathfrak{p})}R\odot a$ is not zero.
\end{lem}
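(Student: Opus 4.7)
The forward direction is immediate: $\mathit{\Gamma}_\mathfrak{p} a \iso \mathit{\Gamma}_{\mathcal{V}(\mathfrak{p})}R \odot (L_{\mathcal{Z}(\mathfrak{p})}R \odot a)$, so if the inner factor vanishes then so does $\mathit{\Gamma}_\mathfrak{p} a$. For the converse, suppose $L_{\mathcal{Z}(\mathfrak{p})}R \odot a \neq 0$. By Lemma \ref{lem_reduction} we may assume $a = I_\l Q_\r M$ for some finitely generated $R$-module $M$. The compatibility results of Propositions \ref{prop_action_compatible} and \ref{prop_explicit_functors}, Lemmas \ref{lem_action_compatible} and \ref{lem_canlocalize}, combined with the fact that Koszul complexes localize, allow us to descend both the hypothesis and the desired conclusion along $R \to R_\mathfrak{p}$. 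Thus we reduce to the local case: $(R,\mathfrak{m},k)$ is Gorenstein local, $\mathfrak{p} = \mathfrak{m}$, and $M$ is a finitely generated module with $\pdim M = \infty$; we must show $\mathit{\Gamma}_\mathfrak{m}(I_\l Q_\r M) \neq 0$.

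To this end we produce a nonzero morphism from a generator of $\mathit{\Gamma}_\mathfrak{m} S(R)$ into $I_\l Q_\r M$. Repeating the adjunction computation in the proof of Lemma \ref{lem_gen1}, Krause's \cite{KrStab} Lemma 2.1 together with the adjunction between $I$ and $I_\l$ yield
\[
\Hom_{S(R)}(\Sigma^{-n-1}I_\l Q_\r k,\; I_\l Q_\r M) \iso \Ext^1\!\bigl(k,\; Z^n(I_\l Q_\r M)\bigr).
\]
By Corollary \ref{cor_envelope} and Theorem \ref{thm_env_exist}, $Z^0(I_\l Q_\r M) \iso G_R(M)$ and $G_R(M)$ differs from $M$ by a module of finite injective dimension, so for $n$ sufficiently large, dimension-shifting along the injective resolution of $G_R(M)$ identifies the right-hand side with $\Ext^{n+1}(k, G_R(M)) \iso \Ext^{n+1}(k, M) \iso k^{\mu_{n+1}(\mathfrak{m},M)}$. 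Over a Gorenstein local ring, a finitely generated module has finite projective dimension if and only if it has finite injective dimension, so $\pdim M = \infty$ forces $\idim M = \infty$; Bass's theorem then yields $\mu_n(\mathfrak{m},M) \neq 0$ for arbitrarily large $n$. Fix such an $n$; the displayed Hom group is then nonzero.

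Finally, $I_\l Q_\r k \in \mathit{\Gamma}_\mathfrak{m} S(R)$ by Proposition \ref{prop_nontriv}, so it is Hom-orthogonal to the complementary localization $L_{\mathcal{V}(\mathfrak{m})} S(R)$. Applying $\Hom(\Sigma^{-n-1} I_\l Q_\r k, -)$ to the localization triangle
\[
\mathit{\Gamma}_\mathfrak{m}(I_\l Q_\r M) \to I_\l Q_\r M \to L_{\mathcal{V}(\mathfrak{m})}(I_\l Q_\r M) \to \Sigma\mathit{\Gamma}_\mathfrak{m}(I_\l Q_\r M)
\]
identifies the nonzero Hom group with $\Hom(\Sigma^{-n-1} I_\l Q_\r k,\; \mathit{\Gamma}_\mathfrak{m}(I_\l Q_\r M))$, forcing $\mathit{\Gamma}_\mathfrak{m}(I_\l Q_\r M) \neq 0$. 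The main obstacle in the plan is the commutative-algebra input above: that a finitely generated module of infinite projective dimension over a Gorenstein local ring has a nonvanishing Bass number $\mu_n(\mathfrak{m},M)$ in arbitrarily high degree $n$. This combines the Gorenstein equivalence $\pdim < \infty \iff \idim < \infty$ for finitely generated modules with Bass's characterization of finite injective dimension via the eventual vanishing of Bass numbers.
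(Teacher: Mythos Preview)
Your proof is correct and follows essentially the same strategy as the paper's: reduce to $a = I_\l Q_\r M$, pass to the local ring $R_\mathfrak{p}$, identify the relevant Hom-group in $S(R_\mathfrak{p})$ with $\Ext^i_{R_\mathfrak{p}}(k(\mathfrak{p}),M_\mathfrak{p})$ via the Gorenstein injective envelope, and conclude using non-vanishing Bass numbers. The only notable difference is in the commutative-algebra input: the paper invokes the propagation result \cite{EnochsJenda} 9.2.13 to push non-zero Bass invariants up to the maximal ideal, whereas you use directly that for a finitely generated module over a noetherian local ring $\idim M = \sup\{n : \mu_n(\mathfrak{m},M) \neq 0\}$, which is slightly cleaner.
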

\begin{proof}
One direction is easy: if $\mathfrak{p}\in \supp a$ then
\begin{displaymath}
\mathit{\Gamma}_\mathfrak{p}R \odot a \iso \mathit{\Gamma}_{\mathcal{V}(\mathfrak{p})}R \odot L_{\mathcal{Z}(\mathfrak{p})}R \odot a \neq 0
\end{displaymath}
so $L_{\mathcal{Z}(\mathfrak{p})}R\odot a$ is certainly not zero.

Now let us prove the converse. By Lemma \ref{lem_reduction} it is sufficient to prove the result for $I_\l Q_\r M$ where $M$ is a finitely generated $R$-module. So suppose $M$ is a finitely generated $R$-module of infinite projective dimension such that the projection of $R_\mathfrak{p}\otimes_R M = M_\mathfrak{p}$ to $S(R_\mathfrak{p})^c$ is not zero, where this projection is  $L_{\mathcal{Z}(\mathfrak{p})}I_\l Q_\r M$ by the compatibility conditions of \ref{prop_action_compatible} and \ref{lem_action_compatible}. In particular $M_\mathfrak{p}$ also has infinite projective dimension. 

As $R_\mathfrak{p}$ is Gorenstein of finite Krull dimension $M_\mathfrak{p}$ has, as an $R_\mathfrak{p}$-module, a Gorenstein injective envelope $G(M_\mathfrak{p})$ by Theorem \ref{thm_env_exist} which fits into an exact sequence
\begin{displaymath}
0 \to M_\mathfrak{p} \to G(M_\mathfrak{p}) \to L \to 0
\end{displaymath}
where $L$ has finite injective dimension (details about Gorenstein injectives and Gorenstein injective envelopes can be found in Section \ref{ssec_GInj}). So for $i$ sufficiently large (i.e., exceeding the dimension of $R_\mathfrak{p}$) we have isomorphisms
\begin{align*}
&\;\;\;\; \Hom_{S(R)}(I_\l Q_\r R/\mathfrak{p}, \S^i L_{\mathcal{Z}(\mathfrak{p})} I_\l Q_\r M) \\
&\iso \Hom_{S(R_\mathfrak{p})}(I_\l Q_\r k(\mathfrak{p}), \S^i I_\l Q_\r M_\mathfrak{p}) \\
&\iso \Hom_{S(R_\mathfrak{p})}(I_\l Q_\r k(\mathfrak{p}), \S^i I_\l Q_\r G(M_\mathfrak{p})) \\
&\iso \Ext^i_{R_\mathfrak{p}}(k(\mathfrak{p}), G(M_\mathfrak{p})) \\
&\iso \Ext^i_{R_\mathfrak{p}}( k(\mathfrak{p}), M_\mathfrak{p})
\end{align*}
where the first isomorphism is by adjunction, the second by the identification of a complete injective resolution for $M_\mathfrak{p}$ with the defining complex of $G(M_\mathfrak{p})$ (see Proposition \ref{prop_ginj_sing_equiv} and Corollary \ref{cor_envelope}, cf.\  \cite{KrStab} Section 7), the third by \cite{KrStab} Proposition 7.10, and the last isomorphism by the finiteness of the injective dimension of $L$. 

From \cite{EnochsJenda} Proposition 9.2.13 we learn that for $\mathfrak{q} \cie \mathfrak{q'}$ distinct primes with no prime ideal between them that $\mu_j(\mathfrak{q}, M_\mathfrak{p}) \neq 0$ implies that $\mu_{j+1}(\mathfrak{q'}, M_\mathfrak{p}) \neq 0$ where
\begin{displaymath}
\mu_j(\mathfrak{q}, M) = \dim_{k(\mathfrak{q})}\Ext^j_{R_\mathfrak{q}}(k(\mathfrak{q}), M_\mathfrak{q}) 
\end{displaymath}
are the Bass invariants. As $M_\mathfrak{p}$ is not perfect infinitely many of the Bass invariants are non-zero and so in particular, as $\mathfrak{p}$ is the maximal ideal of $R_\mathfrak{p}$, there are infinitely many non-zero $\mu_j(\mathfrak{p}, M_\mathfrak{p})$. Thus, taking $i$ larger if necessary, we get that
\begin{displaymath}
0 \neq \Ext^i_{R_\mathfrak{p}}( k(\mathfrak{p}), M_\mathfrak{p}) \iso \Hom_{S(R)}(I_\l Q_\r R/\mathfrak{p}, \S^i L_{\mathcal{Z}(\mathfrak{p})} I_\l Q_\r M).
\end{displaymath}
Hence $\mathit{\Gamma}_{\mathcal{V}(\mathfrak{p})}L_{\mathcal{Z}(\mathfrak{p})}I_\l Q_\r M \neq 0$ as by Lemma \ref{lem_gen1} the object $I_\l Q_\r R/\mathfrak{p}$ is one of the generators for $\mathit{\Gamma}_{\mathcal{V}(\mathfrak{p})}S(R)$. It follows that $\mathfrak{p} \in \supp I_\l Q_\r M$ as desired.
\end{proof}

\begin{prop}\label{prop_compactsupp}
Let $R$ be Gorenstein. If $a$ is a compact object of $S(R)$ then $\supp a$ is a closed subset of $\Sing R$.
\end{prop}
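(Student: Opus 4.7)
The proposition is essentially a direct consequence of the two preceding lemmas, so the plan is simply to combine them. First I would observe that Lemma \ref{lem_compactsupp} gives an equality of sets
\begin{displaymath}
\supp a = \{\mathfrak{p}\in \Sing R \; \vert \; L_{\mathcal{Z}(\mathfrak{p})}R\odot a \neq 0\},
\end{displaymath}
where the use of the Gorenstein hypothesis is crucial (it is what makes the necessary Bass-invariant argument go through, via the existence of Gorenstein injective envelopes with cokernel of finite injective dimension).

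Then Lemma \ref{lem_bigsupp}, which does not require $R$ to be Gorenstein, tells us precisely that the right-hand side is a closed subset of $\Spec R$. Since we have already observed that supports of objects of $S(R)$ are contained in $\Sing R$, the set is closed in $\Sing R$ as well (either by intersecting with $\Sing R$, or directly since $\supp a \subseteq \Sing R$).

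There is no obstacle: the heavy lifting was done in the two previous lemmas. The only small subtlety is the direction $L_{\mathcal{Z}(\mathfrak{p})}R\odot a \neq 0 \Rightarrow \mathfrak{p}\in \supp a$ in Lemma \ref{lem_compactsupp}, which is where the Gorenstein hypothesis enters; once this is in hand, the present proposition follows in one line by invoking Lemma \ref{lem_bigsupp}.
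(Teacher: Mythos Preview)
Your proposal is correct and matches the paper's proof essentially verbatim: the paper simply invokes Lemma~\ref{lem_compactsupp} to identify $\supp a$ with $\{\mathfrak{p}\in \Sing R \mid L_{\mathcal{Z}(\mathfrak{p})}a \neq 0\}$ and then cites Lemma~\ref{lem_bigsupp} for closedness. Your additional remarks about where the Gorenstein hypothesis enters and the containment in $\Sing R$ are accurate elaborations but not needed for the formal argument.
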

\begin{proof}
By the last lemma
\begin{displaymath}
\supp a = \{\mathfrak{p}\in \Sing R\; \vert \; L_{\mathcal{Z}(\mathfrak{p})} a \neq 0\}
\end{displaymath}
which is closed by Lemma \ref{lem_bigsupp}.
\end{proof}

\begin{rem}\label{rem_prop_compactsupp}
If $\mathfrak{p} \in \Sing R$ the proof of Lemma \ref{lem_compactsupp} gives the equality
\begin{displaymath}
\supp I_\l Q_\r R/\mathfrak{p} = \mathcal{V}(\mathfrak{p}).
\end{displaymath}
Indeed, as $R_\mathfrak{p}$ is not regular the residue field $k(\mathfrak{p})$ must have an infinite free resolution over $R_\mathfrak{p}$ so if $(R/\mathfrak{p})_\mathfrak{q}$ had finite projective dimension over $R$ for $\mathfrak{q}\in \mathcal{V}(\mathfrak{p})$ one could localize to find a finite resolution for $k(\mathfrak{p})$ giving a contradication. Thus
\begin{displaymath}
\supp I_\l Q_\r R/\mathfrak{p} = \{\mathfrak{q}\in \Sing R\; \vert \; L_{\mathcal{Z}(\mathfrak{q})} I_\l Q_\r R/\mathfrak{p} \neq 0\}
\end{displaymath}
which is precisely $\mathcal{V}(\mathfrak{p})$.
\end{rem}

It follows from this proposition and the compatibility of supports with extensions, coproducts, and suspensions (Proposition A.5.7) that, provided $R$ is Gorenstein, for any localizing subcategory $\mathcal{L}\cie S(R)$ generated by objects of $S(R)^c$ the subset $\s\mathcal{L} \cie \Sing R$ is specialization closed. It follows from Corollary A.4.11 that $\t$ sends specialization closed subsets to localizing subcategories of $S(R)$ generated by objects compact in $S(R)$ so $\t$ and $\s$ restrict, i.e.:

\begin{prop}\label{prop_res_assignment}
The assignments $\s$ and $\t$ restrict to well-defined functions
\begin{displaymath}
\left\{ \begin{array}{c}
\text{specialization closed} \\ \text{subsets of}\; \Sing R 
\end{array} \right\}
\xymatrix{ \ar[r]<1ex>^{\tau} \ar@{<-}[r]<-1ex>_{\sigma} &} \left\{
\begin{array}{c}
\text{localizing subcategories of}\; S(R) \\ \text{generated by objects of} \; S(R)^c
\end{array} \right\}.
\end{displaymath}
\end{prop}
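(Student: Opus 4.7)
The proposition is really a formal consequence of material already assembled; the work is to check each direction in turn.

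For the $\sigma$ direction, suppose $\mathcal{L}\cie S(R)$ is a localizing subcategory generated by a set $\mathcal{G}\cie S(R)^c$. Using the local-to-global principle (Theorem \ref{thm_general_ltg}) together with the compatibility of support with triangles, coproducts, and suspensions recorded in Proposition A.5.7, I would argue that
\begin{displaymath}
\s(\mathcal{L}) = \bigcup_{a\in \mathcal{G}} \supp a.
\end{displaymath}
Each $\supp a$ is closed in $\Sing R$ by Proposition \ref{prop_compactsupp} (where the Gorenstein hypothesis for that proposition is invoked), so this union is certainly specialization closed, placing $\s(\mathcal{L})$ in the target of the restricted $\s$.

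For the $\t$ direction, let $W\cie \Sing R$ be specialization closed. I would identify $\t(W)$ with the localizing subcategory $\mathit{\Gamma}_W S(R)$ associated to $W$ via the action of $D(R)$. The inclusion $\mathit{\Gamma}_W S(R)\cie \t(W)$ is immediate from the support-theoretic characterization of $\mathit{\Gamma}_W S(R)$. For the reverse inclusion, Proposition \ref{prop_imtau} (which is a restatement of Lemma A.6.2 and is itself an application of the local-to-global principle) gives
\begin{displaymath}
\t(W) = \langle \mathit{\Gamma}_\mathfrak{p} S(R) \; \vert \; \mathfrak{p}\in W\rangle_{\mathrm{loc}},
\end{displaymath}
and each summand on the right is contained in $\mathit{\Gamma}_W S(R)$. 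Finally, Corollary A.4.11 guarantees that $\mathit{\Gamma}_W S(R)$ is generated by objects compact in $S(R)$, so $\t(W)$ lies in the target of the restricted $\t$.

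The only step with any potential for friction is the translation of the local-to-global principle into the equality $\s(\mathcal{L}) = \bigcup_{a\in \mathcal{G}} \supp a$; once one has this and Proposition \ref{prop_compactsupp}, both halves reduce to invoking results already in hand (Corollary A.4.11 and Proposition \ref{prop_imtau}).
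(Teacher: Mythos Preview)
Your proposal is correct and follows essentially the same route as the paper: the paper's argument is the short paragraph immediately preceding the proposition, which handles $\sigma$ via Proposition~\ref{prop_compactsupp} together with Proposition~A.5.7, and handles $\tau$ by citing Corollary~A.4.11. Your invocation of the local-to-global principle in the $\sigma$ direction is not actually needed (the equality $\sigma(\mathcal{L}) = \bigcup_{a\in\mathcal{G}}\supp a$ already follows from A.5.7, since $\{A \mid \mathit{\Gamma}_\mathfrak{p} A = 0\}$ is localizing), and your explicit identification $\tau(W) = \mathit{\Gamma}_W S(R)$ in the $\tau$ direction unpacks what the paper bundles into the citation of A.4.11, but neither of these is a genuine departure.
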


We are now ready to state and prove our first classification theorem for subcategories of the singularity category (cf.\ Theorem 7.5 \cite{TakahashiMCM}).

\begin{thm}\label{thm_bijections}
Let $R$ be a commutative Gorenstein ring. Then there are order preserving bijections
\begin{displaymath}
\left\{ \begin{array}{c}
\text{subsets of}\; \Sing R 
\end{array} \right\}
\xymatrix{ \ar[r]<1ex>^{\tau} \ar@{<-}[r]<-1ex>_{\sigma} &} \left\{
\begin{array}{c}
\text{localizing subcategories}\; \mathcal{L}\; \text{of} \; S(R) \\
\text{containing} \; I_\l Q_\r k(\mathfrak{p})\; \text{for} \; \mathfrak{p} \in \s(\mathcal{L})
\end{array} \right\}
\end{displaymath}
and
\begin{displaymath}
\left\{ \begin{array}{c}
\text{specialization closed} \\ \text{subsets of}\; \Sing R 
\end{array} \right\}
\xymatrix{ \ar[r]<1ex>^{\tau} \ar@{<-}[r]<-1ex>_{\sigma} &} \left\{
\begin{array}{c}
\text{subcategories}\; \mathcal{L}\; \text{of} \; S(R) \\ 
\text{generated by objects of}\; S(R)^c\; \\ \text{and containing} \; I_\l Q_\r k(\mathfrak{p})\; \text{for} \; \mathfrak{p} \in \s(\mathcal{L})
\end{array} \right\}.
\end{displaymath}
This second being equivalent to the bijection
\begin{displaymath}
\left\{ \begin{array}{c}
\text{specialization closed} \\ \text{subsets of}\; \Sing R 
\end{array} \right\}
\xymatrix{ \ar[r]<1ex> \ar@{<-}[r]<-1ex> &} \left\{
\begin{array}{c}
\text{thick subcategories}\; \mathcal{L}\; \text{of} \; D_{\mathrm{Sg}}(R) \\
\text{such that}\; \mathcal{L}_\mathfrak{p} \cie D_{\mathrm{Sg}}(R_\mathfrak{p}) \; \text{contains}\; k(\mathfrak{p}) \\
\text{(up to summands)}
\end{array} \right\}.
\end{displaymath}
\end{thm}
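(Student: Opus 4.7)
My plan is to show that all three bijections fall out by assembling the results proved earlier in this section; the new content is essentially pinpointing the image of $\tau$ in each case. Recall from Corollary \ref{cor_injective} that $\sigma\tau(W) = W$ for every $W \subseteq \Sing R$, so for each bijection it suffices to characterise exactly which localizing $\mathcal{L}$ satisfy $\tau\sigma(\mathcal{L}) = \mathcal{L}$.

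I would begin with the first bijection. Proposition \ref{prop_imtau} rewrites
\[
\tau\sigma(\mathcal{L}) = \langle \mathit{\Gamma}_\mathfrak{p}S(R) \mid \mathfrak{p} \in \sigma\mathcal{L}\rangle_\mathrm{loc},
\]
and Lemma \ref{lem_gen2} identifies each $\mathit{\Gamma}_\mathfrak{p}S(R)$ with $\langle I_\l Q_\r k(\mathfrak{p})\rangle_\mathrm{loc}$. Thus $\tau\sigma(\mathcal{L}) = \langle I_\l Q_\r k(\mathfrak{p}) \mid \mathfrak{p} \in \sigma\mathcal{L}\rangle_\mathrm{loc}$, and this is contained in $\mathcal{L}$ precisely when $\mathcal{L}$ satisfies the hypothesis of the theorem; the reverse inclusion is automatic from the definitions. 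In the other direction, given any $W$ the subcategory $\tau(W)$ contains $I_\l Q_\r k(\mathfrak{p})$ for each $\mathfrak{p} \in W = \sigma\tau(W)$ by the same two inputs, so the description of the image is correct.

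For the second bijection I would restrict the first to specialization closed subsets. By Proposition \ref{prop_res_assignment}, $\tau$ sends specialization closed subsets to localizing subcategories generated by objects of $S(R)^c$ and conversely $\sigma$ sends such subcategories to specialization closed subsets. Since the membership condition on $I_\l Q_\r k(\mathfrak{p})$ is defined purely in terms of $\sigma\mathcal{L}$, it transfers untouched to this restricted setting, and the second bijection follows.

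For the equivalence with the thick subcategory formulation I would invoke the standard Neeman-style correspondence between compactly generated localizing subcategories of $S(R)$ and thick subcategories of $S(R)^c$, then use Theorem \ref{thm_recol}(3) to identify $S(R)^c$ with $D_{\mathrm{Sg}}(R)$ up to summands. The condition $I_\l Q_\r k(\mathfrak{p}) \in \mathcal{L}$ translates, via Lemma \ref{lem_canlocalize} and localization at $\mathfrak{p}$, into the condition that $k(\mathfrak{p})$ lies in $\mathcal{L}_\mathfrak{p}$ up to summands in $D_{\mathrm{Sg}}(R_\mathfrak{p})$. The main technical nuisance is tracking the summand-closure correctly when passing between $S(R)^c$ and its idempotent completion $D_{\mathrm{Sg}}(R)$; beyond this bookkeeping, the proof is a formal consequence of the preceding lemmas and the local-to-global principle.
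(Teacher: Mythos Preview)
Your proposal is correct and follows essentially the same route as the paper. The only cosmetic difference is that the paper establishes $\mathcal{L}=\tau\sigma(\mathcal{L})$ in one stroke by invoking the local-to-global principle directly on $\mathcal{L}$ (writing $\mathcal{L}=\langle \mathit{\Gamma}_\mathfrak{p}\mathcal{L}\rangle_{\mathrm{loc}}$ and then arguing $\mathit{\Gamma}_\mathfrak{p}\mathcal{L}=\mathit{\Gamma}_\mathfrak{p}S(R)$), whereas you split it into the two inclusions $\tau\sigma(\mathcal{L})\subseteq\mathcal{L}$ (from the hypothesis) and $\mathcal{L}\subseteq\tau\sigma(\mathcal{L})$ (automatic); both use the same inputs, namely Proposition~\ref{prop_imtau}, Lemma~\ref{lem_gen2}, and Corollary~\ref{cor_injective}.
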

\begin{proof}
We proved in Corollary \ref{cor_injective} that $\s\t(W) =W$ for every subset $W$ of $\Sing R$. If $\mathcal{L}$ is a localizing subcategory then by the local-to-global principle Theorem \ref{thm_general_ltg}
\begin{displaymath}
\mathcal{L} = \langle \mathit{\Gamma}_\mathfrak{p}\mathcal{L} \; \vert \; \mathfrak{p} \in \s(\mathcal{L})\rangle_\mathrm{loc}.
\end{displaymath}
Given that $I_\l Q_\r k(\mathfrak{p})$ lies in $\mathcal{L}$ for each $\mathfrak{p}\in \s(\mathcal{L})$ we must have $\mathit{\Gamma}_\mathfrak{p}\mathcal{L} = \mathit{\Gamma}_\mathfrak{p}S(R)$ by Lemma \ref{lem_gen2}. Thus $\mathcal{L} = \t\s(\mathcal{L})$ by Proposition \ref{prop_imtau}. 

The restricted assignments of the second claim make sense by Proposition \ref{prop_res_assignment} and it is a bijection by the same argument we have just used above.

The last bijection is a consequence of the second one together with Krause's result Theorem \ref{thm_recol} (3) which identifies $S(R)^c$, up to summands, with the singularity category $D_{\mathrm{Sg}}(R)$.
\end{proof}

\section{The classification theorem for hypersurface rings}\label{sec_hypersurface}
Throughout this section $(R,\mathfrak{m},k)$ is a local Gorenstein ring unless otherwise specified. We consider the relationship between the categories $S(R)$ and $S(R/(x))$ for $x$ a regular element. Our results allow us to classify the localizing subcategories of $S(R)$ in the case that $R$ is a hypersurface ring.

By the classification result we have already proved in Theorem \ref{thm_bijections} together with the fact that every localizing subcategory is closed under the action of $D(R)$ (Lemma \ref{lem_locpreserving}) it is sufficient to consider subcategories of $\mathit{\Gamma}_\mathfrak{p}S(R)$. Observe that a bijection between subsets of the singular locus and the collection of localizing subcategories is equivalent to each of the $\mathit{\Gamma}_\mathfrak{p}S(R)$ being minimal i.e., having no proper non-trivial localizing subcategories.

\begin{rem}
Note that we can reduce to the case of local rings when studying minimality. Indeed, suppose $R$ is a noetherian ring and $\mathfrak{p} \in \Spec R$. Then since $\mathit{\Gamma}_\mathfrak{p}R\otimes L_{\mathcal{Z}(\mathfrak{p})}R \iso \mathit{\Gamma}_{\mathfrak{p}}R$ we can study $\mathit{\Gamma}_{\mathfrak{p}}S(R)$ in $S(R_\mathfrak{p}) \cie S(R)$, the essential image of $L_{\mathcal{Z}(\mathfrak{p})}R\odot(-) \iso R_\mathfrak{p}\otimes_R(-)$. 
\end{rem}

We now prove several lemmas leading to a key proposition. The first two of these lemmas are well known so we omit the proofs.

\begin{lem}\label{lem_Gor_quot}
Let $x$ be a regular element of $R$. Then the quotient ring $R/(x)$ is also Gorenstein.
\end{lem}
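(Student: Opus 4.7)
The plan is to reduce the statement to a computation of injective dimension via change of rings. Since $R$ is local Gorenstein of some dimension $d$, we have $\mathrm{id}_R R = d < \infty$, and it suffices to show that $\mathrm{id}_{R/(x)} R/(x)$ is finite (the quotient is automatically noetherian local, with dimension $d-1$ since $x$ is a nonzerodivisor sitting inside the maximal ideal $\mathfrak{m}$; if $x$ is a unit the quotient is zero and there is nothing to prove).

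First I would recall the Rees-type change of rings isomorphism: for any $R/(x)$-module $N$ and any $R$-module $M$ on which $x$ acts as a nonzerodivisor, there is a natural isomorphism
\begin{displaymath}
\mathrm{Ext}^{i+1}_R(N,M) \iso \mathrm{Ext}^{i}_{R/(x)}(N, M/xM)
\end{displaymath}
for all $i \geq 0$. This is the standard computation (see e.g.\ Matsumura, \emph{Commutative Ring Theory}, Lemma 18.2) obtained by resolving $N$ over $R$ and using the short exact sequence $0 \to R \xrightarrow{x} R \to R/(x) \to 0$, or equivalently from the change-of-rings spectral sequence together with the fact that the $R$-projective dimension of $R/(x)$ is $1$.

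Applying this with $M = R$ and any $R/(x)$-module $N$, the left-hand side vanishes as soon as $i + 1 > d$ because $\mathrm{id}_R R = d$. Hence $\mathrm{Ext}^{i}_{R/(x)}(N, R/(x)) = 0$ for all $R/(x)$-modules $N$ and all $i \geq d$, which gives $\mathrm{id}_{R/(x)} R/(x) \leq d - 1 < \infty$. Combined with the fact that $R/(x)$ is noetherian local, this shows $R/(x)$ is Gorenstein.

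There is no real obstacle here; the only delicate point is ensuring the change-of-rings isomorphism is set up correctly (i.e.\ that $x$ is genuinely a nonzerodivisor on both $R$ and on $N$-relevant data), and that is automatic from the hypothesis. The whole argument is a three-line reduction once the Rees isomorphism is invoked, which is why the excerpt flags this as well known and omits the proof.
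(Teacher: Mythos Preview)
Your argument is correct: the Rees change-of-rings isomorphism together with $\idim_R R = \dim R < \infty$ immediately bounds $\idim_{R/(x)} R/(x)$ and hence shows $R/(x)$ is Gorenstein. The paper omits the proof entirely as well known, so there is nothing further to compare; your write-up is exactly the kind of standard verification the omission points to.
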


\begin{lem}\label{lem_Ginj_divis}
Let $G$ be a Gorenstein injective $R$-module and $x\in R$ an $R$-regular element. Then $G$ is $x$-divisible i.e., multiplication by $x$ is surjective on $G$.
\end{lem}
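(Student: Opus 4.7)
The plan is to exploit the definition of Gorenstein injective directly, together with the standard fact that injective modules over a commutative ring are divisible by any regular element. The key observation is that every Gorenstein injective module is a quotient of an injective module, which follows immediately from the defining totally acyclic complex.

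In detail, by Definition~\ref{defn_ginj}, $G = Z^0 E$ for some exact complex
\begin{displaymath}
E = \cdots \to E_1 \to E_0 \stackrel{d}{\to} E^0 \to E^1 \to \cdots
\end{displaymath}
of injective $R$-modules. Since $E$ is exact at $E^0$, one has $G = \ker(E^0 \to E^1) = \im d$, and therefore $G$ is a quotient of the injective module $E_0$. Hence it suffices to show that $E_0$ is $x$-divisible, because divisibility clearly passes to quotients.

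To check that $E_0$ is $x$-divisible, I would run the standard Baer-style argument: given $e \in E_0$, define $\varphi\colon xR \to E_0$ by $xr \mapsto re$. This is well defined because $x$ is $R$-regular, so multiplication by $x$ is injective on $R$. By injectivity of $E_0$, the map $\varphi$ extends to $\psi\colon R \to E_0$, and then $x\psi(1) = \psi(x) = \varphi(x) = e$, as required.

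There is essentially no obstacle here; the result is immediate once one unwinds the definition of Gorenstein injective. It is perhaps worth noting that the argument uses neither the Gorenstein hypothesis on $R$ nor the locality assumption in force throughout this section, so the statement holds over any noetherian commutative ring.
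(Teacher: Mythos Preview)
Your argument is correct: a Gorenstein injective module is by definition a quotient of an injective, injectives over a commutative ring are divisible by any regular element via the Baer extension argument you give, and divisibility passes to quotients. The paper itself omits the proof, stating only that the result is well known, so there is nothing to compare against; your write-up is the standard justification one would expect.
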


\begin{notation}
We will consider $D(R)$ to act on $\underline{\GInj}R$ via the equivalence of Proposition \ref{prop_ginj_sing_equiv}. Thus by $\mathit{\Gamma}_\mathfrak{m}G$ for $G\in \underline{\GInj}R$ we mean the class represented by the Gorenstein injective $Z^0\mathit{\Gamma}_\mathfrak{m}c(G)$.
\end{notation}

\begin{lem}\label{lem_mG_ext_nonzero}
Let $G$ be a Gorenstein injective $R$-module such that $\mathit{\Gamma}_\mathfrak{m}G \neq 0$ in the stable category. Then for all $i\geq 1$ 
\begin{displaymath}
\Ext^i(k,G)\neq 0.
\end{displaymath}
\end{lem}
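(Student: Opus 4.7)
The plan is to reduce to the case $i=1$ by dimension shifting, then handle that case using the explicit description of $\mathit{\Gamma}_\mathfrak{m}$ provided by Proposition \ref{prop_subcomplex}.

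For the reduction, I would fix a complete injective resolution $I^\bullet$ of $G$ and let $G^j = Z^j(I^\bullet)$ be its $j$-th cosyzygy. Each $G^j$ is again Gorenstein injective and, under the equivalence of Proposition \ref{prop_ginj_sing_equiv}, corresponds to $\Sigma^j G$. Standard dimension shifting applied to the short exact sequences $0 \to G^j \to I^j \to G^{j+1} \to 0$, using that $\Ext^{>0}(k, I^j) = 0$, gives $\Ext^i(k, G) \iso \Ext^1(k, G^{i-1})$ for all $i \geq 1$. Since the action of $D(R)$ is triangulated, $\mathit{\Gamma}_\mathfrak{m} G^{i-1} \iso \Sigma^{i-1}\mathit{\Gamma}_\mathfrak{m} G$ is again nonzero in the stable category, so it suffices to prove the $i = 1$ case.

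For the $i = 1$ case, I would argue by contradiction. Assume $\Ext^1(k, G) = 0$. Choose a minimal injective resolution $0 \to G \to I^0 \to I^1 \to \cdots$ and extend it to a complete resolution $c(G)$. The vanishing hypothesis is equivalent to the first Bass number $\mu^1(\mathfrak{m}, G) = \dim_k \Ext^1(k, G)$ being zero, meaning $I^1$ contains no summand isomorphic to $E(k)$. By Proposition \ref{prop_subcomplex}, $\mathit{\Gamma}_\mathfrak{m}c(G)$ is homotopy equivalent in $K(\Inj R)$ to the subcomplex $J^\bullet$ whose degree-$i$ term $J^i$ is the $E(k)$-summand of $c(G)^i$; in particular $J^1 = 0$. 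Since the equivalence of Proposition \ref{prop_ginj_sing_equiv} sends a complex in $S(R)$ to its $0$-th syzygy and respects homotopy equivalence, $\mathit{\Gamma}_\mathfrak{m} G$ in $\underline{\GInj}R$ is represented by $Z^0(J^\bullet) = \ker(J^0 \to J^1) = J^0$, a direct sum of copies of $E(k)$. This is an injective $R$-module, hence zero in $\underline{\GInj}R$, contradicting the hypothesis.

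The key technical point to verify is the identification of $\mathit{\Gamma}_\mathfrak{m}G$ in $\underline{\GInj}R$ with $J^0$: this uses the homotopy equivalence supplied by Proposition \ref{prop_subcomplex} together with the observation that $J^\bullet$ inherits acyclicity from the acyclic complex $\mathit{\Gamma}_\mathfrak{m}c(G) \in S(R)$, so that its $0$-th syzygy is an unambiguous representative in $\underline{\GInj}R$. I do not expect any serious obstacle; once the description of $\mathit{\Gamma}_\mathfrak{m}$ from Proposition \ref{prop_subcomplex} is in hand, the argument is essentially a direct unpacking of definitions.
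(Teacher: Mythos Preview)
Your argument is correct, and both you and the paper rely on Proposition~\ref{prop_subcomplex} as the key input, but the organization differs. The paper does not dimension-shift: instead it transfers the computation directly, showing via adjunction and \cite{KrStab}~Proposition~7.10 that $\Ext^i(k,G)\iso \Ext^i(k,\mathit{\Gamma}_\mathfrak{m}G)$ for all $i\geq 1$, and then observes (again via Proposition~\ref{prop_subcomplex}) that a representative for $\mathit{\Gamma}_\mathfrak{m}G$ is $\mathfrak{m}$-torsion of infinite injective dimension, so its Bass numbers $\mu^i(\mathfrak{m},-)$ are nonzero in every positive degree. Your route trades this adjunction step for a dimension-shift reduction to $i=1$, after which you read off the contradiction directly from the complex $J^\bullet$; this is slightly more elementary in that it avoids invoking \cite{KrStab}~Proposition~7.10, at the cost of the extra reduction step. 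One small remark: your identification $Z^0(J^\bullet)=J^0$ uses that $J^\bullet$ is genuinely the subcomplex described in the proof of Proposition~\ref{prop_subcomplex} (the kernel of the degreewise split epimorphism), not merely some complex with the prescribed terms --- this is indeed what that proof gives, so the step is fine.
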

\begin{proof}
For $i\geq 1$ there are isomorphisms
\begin{align*}
\Ext^i(k,G) &\iso \Hom(I_\l Q_\r k, \S^i I_\l Q_\r G) \\
& \iso \Hom(I_\l Q_\r k, \S^i\mathit{\Gamma}_\mathfrak{m} I_\l Q_\r G) \\
&\iso \Ext^i(k,\mathit{\Gamma}_\mathfrak{m}G)
\end{align*}
where the first and last isomorphisms are via \cite{KrStab} Proposition 7.10, together with Lemma \ref{lem_action_compatible} for the last isomorphism, and the middle one is by adjunction and the fact that as $R$ is local there is an equality $\mathit{\Gamma}_\mathfrak{m}R = \mathit{\Gamma}_{\mathcal{V}(\mathfrak{m})}R$ of tensor idempotents in $D(R)$. By Proposition \ref{prop_subcomplex} the minimal complete resolution of $\mathit{\Gamma}_\mathfrak{m}G$ consists solely of copies of $E(k)$. Hence there is a representative for $\mathit{\Gamma}_\mathfrak{m}G$ which is $\mathfrak{m}$-torsion and, as it represents a non-zero object in the stable category, of infinite injective dimension. So using the isomorphisms above we see that the $\Ext$'s are nonvanishing as claimed: their dimensions give the cardinalities of the summands of $E(k)$ in each degree of a minimal injective resolution for our representative of $\mathit{\Gamma}_\mathfrak{m}G$ (\cite{EnochsJenda} 9.2.4).
\end{proof}

\begin{lem}\label{lem_ker_nontriv}
Let $G$ be a Gorenstein injective $R$-module such that $\mathit{\Gamma}_\mathfrak{m} G \neq 0$ in the stable category, $x\in R$ a regular element, and denote by $M$ the $R$-module $\Hom_R(R/(x), G)$. Then $\idim_R M = \infty = \pdim_R M$.
\end{lem}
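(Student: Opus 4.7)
The plan is to first produce a short exact sequence relating $M$ and $G$ by exploiting $x$-divisibility (Lemma \ref{lem_Ginj_divis}), then milk it with $\Hom_R(k,-)$ to transfer the nonvanishing of $\Ext^{\ast}_R(k,G)$ supplied by Lemma \ref{lem_mG_ext_nonzero} to $\Ext^{\ast}_R(k,M)$. This will give $\idim_R M = \infty$. Infinite projective dimension will then follow from the Gorenstein hypothesis via the standard equivalence between finite projective and finite injective dimension.

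Applying $\Hom_R(-,G)$ to the resolution $0 \to R \xrightarrow{x} R \to R/(x) \to 0$ produces an exact sequence $0 \to M \to G \xrightarrow{x} G$, and Lemma \ref{lem_Ginj_divis} tells us that the right-hand map is surjective since $G$ is $x$-divisible. So one has a short exact sequence
\[
0 \to M \to G \xrightarrow{x} G \to 0.
\]
I would then apply $\Hom_R(k,-)$. Since $x$ annihilates $k$, multiplication by $x$ acts as zero on each $\Ext^i_R(k,G)$, so the long exact sequence breaks into short exact sequences
\[
0 \to \Ext^i_R(k,G) \to \Ext^{i+1}_R(k,M) \to \Ext^{i+1}_R(k,G) \to 0
\]
for every $i \geq 0$. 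By Lemma \ref{lem_mG_ext_nonzero}, $\Ext^i_R(k,G) \neq 0$ for every $i \geq 1$, and so $\Ext^{i+1}_R(k,M) \neq 0$ for every $i \geq 1$. This forces $\idim_R M = \infty$.

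For the projective dimension one invokes the well-known fact that over a Gorenstein local ring (necessarily of finite Krull dimension) finite projective dimension and finite injective dimension are equivalent properties for an arbitrary module; see for instance \cite{EnochsJenda} Chapter 9. This immediately yields $\pdim_R M = \infty$. The main point to watch is that $M$ need not be finitely generated, being a submodule of a potentially enormous Gorenstein injective, so one must appeal to the version of the $\pdim$--$\idim$ equivalence that is valid for arbitrary modules rather than only for finitely generated ones; but this is standard and requires no extra work.
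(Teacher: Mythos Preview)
Your proof is correct and follows essentially the same route as the paper's: the same short exact sequence $0\to M\to G\xrightarrow{x} G\to 0$, the same application of $\Hom_R(k,-)$ with the multiplication-by-$x$ maps vanishing, and the same appeal to Lemma~\ref{lem_mG_ext_nonzero} and the Gorenstein $\pdim$/$\idim$ equivalence. The only cosmetic differences are that the paper justifies surjectivity of $G\xrightarrow{x}G$ via $\Ext$-orthogonality of Gorenstein injectives to modules of finite projective dimension rather than Lemma~\ref{lem_Ginj_divis}, and reads off nonvanishing from the surjection $\Ext^i(k,M)\twoheadrightarrow\Ext^i(k,G)$ rather than the injection $\Ext^i(k,G)\hookrightarrow\Ext^{i+1}(k,M)$.
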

\begin{proof}
As $x$ is regular we get a projective resolution of the $R$-module $R/(x)$
\begin{displaymath}
\xymatrix{
0 \ar[r] & R \ar[r]^x & R \ar[r] & R/(x) \ar[r] & 0.
}
\end{displaymath}
Recall from Proposition \cite{EnochsJenda} Corollary 11.2.2 that Gorenstein injective modules are right $\Ext^i$-orthogonal to the modules of finite projective dimension for $i\geq 1$. So applying $\Hom_R(-,G)$ to the above short exact sequence yields an exact sequence
\begin{displaymath}
\xymatrix{
0 \ar[r] & \Hom_R(R/(x), G) \ar[r] & G \ar[r]^x & G \ar[r] & 0.
}
\end{displaymath}
Applying $\Hom_R(k,-)$ gives a long exact sequence
\begin{displaymath}
\xymatrix{
0 \ar[r] & \Hom(k,M) \ar[r] & \Hom(k,G) \ar[r] & \Hom(k,G) \ar[r] & \Ext^1(k,M) \ar[r] & \cdots
}
\end{displaymath}
where for $i\geq 0$ the maps
\begin{displaymath}
\Ext^i(k,G) \to \Ext^i(k,G)
\end{displaymath}
are multiplication by $x$ (see for example \cite{Weibel:HA} 3.3.6) and hence are all $0$ as (\cite{Weibel:HA} Corollary 3.3.7) the $\Ext^i(k,G)$ are $k$-vector spaces. Thus for $i\geq 0$ the morphisms
\begin{displaymath}
\Ext^i(k,M) \to \Ext^{i}(k,G)
\end{displaymath}
are surjective. By the last lemma the groups $\Ext^i(k,G)$ are non-zero for $i\geq 1$. Thus $\Ext^i(k,M)\neq 0$ for $i\geq 1$ so $M$ necessarily has infinite injective dimension. Since $R$ is Gorenstein $M$ must also have infinite projective dimension (see for example \cite{EnochsJenda} Theorem 9.1.10).
\end{proof}

\begin{prop}\label{prop_inductive}
Let $G$ be a non-zero object of $\mathit{\Gamma}_\mathfrak{m}\underline{\GInj}R$ and suppose $x$ is a regular element of $R$. Then $\langle G \rangle_{\mathrm{loc}}$ contains the image of a non-injective Gorenstein injective envelope of an $R/(x)$-module.
\end{prop}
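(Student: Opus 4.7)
The plan is to realize the desired envelope as the fibre of the multiplication-by-$x$ map on $G$. Set $M = \Hom_R(R/(x),G)$, which is naturally an $R/(x)$-module, being the $x$-torsion submodule of $G$. The proof of Lemma \ref{lem_ker_nontriv} already supplies a short exact sequence of $R$-modules
\[
0 \to M \to G \xrightarrow{x} G \to 0,
\]
using that $G$ is $x$-divisible (Lemma \ref{lem_Ginj_divis}) together with the $\Ext^1$-vanishing of Gorenstein injective modules against modules of finite projective dimension. Viewing this as a triangle in $D(R)$ and applying the exact functor $I_\l Q_\r$ yields a triangle in $S(R) \simeq \underline{\GInj}R$
\[
I_\l Q_\r M \to I_\l Q_\r G \xrightarrow{x} I_\l Q_\r G \to \Sigma I_\l Q_\r M.
\]

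I next use Corollary \ref{cor_envelope} twice: once to identify $I_\l Q_\r G$ with $G$ (as $G$ is already Gorenstein injective, its envelope is itself), and once to identify $I_\l Q_\r M$ with the Gorenstein injective envelope $G_R(M)$ of the $R/(x)$-module $M$. Since $G \in \langle G \rangle_{\mathrm{loc}}$ and the latter is a triangulated subcategory, the fibre $G_R(M)$ of $x\colon G \to G$ also lies in $\langle G \rangle_{\mathrm{loc}}$.

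It remains to check that $G_R(M)$ is non-injective, i.e.\ represents a non-zero object of $\underline{\GInj}R$. By Lemma \ref{lem_ker_nontriv} we have $\idim_R M = \infty$, and Theorem \ref{thm_env_exist} asserts that the Gorenstein injective envelope of a module coincides with its injective envelope exactly when the module has finite injective dimension; hence $G_R(M)$ is not injective. The main conceptual content of the argument is recognising, through Corollary \ref{cor_envelope}, that the fibre in $\underline{\GInj}R$ of the obvious map $G \xrightarrow{x} G$ is precisely the Gorenstein injective envelope of the $R/(x)$-module $\Hom_R(R/(x),G)$; everything else is routine bookkeeping with the cited lemmas, and I do not expect a serious obstacle.
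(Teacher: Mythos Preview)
Your argument is correct and is essentially identical to the paper's own proof: both obtain the short exact sequence $0 \to M \to G \xrightarrow{x} G \to 0$ from Lemmas \ref{lem_Ginj_divis} and \ref{lem_ker_nontriv}, apply $Z^0 I_\l Q_\r$ and invoke Corollary \ref{cor_envelope} to identify the fibre with $G_R(M)$, and then use the infinite homological dimension of $M$ together with Theorem \ref{thm_env_exist} to conclude that $G_R(M)$ is non-injective. The only cosmetic difference is that the paper cites $\pdim_R M = \infty$ while you cite $\idim_R M = \infty$, but both are provided by Lemma \ref{lem_ker_nontriv} and either suffices.
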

\begin{proof}
By Lemmas \ref{lem_ker_nontriv} and \ref{lem_Ginj_divis} there is a short exact sequence of $R$-modules
\begin{displaymath}
0 \to M \to G \stackrel{x}{\to} G \to 0
\end{displaymath}
with $M$ an $R/(x)$-module of infinite projective dimension over $R$. Applying $Z^0I_\l Q_\r$ gives a triangle in $\langle G \rangle_{\mathrm{loc}}$
\begin{displaymath}
G_R(M) \to G \to G \to \S G_R(M)
\end{displaymath}
where we use Corollary \ref{cor_envelope} to identify $Z^0I_\l Q_\r M$ with the class of its Gorenstein injective envelope. The module $G_R(M)$ is not injective by Theorem \ref{thm_env_exist} as $\pdim_R M = \infty$ (i.e., it is not in the kernel of $I_\l Q_\r$), which completes the proof.
\end{proof}

Suppose $R$ is an artinian local hypersurface. Then necessarily $R$ is, up to isomorphism, of the form $S/(x^n)$ for a discrete valuation ring $S$ with $x$ a uniformiser. In particular, $R$ is an artinian principal ideal ring so by \cite{Huisgen} Theorem 2 every $R$-module is a direct sum of cyclic $R$-modules. Using this fact we show that $\underline{\GInj}R$ is minimal when $R$ is an artinian local hypersurface. This provides the base case for our inductive argument that the maps of Theorem \ref{thm_bijections} are bijections for any hypersurface ring without the requirement that the categories in question contain certain objects.

\begin{lem}\label{lem_art_hyper_min}
Suppose $R$ is an artinian local hypersurface. Then the category $\underline{\GInj}R = \mathit{\Gamma}_\mathfrak{m}\underline{\GInj}R$ is minimal.
\end{lem}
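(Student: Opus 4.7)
The plan is to exploit the explicit module theory over $R$. Writing $R \iso S/(x^n)$ for $S$ a DVR with uniformiser $x$, the cited result of \cite{Huisgen} tells us every $R$-module decomposes as a coproduct of the cyclics $R/(x^j)$ for $1 \le j \le n$. Since $R/(x^n) = R$ is projective, hence zero in $\underline{\GInj}R$, any non-zero object $G$ has a summand of the form $R/(x^i)$ with $1 \le i \le n-1$. Setting $\mathcal{L} = \langle G \rangle_{\mathrm{loc}}$, closure under direct summands reduces the problem to showing that $R/(x^i) \in \mathcal{L}$ forces $\mathcal{L} = \underline{\GInj}R$ for each such $i$.

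The heart of the argument is a computation of $\cone(\times x)$, where $\times x \colon R/(x^i) \to R/(x^i)$ is multiplication by $x$. I would factor this morphism as the projection $g \colon R/(x^i) \twoheadrightarrow R/(x^{i-1})$ followed by the injection $h \colon R/(x^{i-1}) \hookrightarrow R/(x^i)$ sending $1 \mapsto x$. Using that $R$ is the injective envelope of $R/(x^j)$ for $1\le j \le n-1$, one computes $\S R/(x^j) \iso R/(x^{n-j})$ in $\underline{\GInj}R$; the defining triangles of $g$ and $h$ then yield $\cone(g) \iso \S k \iso R/(x^{n-1})$ and $\cone(h) \iso k$. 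The octahedral axiom applied to $\times x = hg$ furnishes a triangle
\begin{displaymath}
R/(x^{n-1}) \to \cone(\times x) \to k \xrightarrow{\phi} k.
\end{displaymath}
Since $(\times x)^n = 0$ while $\id_{R/(x^i)} \neq 0$ in $\underline{\GInj}R$, the morphism $\times x$ is not an isomorphism there, so $\cone(\times x) \neq 0$; and since $\underline{\End}(k) = k$ is a field, $\phi$ is either zero or an isomorphism, the latter being excluded (as that would force $\cone(\times x) = 0$). Hence $\phi = 0$, the triangle splits, and $\cone(\times x) \iso R/(x^{n-1}) \oplus k$, so $k \in \mathcal{L}$.

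To finish, it remains to generate every $R/(x^j)$ from $k$. The short exact sequence $0 \to R/(x^{j-1}) \xrightarrow{\times x} R/(x^j) \to k \to 0$, rotated appropriately, expresses $R/(x^j)$ as the cone of a morphism $R/(x^{n-j+1}) \to R/(x^{j-1})$. Since $\S k \iso R/(x^{n-1})$ already lies in $\langle k \rangle_{\mathrm{loc}}$, a straightforward induction on $j$ places every $R/(x^j)$ with $1 \le j \le n-1$ in $\langle k \rangle_{\mathrm{loc}}$. Closure under coproducts together with the structure theorem then gives $\mathcal{L} = \underline{\GInj}R$, establishing minimality.

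The main obstacle is justifying that the connecting morphism $\phi\colon k \to k$ vanishes; this requires the combination of two specific features of the setting, namely that $\underline{\End}(k)$ is a field and that $\times x$ is nilpotent on $R/(x^i)$ and hence cannot be invertible in the stable category.
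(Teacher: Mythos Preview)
Your argument is correct and reaches the same conclusion, but by a different route than the paper. The paper first observes that every localizing subcategory of $\underline{\GInj}R = \underline{R\text{-}\Module}$ is generated by compact objects (since every object is a coproduct of cyclics), and then invokes the Auslander--Reiten sequences
\[
0 \to R/(x^i) \to R/(x^{i-1}) \oplus R/(x^{i+1}) \to R/(x^i) \to 0
\]
to walk from any $R/(x^i)$ to its neighbours and hence to all indecomposable cyclics. Your approach instead jumps straight from $R/(x^i)$ to $k$ via the octahedral computation of $\cone(\times x)$, and then climbs back up by the filtration $0 \to R/(x^{j-1}) \to R/(x^j) \to k \to 0$. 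Both are valid; the paper's method is the standard representation-theoretic move (AR theory for $S/(x^n)$ is textbook), while yours is more self-contained and avoids any appeal to AR sequences.

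Two small remarks. First, you implicitly use the identification $\underline{\GInj}R = \underline{R\text{-}\Module}$ (so that an arbitrary object decomposes as a coproduct of cyclics and the $R/(x^j)$ are legitimate objects); the paper makes this explicit by citing that every module over a $0$-Gorenstein ring is Gorenstein injective. Second, in your inductive step you write that $R/(x^j)$ is the cone of a map $R/(x^{n-j+1}) \to R/(x^{j-1})$, but rotating the triangle $R/(x^{j-1}) \to R/(x^j) \to k$ gives $R/(x^j)$ as the cone of a map $\Sigma^{-1}k \iso R/(x^{n-1}) \to R/(x^{j-1})$; the domain does not depend on $j$. This is only a typo and does not affect the induction, since both terms lie in $\langle k \rangle_{\mathrm{loc}}$ by hypothesis.
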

\begin{proof}
Since $R$ is 0-Gorenstein there is an equality $\underline{R\text{-}\Module} = \underline{\GInj}R$, where $\underline{R\text{-}\Module}$ is the stable category of the Frobenius category $R$-$\Module$, as every $R$-module is Gorenstein injective by \cite{EnochsJenda} Proposition 11.2.5 (4).

As remarked above we have an isomorphism $R\iso S/(x^n)$ where $S$ is a discrete valuation ring and $x$ is a uniformiser. We also recalled that every $R$-module is a coproduct of cyclic $R$-modules; this remains true in the stable category $\underline{R\text{-}\Module}$. Since the subcategory of compacts in $\underline{R\text{-}\Module}$ is precisely $\underline{R\text{-}\module}$ every object is thus a coproduct of compact objects (so in particular $\underline{R\text{-}\Module}$ is a pure-semisimple triangulated category cf.\ \cite{BelPurity} Corollary 12.26). It follows that every localizing subcategory of $\underline{R\text{-}\Module}$ is generated by objects of $\underline{R\text{-}\module}$.

We deduce minimality from the existence of Auslander-Reiten sequences. For each $1\leq i \leq n-1$ there is an Auslander-Reiten sequence
\begin{displaymath}
\xymatrix{
0 \ar[r] & R/(x^i) \ar[r]^(0.3)f & R/(x^{i-1}) \oplus R/(x^{i+1}) \ar[r]^(0.7)g & R/(x^i) \ar[r] & 0
}
\end{displaymath}
where, using $\overline{(-)}$ to denote residue classes,
\begin{displaymath}
f(\overline{a}) =  \begin{pmatrix}\overline{a} \\ \overline{ax} \end{pmatrix} \quad \text{and} \quad g \begin{pmatrix} \overline{a} \\ \overline{b}\end{pmatrix} = \overline{ax - b}.
\end{displaymath}
So the smallest thick subcategory containing any non-zero compact object is all of $\underline{R\text{-}\module}$: every object is up to isomorphism a coproduct of the classes of cyclic modules and the Auslander-Reiten sequences show that any cyclic module which is non-zero in the stable category is a generator. Thus, as we observed above that every localizing subcategory is generated by objects of $\underline{R\text{-}\module}$, we see there are no non-trivial localizing subcategories except for all of $\underline{R\text{-}\Module}$ so it is minimal as claimed.
\end{proof}

We next need a result that is essentially contained in \cite{KrStab} Section 6 but which we reformulate in a way which is more convenient for our purposes. To prove this lemma we need to recall the following well known fact.

\begin{lem}\label{lem_lol_soeasy}
Suppose $R$ and $S$ are local rings and $\pi\colon R\to S$ is a surjection with kernel generated by an $R$-regular sequence. Then the functor
\begin{displaymath}
\pi_* \colon S\text{-}\Module \to R \text{-}\Module
\end{displaymath}
sends modules of finite projective dimension to modules of finite projective dimension.
\end{lem}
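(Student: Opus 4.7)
The plan is to exhibit an explicit finite projective resolution of $\pi_*M$ over $R$ by combining a finite projective resolution of $M$ over $S$ with a finite free resolution of $S$ over $R$ coming from the Koszul complex on the regular sequence generating $\ker \pi$.

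First I would observe that if $\mathbf{x} = (x_1,\ldots,x_n)$ is an $R$-regular sequence generating $\ker\pi$, then the Koszul complex $K(\mathbf{x})$ is a finite free resolution of $S = R/(\mathbf{x})$ as an $R$-module. In particular $\pdim_R S \leq n$, so the restriction along $\pi$ of any free $S$-module has finite projective dimension over $R$ (bounded by $n$ independently of the rank), and hence so does any projective $S$-module, these being summands of frees.

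Next, suppose $M$ is an $S$-module with $\pdim_S M = d < \infty$ and fix a projective resolution
\begin{displaymath}
0 \to P_d \to P_{d-1} \to \cdots \to P_0 \to M \to 0
\end{displaymath}
of length $d$ by projective $S$-modules. Applying $\pi_*$ yields an exact sequence of $R$-modules in which every $\pi_*P_i$ has projective dimension at most $n$ over $R$. A standard dimension-shifting argument (splicing in finite projective $R$-resolutions of each $\pi_*P_i$ via the Horseshoe Lemma, or equivalently reading off the bound from the change-of-rings spectral sequence) then gives
\begin{displaymath}
\pdim_R \pi_* M \;\leq\; \pdim_S M + \pdim_R S \;\leq\; d + n,
\end{displaymath}
which is finite, as required.

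There is no real obstacle here: the only input beyond formal homological algebra is the Koszul resolution of $S$ over $R$, which is immediate from the hypothesis that $\ker \pi$ is generated by a regular sequence. The locality of $R$ and $S$ is not used in the argument — it is merely part of the context in which the lemma is applied.
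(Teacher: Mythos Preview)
Your argument is correct and is exactly the standard proof of the change-of-rings inequality $\pdim_R M \leq \pdim_S M + \pdim_R S$; the paper omits the proof entirely, labelling the lemma a ``well known fact'', so there is nothing further to compare.
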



\begin{lem}\label{lem_stable_map}
Suppose $\pi\colon R \to S$ is a surjective map of Gorenstein local rings with kernel generated by an $R$-regular sequence. Then there is an induced coproduct preserving exact functor
\begin{displaymath}
\underline{\pi_*}\colon \underline{\GInj}S \to \underline{\GInj}R
\end{displaymath}
which sends an object of $\underline{\GInj}S$ to its $\GInj R$-envelope.
\end{lem}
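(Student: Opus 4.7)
The plan is to build $\underline{\pi_*}$ from the exact, coproduct-preserving restriction-of-scalars functor $\pi_* \colon S\text{-Mod} \to R\text{-Mod}$ by post-composing with $I_\l Q_\r \colon R\text{-Mod} \to S(R)$ (viewing modules as complexes concentrated in degree zero) and then invoking the equivalence $S(R) \simeq \underline{\GInj}R$ of Proposition \ref{prop_ginj_sing_equiv}. By Corollary \ref{cor_envelope} this composite sends an $R$-module $M$ to (the class of) $G_R(M)$, so restricting along the inclusion $\GInj S \hookrightarrow S\text{-Mod}$ produces a functor $F \colon \GInj S \to \underline{\GInj}R$ defined on objects by $F(N) = G_R(\pi_* N)$. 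It remains to check that $F$ descends to $\underline{\GInj}S$ on the source and to verify exactness and coproduct preservation.

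The crucial step is the descent. Since the projective-injective objects of the Frobenius category $\GInj S$ are precisely the injective $S$-modules, it suffices to show $F(J) = 0$ in $\underline{\GInj}R$ for every injective $S$-module $J$. Write $S = R/(\mathbf{x})$ for an $R$-regular sequence $\mathbf{x} = x_1,\ldots,x_n$; the Koszul complex gives $\pdim_R S \leq n$. For any $R$-module $M$ the change-of-rings spectral sequence
\[
E_2^{p,q} = \Ext^p_S(\Tor^R_q(M,S), J) \;\Longrightarrow\; \Ext^{p+q}_R(M, \pi_* J)
\]
collapses because $J$ is $S$-injective, yielding $\Ext^i_R(M, \pi_* J) \iso \Hom_S(\Tor^R_i(M,S), J)$, which vanishes for $i > n$. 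Hence $\idim_R \pi_* J \leq n < \infty$, and since $R$ is Gorenstein this forces $\pdim_R \pi_* J < \infty$ as well. Thus $\pi_* J$ is a perfect object of $D(R)$, so $I_\l Q_\r \pi_* J = 0$ in $S(R)$ and $F(J) = 0$ in $\underline{\GInj}R$. It follows that any morphism in $\GInj S$ factoring through an injective $S$-module is sent to zero, so $F$ factors uniquely through $\underline{\GInj}S$ to give the desired $\underline{\pi_*}$.

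Exactness is then automatic: a triangle in $\underline{\GInj}S$ arises from a short exact sequence of Gorenstein injective $S$-modules, $\pi_*$ is exact, and $I_\l Q_\r$ converts short exact sequences of $R$-modules into triangles in $S(R)$. Coproduct preservation follows from $\pi_*$ preserving coproducts together with Proposition \ref{prop_ginj_dsums} (equivalently, $I_\l$ is a left adjoint and, in the noetherian setting, coproducts of injective $R$-modules are injective so $Q_\r$ commutes with coproducts on modules). The identification $\underline{\pi_*}(N) \iso G_R(\pi_* N)$ is just Corollary \ref{cor_envelope} again. The main obstacle in this plan is the descent to the stable categories, where one must extract from the regular-sequence hypothesis on $\ker \pi$ the bound on $\pdim_R S$, and from the Gorenstein hypothesis on $R$ the conversion between finite injective and finite projective dimension; everything else is a matter of assembling the resulting consequences.
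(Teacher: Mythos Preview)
Your proof is correct and follows the same architecture as the paper's: form the composite $\GInj S \hookrightarrow S\text{-}\Module \xrightarrow{\pi_*} R\text{-}\Module \xrightarrow{Z^0 I_\l Q_\r} \underline{\GInj}R$, show it kills injective $S$-modules so that it descends to $\underline{\GInj}S$, and then verify exactness and coproduct preservation. The one substantive difference is how the key vanishing $\pdim_R \pi_* J < \infty$ is obtained for $J\in \Inj S$. The paper argues that $J$ has finite projective dimension over $S$ (since $S$ is Gorenstein) and then invokes Lemma~\ref{lem_lol_soeasy}, which says $\pi_*$ preserves finite projective dimension when $\ker\pi$ is generated by a regular sequence. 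You instead bound $\idim_R \pi_* J$ directly via the change-of-rings spectral sequence (using $\pdim_R S < \infty$ from the Koszul resolution) and then convert to finite projective dimension using that $R$ is Gorenstein. Both routes are standard; yours is more self-contained, while the paper's isolates the reusable statement about $\pi_*$ as a separate lemma.
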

\begin{proof}
Let us denote by $\nu$ the composite
\begin{displaymath}
\nu\colon \xymatrix{ D(S) \ar[r]^{\pi_*} & D(R) \ar[r]^{I_\l Q_\r} & S(R) \ar[r]^{Z^0} & \underline{\GInj} R}.
\end{displaymath}
Recall from \cite{KrStab} Corollary 5.5 and Example 5.6 that the composite
\begin{displaymath}
\mu\colon \xymatrix{ R\text{-}\Module \ar[r] & D(R) \ar[r]^{I_\l Q_\r} & S(R)}
\end{displaymath}
where the functor $R$-$\Module \to D(R)$ is the canonical inclusion, preserves all coproducts and annihilates all modules of finite projective dimension. Thus by Lemma \ref{lem_lol_soeasy} the equal composites
\begin{displaymath}
\xymatrix{
\GInj S \ar[r] \ar[dr]_{\pi_*} & D(S) \ar[r]^\nu & \underline{\GInj}R \\
& R\text{-}\Module \ar[ur]_{Z^0\mu} &
}
\end{displaymath}
must factor via the stable category $\underline{\GInj} S$. Indeed, as $R$ is Gorenstein $S$ is also Gorenstein by Lemma \ref{lem_Gor_quot}. Thus injective $S$-modules have finite $S$-projective dimension and $\pi_*$ sends them to modules of finite $R$-projective dimension by the last lemma. In particular $S$-injectives are killed by both composites. We get a commutative diagram
\begin{displaymath}
\xymatrix{
\GInj S \ar[r] \ar[dr]_p & D(S) \ar[r]^\nu & \underline{\GInj}R \\
& \underline{\GInj} S \ar[ur]_{\underline{\pi_*}}. &
}
\end{displaymath}
The functors $\pi_*$, $p$, and $Z^0\mu$ are all coproduct preserving: we have already noted that $\mu$ preserves coproducts, $Z^0$ is the equivalence of Proposition \ref{prop_ginj_sing_equiv}, and it is easily seen that the projection $p$ also preserves coproducts (the concerned reader may consult \cite{KrStab} Corollary 7.14).  As $p$ is essentially surjective we see that $\underline{\pi_*}$ also preserves coproducts. Indeed, the top composite is equal to $Z^0\mu \pi_*$ which preserves coproducts and any coproduct of objects in $\underline{\GInj}S$ is the image under $p$ of a coproduct of $S$-modules. Exactness follows similarly by noting that the top composite sends short exact sequences to triangles as $\GInj S$ is an exact subcategory of $S$-$\Module$ and $\pi_*$ is exact.

The explicit description of $\underline{\pi_*}$ is clear from the construction: by the commutativity of the diagram $\underline{\pi_*}$ sends the image of an object $M$ of $\GInj S$ to $Z^0I_\l Q_\r \pi_* M$ which is precisely its Gorenstein injective envelope as an $R$-module by Corollary \ref{cor_envelope}.
\end{proof}

\begin{rem}
Given an $S$-module $M$ we see from the above that, letting $G_S(M)$ and $G_R(\pi_* M)$ denote its Gorenstein injective envelopes over $S$ and $R$ respectively, there are isomorphisms in the stable category
\begin{displaymath}
\underline{\pi_*}G_S(M) \iso G_R(\pi_* G_S(M)) \iso G_R(\pi_* M).
\end{displaymath}
The first isomorphism is a consequence of the last lemma. The second isomorphism follows from Theorem \ref{thm_env_exist} which provides us, after an application of $\pi_*$, with a short exact sequence
\begin{displaymath}
0 \to \pi_* M \to \pi_* G_S(M) \to \pi_* L \to 0
\end{displaymath}
where $\pi_*L$ has finite projective and injective dimension. Thus the $R$-Gorenstein injective envelopes of $\pi_* M$ and $\pi_*G_S(M)$ agree in $\underline{\GInj} R$ which gives the second isomorphism.
\end{rem}


We are now ready to prove the theorem which gives us a complete classification of the localizing subcategories of $S(R)$ when $R$ is a hypersurface.

\begin{thm}\label{thm_hyper_min}
If $(R,\mathfrak{m},k)$ is a hypersurface then $\mathit{\Gamma}_\mathfrak{m}\underline{\GInj}R$ is minimal.
\end{thm}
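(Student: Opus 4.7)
The plan is to induct on $\dim R$, with base case $\dim R = 0$ handled by Lemma~\ref{lem_art_hyper_min}. The engine for the inductive step is Proposition~\ref{prop_inductive}, which produces objects coming from a quotient $R/(x)$, combined with the pushforward functor $\underline{\pi_*}\colon \underline{\GInj}(R/(x)) \to \underline{\GInj}R$ of Lemma~\ref{lem_stable_map}; this will let us transfer minimality from $R/(x)$ up to $R$.

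For $\dim R = d \geq 1$, take any non-zero $G \in \mathit{\Gamma}_\mathfrak{m}\underline{\GInj}R$; I aim to show $G_R(k) \in \langle G \rangle_{\mathrm{loc}}$, since by Lemma~\ref{lem_gen2} this forces $\langle G \rangle_{\mathrm{loc}} = \mathit{\Gamma}_\mathfrak{m}\underline{\GInj}R$. Write $\hat{R} \cong Q/(f)$ with $Q$ regular local and $f \in \mathfrak{m}_Q^2$, and choose (after a faithfully flat extension to force an infinite residue field if necessary) an $R$-regular element $x \in \mathfrak{m}$ whose lift to $Q$ is part of a regular system of parameters, so that $R/(x)$ is once more a hypersurface, of dimension $d-1$. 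Proposition~\ref{prop_inductive} yields an $R/(x)$-module $M$, of infinite projective dimension over $R/(x)$ by the standard change of rings inequality, such that $G_R(M) \in \langle G \rangle_{\mathrm{loc}}$; the remark after Lemma~\ref{lem_stable_map} then identifies $G_R(M) \cong \underline{\pi_*} G_{R/(x)}(M)$. Once I verify $G_{R/(x)}(M) \in \mathit{\Gamma}_{\mathfrak{m}/(x)}\underline{\GInj}(R/(x))$, the inductive hypothesis gives $G_{R/(x)}(k) \in \langle G_{R/(x)}(M) \rangle_{\mathrm{loc}}$; pushing forward along the exact, coproduct-preserving $\underline{\pi_*}$, and using $\underline{\pi_*} G_{R/(x)}(k) \cong G_R(k)$, yields $G_R(k) \in \langle G \rangle_{\mathrm{loc}}$, closing the induction.

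The main obstacle is the bracketed support claim: that $G_{R/(x)}(M)$ actually lies in the closed-point piece $\mathit{\Gamma}_{\mathfrak{m}/(x)}\underline{\GInj}(R/(x))$. This should follow from $G_R(M) \in \mathit{\Gamma}_\mathfrak{m}\underline{\GInj}R$ together with compatibility of $\underline{\pi_*}$ with the $D(R)$- and $D(R/(x))$-actions (Proposition~\ref{prop_action_compatible}, Lemma~\ref{lem_action_compatible}), but pinning down how the Rickard idempotent $\mathit{\Gamma}_\mathfrak{m}\mathbf{1}$ transports along the surjection $\pi$ requires care. A secondary technical point is the selection of $x$, which reduces to prime avoidance in $\mathfrak{m}_Q/\mathfrak{m}_Q^2$ after a routine residue field enlargement.
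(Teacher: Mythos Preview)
Your approach is essentially the paper's: induct on $\dim R$, use Proposition~\ref{prop_inductive} to land in the image of $\underline{\pi_*}$, then invoke minimality for $R/(x)$ to recover $G_R(k)$. Two remarks are in order.

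First, the choice of $x$. The faithfully flat residue-field extension is never needed: ordinary prime avoidance already yields $x\in\mathfrak{m}\setminus(\mathfrak{m}^2\cup\bigcup\mathrm{Ass}(R))$, since only one of the avoided sets (namely $\mathfrak{m}^2$) fails to be prime. The paper does exactly this and then checks that $R/(x)$ is again a hypersurface via deviations ($\varepsilon_2(R)=\varepsilon_2(R/(x))$, higher deviations vanish). Your detour through the completion is fine, but actually invoking a base change would oblige you to prove a descent statement for minimality that you have not supplied.

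Second, your ``main obstacle'' dissolves more easily than you suggest, and without transporting Rickard idempotents along $\pi$. In the construction of Proposition~\ref{prop_inductive} the module $M=\Hom_R(R/(x),G)$ is a submodule of the chosen representative $G$; by Proposition~\ref{prop_subcomplex} (cf.\ the proof of Lemma~\ref{lem_mG_ext_nonzero}) one may take that representative to be $\mathfrak{m}$-torsion, so $M$ is $\mathfrak{m}/(x)$-torsion as an $R/(x)$-module. Then $M_\mathfrak{q}=0$ for every $\mathfrak{q}\neq\mathfrak{m}/(x)$, whence by Lemma~\ref{lem_canlocalize} and the local-to-global principle $G_{R/(x)}(M)\in\mathit{\Gamma}_{\mathfrak{m}/(x)}\underline{\GInj}(R/(x))$. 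The paper in fact glosses over this verification, simply asserting that containing one nonzero object in the image of $\underline{\pi_*}$ forces containment of the whole image; you were right to flag it, but it is no harder than the observation above.
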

\begin{proof}
We prove the theorem by induction on the dimension of $R$. In the case $\dim R =0$ then $R$ is an artinian hypersurface and $\underline{\GInj}R$ is minimal by Lemma \ref{lem_art_hyper_min}.

So suppose the theorem holds for hypersurfaces of dimension strictly less than $n$ and let $\dim R = n\geq 1$. Then as $\depth R = n \geq 1$ the maximal ideal $\mathfrak{m}$ is not contained in any of the associated primes or $\mathfrak{m}^2$ so we can choose, by prime avoidance, a regular element $x$ not lying in $\mathfrak{m}^2$. The quotient $R/(x)$ is again a hypersurface, for example one can see this by noting that the second deviations agree $\varepsilon_2(R) = \varepsilon_2(R/(x)) = 1$ and the higher deviations vanish (see \cite{AvramovIFR} section 7 for details).


Let us denote the projection $R\to R/(x)$ by $\pi$. By Proposition \ref{prop_inductive} for every $0\neq G \in \mathit{\Gamma}_\mathfrak{m}\underline{\GInj} R$ the subcategory $\langle G \rangle_{\mathrm{loc}}$ contains a non-zero object in the image of the functor $\underline{\pi_*}$ of Lemma \ref{lem_stable_map}. The ring $R/(x)$ has dimension $n-1$ so by the inductive hypothesis the category $\mathit{\Gamma}_{\mathfrak{m}/(x)}\underline{\GInj}R/(x)$ is minimal.

The functor $\underline{\pi_*}$ is exact and coproduct preserving by Lemma \ref{lem_stable_map} so as $\langle G \rangle_{\mathrm{loc}}$ contains one object in the image of $\underline{\pi_*}$ it must contain the whole image by minimality of $\mathit{\Gamma}_{\mathfrak{m}/(x)}\underline{\GInj}R/(x)$. In particular $\langle G \rangle_\mathrm{loc}$ contains $G_R(k) \iso \underline{\pi_*}G_{R/(x)}(k)$. This object generates $\mathit{\Gamma}_\mathfrak{m}\underline{\GInj}R$ by $Z^0$ applied to the statement of Lemma \ref{lem_gen2}. Hence $\langle G \rangle_{\mathrm{loc}} = \mathit{\Gamma}_\mathfrak{m}\underline{\GInj}R$ so $\mathit{\Gamma}_\mathfrak{m}\underline{\GInj}R$ is minimal as claimed.
\end{proof}

Using the other techniques we have developed this is enough to give a classification of the localizing subcategories of $S(R)$ for $R$ a not necessarily local ring which is locally a hypersurface.

\begin{thm}\label{cor_hyper_min}
If $R$ is a noetherian ring which is locally a hypersurface then there is an order preserving bijection
\begin{displaymath}
\left\{ \begin{array}{c}
\text{subsets of}\; \Sing R 
\end{array} \right\}
\xymatrix{ \ar[r]<1ex>^{\tau} \ar@{<-}[r]<-1ex>_{\sigma} &} \left\{
\begin{array}{c}
\text{localizing subcategories of} \; S(R) \\
\end{array} \right\} 
\end{displaymath}
given by the assignments of Theorem \ref{thm_bijections}. This restricts to the equivalent order preserving bijections
\begin{displaymath}
\left\{ \begin{array}{c}
\text{specialization closed} \\ \text{subsets of}\; \Sing R 
\end{array} \right\}
\xymatrix{ \ar[r]<1ex>^{\tau} \ar@{<-}[r]<-1ex>_{\sigma} &} \left\{
\begin{array}{c}
\text{localizing subcategories of} \; S(R)\; \\
\text{generated by objects of} \; S(R)^c
\end{array} \right\} 
\end{displaymath}
and
\begin{displaymath}
\left\{ \begin{array}{c}
\text{specialization closed} \\ \text{subsets of}\; \Sing R 
\end{array} \right\}
\xymatrix{ \ar[r]<1ex> \ar@{<-}[r]<-1ex> &} \left\{
\begin{array}{c}
\text{thick subcategories of} \; D_{\mathrm{Sg}}(R) \\
\end{array} \right\}.
\end{displaymath}
\end{thm}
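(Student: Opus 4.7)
The plan is to bootstrap Theorem \ref{thm_bijections} by showing that, for a locally hypersurface ring $R$, its side condition becomes vacuous: every localizing subcategory $\mathcal{L} \cie S(R)$ automatically contains $I_\l Q_\r k(\mathfrak{p})$ for each $\mathfrak{p} \in \s(\mathcal{L})$. Once this is established, the first bijection is immediate from Theorem \ref{thm_bijections}, and the restrictions in the second and third statements follow from Proposition \ref{prop_res_assignment} together with Theorem \ref{thm_recol}(3) (which identifies $D_{\mathrm{Sg}}(R)$ with $S(R)^c$ up to summands, and so gives the standard correspondence between thick subcategories of $D_{\mathrm{Sg}}(R)$ and localizing subcategories of $S(R)$ generated by compact objects).

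The core step is to reduce at each prime to the minimality theorem for local hypersurfaces. Fix $\mathfrak{p} \in \s(\mathcal{L})$, so that $\mathit{\Gamma}_\mathfrak{p}\mathcal{L}$ is a non-zero subcategory of $\mathit{\Gamma}_\mathfrak{p}S(R)$, and note it is a localizing submodule by Lemma \ref{lem_locpreserving}. Using Proposition \ref{prop_action_compatible}, Lemma \ref{lem_canlocalize}, and the identification $L_{\mathcal{Z}(\mathfrak{p})}R \iso R_\mathfrak{p}$ from Proposition \ref{prop_explicit_functors}, the subcategory $\mathit{\Gamma}_\mathfrak{p}S(R)$ can be identified with $\mathit{\Gamma}_{\mathfrak{p}R_\mathfrak{p}}S(R_\mathfrak{p})$ viewed inside $S(R)$. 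By hypothesis $R_\mathfrak{p}$ is a local hypersurface, hence Gorenstein, so the equivalence of Proposition \ref{prop_ginj_sing_equiv} together with Theorem \ref{thm_hyper_min} shows that $\mathit{\Gamma}_{\mathfrak{p}R_\mathfrak{p}}S(R_\mathfrak{p})$ is minimal. Therefore $\mathit{\Gamma}_\mathfrak{p}\mathcal{L} = \mathit{\Gamma}_\mathfrak{p}S(R)$, and by Lemma \ref{lem_gen2} this subcategory is generated by $I_\l Q_\r k(\mathfrak{p})$, which must then belong to $\mathcal{L}$.

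With the side condition discharged, the bijectivity argument is the same bookkeeping as in Theorem \ref{thm_bijections}: the local-to-global principle (Theorem \ref{thm_general_ltg}) together with Proposition \ref{prop_imtau} ensures $\mathcal{L} = \langle \mathit{\Gamma}_\mathfrak{p}\mathcal{L} \mid \mathfrak{p} \in \s(\mathcal{L}) \rangle_{\mathrm{loc}} = \t\s(\mathcal{L})$, while Corollary \ref{cor_injective} supplies $\s\t(W) = W$ for every $W \cie \Sing R$. For the restricted bijections, Proposition \ref{prop_res_assignment} already shows $\s$ and $\t$ carry specialization closed subsets to compactly generated localizing subcategories and back; the now-automatic side condition then makes these maps mutually inverse as well. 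The main obstacle has effectively been absorbed into Theorem \ref{thm_hyper_min}, so here the only remaining subtlety is checking that the localization/support compatibilities of Section \ref{sec_affine} legitimately pass minimality from each local hypersurface $R_\mathfrak{p}$ up to the global picture, which is exactly what Lemma \ref{lem_canlocalize} and the idempotent computations of Proposition \ref{prop_explicit_functors} deliver.
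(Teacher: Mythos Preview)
Your proposal is correct and follows essentially the same approach as the paper: reduce to the minimality of $\mathit{\Gamma}_\mathfrak{p}S(R)$ via the identification with $\mathit{\Gamma}_{\mathfrak{p}R_\mathfrak{p}}S(R_\mathfrak{p})$ and Theorem~\ref{thm_hyper_min}, deduce that the side condition in Theorem~\ref{thm_bijections} is vacuous, and then read off the restricted bijections from Proposition~\ref{prop_res_assignment} (equivalently Proposition~\ref{prop_compactsupp}) and Theorem~\ref{thm_recol}(3). The only cosmetic difference is that the paper cites Thomason's localization theorem explicitly for the passage from compactly generated localizing subcategories to thick subcategories of $D_{\mathrm{Sg}}(R)$, which is the ``standard correspondence'' you invoke.
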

\begin{proof}
By Theorem \ref{thm_bijections} it is sufficient to check every localizing subcategory $\mathcal{L}$ contains $I_\l Q_\r k(\mathfrak{p})$ for each $\mathfrak{p}\in \Sing R$ such that $\mathit{\Gamma}_\mathfrak{p}\mathcal{L} \neq 0$. As there are equivalences
\begin{displaymath}
\mathit{\Gamma}_\mathfrak{p}S(R) \iso \mathit{\Gamma}_\mathfrak{p}S(R_\mathfrak{p})
\end{displaymath}
each of the subcategories $\mathit{\Gamma}_\mathfrak{p}S(R)$ is minimal by Theorem \ref{thm_hyper_min} as each $R_\mathfrak{p}$ is a local hypersurface. Hence if $\mathit{\Gamma}_\mathfrak{p}\mathcal{L} \neq 0$ for a localizing subcategory $\mathcal{L}$ we must have $\mathit{\Gamma}_\mathfrak{p}S(R) = \mathit{\Gamma}_\mathfrak{p}\mathcal{L} \cie \mathcal{L}$ where the containment is a consequence of the closure of localizing subcategories under the action of $D(R)$ (Lemma \ref{lem_locpreserving}). In particular the generator $I_\l Q_\r k(\mathfrak{p})$ of $\mathit{\Gamma}_\mathfrak{p}S(R)$ is an object of $\mathcal{L}$. Thus the image of the injection $\t$, namely those localizing subcategories $\mathcal{L}$ containing $I_\l Q_\r k(\mathfrak{p})$ for each $\mathfrak{p}\in \Sing R$ such that $\mathit{\Gamma}_\mathfrak{p}\mathcal{L} \neq 0$, is in fact the set of all localizing subcategories. This proves the first bijection.

As in Theorem \ref{thm_bijections} the second bijection is a consequence of the first and Proposition \ref{prop_compactsupp} which states that compact objects have closed supports so $\s$ of a compactly generated subcategory is specialization closed. The third bijection is equivalent to the second as by Theorem \ref{thm_recol} (3) there is an equivalence up to summands $D_{\mathrm{Sg}}(R) \iso S(R)^c$ so our restatement is a consequence of Thomason's localization theorem (\cite{NeeGrot} Theorem 2.1).
\end{proof}

\begin{rem}
Our result implies Takahashi's Theorem 7.6 of \cite{TakahashiMCM}.
\end{rem}

We can use this theorem to give a proof of the telescope conjecture for $S(R)$ when $R$ is locally a hypersurface.
\begin{thm}\label{thm_hyper_tele}
If $R$ is locally a hypersurface then the singularity category $S(R)$ satisfies the telescope conjecture i.e., every smashing subcategory of $S(R)$ is generated by objects of $S(R)^c$.
\end{thm}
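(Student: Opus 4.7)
The plan is to deduce this from the relative telescope conjecture result Theorem \ref{thm_rel_tele} applied to the action of $D(R)$ on $S(R)$. The observation that makes everything work is Lemma \ref{lem_locpreserving}: every localizing subcategory of $S(R)$ is automatically a $D(R)$-submodule, so ``smashing submodule'' and ``smashing subcategory'' coincide, and the relative telescope conjecture for this action is precisely the telescope conjecture as stated.

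Therefore the task reduces to verifying the three hypotheses of Theorem \ref{thm_rel_tele} for the action $D(R) \times S(R) \stackrel{\odot}{\to} S(R)$, where $\s S(R) = \Sing R$ by Proposition \ref{prop_nontriv}. First, supports of compact objects are specialization closed: since $R$ is locally a hypersurface it is in particular Gorenstein, so Proposition \ref{prop_compactsupp} gives that the support of any compact object of $S(R)$ is closed in $\Sing R$. Second, for each irreducible closed subset of $\Sing R$, i.e.\ each $\mathcal{V}(\mathfrak{p})$ with $\mathfrak{p}\in \Sing R$, the compact object $I_\l Q_\r R/\mathfrak{p}$ (compact by Lemma \ref{lem_gen1}) has support exactly $\mathcal{V}(\mathfrak{p})$ by Remark \ref{rem_prop_compactsupp}. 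Third, the assignments $\s$ and $\t$ give a bijection between subsets of $\Sing R$ and localizing submodules of $S(R)$: this is precisely the first bijection of Theorem \ref{cor_hyper_min}, combined once more with Lemma \ref{lem_locpreserving} to identify localizing submodules with localizing subcategories.

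With all three hypotheses verified, Theorem \ref{thm_rel_tele} immediately yields that every smashing $D(R)$-submodule of $S(R)$ is generated, as a localizing subcategory, by objects of $S(R)^c$, which is the telescope conjecture.

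There is no real obstacle here: the heavy lifting has already been done in proving the classification Theorem \ref{cor_hyper_min} and in establishing the structural properties of supports for compact objects over Gorenstein rings. The proof is essentially a bookkeeping exercise checking that the hypotheses of the abstract result Theorem \ref{thm_rel_tele} are in place.
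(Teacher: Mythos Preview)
Your proof is correct and follows essentially the same route as the paper: reduce to the relative telescope conjecture via Lemma \ref{lem_locpreserving}, then verify the hypotheses of Theorem \ref{thm_rel_tele} using Proposition \ref{prop_compactsupp}, Remark \ref{rem_prop_compactsupp}, and the classification Theorem \ref{cor_hyper_min}. The only cosmetic difference is that the paper explicitly notes the local-to-global principle holds for $D(R)$ (via Theorem \ref{thm_general_ltg}), which is the way the monoidal-model hypothesis of Theorem \ref{thm_rel_tele} enters; you left this implicit, but it is not a gap.
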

\begin{proof}
As every localizing subcategory of $S(R)$ is a $D(R)$-submodule by Lemma \ref{lem_locpreserving} and the $D(R)$ action classifies the localizing subcategories of $S(R)$ by Theorem \ref{cor_hyper_min} the relative telescope conjecture (Definition \ref{defn_relative_tele}) for this action agrees with the usual telescope conjecture. Thus it is sufficient to verify that the conditions of Theorem \ref{thm_rel_tele} hold.

The local-to-global principle holds for the action as Theorem \ref{thm_general_ltg} applies to $D(R)$. The support of every compact object of $S(R)$ is specialization closed by Proposition \ref{prop_compactsupp} and for every irreducible closed subset $\mathcal{V}(\mathfrak{p}) \cie \Sing R$ the object $I_\l Q_\r R/\mathfrak{p}$ has support $\mathcal{V}(\mathfrak{p})$ by Remark \ref{rem_prop_compactsupp}.


Thus the theorem applies and every smashing subcategory of $S(R)$ is compactly generated.
\end{proof}

\section{Schemes and hypersurface singularities}\label{sec_schemes}
We are now in a position to demonstrate that what we have proved in the affine case extends in a straightforward way to noetherian separated schemes via the machinery of Section 8 of \cite{StevensonActions}. As in Definition \ref{defn_vis_sigmatau} we have assignments
\begin{displaymath}
\left\{ \begin{array}{c}
\text{subsets of}\; X 
\end{array} \right\}
\xymatrix{ \ar[r]<1ex>^\t \ar@{<-}[r]<-1ex>_\s &} \left\{
\begin{array}{c}
\text{localizing $D(X)$-submodules} \; \text{of} \; S(X) \\
\end{array} \right\} 
\end{displaymath}
where for a localizing submodule $\mathcal{L}$ we set
\begin{displaymath}
\s(\mathcal{L}) = \supp \mathcal{L} = \{x \in X \; \vert \; \mathit{\Gamma}_x\mathcal{L} \neq 0\}
\end{displaymath}
and for a subset $W$ of $X$
\begin{displaymath}
\t(W) = \{A \in S(X) \; \vert \; \supp A \cie W\}.
\end{displaymath}

In this section, unless stated otherwise, submodules are localizing submodules. In order to apply our formalism to the situation of $D(X)$ acting on $S(X)$ we first need to understand what the effect of restricting to an open subset of $X$ is.

Before doing this let us remind the reader of some of notation. Given a specialization closed subset $\mathcal{V} \cie X$ we denote by $D_\mathcal{V}(X)$ the smashing subcategory generated by those compact objects whose support, in the sense of \cite{BaSpec}, lies in $\mathcal{V}$. We recall that the corresponding localization sequence gives rise to the tensor idempotents $\mathit{\Gamma}_\mathcal{V}\str_X$ and $L_\mathcal{V}\str_X$. For a closed subset $Z$ of $X$ with complement $U$ we denote the quotient $D(X)/D_Z(X)$ by either $L_ZD(X)$ or $D(X)(U)$, as in Section A.8. The action of $D(X)$ on $S(X)$ gives rise to an action of $D(X)(U)$ on $S(X)(U) = L_ZS(X)$ as in Proposition A.8.5.

\begin{lem}\label{lem_open_loc_a}
Let $U\cie X$ be an open set with complement $Z = X\setminus U$, and let $f\colon U\to X$ be the inclusion. If $E$ is an object of $D(X)$ then the map \mbox{$E\to \mathbf{R}f_*f^*E$} agrees with the localization map $E\to L_ZE$. In particular, $D(X)(U)$ is precisely $D(U)$.
\end{lem}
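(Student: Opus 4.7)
The plan is to verify the universal property of the Bousfield localization $L_Z$ for the map $E\to\mathbf{R}f_*f^*E$. Recall that $E\to L_Z E$ is characterized, up to canonical isomorphism, by fitting into a triangle $\mathit{\Gamma}_Z E\to E\to L_Z E$ in which $\mathit{\Gamma}_Z E\in D_Z(X)$ and $L_Z E$ is right $\Hom$-orthogonal to $D_Z(X)$. I would establish both properties for the canonical map $E\to \mathbf{R}f_*f^*E$ and then conclude by uniqueness.

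First, I would check that $\mathbf{R}f_*f^*E$ is $L_Z$-local. For any $G\in D_Z(X)$, the adjunction $(f^*,\mathbf{R}f_*)$ gives
\[
\Hom_{D(X)}(G,\mathbf{R}f_*f^*E)\cong \Hom_{D(U)}(f^*G,f^*E),
\]
so it suffices to show that $f^*$ annihilates $D_Z(X)$. By definition, $D_Z(X)$ is generated as a localizing subcategory by perfect complexes on $X$ whose Balmer support lies in $Z$; via the identification $\Spc D^{\mathrm{perf}}(X)\cong X$ of Thomason such complexes restrict to zero on $U$, and since $f^*$ is exact and coproduct-preserving the vanishing extends to all of $D_Z(X)$.

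Next, I would show that the fibre $F$ of $E\to\mathbf{R}f_*f^*E$ lies in $D_Z(X)$. Applying $f^*$ to the defining triangle and using the standard isomorphism $f^*\mathbf{R}f_*\cong\mathrm{id}_{D(U)}$ for an open immersion (which reduces to $f^*f_*=\mathrm{id}$ at the sheaf level and lifts to the derived category via exactness of $f^*$) gives $f^*F=0$. To deduce $F\in D_Z(X)$ I need the equality $\ker f^*=D_Z(X)$; this is the content of the Thomason--Neeman localization theorem, which provides the smashing localization sequence $D_Z(X)\to D(X)\xrightarrow{f^*}D(U)$. Establishing this identification is the main technical input and the principal obstacle; everything else in the argument is formal.

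By uniqueness of the Bousfield localization triangle, the canonical map $E\to \mathbf{R}f_*f^*E$ agrees with $E\to L_ZE$. Consequently the essential image of $\mathbf{R}f_*$ coincides with $L_Z D(X)=D(X)(U)$, and the full faithfulness of $\mathbf{R}f_*$ (a formal consequence of $f^*\mathbf{R}f_*\cong\mathrm{id}$) yields the asserted equivalence $D(U)\simeq D(X)(U)$.
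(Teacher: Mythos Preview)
Your proposal is correct and follows essentially the same route as the paper: both arguments hinge on the identification $\ker f^* = D_Z(X)$, from which the agreement of $E\to \mathbf{R}f_*f^*E$ with $E\to L_Z E$ is immediate by uniqueness of Bousfield localization. The paper obtains this identification by citing Rouquier (together with the coincidence of Balmer and homological support on compacts) and then simply says the lemma follows, whereas you cite Thomason--Neeman and spell out the verification of the universal property more explicitly; the content is the same.
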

\begin{proof}
By definition the smashing subcategory $D_Z(X)$ giving rise to $L_Z$ is the localizing subcategory generated by the compact objects whose support is contained in $Z$. The kernel of $f^*$ is the localizing subcategory generated by those compact objects whose homological support is contained in $Z$ by \cite{Rouquier_dim}. As these two notions of support coincide for compact objects of $D(X)$ (see for example \cite{BaSpec} Corollary 5.6) the lemma follows immediately.
\end{proof}

We recall from \cite{KrStab} Theorems 1.5 and 6.6 that for $f\colon U\to X$ an open immersion we obtain an adjoint pair of functors
\begin{displaymath}
\xymatrix{
S(X) \ar[r]<0.5ex>^{f^*} \ar@{<-}[r]<-0.5ex>_{f_*} & S(U).
}
\end{displaymath}
These functors are easily seen to be, using the classification of injective quasi-coherent sheaves on a locally noetherian scheme (see for example \cite{ConradDuality} Lemma 2.1.5), just the usual pullback and pushforward of complexes.

\begin{lem}\label{lem_open_loc_b}
With notation as in Lemma \ref{lem_open_loc_a} suppose $U\cie X$ is an open affine and let $A$ be an object of $S(X)$. Then the natural map $A\to f_*f^*A$ agrees with $A\to L_Z A$. In particular, $S(X)(U)$ is identified with $S(U)$.
\end{lem}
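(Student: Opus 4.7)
The plan is to construct a natural isomorphism $L_Z A \iso f_*f^*A$ through which the localization map coincides with the unit of adjunction, and then deduce the equivalence $S(X)(U) \simeq S(U)$ as an immediate consequence.

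First I would make the Rickard idempotent explicit. By Lemma \ref{lem_open_loc_a} there is an isomorphism $L_Z \str_X \iso \mathbf{R}f_* f^* \str_X = \mathbf{R}f_* \str_U$ in $D(X)$. Since $X$ is separated and $U$ is affine, $f$ is an affine morphism, so $\mathbf{R}^i f_*$ vanishes on quasi-coherent sheaves for $i>0$. Thus $L_Z \str_X$ is represented by the flat (hence K-flat, as a complex concentrated in degree zero) quasi-coherent sheaf $f_* \str_U$. Next I would unwind the localization map: by construction $A\to L_Z A$ is obtained from $\str_X \to L_Z\str_X$ by applying $(-)\odot A$. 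K-flatness of $f_*\str_U$ (cf.\ Remark \ref{rem_boundedabove}) lets one compute $L_Z\str_X \odot A \iso f_*\str_U \otimes_{\str_X} A$ degreewise. The projection formula for the open immersion $f$ provides a natural isomorphism $f_*\str_U \otimes_{\str_X} A \iso f_* f^* A$ in each degree, and naturality assembles this into an isomorphism of complexes. Under the adjunction $f^*\dashv f_*$, the map $\str_X \to f_*\str_U$ is the unit at the tensor unit, and tensoring with $A$ recovers the unit $A\to f_*f^*A$; this gives the first assertion.

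For the second assertion, once $L_Z A \iso f_*f^* A$ is established I would combine this with the standard identity $f^*f_* \iso \id_{S(U)}$ for an open immersion to conclude that $f^*$ and $f_*$ restrict to mutually inverse equivalences between $L_Z S(X) = S(X)(U)$ and $S(U)$. Along the way one uses that injective quasi-coherent sheaves on a locally noetherian scheme restrict to injectives on $U$ (via the decomposition into injective envelopes of residue fields), and that $f_*$ is exact because $f$ is affine, so both $f^*$ and $f_*$ preserve the subcategories of acyclic complexes of injectives.

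The main obstacle is checking the projection formula at the level of chain complexes (not merely in the derived sense) together with its compatibility with the localization unit. All the requisite ingredients — flatness of $f_*\str_U$ for tensor exactness, affineness of $f$ for vanishing of higher direct images, and injectivity preservation under $f^*$ and $f_*$ — are elementary in this setting; the only delicate point is to track naturality carefully enough that the sheaf-level unit $\str_X\to f_*\str_U$ gets converted by the projection-formula isomorphism into the categorical unit $A\to f_*f^*A$.
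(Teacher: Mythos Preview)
Your proposal is correct and follows essentially the same route as the paper: identify $L_Z\str_X$ with the K-flat sheaf $f_*\str_U$ via Lemma~\ref{lem_open_loc_a} (using that $f$ is affine so $\mathbf{R}f_*=f_*$), then apply the projection formula to rewrite $f_*\str_U\otimes_{\str_X}A$ as $f_*f^*A$. You supply somewhat more detail on the second assertion and on tracking the unit, but the argument is the same.
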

\begin{proof}
Since $f\colon U\to X$ is an affine morphism we have that $f_*\colon D(U)\to D(X)$ is exact and $\mathbf{R}f_* = f_*$. The map $A\to L_ZA$ is, by definition, obtained by taking the morphism $\str_X \to L_Z\str_X$ in $D(X)$ and tensoring with $A\in S(X)$. By Lemma \ref{lem_open_loc_a} the map $\str_X \to L_Z\str_X$ is just $\str_X \to f_*\str_U$, which is a map of K-flat complexes. Thus the map $A \to L_Z A$ is
\begin{displaymath}
A \to f_*\str_U \otimes_{\str_X} A \iso f_*(\str_U \otimes_{\str_U} f^*A) \iso f_*f^*A,
\end{displaymath}
where the first isomorphism is by the projection formula, completing the proof.
\end{proof}

Now we are in business: we know that for an open affine $U\iso \Spec R$ in $X$ the construction of Section A.8 gives us $D(R)$ acting on $S(R)$. It just remains to verify that this is the action we expect.

\begin{lem}\label{lem_aff_reduction}
Suppose $U$ is an open subscheme of $X$ with inclusion $f\colon U\to X$. Then the diagram
\begin{displaymath}
\xymatrix{
D(X) \times S(X) \ar[d]_{\odot} \ar[rr]^{f^*\times f^*} && D(U) \times S(U) \ar[d]^{\odot} \\
S(X) \ar[rr]^{f^*} && S(U)
}
\end{displaymath}
commutes up to natural isomorphism.
\end{lem}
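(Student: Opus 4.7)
The plan is to unpack the construction of $\odot$ on both sides and verify directly that the natural transformations one gets from functoriality of pullback and tensor product assemble into the desired natural isomorphism.

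Recall that by definition $E \odot A = \tilde{E} \otimes_{\str_X} A$, where $\tilde{E}$ is a K-flat resolution of $E$, and the analogous formula defines the action on $U$. So to prove the diagram commutes up to natural isomorphism, I would first establish two key properties of the pullback $f^*$ along the open immersion $f\colon U\to X$:
\begin{enumerate}
\item[(a)] $f^*$ commutes with tensor products, i.e.\ for complexes of quasi-coherent $\str_X$-modules $F, G$ there is a natural isomorphism $f^*(F \otimes_{\str_X} G) \iso f^*F \otimes_{\str_U} f^*G$ (this is just restriction of the local formula for tensor products to the open $U$);
\item[(b)] $f^*$ preserves K-flatness, i.e.\ if $\tilde{E}$ is K-flat over $\str_X$ then $f^*\tilde{E}$ is K-flat over $\str_U$.
\end{enumerate}
Property (b) is a local matter: K-flatness is preserved under restriction to an open subscheme since, given an acyclic complex $G$ of $\str_U$-modules, tensoring with $f^*\tilde{E}$ agrees with the restriction of $\tilde{E}\otimes_{\str_X}(-)$ applied to any extension of $G$, and the latter preserves acyclicity. (Alternatively, one reduces to the affine case and uses that flatness of modules is stable under localization.)

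With (a) and (b) in hand, the proof is essentially a one-line calculation. Given $E\in D(X)$ and $A\in S(X)$, pick a K-flat resolution $\tilde{E}\to E$. Since $f^*$ is exact on quasi-coherent sheaves, $f^*\tilde{E}\to f^*E$ is a quasi-isomorphism, and by (b) the complex $f^*\tilde{E}$ is K-flat, so it is a K-flat resolution of $f^*E$. Therefore
\begin{equation*}
f^*(E \odot A) \;=\; f^*(\tilde{E} \otimes_{\str_X} A) \;\stackrel{\sim}{\longrightarrow}\; f^*\tilde{E} \otimes_{\str_U} f^*A \;=\; f^*E \odot f^*A,
\end{equation*}
where the middle isomorphism is from (a) applied to $\tilde{E}$ and $A$, regarded as complexes of quasi-coherent sheaves. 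Since both the K-flat resolution and the action are well-defined up to coherent natural isomorphism, this is a natural isomorphism of bifunctors.

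The main obstacle is really only the technical point (b), that restriction preserves K-flatness for unbounded complexes of quasi-coherent sheaves. This is well-known, but one should either cite it or give a brief argument exploiting the affine cover of $U$ and the stability of flatness under localization; everything else reduces to formal properties of tensor products and pullback along an open immersion. Once (b) is granted, the naturality of the isomorphism in both variables follows from the naturality of (a) and of the K-flat resolution.
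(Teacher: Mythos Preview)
Your proposal is correct and follows essentially the same approach as the paper: the paper's proof is a terse one-paragraph version of exactly what you wrote, noting that $f^*$ sends K-flat complexes to K-flat complexes, commutes with taking K-flat resolutions, and commutes with tensor products up to natural isomorphism. Your version simply unpacks these points (your (a) and (b)) with more care.
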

\begin{proof}
By virtue of being an open immersion $f^*$ sends K-flat complexes to K-flat complexes and commutes with taking K-flat resolutions. Thus, as $f^*$ commutes with tensor products up to natural isomorphism, resolving by a K-flat, tensoring, and then pulling back agrees with pulling back, resolving and then tensoring (up to natural isomorphism). So the square is commutative as claimed.
\end{proof}

This is the diagram of Proposition A.8.5, so it follows that the action $\odot_U$ of said proposition is precisely our old friend $\odot$. Thus we can use the machinery we have developed to obtain a classification of the localizing $D(X)$-submodules of $S(X)$ when $X$ is locally a hypersurface.

\begin{lem}\label{lem_nonaffine_nontriv}
There is an equality $\s S(X) = \Sing X$ i.e., for any $x\in X$ the subcategory $\mathit{\Gamma}_xS(X)$ is non-trivial if and only if $x\in \Sing X$.
\end{lem}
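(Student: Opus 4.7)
The plan is to reduce both inclusions to the affine case (Proposition \ref{prop_nontriv}) by passing to an affine open neighborhood $f\colon U = \Spec R \hookrightarrow X$ of $x$ and using the compatibility of the action with restriction established in Lemma \ref{lem_aff_reduction}. The first step is to observe that since $f^*$ is a coproduct preserving tensor-triangulated functor, it sends the Rickard idempotents to their counterparts on $U$; in particular, for $x\in U$ there are natural isomorphisms $f^*\mathit{\Gamma}_{\mathcal{V}(x)}\str_X\iso \mathit{\Gamma}_{\mathcal{V}(x)\cap U}\str_U$ and $f^*L_{\mathcal{Z}(x)}\str_X\iso L_{\mathcal{Z}(x)\cap U}\str_U$, and hence $f^*\mathit{\Gamma}_x\str_X\iso \mathit{\Gamma}_x\str_U$. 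Combined with Lemma \ref{lem_aff_reduction}, this yields $f^*\mathit{\Gamma}_x A \iso \mathit{\Gamma}_x f^*A$ in $S(U)$ for every $A\in S(X)$.

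For the inclusion $\sigma S(X) \cie \Sing X$, suppose $x$ is a regular point of $X$ and choose an affine open $U=\Spec R$ around $x$ so that the corresponding prime $\mathfrak{p}$ satisfies $R_\mathfrak{p}$ regular. Then $S(R_\mathfrak{p}) = 0$, so by the argument of Proposition \ref{prop_nontriv} (using Lemma \ref{lem_canlocalize}) we have $\mathit{\Gamma}_\mathfrak{p}f^*A = 0$ in $S(U)$ for any $A\in S(X)$. By the compatibility above, $f^*\mathit{\Gamma}_x A = 0$, which places $\mathit{\Gamma}_x A$ in $\mathit{\Gamma}_Z S(X)$ where $Z = X\setminus U$. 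On the other hand, a standard property of the support theory gives $\supp \mathit{\Gamma}_x A \cie \{x\}$, and an object in $\mathit{\Gamma}_Z S(X)$ has its support contained in $Z$ because for $y\notin Z$ the tensor orthogonality $\mathit{\Gamma}_y\str_X \otimes \mathit{\Gamma}_Z\str_X = 0$ may be checked on an affine open containing $y$ via the explicit K\=oszul description of Proposition \ref{prop_explicit_functors}. Since $x\notin Z$, the support of $\mathit{\Gamma}_x A$ is empty, and the relative version of the vanishing detection statement in Theorem \ref{thm_general_ltg} forces $\mathit{\Gamma}_x A = 0$.

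For the reverse inclusion $\Sing X \cie \sigma S(X)$, suppose $x\in \Sing X$ and choose any affine open $U=\Spec R$ through $x$, with $\mathfrak{p}$ the associated singular prime. Proposition \ref{prop_nontriv} supplies an object $B = I_\l Q_\r k(\mathfrak{p}) \in S(U)$ with $\mathit{\Gamma}_\mathfrak{p} B \neq 0$. Set $A = f_*B\in S(X)$. Because $f$ is an open immersion, the counit $f^*f_* \to \id$ is an isomorphism on $S$, so $f^*A \iso B$, and therefore $f^*\mathit{\Gamma}_x A \iso \mathit{\Gamma}_x f^*A \iso \mathit{\Gamma}_\mathfrak{p} B \neq 0$. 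This forces $\mathit{\Gamma}_x A \neq 0$, witnessing $x\in \sigma S(X)$.

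The main obstacle is the bookkeeping in the first inclusion: namely, verifying cleanly that $\mathit{\Gamma}_Z S(X)$ consists of objects with $\sigma$-support in $Z$. This reduces to the tensor-orthogonality of the point-idempotents with $\mathit{\Gamma}_Z\str_X$, which is local on $X$ and hence follows from the explicit affine computations in Proposition \ref{prop_explicit_functors}. Once that is in hand, both directions fall out immediately from the compatibilities recorded earlier in the section.
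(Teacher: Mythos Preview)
Your argument is correct and follows the same underlying strategy as the paper: reduce to the affine case via restriction to an open affine neighbourhood and then invoke Proposition~\ref{prop_nontriv}. The difference is one of packaging. The paper's proof is a two-line application of Remark~A.8.7 (from \cite{StevensonActions}), which asserts directly that for an open affine cover $\{U_i\}$ one has $\sigma S(X) = \bigcup_i \sigma S(U_i)$; combined with the affine computation this finishes immediately. What you do instead is effectively rederive the relevant instance of that remark by hand: you use Lemma~\ref{lem_aff_reduction} and Lemma~\ref{lem_open_loc_b} to identify $f^*$ with $L_Z$ and to transport $\mathit{\Gamma}_x$ across $f^*$, and then argue separately in each direction using the idempotent calculus and support detection of Theorem~\ref{thm_general_ltg}. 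This is more laborious but self-contained within the present paper; the paper's version trades that explicitness for a citation to the general machinery. One small remark: your first inclusion can be shortened by noting that once $L_Z\mathit{\Gamma}_xA=0$ you have $\mathit{\Gamma}_xA \iso \mathit{\Gamma}_x\str_X\odot\mathit{\Gamma}_Z\str_X\odot\mathit{\Gamma}_xA$, and $\mathit{\Gamma}_x\str_X\otimes\mathit{\Gamma}_Z\str_X=0$ because $Z\subseteq\mathcal{Z}(x)$ (as $Z$ is closed and $x\notin Z$), avoiding the detour through support detection.
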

\begin{proof}
Let $\Un_{i=1}^n U_i$ be an open affine cover of $X$. By Remark A.8.7 the subset $\s S(X)$ is the union of the $\s S(U_i)$. Thus it is sufficient to note that $x\in U_i$ lies in $\Sing X$ if and only if it lies in $\Sing U_i$ and invoke Proposition \ref{prop_nontriv} which tells us that $\s S(U_i) = \Sing U_i$.
\end{proof}

\begin{prop}\label{prop_nonaffine_specclosed}
If $X$ is a Gorenstein separated scheme then every compact object of $S(X)$ has closed support.
\end{prop}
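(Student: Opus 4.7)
The plan is to reduce to the affine case (Proposition \ref{prop_compactsupp}) by covering $X$ with open affines and exploiting the compatibility of the action, and hence the support, with restriction to open subschemes established in Lemma \ref{lem_aff_reduction}.

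First I would choose a finite affine open cover $X = \bigcup_{i=1}^n U_i$ with $U_i = \Spec R_i$; such a cover exists since $X$ is noetherian, and each $R_i$ is Gorenstein because being Gorenstein is a local property. For an open immersion $f\colon U \to X$ the pullback $f^*\colon S(X) \to S(U)$ admits the right adjoint $f_*$ from \cite{KrStab}, and $f_*$ preserves coproducts (since $f$ is an open immersion of noetherian schemes, so pushforward commutes with direct sums of injectives), hence $f^*$ preserves compactness. Thus for any $A \in S(X)^c$, the restriction $f_i^* A$ lies in $S(U_i)^c$.

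Next I would use the local nature of the support. By Lemma \ref{lem_aff_reduction}, the action $\odot$ commutes with restriction to $U_i$, so for any $x \in U_i$ the object $\mathit{\Gamma}_x \str_X \odot A$ restricts to $\mathit{\Gamma}_x \str_{U_i} \odot f_i^* A$, yielding the equality
\begin{displaymath}
\supp A \cap U_i = \supp f_i^* A
\end{displaymath}
(this is the content of Remark A.8.7 invoked in the proof of Lemma \ref{lem_nonaffine_nontriv}). Applying Proposition \ref{prop_compactsupp} to the Gorenstein ring $R_i$ with the compact object $f_i^* A \in S(U_i)^c$ shows that $\supp f_i^* A$ is a closed subset of $\Sing U_i = \Sing X \cap U_i$.

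Finally, since closedness in $\Sing X$ is a local property and $\{U_i \cap \Sing X\}$ is an open cover of $\Sing X$, the subset $\supp A \subseteq \Sing X$, whose intersection with each $U_i \cap \Sing X$ is closed therein, is itself closed in $\Sing X$. I expect the main technical point to verify carefully is that $f^*$ preserves compactness and that the identification $S(X)(U) \iso S(U)$ of Lemma \ref{lem_open_loc_b} really does make the local supports computed in $S(U_i)$ agree with the restrictions of the supports computed in $S(X)$; once that bookkeeping is in place the proof is a direct reduction to Proposition \ref{prop_compactsupp}.
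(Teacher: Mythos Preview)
Your proposal is correct and follows essentially the same route as the paper: cover $X$ by open affines, use compatibility of support with restriction, and invoke Proposition~\ref{prop_compactsupp} on each affine piece. The paper's proof is a one-liner that cites Lemma~A.8.8 for the local-to-global bookkeeping (preservation of compactness under restriction and gluing of closed supports), which is exactly what you have unpacked by hand.
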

\begin{proof}
We proved that for any open affine $U_i$ the compact objects of $S(U_i)$ have closed support in Proposition \ref{prop_compactsupp}. The result then follows by covering $X$ by open affines and applying Lemma A.8.8.
\end{proof}

\begin{rem}
It follows that the support of any submodule generated by compact objects of $S(X)$ is a specialization closed subset of $\Sing X$.
\end{rem}

We are now ready to state our first theorem concerning the singularity categories of schemes with hypersurface singularities.

\begin{thm}\label{thm_hyper_nonaffine_bijections}
Suppose $X$ is a noetherian separated scheme with only hypersurface singularities. Then there is an order preserving bijection
\begin{displaymath}
\left\{ \begin{array}{c}
\text{subsets of}\; \Sing X 
\end{array} \right\}
\xymatrix{ \ar[r]<1ex>^{\tau} \ar@{<-}[r]<-1ex>_{\sigma} &} \left\{
\begin{array}{c}
\text{localizing submodules of} \; S(X) \\
\end{array} \right\} 
\end{displaymath}
given by the assignments discussed before Lemma \ref{lem_aff_reduction}. This restricts to the equivalent  bijections
\begin{displaymath}
\left\{ \begin{array}{c}
\text{specialization closed} \\ \text{subsets of}\; \Sing X 
\end{array} \right\}
\xymatrix{ \ar[r]<1ex>^{\tau} \ar@{<-}[r]<-1ex>_{\sigma} &} \left\{
\begin{array}{c}
\text{submodules of} \; S(X)\; \text{generated} \\
\text{by objects of} \; S(X)^c
\end{array} \right\} 
\end{displaymath}
and
\begin{displaymath}
\left\{ \begin{array}{c}
\text{specialization closed} \\ \text{subsets of}\; \Sing X 
\end{array} \right\}
\xymatrix{ \ar[r]<1ex> \ar@{<-}[r]<-1ex> &} \left\{
\begin{array}{c}
\text{thick} \; D^{\mathrm{perf}}(X)\text{-submodules of} \; D_{\mathrm{Sg}}(X) \\
\end{array} \right\}.
\end{displaymath}
\end{thm}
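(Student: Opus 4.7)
The plan is to globalise the affine classification of Theorem \ref{cor_hyper_min} by using the local-to-global principle together with the open-cover machinery of Section A.8. Choose an open affine cover $X = \bigcup_{i=1}^n U_i$ with $U_i = \Spec R_i$; because $X$ has only hypersurface singularities each $R_i$ is locally a hypersurface. By Theorem \ref{thm_general_ltg} (the local-to-global principle holds for $D(X)$ acting on $S(X)$, since $D(X)$ has a monoidal model), any localizing $D(X)$-submodule $\mathcal{L}\subseteq S(X)$ satisfies
\begin{displaymath}
\mathcal{L} = \langle \mathit{\Gamma}_x \mathcal{L} \mid x\in X\rangle_{*}.
\end{displaymath}
Combined with the fact (Lemma \ref{lem_nonaffine_nontriv}) that $\sigma S(X) = \Sing X$, the usual formal argument (as in Corollary \ref{cor_injective} and the end of Theorem \ref{thm_bijections}) gives $\sigma\tau(W) = W$ for any $W\subseteq \Sing X$, and shows that $\tau\sigma(\mathcal{L}) = \mathcal{L}$ will follow from the single assertion that for each $x\in \Sing X$ the submodule $\mathit{\Gamma}_x S(X)$ is minimal, i.e., admits no proper non-trivial localizing $D(X)$-submodule.

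To prove minimality of $\mathit{\Gamma}_x S(X)$, pick an open affine $U = \Spec R \subseteq X$ containing $x = \mathfrak{p}$. Using Lemmas \ref{lem_open_loc_a}, \ref{lem_open_loc_b} and the compatibility of actions established in Lemma \ref{lem_aff_reduction}, restriction along $U \hookrightarrow X$ identifies $\mathit{\Gamma}_x S(X)$ with $\mathit{\Gamma}_\mathfrak{p} S(U) = \mathit{\Gamma}_\mathfrak{p} S(R)$, and this identification is compatible with the $D(X)$- and $D(R)$-actions (so that submodules correspond to submodules). In the affine case, because every localizing subcategory of $S(R)$ is automatically closed under the $D(R)$-action (Lemma \ref{lem_locpreserving}), the notions of localizing subcategory and submodule coincide; further localising, $\mathit{\Gamma}_\mathfrak{p} S(R)\iso \mathit{\Gamma}_\mathfrak{p} S(R_\mathfrak{p})$ with $R_\mathfrak{p}$ a local hypersurface, so Theorem \ref{thm_hyper_min} gives the required minimality. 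Thus the first bijection is established.

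The second bijection follows immediately. By Proposition \ref{prop_nonaffine_specclosed} compact objects of $S(X)$ have closed support, so $\sigma$ sends compactly-generated submodules to specialization closed subsets; conversely Corollary A.4.11, applied relative to the open cover as in Lemma A.8.8, shows that $\tau$ sends a specialization closed subset to a submodule generated by objects of $S(X)^c$. Restricting the bijection of the first part to these classes yields the second bijection. The third bijection is then a standard consequence: by Theorem \ref{thm_recol} the closure under summands of the image of $D_{\mathrm{Sg}}(X)$ in $S(X)$ is $S(X)^c$, and Thomason's localization theorem (\cite{NeeGrot} Theorem 2.1) exchanges localizing subcategories of $S(X)$ generated by compacts with thick subcategories of $S(X)^c$, which, together with the $D^{\mathrm{perf}}(X)$-submodule condition, gives the stated bijection with thick $D^{\mathrm{perf}}(X)$-submodules of $D_{\mathrm{Sg}}(X)$.

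The main technical point to be careful about is the identification, for $U\subseteq X$ open affine and $x\in U$, of $\mathit{\Gamma}_x S(X)$ with $\mathit{\Gamma}_x S(U)$ together with the fact that this identification carries $D(X)$-submodules to $D(U)$-submodules; this is exactly what Section A.8 of \cite{StevensonActions}, together with Lemma \ref{lem_aff_reduction}, is designed to deliver, so no new work is required beyond invoking that machinery. Everything else in the proof is a straightforward translation of the affine arguments (Theorem \ref{cor_hyper_min}) to the global setting.
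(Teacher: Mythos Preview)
Your proposal is correct and follows essentially the same route as the paper: globalise the affine classification (Theorem~\ref{cor_hyper_min}) via an open affine cover, using the Section~A.8 machinery and Lemma~\ref{lem_nonaffine_nontriv}, then restrict to compactly generated submodules via Proposition~\ref{prop_nonaffine_specclosed} and pass to thick submodules of $D_{\mathrm{Sg}}(X)$ using Theorem~\ref{thm_recol}. The only cosmetic difference is that the paper invokes the black-box gluing result Theorem~A.8.11 directly, whereas you unpack its content by arguing minimality of each $\mathit{\Gamma}_x S(X)$ through restriction to an affine neighbourhood and invoking Theorem~\ref{thm_hyper_min}; these are the same argument at different levels of detail.
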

\begin{proof}
The first bijection is an application of Theorem \ref{cor_hyper_min} and Theorem A.8.11 to an open affine cover of $X$ together with the observation of Lemma \ref{lem_nonaffine_nontriv} that $\s S(X) = \Sing X$.
To see that the first bijection restricts to the second recall from Proposition \ref{prop_nonaffine_specclosed} that compact objects of $S(X)$ have specialization closed support. The statement now follows immediately from what we have already proved and using Theorem \ref{thm_recol} it is easily deduced that the second and third bijections are equivalent.
\end{proof}

It is natural to ask when one can strengthen this result to a complete classification of the localizing subcategories of $S(X)$. We now prove that if $X$ is a hyperplane section of a regular scheme then every localizing subcategory of $S(X)$ is closed under the action of $D(X)$. This gives a complete description of the lattice of localizing subcategories of $S(X)$ for such schemes. 

Let $T$ be a regular separated noetherian scheme of finite Krull dimension and let $L$ be an ample line bundle on $T$. Suppose $t\in H^0(T,L)$ is a section giving rise to an exact sequence
\begin{displaymath}
0 \to L^{-1} \stackrel{t^\vee}{\to} \str_T \to \str_X \to 0
\end{displaymath}
which defines a hypersurface $X \stackrel{i}{\to} T$. The scheme $X$ is a noetherian separated scheme with hypersurface singularities so our theorem applies to classify localizing $D(X)$-submodules of $S(X)$. The key observation in strengthening this result is the following easy computation.

\begin{lem}\label{lem_lolcool}
Let $F\in D(X)$ be a quasi-coherent sheaf concentrated in degree zero. There is an isomorphism in $S(X)$
\begin{displaymath}
I_\l Q_\r (F\otimes i^*L^{-1}) \iso \S^{-2} I_\l Q_\r F.
\end{displaymath}
\end{lem}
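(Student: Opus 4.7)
The plan is to produce, in $D(X)$, a triangle
\[\S(F\otimes i^*L^{-1}) \to Li^*i_*F \to F \to \S^2(F\otimes i^*L^{-1})\]
whose middle term lies in $\ker(I_\l Q_\r)$; applying $I_\l Q_\r$ will then collapse the middle term and force the connecting morphism $I_\l Q_\r F \to \S^2 I_\l Q_\r(F\otimes i^*L^{-1})$ to be an isomorphism, which after desuspension is exactly the statement of the lemma.

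The displayed triangle will be the canonical truncation triangle of $Li^*i_*F$. To identify the cohomology, I compute the derived tensor product $\str_X \otimes^L_{\str_T} i_*F$ using the two-term locally free resolution $[L^{-1} \xrightarrow{t^\vee} \str_T]$ of $i_*\str_X$ provided by the defining section of $X$. Since $t^\vee$ annihilates $i_*F$ (as $i_*F$ is supported on $X$), tensoring with $i_*F$ yields the complex $[i_*(F\otimes i^*L^{-1}) \xrightarrow{0} i_*F]$ in degrees $-1,0$, where the projection formula has been used to identify $L^{-1}\otimes_{\str_T} i_*F \iso i_*(F\otimes i^*L^{-1})$. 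Hence $H^0(Li^*i_*F) = F$, $H^{-1}(Li^*i_*F) = F\otimes i^*L^{-1}$, and the remaining Tor sheaves vanish, so the truncation triangle takes the form above.

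To verify $I_\l Q_\r(Li^*i_*F) = 0$, first suppose $F$ is coherent. Then $i_*F$ is a coherent sheaf on the regular noetherian scheme $T$ of finite Krull dimension, and thus admits a finite resolution by locally free $\str_T$-modules. Pulling this resolution back by $i^*$ yields a bounded complex of vector bundles on $X$ representing $Li^*i_*F$, so $Li^*i_*F \in D^{\mathrm{perf}}(X)$. By Theorem \ref{thm_recol}(3), $I_\l Q_\r$ annihilates $D^{\mathrm{perf}}(X)$, giving the desired vanishing. For a general quasi-coherent $F$, express $F$ as the filtered colimit of its coherent subsheaves and pass to the colimit, using the compatibility of $i_*$, $Li^*$, $\otimes$, $Q_\r$, and $I_\l$ with filtered colimits on the noetherian scheme $X$. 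Applying $I_\l Q_\r$ to the triangle then yields $\S I_\l Q_\r(F\otimes i^*L^{-1}) \to 0 \to I_\l Q_\r F \to \S^2 I_\l Q_\r(F\otimes i^*L^{-1})$, so the connecting map is an isomorphism and $I_\l Q_\r(F\otimes i^*L^{-1}) \iso \S^{-2} I_\l Q_\r F$.

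The main obstacle I expect is the passage from coherent to quasi-coherent $F$: while the perfectness-based argument is clean once $i_*F$ is coherent on $T$, verifying that $Li^*i_*F$ is annihilated by $I_\l Q_\r$ in general relies on a filtered-colimit argument routed through the recollement $S(X)\to K(\Inj X)\to D(X)$, and the colimit compatibility of $Q_\r$ in particular must be checked with care.
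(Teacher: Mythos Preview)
Your overall strategy---produce the truncation triangle for $\mathbf{L}i^*i_*F$, identify its cohomology via the two-term resolution of $\str_X$, kill the middle term with $I_\l Q_\r$, and read off the isomorphism---is exactly the paper's. The only divergence is in how you argue $I_\l Q_\r(\mathbf{L}i^*i_*F)=0$.

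The paper does not split into coherent and quasi-coherent cases. Instead it observes that since $T$ is regular of finite Krull dimension, \emph{any} quasi-coherent $i_*F$ locally has finite projective dimension; hence $\mathbf{L}i^*i_*F$ is locally a bounded complex of projectives (not necessarily of finite rank). It then invokes the fact that vanishing in $S(X)$ is local (Lemma~\ref{lem_open_loc_b} together with the local-to-global principle) and that $I_\l Q_\r$ kills modules of finite projective dimension (Krause, as cited in the proof of Lemma~\ref{lem_stable_map}), hence kills bounded complexes of such. This handles all $F$ in one stroke and completely sidesteps the colimit issue you flagged.

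Your filtered-colimit reduction can be made to work, but the step you worry about is real: $Q_\r$ is a right adjoint and does not preserve arbitrary coproducts. What saves you here is that $X$ is Gorenstein of finite Krull dimension, so injective dimension is uniformly bounded and $Q_\r$ does preserve coproducts of uniformly bounded complexes; alternatively, one can cite directly that the composite $I_\l Q_\r$ preserves coproducts on modules (Krause, Corollary~5.5). Either way the paper's local argument is cleaner, and you should consider replacing your two-step reduction with it.
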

\begin{proof}
By the way we have defined $X$ the coherent $\str_T$-module $\str_X$ comes with a flat resolution
\begin{displaymath}
0\to L^{-1} \stackrel{t^\vee}{\to} \str_{T} \to \str_X \to 0.
\end{displaymath}
Thus the complex $\mathbf{L}i^*i_*F$ has two non-zero cohomology groups namely
\begin{displaymath}
H^0(\mathbf{L}i^*i_*F) \iso F \quad \text{and} \quad H^{-1}(\mathbf{L}i^*i_*F) \iso F\otimes_{\str_X} i^*L^{-1}.
\end{displaymath}
As the scheme $T$ is regular of finite Krull dimension the object $i_*F$ of $D(T)$ is locally isomorphic to a bounded complex of projectives. Hence $\mathbf{L}i^*i_*F$ is also locally isomorphic to a bounded complex of projectives. In particular, since being the zero object is local in $S(X)$ by Lemma \ref{lem_open_loc_b} and the local-to-global principle, we have $I_\l Q_\r \mathbf{L}i^*i_*F \iso 0$. The standard t-structure on $D(X)$ gives a triangle
\begin{displaymath}
\S F\otimes_{\str_X} i^*L^{-1} \to \mathbf{L}i^*i_*F \to F \to \S^2 F\otimes_{\str_X} i^*L^{-1}.
\end{displaymath}
Thus applying $I_\l Q_\r$ to this triangle yields an isomorphism 
\begin{displaymath}
I_\l Q_\r F \iso I_\l Q_\r (\S^2 F\otimes_{\str_X} i^*L^{-1})
\end{displaymath}
in $S(X)$ i.e., $I_\l Q_\r (F\otimes_{\str_X} i^*L^{-1}) \iso \S^{-2} I_\l Q_\r F$.
\end{proof}

Let us write $i^*L^n$ for the tensor product of $n$ copies of $i^*L$. By Proposition \ref{prop_action_compatible} and Lemma \ref{lem_action_compatible} twisting by $i^*L^{n}$ and applying $I_\l Q_\r$ to a sheaf $F$ commute up to natural isomorphism. We thus have isomorphisms
\begin{displaymath}
i^*L^n \odot I_\l Q_\r F \iso I_\l Q_\r (F\otimes_{\str_X} i^*L^{n}) \iso \S^{2n} I_\l Q_\r F
\end{displaymath}
in $S(X)$.

\begin{cor}\label{cor_ample_stable}
Let $X$ be as above. Then there are order preserving bijections
\begin{displaymath}
\left\{ \begin{array}{c}
\text{subsets of}\; \Sing X 
\end{array} \right\}
\xymatrix{ \ar[r]<1ex>^{\tau} \ar@{<-}[r]<-1ex>_{\sigma} &} \left\{
\begin{array}{c}
\text{localizing subcategories of} \; S(X) \\
\end{array} \right\} 
\end{displaymath}
and
\begin{displaymath}
\left\{ \begin{array}{c}
\text{specialization closed} \\ \text{subsets of}\; \Sing X 
\end{array} \right\}
\xymatrix{ \ar[r]<1ex>^{\tau} \ar@{<-}[r]<-1ex>_{\sigma} &} \left\{
\begin{array}{c}
\text{localizing subcategories} \\ \text{of} \; S(X) \; \text{generated by} \\ \text{objects of} \; S(X)^c
\end{array} \right\}. 
\end{displaymath}
\end{cor}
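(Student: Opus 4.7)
The plan is to reduce the corollary to Theorem~\ref{thm_hyper_nonaffine_bijections} by arguing that, under the ampleness hypothesis on $L$, every localizing subcategory of $S(X)$ is automatically a localizing $D(X)$-submodule. Granting this, the first bijection is precisely Theorem~\ref{thm_hyper_nonaffine_bijections} and the second follows from it together with Proposition~\ref{prop_nonaffine_specclosed}. For any localizing subcategory $\mathcal{L}\cie S(X)$ the full subcategory $\{E\in D(X)\;\vert\; E\odot\mathcal{L}\cie \mathcal{L}\}$ is a localizing subcategory of $D(X)$ containing the tensor unit, so it suffices to produce a generating set for $D(X)$ whose objects preserve $\mathcal{L}$.

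The natural candidates are the twists $\{i^*L^n\;\vert\; n\in\int\}$. These generate $D(X)$ as a localizing subcategory: since $T$ is regular of finite Krull dimension and $L$ is ample, the powers $\{L^n\}$ generate $D(T)$ as a localizing subcategory, and because $i$ is a closed immersion the counit $\mathbf{L}i^*i_*\to \id_{D(X)}$ is an isomorphism, so $\mathbf{L}i^*$ is essentially surjective. To see that each $i^*L^n$ preserves $\mathcal{L}$, I would upgrade the pointwise isomorphism $i^*L^n\odot I_\l Q_\r F\iso \S^{2n}I_\l Q_\r F$ of Lemma~\ref{lem_lolcool} to a natural isomorphism of endofunctors $i^*L^n\odot(-)\iso \S^{2n}(-)$ on $S(X)$. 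The boundary $\delta\colon \str_X\to \S^2 i^*L^{-1}$ arising from the truncation triangle of $\mathbf{L}i^*i_*\str_X$ in $D(X)$ induces, via the action, a natural transformation $\eta\colon \id_{S(X)}\to \S^2 i^*L^{-1}\odot(-)$. Using naturality of the truncation triangle in $F$ together with the compatibilities of Proposition~\ref{prop_action_compatible} and Lemma~\ref{lem_action_compatible}, the component $\eta_{I_\l Q_\r F}$ is identified with $I_\l Q_\r \delta_F$, which is the isomorphism of Lemma~\ref{lem_lolcool}. Since compact generators of $S(X)$ are, up to direct summands, of the form $I_\l Q_\r F$ for $F$ coherent and the full subcategory of $S(X)$ on which $\eta$ is invertible is localizing, $\eta$ is a natural isomorphism on all of $S(X)$; iterating yields the statement for every $n$.

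Combining the two steps, any localizing subcategory $\mathcal{L}$ of $S(X)$ is closed under the action of each $i^*L^n$ (being suspension-closed), so the subcategory $\{E\in D(X)\;\vert\; E\odot\mathcal{L}\cie \mathcal{L}\}$ contains a generating set of $D(X)$ and hence equals $D(X)$. I expect the main obstacle to lie in the naturality argument of the second step, in particular the identification of $\eta_{I_\l Q_\r F}$ with the isomorphism of Lemma~\ref{lem_lolcool} through the action compatibilities and the subsequent passage from compact generators to arbitrary objects of $S(X)$ via the localizing property of the invertibility locus of $\eta$.
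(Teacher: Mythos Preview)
Your overall strategy---show each $i^*L^n$ acts by a shift, hence every localizing subcategory is a submodule, then invoke Theorem~\ref{thm_hyper_nonaffine_bijections}---matches the paper's. The gap is exactly where you suspected: the natural transformation $\eta$ you construct is identically zero. Locally $t^\vee$ is multiplication by the defining equation of $X$, so $i^*(t^\vee)=0$ and $\mathbf{L}i^*i_*\str_X$ is the complex $i^*L^{-1}\stackrel{0}{\to}\str_X$, which splits in $D(X)$; the boundary map $\delta\colon \str_X\to\S^2 i^*L^{-1}$ of its truncation triangle is therefore zero. More conceptually, truncation does not commute with tensoring by $F$, so the connecting map $\delta_F$ in the truncation triangle of $\mathbf{L}i^*i_*F$ is \emph{not} $\delta\otimes F$; the isomorphism of Lemma~\ref{lem_lolcool} really does depend on $F$ and is not induced by a single morphism in $D(X)$.

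The paper sidesteps the need for naturality altogether. Since $X$ is Gorenstein with a dualising complex, every acyclic complex of injectives is totally acyclic, so by \cite{KrStab} Proposition~7.13 every object of $S(X)$ is isomorphic to $I_\l Q_\r G$ for some Gorenstein injective (hence degree-zero) sheaf $G$. One can then apply Lemma~\ref{lem_lolcool} directly to $G$ to get $i^*L^n\odot A\iso \S^{2n}A$ for each individual object $A$, which is all that is needed to see $\mathcal{L}$ is stable under the generators $\{\S^m i^*L^n\}$ of $D(X)$. If you want to salvage your line of argument, one option is to work instead with the counit $\mathbf{L}i^*i_*\Rightarrow\id_{D(X)}$, whose cofibre is naturally $\S^2(i^*L^{-1}\otimes -)$; transporting this through $I_\l Q_\r$ and using that $\mathbf{L}i^*i_*F$ is perfect for $F\in D^b(\Coh X)$ would give the isomorphism on compacts, but the Gorenstein-injective argument is cleaner.
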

\begin{proof}
As $X$ is a locally complete intersection in the regular scheme $T$ it is certainly Gorenstein. In particular it has a dualising complex so by \cite{MurfetTAC} (Proposition 6.1 and Theorem 4.31) every complex in $S(X)$ is totally acyclic. Thus \cite{KrStab} Proposition 7.13 applies telling us that every object of $S(X)$ is the image, under $I_\l Q_\r$, of a Gorenstein injective sheaf on $X$.

Let $\mathcal{L}\cie S(X)$ be a localizing subcategory and suppose $A$ is an object of $\mathcal{L}$. Then there exists a Gorenstein injective sheaf $G$ such that $A\iso I_\l Q_\r G$ by the discussion above. There are isomorphisms
\begin{align*}
\S^m i^*L^n \odot A \iso \S^m i^*L^n\odot I_\l Q_\r G &\iso \S^m I_\l Q_\r  (G \otimes i^*L^n) \\
&\iso \S^{m+2n}I_\l Q_\r G \\
&\iso \S^{m+2n} A
\end{align*}
where we can interchange the action of $i^*L^n$ and $I_\l Q_\r$ as in the discussion before the proposition.

As $L$ is ample on $T$ the line bundle $i^*L$ is ample on $X$ so the set of objects
\begin{displaymath}
\{ \S^m i^*L^n \; \vert \; m,n\in \int\}
\end{displaymath}
is a compact generating set for $D(X)$, see for example 1.10 of \cite{NeeGrot}. We have just seen $\mathcal{L}$ is stable under the action of each of the generators. Thus the full subcategory of $D(X)$ consisting of objects whose action sends $\mathcal{L}$ to itself is localizing, as $\mathcal{L}$ is localizing, and contains a generating set so must be all of $D(X)$. This proves $\mathcal{L}$ is a submodule as claimed.
\end{proof}

\begin{rem}
The action of $i^*L$ can be viewed in the context of the degree 2 periodicity operator of Gulliksen \cite{GulliksenOperator} (see also \cite{EisenbudOperators} and \cite{AvSunOperators}). As $i^*L$ is invertible in $D(X)$ one can consider, as in \cite{BaSSS}, the graded commutative ring
\begin{displaymath}
E_{i^*L}^* = \bigoplus_{j\in \int} \Hom(\str_X, i^*L^j)
\end{displaymath}
with multiplication defined by sending $(\str_X \to i^*L^j, \str_X \to i^*L^k)$ to the composite
\begin{displaymath}
\xymatrix{
\str_X \ar[d] \ar[rr] & & i^*L^{j+k} \\
i^*L^j \ar[r]^(0.4)\sim & i^*L^j \otimes \str_X \ar[r] & i^*L^j\otimes i^*L^k. \ar[u]^\wr
}
\end{displaymath}
The degree $j$ elements of the ring $E_{i^*L}^*$ act on $S(X)$ by natural transformations $\id_{S(X)} \to i^*L^j \otimes(-)$. In particular, in the above situation Lemma \ref{lem_lolcool} implies that $E_{i^*L}^*$ acts via the even part of the central ring $\mathcal{Z}(S(X))$.
\end{rem}

To end the section we show that our relative version of the telescope conjecture (Definition \ref{defn_relative_tele}) holds for the action of $D(X)$ on $S(X)$ when $X$ is any separated noetherian scheme with hypersurface singularities.

\begin{lem}\label{lem_closed_compact_exists}
Let $X$ be a Gorenstein separated scheme. For any irreducible closed subset $\mathcal{V} \cie \Sing X$ there exists a compact object of $S(X)^c$ whose support is precisely $\mathcal{V}$, namely $I_\l Q_\r \str_\mathcal{V}$ where $\str_\mathcal{V}$ is the structure sheaf associated to the reduced induced structure on $\mathcal{V}$.
\end{lem}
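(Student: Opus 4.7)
The plan is to prove compactness by observing that $\str_\mathcal{V}$ is coherent, and compute the support by reducing to an open affine cover where the affine Remark \ref{rem_prop_compactsupp} applies.

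Let $x$ be the unique generic point of $\mathcal{V}$, so that $\mathcal{V} = \mathcal{V}(x)$, and note $x \in \Sing X$ since $\mathcal{V} \cie \Sing X$. For compactness, $\str_\mathcal{V}$ is a coherent sheaf on the noetherian scheme $X$, hence sits in $D^b(\Coh X)$. By Theorem \ref{thm_recol}(3), $I_\l Q_\r$ sends $D^b(\Coh X)$ into $S(X)^c$ (up to summands), so $I_\l Q_\r \str_\mathcal{V}$ is compact in $S(X)$.

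For the support, the strategy is to work one open affine at a time. Fix any point $y \in X$ and choose an open affine neighbourhood $U = \Spec R$ of $y$ with inclusion $f \colon U \to X$. By Remark A.8.7 and Lemma A.8.8, $y \in \supp I_\l Q_\r \str_\mathcal{V}$ if and only if $y$ lies in the support of the restriction $f^*(I_\l Q_\r \str_\mathcal{V})$ computed in $S(U)$. By Lemma \ref{lem_aff_reduction} together with the fact that $f^*$ commutes up to natural isomorphism with $Q_\r$ and $I_\l$ when applied to bounded complexes of coherent sheaves (since both are compatible with the localization sequences of Theorem \ref{thm_recol}), we have $f^*(I_\l Q_\r \str_\mathcal{V}) \iso I_\l Q_\r(f^* \str_\mathcal{V}) = I_\l Q_\r(\str_\mathcal{V}|_U)$.

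If $\mathcal{V}\intersec U = \emptyset$ then $\str_\mathcal{V}|_U = 0$, and in particular $y \notin \supp$. Otherwise, $\mathcal{V}\intersec U$ is a non-empty open subset of the irreducible space $\mathcal{V}$, hence irreducible, and its closure in $X$ is $\mathcal{V}$; by uniqueness of the generic point it follows that $x \in U$. Let $\mathfrak{p} \cie R$ be the prime corresponding to $x$, so $\mathcal{V}\intersec U = \mathcal{V}(\mathfrak{p})$ and $\str_\mathcal{V}|_U = R/\mathfrak{p}$. Since $x \in \Sing X$, we have $\mathfrak{p} \in \Sing R$, so Remark \ref{rem_prop_compactsupp} applies and yields $\supp I_\l Q_\r(R/\mathfrak{p}) = \mathcal{V}(\mathfrak{p}) = \mathcal{V}\intersec U$. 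Hence $y \in \supp I_\l Q_\r \str_\mathcal{V}$ if and only if $y \in \mathcal{V}\intersec U$, which (since $y \in U$) is equivalent to $y \in \mathcal{V}$. This gives $\supp I_\l Q_\r \str_\mathcal{V} = \mathcal{V}$ as required.

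The main thing to check carefully is the compatibility $f^*\circ I_\l Q_\r \iso I_\l Q_\r \circ f^*$ on coherent sheaves; this should be the principal technical point, but it is essentially automatic since open immersions are flat and preserve injective quasi-coherent sheaves, so $f^*$ descends through the recollement of Theorem \ref{thm_recol} in the expected manner, and the full local-to-global support statement of Lemma A.8.8 then finishes the argument.
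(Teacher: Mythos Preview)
Your proof is correct and follows essentially the same approach as the paper: reduce to an open affine cover and apply the affine computation of Remark~\ref{rem_prop_compactsupp}. The paper packages the local-to-global support computation into a single citation of Remark~A.8.9, whereas you spell out the compatibility $f^*\circ I_\l Q_\r \iso I_\l Q_\r \circ f^*$ and the pointwise argument more explicitly, but the underlying idea is identical.
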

\begin{proof}
Let $\mathcal{V}$ be an irreducible closed subset of $\Sing X$ as in the statement. We have claimed the object $I_\l Q_\r \str_\mathcal{V}$ of $S(X)^c$ has the desired support. To see this let $X$ be covered by open affine subschemes $\{U_i\}_{i=1}^n$ where $U_i \iso \Spec R_i$. The restriction $\str_{\mathcal{V}_i}$ of $\str_\mathcal{V}$ to $U_i$ is the sheaf associated to \mbox{$R/\mathfrak{p}_i$ where $\mathcal{V}(\mathfrak{p}_i) = \mathcal{V}_i = \mathcal{V}\intersec U_i$}. By Remark A.8.9
\begin{align*}
\supp I_\l Q_\r \str_\mathcal{V} &= \Un_{i=1}^n \supp I_\l Q_\r \str_{\mathcal{V}_i} \\
&= \Un_{i=1}^n \supp I_\l Q_\r R/\mathfrak{p}_i \\
&= \Un_{i=1}^n \mathcal{V}_i \\
&= \mathcal{V}
\end{align*}
where the second last equality comes from Remark \ref{rem_prop_compactsupp}.
\end{proof}

\begin{thm}\label{thm_nonaffine_hyper_tele}
Let $X$ be a noetherian separated scheme with hypersurface singularities. Then the action of $D(X)$ on the singularity category $S(X)$ satisfies the relative telescope conjecture i.e., every smashing $D(X)$-submodule of $S(X)$ is generated by objects of $S(X)^c$. In particular, if $X$ is a hypersurface defined by a section of an ample line bundle on some ambient regular separated noetherian scheme $T$ as above then $S(X)$ satisfies the usual telescope conjecture.
\end{thm}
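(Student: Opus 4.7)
The plan is to verify that all four hypotheses of Theorem \ref{thm_rel_tele} are satisfied for the action of $D(X)$ on $S(X)$ and then invoke that theorem directly. The category $D(X)$ is rigidly-compactly generated and has a monoidal model (the standard model structure on unbounded complexes of quasi-coherent sheaves), so the first hypothesis is immediate; moreover $\Spc D(X)^c \iso X$ is noetherian.

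Next, I would check the two support-theoretic hypotheses. By Proposition \ref{prop_nonaffine_specclosed} every compact object of $S(X)$ has closed, hence specialization closed, support in $\Sing X$; this takes care of the first condition on the compact objects. For the second condition, given any irreducible closed $\mathcal{V}\cie \Sing X$, Lemma \ref{lem_closed_compact_exists} exhibits $I_\l Q_\r \str_\mathcal{V}$ as a compact object of $S(X)$ whose support is precisely $\mathcal{V}$. Finally, the required bijection between subsets of $\sigma S(X) = \Sing X$ (Lemma \ref{lem_nonaffine_nontriv}) and localizing $D(X)$-submodules of $S(X)$ is exactly the content of Theorem \ref{thm_hyper_nonaffine_bijections}. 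With all hypotheses of Theorem \ref{thm_rel_tele} in place, the relative telescope conjecture follows: every smashing $D(X)$-submodule of $S(X)$ is generated by objects compact in $S(X)$.

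For the second assertion, the point is that when $X$ is cut out by a section of an ample line bundle on a regular ambient scheme $T$, Corollary \ref{cor_ample_stable} shows that every localizing subcategory of $S(X)$ is automatically a $D(X)$-submodule. Consequently the notions of smashing subcategory and smashing submodule coincide in this case, and the relative telescope conjecture proved in the first part specialises to the usual telescope conjecture for $S(X)$.

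There is no real obstacle here; the work has all been done in the preceding sections, and the proof is essentially a bookkeeping exercise that assembles the relevant inputs and invokes Theorem \ref{thm_rel_tele}. The only mild subtlety is making sure one is entitled to replace submodules with arbitrary localizing subcategories in the ample-section case, which is handled by appealing to Corollary \ref{cor_ample_stable}.
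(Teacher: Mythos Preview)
Your proposal is correct and follows essentially the same approach as the paper: verify the hypotheses of Theorem~\ref{thm_rel_tele} using Theorem~\ref{thm_hyper_nonaffine_bijections}, Proposition~\ref{prop_nonaffine_specclosed}, and Lemma~\ref{lem_closed_compact_exists}, then invoke Corollary~\ref{cor_ample_stable} for the ample-section case. If anything, you are slightly more explicit in checking the standing assumptions on $D(X)$ and identifying $\sigma S(X)$ with $\Sing X$.
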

\begin{proof}
The result is an application of Theorem \ref{thm_rel_tele}. We have seen in Theorem \ref{thm_hyper_nonaffine_bijections} that $D(X)$-submodules are classified by $\Sing X$ via the assignments $\s$ and $\t$. By Proposition \ref{prop_nonaffine_specclosed} compact objects of $S(X)$ have specialization closed support. Finally, we have proved in the last lemma that every irreducible closed subset of $\Sing X$ can be realised as the support of a compact object.

Thus the conditions of Theorem \ref{thm_rel_tele} are met for the action of $D(X)$ on $S(X)$ and it follows that the relative telescope conjecture holds. In the case Corollary \ref{cor_ample_stable} applies this reduces to the usual telescope conjecture.
\end{proof}

\section{A general classification theorem}\label{sec_general}
We are now ready to prove a version of Theorem \ref{thm_hyper_nonaffine_bijections} valid in higher codimension. Our strategy is to reduce to the hypersurface case so we may deduce the result from what we have already proved. Let us begin by fixing some terminology and notation for the setup we will be considering following Section 2 of \cite{OrlovSing2}.

Throughout this section by a complete intersection ring we mean a ring $R$ such that there is a regular ring $Q$ and a surjection $Q\to R$ with kernel generated by a regular sequence. A locally complete intersection scheme $X$ is a closed subscheme of a regular scheme such that the corresponding sheaf of ideals is locally generated by a regular sequence. All schemes considered from this point onward are assumed to have enough locally free sheaves.
Let $T$ be a separated regular noetherian scheme of finite Krull dimension and $\mathcal{E}$ a vector bundle on $T$ of rank $c$. For a section $t\in H^0(T,\mathcal{E})$ we denote by $Z(t)$ the \emph{zero scheme} of $t$. We recall that $Z(t)$ can be defined globally by the exact sequence
\begin{displaymath}
\xymatrix{
\mathcal{E}^{\vee} \ar[r]^{t^{\vee}} & \str_T \ar[r] & \str_{Z(t)} \ar[r] & 0.
}
\end{displaymath}
It can also be defined locally by taking a cover $X= \Un_iU_i$ trivializing $\mathcal{E}$ via $f_i\colon \mathcal{E}\vert_{U_i} \stackrel{\sim}{\to} \str_{U_i}^{\oplus c}$ and defining an ideal sheaf $\mathscr{I}(s)$ by
\begin{displaymath}
\mathscr{I}(t)\vert_{U_i} = (f_i(t)_1,\ldots, f_i(t)_c).
\end{displaymath}
We say that the section $t$ is \emph{regular} if the ideal sheaf $\mathscr{I}(t)$ is locally generated by a regular sequence. Thus the zero scheme $Z(t)$ of a regular section $t$ is a locally complete intersection in $T$ of codimension $c$. In our situation $t$ is  regular if and only if $\codim Z(t) = \rk \mathcal{E} = c$ (cf.\ \cite{MatsuAlg} 16.B). 

Let $T$ and $\mathcal{E}$ be as above and let $t\in H^0(T,\mathcal{E})$ be a regular section with zero scheme $X$. Denote by $\mathcal{N}_{X/T}$ the normal bundle of $X$ in $T$. There are projective bundles $\mathbb{P}(\mathcal{E}^\vee) = T'$ and $\mathbb{P}(\mathcal{N}_{X/T}^\vee) = Z$ with projections which we denote $q$ and $p$ respectively. Associated to these projective bundles are canonical line bundles $\str_{\mathcal{E}}(1)$ and $\str_\mathcal{N}(1)$ together with canonical surjections
\begin{displaymath}
q^*\mathcal{E} \to \str_\mathcal{E}(1) \quad \text{and} \quad p^*\mathcal{N}_{X/S} \to \str_\mathcal{N}(1).
\end{displaymath}
The section $t$ induces a section $t'\in H^0(T', \str_\mathcal{E}(1))$ and we denote its divisor of zeroes by $Y$. The natural closed immersion $Z\to T'$ factors via $Y$. To summarize there is a commutative diagram
\begin{equation}\label{eq_setup}
\xymatrix{
Z = \mathbb{P}(\mathcal{N}_{X/T}^\vee) \ar[r]^(0.7)i \ar[d]_p & Y \ar[r]^(0.3)u \ar[dr]^{\pi} & \mathbb{P}(\mathcal{E}^\vee) = T' \ar[d]^{q} \\
X \ar[rr]_j && T.
}
\end{equation}
This gives rise to functors $Si_*\colon S(Z) \to S(Y)$ and $Sp^*\colon S(X) \to S(Z)$ by \cite{KrStab} Theorem 1.5 and Theorem 6.6 respectively. Orlov proves the following theorem in Section 2 of \cite{OrlovSing2}:

\begin{thm}\label{thm_Orlov}
Let $T,T',X,$ and $Y$ be as above. Then the functor
\begin{displaymath}
\Phi_Z:= i_*p^* \colon D^b(\Coh X) \to D^b(\Coh Y)
\end{displaymath}
induces an equivalence of triangulated categories
\begin{displaymath}
\overline{\Phi}_Z \colon D_{\mathrm{Sg}}(X) \to D_{\mathrm{Sg}}(Y).
\end{displaymath}
\end{thm}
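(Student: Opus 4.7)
My plan is to reduce to a local model where $\mathcal{E}$ is trivial, establish the equivalence there, and then patch to obtain the global statement. Choose an affine open $U = \Spec R \subseteq T$ trivializing $\mathcal{E}$; the regular section $t$ corresponds to a regular sequence $(t_{1},\ldots,t_{c})$ cutting out $X \cap U$, the projective bundle $T'|_{U}$ becomes $\mathbb{P}^{c-1}_{R}$ with homogeneous coordinates $x_{0},\ldots,x_{c-1}$, the hypersurface $Y|_{U}$ is cut out by the single equation $W = \sum_{i} x_{i} t_{i}$, and $Z|_{U} = \mathbb{P}^{c-1}_{R/(t_{1},\ldots,t_{c})}$ sits inside $Y|_{U}$ as the locus where all $t_{i}$ vanish.

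The first step is to verify that $\Phi_{Z}$ descends to singularity categories, i.e.\ sends $D^{\mathrm{perf}}(X)$ into $D^{\mathrm{perf}}(Y)$. Since $p$ is a smooth projective morphism, $p^{*}$ preserves perfectness. For $i_{*}$, one examines the local model: on the standard affine chart $\{x_{j} \neq 0\}$ of $\mathbb{P}^{c-1}$ the defining equation of $Y$ rewrites $t_{j}$ as an $\str_{Y}$-linear combination of the remaining $t_{i}$, so the ideal of $Z$ in $Y$ on this chart is generated by the regular sequence $(t_{i})_{i \neq j}$. Hence $i$ is a regular closed immersion of codimension $c-1$, and $i_{*}$ preserves perfect complexes via the Koszul resolution. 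This yields $\overline{\Phi}_{Z} \colon D_{\mathrm{Sg}}(X) \to D_{\mathrm{Sg}}(Y)$.

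The second step is to construct a quasi-inverse. A natural candidate is $\Psi := Rp_{*} \circ i^{!}$, right adjoint to $i_{*}p^{*}$ up to the relative dualizing complex of $p$. Using that $p$ is a $\mathbb{P}^{c-1}$-bundle and that $i$ is a regular embedding, the same local analysis shows $\Psi$ preserves perfectness (ultimately by Koszul complexes and a Bott-style computation for line bundles on projective space), so $\Psi$ descends to $\overline{\Psi} \colon D_{\mathrm{Sg}}(Y) \to D_{\mathrm{Sg}}(X)$.

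The hard part will be verifying $\overline{\Psi}\,\overline{\Phi}_{Z} \cong \id$ and $\overline{\Phi}_{Z}\,\overline{\Psi} \cong \id$. The first composition is computable via the self-intersection formula $i^{!}i_{*}(-) \cong (-) \otimes_{\str_{Z}} \bigwedge^{\bullet} \mathcal{N}_{Z/Y}^{\vee}[\cdots]$ followed by $Rp_{*}$ applied to symmetric powers of $\mathcal{N}_{X/T}^{\vee}$ on $Z = \mathbb{P}(\mathcal{N}_{X/T}^{\vee})$; all extra terms turn out to be perfect and vanish in $D_{\mathrm{Sg}}(X)$, leaving the identity. The second composition is the main obstacle, and I would attack it locally using the matrix-factorization description of $D_{\mathrm{Sg}}(Y)$ for the hypersurface $W = \sum x_{i} t_{i}$: objects are pairs $(E,d)$ with $d^{2} = W \cdot \id$, and under this dictionary $\overline{\Phi}_{Z}\,\overline{\Psi}$ is identified with the identity via an explicit Koszul-type computation, with the twists by $\str_{\mathcal{E}}(1)$ and $\str_{\mathcal{N}}(1)$ matching up in a Bott-type cancellation. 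Once the local equivalence is established, covering $T$ by affines trivializing $\mathcal{E}$ and invoking naturality of all constructions produces the global equivalence.
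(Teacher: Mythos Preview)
The paper does not prove this theorem at all: it is quoted as a result of Orlov \cite{OrlovSing2}, with the remark immediately following that Burke and Walker \cite{BurkeMF2} removed the base-field hypothesis. So there is no proof in the paper to compare against; the statement is imported wholesale.

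That said, your sketch is a plausible outline of how one might approach Orlov's argument, and the first two steps are essentially correct: the fact that $i$ is a regular closed immersion (your chart-by-chart verification) is exactly what the paper records later as Remark~\ref{rem_regular}, and this does give that $\Phi_Z$ preserves perfectness. Your candidate quasi-inverse is also reasonable, though Orlov in fact works with $\mathbf{R}p_*\mathbf{L}i^*$ rather than $\mathbf{R}p_*i^!$; the two differ by a line-bundle twist and shift, which is harmless in the singularity category.

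The genuine gap is your final ``patch to obtain the global statement'' step. Singularity categories do not satisfy Zariski descent in any na\"{\i}ve sense, and an equivalence of categories is not the kind of datum one can glue from local equivalences plus ``naturality''. Orlov's proof is global from the outset: he works directly with the functors $i_*p^*$ and $\mathbf{R}p_*\mathbf{L}i^*$ on the bounded derived categories, uses the projective-bundle semiorthogonal decomposition of $D^b(\Coh Z)$ and $D^b(\Coh T')$, and computes the cones of the unit and counit explicitly, showing they land in the perfect subcategory. No localization to affine charts is needed except to verify auxiliary facts such as regularity of $i$. Your matrix-factorization idea for the second composition is closer in spirit to the Burke--Walker approach, but as written it too is phrased locally and would face the same gluing problem. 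If you want to carry out your strategy, you would need to run the cone computations globally rather than chartwise.
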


\begin{rem}
In Orlov's paper it is assumed that all the schemes are over some fixed base field and this is used in the proof. However, this hypothesis turns out to be unnecessary. In Appendix A of \cite{BurkeMF2} Jesse Burke and Mark Walker give a proof of Orlov's theorem without this assumption; it is this version of the theorem that we state.
\end{rem}

We wish to show this equivalence extends to the infinite completions $S(X)$ and $S(Y)$; it is natural to ask if the theorem extends and considering the larger categories allows us to utilise the formalism we have developed. In order to show the equivalence extends we demonstrate that it is  compatible with the functor $Si_*Sp^*$, induced by $i$ and $p$ as in Section 6 of \cite{KrStab}, via $I_\l Q_\r$. General nonsense about triangulated categories then implies $Si_*Sp^*$ must also be an equivalence.

\begin{notation}
We will frequently be concerned below with commuting diagrams involving the functors of Theorem \ref{thm_recol} for pairs of schemes. As in \cite{KrStab} we will tend not to clutter the notation by indicating which scheme the various functors correspond to as it is always identifiable from the context.
\end{notation}

\begin{lem}\label{lem_equiv1}
Let $i\colon Z\to Y$ be a regular closed immersion i.e., the ideal sheaf on $Y$ defining $Z$ is locally generated by a regular sequence, where $Z$ and $Y$ are noetherian separated schemes. Then the functor 
\begin{displaymath}
\hat{R}i_*\colon K(\Inj Z)\to K(\Inj Y)
\end{displaymath}
of \cite{KrStab} Theorem 1.4 has a coproduct preserving right adjoint $K(i^!)$ and sends compact objects to compact objects.
\end{lem}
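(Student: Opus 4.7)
The plan is to verify each assertion directly, leveraging the fact that $i_*$ is already exact at the abelian level for any closed immersion, so the derived machinery simplifies considerably; in particular, I do not expect the regularity hypothesis on $i$ to be needed for this lemma (presumably it is recorded only to match the standing setup of the section).

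First I would observe that $i_* \colon \QCoh Z \to \QCoh Y$ is exact and admits a right adjoint $i^!$ on abelian categories, namely the subsheaf of sections annihilated by the defining ideal of $Z$. Being a right adjoint, $i^!$ is left exact, and from this I would deduce that $i_*$ sends injective quasi-coherent sheaves to injectives: for an injective $I$ on $Z$, the functor $\Hom_Y(-, i_*I) \simeq \Hom_Z(i^!(-), I)$ carries short exact sequences to short exact sequences since $i^!$ is left exact and $I$ is injective. Consequently both $i_*$ and $i^!$ induce termwise functors $K(i_*) \colon K(\Inj Z) \to K(\Inj Y)$ and $K(i^!) \colon K(\Inj Y) \to K(\Inj Z)$ on the homotopy categories of injectives, and the abelian adjunction lifts verbatim to an adjunction $(K(i_*), K(i^!))$. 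Moreover, because $i_*$ is already exact, one has $Ri_* = i_*$ on $D(Z)$, and so $K(i_*)$ is naturally isomorphic to Krause's $\hat{R}i_*$; this identifies $K(i^!)$ as a right adjoint to $\hat{R}i_*$.

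Next, to show that $\hat{R}i_*$ preserves compact objects, I would invoke Theorem \ref{thm_recol}, which identifies, up to summands, the compact objects of $K(\Inj Z)$ with $D^b(\Coh Z)$, and similarly for $Y$. Since $i$ is a closed immersion of noetherian schemes it is finite, so $i_*$ carries $\Coh Z$ into $\Coh Y$; combined with the exactness of $i_*$, this forces $\hat{R}i_*$ to restrict, under the equivalence of Theorem \ref{thm_recol}, to the functor $D^b(\Coh Z) \to D^b(\Coh Y)$ induced by $i_*$. Hence $\hat{R}i_*$ preserves compacts.

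Finally, coproduct preservation of $K(i^!)$ is formal: any right adjoint to an exact, coproduct-preserving functor between compactly generated triangulated categories which preserves compact objects must itself preserve coproducts, via the standard comparison of $\Hom(C, K(i^!)(\coprod G_\alpha))$ with $\coprod \Hom(C, K(i^!) G_\alpha)$ for $C$ ranging over a set of compact generators of $K(\Inj Z)$. The main (and only mildly technical) obstacle is the preservation of injective quasi-coherent sheaves by $i_*$; as noted above this follows immediately from the left exactness of $i^!$, so the whole argument reduces to Krause's identification of compacts and generalities about compactly generated triangulated categories.
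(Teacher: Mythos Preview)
There is a genuine error in your first step. The isomorphism $\Hom_Y(-, i_*I) \simeq \Hom_Z(i^!(-), I)$ would require $i^!$ to be \emph{left} adjoint to $i_*$, but it is the right adjoint; the left adjoint of $i_*$ is $i^*$, so the correct identity is $\Hom_Y(-, i_*I) \simeq \Hom_Z(i^*(-), I)$, and $i^*$ is only right exact. Consequently $i_*$ does \emph{not} preserve injectives: already for $Z = \Spec k \hookrightarrow Y = \Spec k[x]$ the module $k$ is injective over itself but not over $k[x]$. Your identification of $\hat{R}i_*$ with termwise $K(i_*)$ on $K(\Inj Z)$ therefore collapses, and with it the rest of the argument as written.

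The paper runs the adjunction the other way. Since $i_*$ is exact, its right adjoint $i^!$ preserves injectives, so $K(i^!)$ restricts to a functor $K(\Inj Y)\to K(\Inj Z)$. One then checks directly that this restriction is right adjoint to Krause's $\hat{R}i_* = J_\l K(i_*)J$ (note the projection $J_\l$ is genuinely needed precisely because $i_*$ does not land in injectives). For coproduct preservation the paper argues at the abelian level: $i^! = \underline{\Hom}_{\str_Y}(\str_Z,-)$ commutes with coproducts because $\str_Z$ is coherent over $\str_Y$, hence $K(i^!)$ does too. Compactness of $\hat{R}i_*$ then follows from Neeman's theorem rather than from the explicit description of compacts. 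Your alternative route to compactness via $D^b(\Coh)$ is salvageable once one uses that $\hat{R}i_*$ lifts $\mathbf{R}i_* = i_*$, but the adjunction and coproduct statements still need to be argued as in the paper.
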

\begin{proof}
Since $i$ is a closed immersion we have an adjunction at the level of categories of quasi-coherent sheaves
\begin{displaymath}
\xymatrix{
\QCoh Z \ar[r]<0.5ex>^{i_*} \ar@{<-}[r]<-0.5ex>_{i^!} & \QCoh Y.
}
\end{displaymath}
The right adjoint $i^!$ sends injectives to injectives as $i_*$ is exact. 

These functors give an adjunction
\begin{displaymath}
\xymatrix{
K(\QCoh Z) \ar[r]<0.5ex>^{K(i_*)} \ar@{<-}[r]<-0.5ex>_{K(i^!)} & K(\QCoh Y)
}
\end{displaymath}
and $K(i^!)$ restricts to a functor from $K(\Inj Y)\to K(\Inj Z)$. We claim that this restricted functor is the right adjoint of $\hat{R}i_*$. Recall that $\hat{R}i_*$ is defined by the composite
\begin{displaymath}
\xymatrix{
K(\Inj Z) \ar[r]^(0.45){J} & K(\QCoh Z) \ar[r]^{K(i_*)} & K(\QCoh Y) \ar[r]^(0.55){J_\l} & K(\Inj Y)
}
\end{displaymath}
where $J$ is the inclusion and $J_\l$ is left adjoint to the corresponding inclusion for $Y$. For $A \in K(\Inj Z)$ and $B\in K(\Inj Y)$ there are isomorphisms
\begin{align*}
\Hom(\hat{R}i_*A,B) &= \Hom(J_\l K(i_*)JA,B) \\
&\iso \Hom(JA, K(i^!)JB) \\
&\iso \Hom(JA, JK(i^!)B) \\
&\iso \Hom(A, K(i^!)B)
\end{align*}
the first equality by definition, the third isomorphism $JK(i^!)\iso K(i^!)J$ as $K(i^!)$ sends complexes of injectives to complexes of injectives, and the fourth isomorphism as $J$ is fully faithful. This proves that the right adjoint to $\hat{R}i_*$ is induced by $K(i^!)$ as claimed.

To complete the proof note that $i^!$ preserves coproducts. The functor $K(i^!)$ and hence the right adjoint of $\hat{R}i_*$ are thus also coproduct preserving. It now follows from \cite{NeeGrot} Theorem 5.1 that $\hat{R}i_*$ sends compact objects to compact objects.
\end{proof}

Thus from \cite{KrStab}, namely the first diagram of Theorem 6.1 and Remark 3.8, we deduce, whenever $i$ is a regular closed immersion, a commutative square
\begin{equation}\label{eqblah}
\xymatrix{
D^b(\Coh Z) \ar[d]_{i_*} \ar[r]^{Q_\r}_{\sim} & K^c(\Inj Z) \ar[d]^{\hat{R}i_*} \\
D^b(\Coh Y) \ar[r]^{\sim}_{Q_\r} & K^c(\Inj Y).
}
\end{equation}


\begin{lem}\label{lem_equiv2}
Let $Z$ and $Y$ be Gorenstein separated schemes and suppose $i\colon Z\to Y$ is a regular closed immersion. Then the functor $K(i^!)$ sends acyclic complexes of injectives to acyclic complexes of injectives. 
\end{lem}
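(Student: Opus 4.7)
The plan is to prove that if $E$ is an acyclic complex of injective quasi-coherent $\str_Y$-modules then $K(i^!)E$, which is a complex of injectives on $Z$ by the proof of Lemma \ref{lem_equiv1}, is also acyclic. I would decompose $E$ into its short exact sequences of cocycles $0 \to Z^nE \to E^n \to Z^{n+1}E \to 0$ in $\QCoh Y$. Applying $i^!$ and invoking the long exact sequence of its right derived functors, together with the vanishing of $R^k i^!$ on injective quasi-coherent sheaves (since $R^k i^! = \mathcal{E}xt^k_{\str_Y}(\str_Z,-)$ vanishes in positive degrees on injectives), reduces the whole problem to showing $R^k i^!(Z^nE) = 0$ for every $n \in \int$ and every $k \geq 1$; the short exact sequences of cocycles will then survive $i^!$ intact and splice back to give the complex $i^! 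E$.

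The next step is to bring in the Gorenstein hypothesis on $Y$. By the scheme-theoretic extension of \cite{KrStab} Proposition 7.13 provided by \cite{MurfetTAC}, acyclicity of $E$ upgrades to total acyclicity, so each cocycle $Z^nE$ is a Gorenstein injective $\str_Y$-module. Since the vanishing of $R^k i^!(Z^nE)$ is a local statement on $Y$, I would reduce to an open affine $U = \Spec R \subseteq Y$ on which $i^{-1}(U) \iso \Spec R/I$, with $I$ generated by an $R$-regular sequence because $i$ is a regular closed immersion, and on which the restriction of $Z^nE$ remains Gorenstein injective as an $R$-module. Under this identification the stalk of $R^k i^!(Z^nE)$ on $i^{-1}(U)$ is $\Ext^k_R(R/I, Z^nE|_U)$, and the Koszul resolution of $R/I$ forces $\pdim_R R/I < \infty$.

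The desired vanishing is then immediate from the standard fact, \cite{EnochsJenda} Corollary 11.2.2 (already used in the proof of Lemma \ref{lem_ker_nontriv}), that Gorenstein injective modules are right $\Ext^k$-orthogonal to modules of finite projective dimension for all $k \geq 1$. The main technical point I expect to need to verify carefully is the global-to-local bookkeeping, specifically that Gorenstein injective quasi-coherent $\str_Y$-modules restrict to Gorenstein injective modules over each open affine; this reduces to the observation that open immersions into noetherian schemes preserve both injectivity and exactness of complexes of quasi-coherent sheaves, and so carry totally acyclic complexes of injectives on $Y$ to totally acyclic complexes of injectives on the open. Beyond this routine verification, no further input is required.
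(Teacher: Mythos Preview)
Your argument is correct and genuinely different from the paper's. The paper does not touch Gorenstein injectives here at all: it observes that $i_*$ preserves perfect complexes, so $\mathbf{R}i^!$ preserves coproducts and hence $\mathbf{R}i^! \iso \mathbf{L}i^*(-)\otimes \mathbf{R}i^!\str_Y$, with $\mathbf{R}i^!\str_Y$ a shifted line bundle. This gives a uniform $n$ with $H^j(\mathbf{R}i^!F)=0$ for $j>n$ and every sheaf $F$; applying $K(i^!)$ to the truncation $A^{\geq 0}$ computes $\mathbf{R}i^!(Z^0A)$, hence is acyclic above $n$, and shifting finishes. Your route instead exploits that each cocycle $Z^nE$ is Gorenstein injective (via ``acyclic $=$ totally acyclic'' on a Gorenstein scheme with dualizing complex) and that $\str_Z$ has finite projective dimension locally, so $\mathcal{E}xt^k_{\str_Y}(\str_Z,Z^nE)$ vanishes for $k\geq 1$ by \cite{EnochsJenda} Corollary 11.2.2. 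The paper's approach is duality-theoretic and in fact uses the Gorenstein hypothesis only incidentally (for a regular closed immersion $\mathbf{R}i^!\str_Y$ is already a shifted line bundle); yours stays entirely inside the Gorenstein homological algebra used in Sections~\ref{sec_affine} and \ref{sec_hypersurface} and is arguably more in keeping with the paper's overall style.

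One small point: your final sentence, deducing that total acyclicity restricts to an open from ``$j^*$ preserves injectives and exactness'', is not quite a proof as written, since total acyclicity is more than acyclic-with-injective-terms. The clean fix is the one you have implicitly available anyway: each affine open $U\iso\Spec R$ is itself Gorenstein (with dualizing complex, under the ambient finite-dimensionality assumptions in force in Section~\ref{sec_general}), so Krause's Proposition 7.13 applies over $R$ and the restricted complex, being acyclic with injective terms, is automatically totally acyclic there.
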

\begin{proof}
As $i$ is a regular closed immersion $i_*$ sends perfect complexes to perfect complexes. Thus $\mathbf{R}i^!\colon D(Y) \to D(Z)$ preserves coproducts by \cite{NeeGrot} Theorem 5.1 so is isomorphic to $\mathbf{L}i^*(-) \otimes \mathbf{R}i^!\str_Y$ by \emph{ibid}.\ Theorem 5.4. The scheme $Y$ is Gorenstein so $\mathbf{R}i^! \str_Y$ is a dualizing complex on $Z$. As $Z$ is also Gorenstein the dualizing complex $\mathbf{R}i^!\str_Y$ is (at least on each connected component) a suspension of an invertible sheaf. Thus we can choose $n\in \int$ so that $H^j(\mathbf{R}i^!F) = 0$ for every quasi-coherent sheaf $F$ on $Y$ and $j> n$ as $\mathbf{L}i^*(F)$ is always bounded above.

If $A$ is an acyclic complex of injectives then the truncation
\begin{displaymath}
0 \to A^0 \to A^1 \to A^2 \to \cdots
\end{displaymath} 
is an injective resolution of $B =\ker(A^0 \to A^1)$. Thus applying $K(i^!)$ to this truncation computes $\mathbf{R}i^!B$ so the resulting complex is acyclic above degree $n$. By taking suspensions we deduce that $K(i^!)A$ is in fact acyclic everywhere and we have already noted that $i^!$ preserves injectivity as it has an exact left adjoint.
\end{proof}

\begin{rem}\label{rem_regular}
As the notation in the last two lemmas indicates they apply to the situation we are interested in, namely the one given at the start of the section: the morphism $i\colon Z \to Y$ is a regular closed immersion. Let us indicate why this is the case. Pick some open affine subscheme $\Spec Q$ of $T$, with preimage in $X$ isomorphic to $\Spec R$, on which $\mathcal{E}$ is trivial and such that the kernel of $Q\to R$ is generated by the regular sequence $\{q_1,\ldots,q_c\}$. We get a diagram of open subschemes of the diagram (\ref{eq_setup})
\begin{displaymath}
\xymatrix{
\mathbb{P}^{c-1}_R \ar[r]^(0.7)i \ar[d]_p & Y' \ar[r]^(0.3)u \ar[dr]^{\pi} & \mathbb{P}^{c-1}_Q \ar[d]^{q} \\
\Spec R \ar[rr]_j && \Spec Q.
}
\end{displaymath}
The hypersurface $Y'$ is defined by the section $t' = \sum_{i=1}^c q_ix_i$ of $\str_{\mathbb{P}^{c-1}_Q}(1)$, where the $x_i$ are a basis for the global sections of $\str_{\mathbb{P}^{c-1}_Q}$. Let $z$ be a point in the $c$th standard open affine $\mathbb{A}^{c-1}_R$ in $\mathbb{P}^{c-1}_R$ (we choose this open affine for ease of notation, little changes if $z$ lies in another standard open affine) and consider the local maps of local rings
\begin{displaymath}
\str_{T',ui(z)} \stackrel{\a}{\to} \str_{Y,i(z)} \stackrel{\b}{\to} \str_{Z,z}.
\end{displaymath}
We wish to show that $\ker \b$ is generated by a regular sequence. Note that both $\a$ and $\b\a$ have kernels generated by regular sequences: the kernel of $\a$ is generated by the image of $s = q_1x_1+\cdots+q_{c-1}x_{c-1} + q_c$ in $\str_{T',ui(z)}$ and the kernel of $\b\a$ is generated by the image of the regular sequence $\{q_1,\ldots,q_c\}$.

It is clear that the image of $\{q_1,\ldots,q_{c-1},s\}$ is a regular sequence in $\str_{T',ui(z)}$ and as this ring is local and noetherian we may permute the order of the elements in this sequence and it remains regular by \cite{MatsuRing} Theorem 16.3. Thus $\{s,q_1,\ldots,q_{c-1}\}$ is a regular sequence in $\str_{T',ui(z)}$. It follows that the image of $\{q_1,\ldots,q_{c-1}\}$ is a regular sequence in $\str_{Y,i(z)}$ and it generates the kernel of $\b$. Thus $i$ is a regular closed immersion as claimed.
\end{rem}

So we have an adjoint pair of functors
\begin{displaymath}
\xymatrix{
K(\Inj Z) \ar[r]<0.5ex>^{\hat{R}i_*} \ar@{<-}[r]<-0.5ex>_{K(i^!)} & K(\Inj Y)
}
\end{displaymath}
which both send acyclic complexes to acyclic complexes: $\hat{R}i_*$ by Theorem 1.5 of \cite{KrStab} and $K(i^!)$ by Lemma \ref{lem_equiv2}. Thus they restrict to an adjoint pair
\begin{displaymath}
\xymatrix{
S(Z) \ar[r]<0.5ex>^{Si_*} \ar@{<-}[r]<-0.5ex>_{Si^!} & S(Y).
}
\end{displaymath}

So we have a commutative square
\begin{displaymath}
\xymatrix{
S(Y) \ar[r]^(0.4)I \ar[d]_{Si^!} & K(\Inj Y) \ar[d]^{K(i^!)} \\
S(Z) \ar[r]_(0.4)I & K(\Inj Z).
}
\end{displaymath}
Taking left adjoints of the functors in this last square we get another commutative diagram
\begin{equation*}\label{eqblah2}
\xymatrix{
K(\Inj Z) \ar[r]^(0.6){I_\l} \ar[d]_{\hat{R}i_*} & S(Z) \ar[d]^{Si_*} \\
K(\Inj Y) \ar[r]_(0.6){I_\l} & S(Y).
}
\end{equation*}
By Lemma \ref{lem_equiv1} the composite $I_\l \hat{R}i_*$ sends compact objects to compact objects. As $I_\l$ sends compacts to compacts and is essentially surjective, up to summands, on compacts we see that $Si_*$ must preserve compacts too. So restricting this square to compact objects and juxtaposing with the square (\ref{eqblah}) we get a commutative diagram
\begin{displaymath}
\xymatrix{
D^b(\Coh Z) \ar[d]_{i_*} \ar[r]^{Q_\r}_{\sim} & K^c(\Inj Z) \ar[d]^{\hat{R}i_*} \ar[r]^(0.6){I_\l} & S^c(Z) \ar[d]^{Si_*} \\
D^b(\Coh Z) \ar[r]^{\sim}_{Q_\r} & K^c(\Inj Y) \ar[r]_(0.6){I_\l} & S^c(Y).
}
\end{displaymath}
In particular, the functor $\overline{i}_*\colon D_{\mathrm{Sg}}(Z) \to D_\mathrm{Sg}(Y)$ induced by $i$ is compatible with $Si_*$ under the embeddings of $D_\mathrm{Sg}(Z)$ and $D_\mathrm{Sg}(Y)$ as the compact objects in $S(Z)$ and $S(Y)$.

\begin{prop}\label{cor_equiv}
There is an equivalence of triangulated categories
\begin{displaymath}
Si_*Sp^*\colon S(X) \to S(Y)
\end{displaymath}
which when restricted to compact objects is Orlov's equivalence.
\end{prop}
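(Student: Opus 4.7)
The plan is to show that $Si_*Sp^*$ is an exact coproduct-preserving functor between compactly generated triangulated categories whose restriction to compact objects coincides, up to summands, with Orlov's equivalence $\overline{\Phi}_Z$ of Theorem~\ref{thm_Orlov}; an equivalence on the full singularity categories will then follow from the standard principle that a coproduct-preserving exact functor between compactly generated triangulated categories which restricts to an equivalence on compacts is itself an equivalence.

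First I would verify compatibility on compact objects. The compatibility of $Si_*$ with $\overline{i}_*\colon D_{\mathrm{Sg}}(Z)\to D_{\mathrm{Sg}}(Y)$ under the embeddings $I_\l Q_\r\colon D_{\mathrm{Sg}}(-)\hookrightarrow S^c(-)$ has already been established in the diagram immediately preceding this proposition; the essential input was Lemma~\ref{lem_equiv2}, which applies because $i\colon Z\to Y$ is a regular closed immersion (Remark~\ref{rem_regular}). The analogous compatibility for $Sp^*$ is easier: since $p\colon Z\to X$ is a projective bundle it is flat (of finite Tor-dimension), so $p^*$ is exact on quasi-coherent sheaves and sends $D^b(\Coh X)$ into $D^b(\Coh Z)$ and perfect complexes to perfect complexes. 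Krause's construction of $Sp^*$ (\cite{KrStab} Theorem~6.6) is compatible with $I_\l Q_\r$ on bounded below complexes, and a short diagram chase then yields the commutative square
\begin{displaymath}
\xymatrix{
D^b(\Coh X) \ar[d]_{p^*} \ar[r]^(0.45){I_\l Q_\r} & S^c(X) \ar[d]^{Sp^*} \\
D^b(\Coh Z) \ar[r]^(0.45){I_\l Q_\r} & S^c(Z).
}
\end{displaymath}
Juxtaposing with the compatibility square for $Si_*$ shows that $Si_*Sp^*$ sends $S^c(X)$ into $S^c(Y)$ and that its restriction agrees, under the embeddings, with $\Phi_Z = i_*p^*$. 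Passing to the Verdier quotients $D_{\mathrm{Sg}}(X)$ and $D_{\mathrm{Sg}}(Y)$ this descends to $\overline{\Phi}_Z$, which is an equivalence by Theorem~\ref{thm_Orlov}. Since $D_{\mathrm{Sg}}(-)\to S^c(-)$ is fully faithful and essentially surjective up to summands, it follows that $Si_*Sp^*\vert_{S^c(X)}$ is fully faithful with essential image generating $S^c(Y)$ as a thick subcategory.

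Finally I would lift this to a global equivalence. The functor $Sp^*$ is a left adjoint (to $Sp_*$) and hence preserves small coproducts, while $Si_*$ preserves small coproducts because it has the coproduct-preserving right adjoint $Si^!$ constructed in Lemma~\ref{lem_equiv1} and the paragraph following Lemma~\ref{lem_equiv2}. The composite $F := Si_*Sp^*$ is therefore an exact, coproduct-preserving functor $S(X)\to S(Y)$ which sends compacts to compacts and restricts to an equivalence on compacts (up to summands). The main (and only real) obstacle is then the standard argument promoting this to an equivalence: by Brown representability $F$ admits a right adjoint $F^\r$, which automatically preserves coproducts since $F$ preserves compacts; the full subcategory of $S(X)$ on which the unit $\mathrm{id}\to F^\r F$ is an isomorphism is localizing and contains the compact generators (where the statement reduces to the full faithfulness of Orlov's $\overline{\Phi}_Z$), so it is all of $S(X)$, and an entirely symmetric argument handles the counit. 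This gives the required equivalence $S(X)\simeq S(Y)$, whose restriction to compact objects is Orlov's equivalence by construction.
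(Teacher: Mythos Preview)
Your proposal is correct and follows essentially the same route as the paper: establish compatibility of $Si_*$ and $Sp^*$ with $\overline{i}_*$ and $\overline{p}^*$ on compacts (the former from the discussion preceding the proposition, the latter directly from \cite{KrStab} Theorem~6.6), stack the squares to match $Si_*Sp^*$ with Orlov's $\overline{\Phi}_Z$, and then invoke the standard fact that a coproduct-preserving exact functor between compactly generated triangulated categories inducing an equivalence on compacts is an equivalence. You spell out the ``abstract nonsense'' step (Brown representability plus the unit/counit argument) and the up-to-summands issue more explicitly than the paper does, but the argument is the same.
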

\begin{proof}
We have just seen that the square
\begin{displaymath}
\xymatrix{
D_\mathrm{Sg}(Z) \ar[r] \ar[d]_{\overline{i}_*} & S(Z) \ar[d]^{Si_*} \\
D_\mathrm{Sg}(Y) \ar[r] & S(Y)
}
\end{displaymath}
commutes. By \cite{KrStab} Theorem 6.6 the square
\begin{displaymath}
\xymatrix{
D_\mathrm{Sg}(X) \ar[r] \ar[d]_{\overline{p}^*} & S(X) \ar[d]^{Sp^*} \\
D_\mathrm{Sg}(Z) \ar[r] & S(Z)
}
\end{displaymath}
commutes. Putting this second square on top of the first the equivalence $\overline{\Phi}_Z$ fits into a commutative diagram
\begin{displaymath}
\xymatrix{
D_\mathrm{Sg}(X) \ar[r] \ar[d]_{\overline{\Phi}_Z}^{\wr} & S(X) \ar[d]^{Si_*Sp^*} \\
D_\mathrm{Sg}(Y) \ar[r] & S(Y).
}
\end{displaymath}
Hence $Si_*Sp^*$ is a coproduct preserving exact functor between compactly generated triangulated categories inducing an equivalence on compact objects. It follows from abstract nonsense that it must be an equivalence.
\end{proof}

We have thus reduced the problem of understanding $S(X)$ to that of understanding $S(Y)$. The scheme $Y$ is locally a hypersurface as it is a locally complete intersection in the regular scheme $T'$ and has codimension 1. Theorem \ref{thm_hyper_nonaffine_bijections} thus applies and we have the following theorem, where we use the notation introduced at the beginning of the section.

\begin{thm}\label{thm_general_bijections}
The category $D(Y)$ acts on $S(X)$ via the equivalence\\ $S(X) \iso S(Y)$ giving order preserving bijections
\begin{displaymath}
\left\{ \begin{array}{c}
\text{subsets of}\; \Sing Y 
\end{array} \right\}
\xymatrix{ \ar[r]<1ex>^{\tau} \ar@{<-}[r]<-1ex>_{\sigma} &} \left\{
\begin{array}{c}
\text{localizing} \; D(Y)\text{-submodules of} \; S(X) \\
\end{array} \right\} 
\end{displaymath}
and
\begin{displaymath}
\left\{ \begin{array}{c}
\text{specialization closed} \\ \text{subsets of}\; \Sing Y 
\end{array} \right\}
\xymatrix{ \ar[r]<1ex>^{\tau} \ar@{<-}[r]<-1ex>_{\sigma} &} \left\{
\begin{array}{c}
\text{localizing} \; D(Y)\text{-submodules} \\ \text{of} \; S(X) \; \text{generated by} \\ \text{objects of} \; S(X)^c
\end{array} \right\}. 
\end{displaymath}
Furthermore if the line bundle $\str_\mathcal{E}(1)$ is ample, for example if $S$ is affine, then every localizing subcategory of $S(X)$ is a $D(Y)$-submodule so one obtains a complete classification of the localizing subcategories of $S(X)$.
\end{thm}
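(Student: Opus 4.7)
The proof strategy is to transport the classification from $S(Y)$ back to $S(X)$ along the equivalence established in Proposition \ref{cor_equiv}. First I would observe that $Y$, being cut out by a regular section of the line bundle $\str_\mathcal{E}(1)$ on the regular noetherian separated scheme $T' = \mathbb{P}(\mathcal{E}^\vee)$, is itself a noetherian separated scheme with only hypersurface singularities. Consequently Theorem \ref{thm_hyper_nonaffine_bijections} applies verbatim to $S(Y)$ equipped with its natural $D(Y)$-action, yielding both bijections between subsets (resp.\ specialization closed subsets) of $\Sing Y$ and localizing $D(Y)$-submodules (resp.\ compactly generated $D(Y)$-submodules) of $S(Y)$.

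The second step is to transfer this structure across the equivalence $Si_*Sp^* \colon S(X) \stackrel{\sim}{\to} S(Y)$. Declaring that $D(Y)$ acts on $S(X)$ by pulling the action through the equivalence, the triangle equivalence automatically sends localizing subcategories to localizing subcategories and compact objects to compact objects, so $\mathcal{L} \cie S(X)$ is a $D(Y)$-submodule (resp.\ generated by compact objects) if and only if its image in $S(Y)$ is. Likewise the supports and the assignments $\s$ and $\t$ match under the equivalence. Hence the two bijections of Theorem \ref{thm_hyper_nonaffine_bijections} pull back to the two bijections asserted for $S(X)$.

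For the final claim, I would appeal to Corollary \ref{cor_ample_stable}. Whenever $\str_\mathcal{E}(1)$ is ample on $T'$, the hypersurface $Y$ is precisely the zero scheme of a regular section of an ample line bundle on a regular separated scheme, so Corollary \ref{cor_ample_stable} guarantees that every localizing subcategory of $S(Y)$ is automatically a $D(Y)$-submodule. Transferring through the equivalence then promotes the first bijection for $S(X)$ to a complete classification of its localizing subcategories. To justify the parenthetical, note that when the ambient regular scheme $T$ is affine, $T' = \mathbb{P}(\mathcal{E}^\vee)$ is a projective bundle over an affine base and the tautological quotient $\str_\mathcal{E}(1)$ is ample on $T'$.

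The only potentially delicate point is to make sure that the transported action is well defined and that the various ``relative'' notions (submodule, support, compact generation) genuinely correspond under $Si_*Sp^*$; but since Proposition \ref{cor_equiv} gives a bona fide triangle equivalence and the action on $S(X)$ is \emph{defined} by transport of structure, this reduces to formal bookkeeping, and no further geometric input beyond what is already encoded in Theorem \ref{thm_hyper_nonaffine_bijections} and Corollary \ref{cor_ample_stable} is required.
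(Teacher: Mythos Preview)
Your proposal is correct and follows essentially the same route as the paper: define the $D(Y)$-action on $S(X)$ by transport of structure along the equivalence $\Psi = Si_*Sp^*$, observe that $\Psi$ preserves localizing subcategories and compact generation so that $D(Y)$-submodules of $S(X)$ correspond bijectively to $D(Y)$-submodules of $S(Y)$, and then invoke Theorem \ref{thm_hyper_nonaffine_bijections} for the hypersurface $Y$ together with Corollary \ref{cor_ample_stable} for the ample case. Your write-up is in fact slightly more explicit than the paper's in justifying why $Y$ has hypersurface singularities and why $\str_\mathcal{E}(1)$ is ample over an affine base, but the argument is the same.
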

\begin{proof}
Let us denote the equivalence $S(X)\stackrel{\sim}{\to}S(Y)$ by $\Psi$. We define an action of $D(Y)$ on $S(X)$ by setting, for $E\in D(Y)$ and $A\in S(X)$
\begin{displaymath}
E\square A = \Psi^{-1}(E\odot \Psi A).
\end{displaymath}
It is easily checked that this is in fact an action.

The equivalence $\Psi$ sends localizing subcategories (generated by objects of $S(X)^c$) to localizing subcategories (generated by objects of $S(Y)^c$). A localizing subcategory $\mathcal{L}\cie S(X)$ is a $D(Y)$-submodule if and only if for every $E\in D(Y)$
\begin{displaymath}
E\square \mathcal{L} = \Psi^{-1}(E\odot \Psi\mathcal{L}) \cie \mathcal{L}
\end{displaymath}
if and only if $E\odot \Psi \mathcal{L} \cie \Psi \mathcal{L}$. In other words $\mathcal{L}$ is a $D(Y)$-submodule if and only if $\Psi\mathcal{L}$ is a $D(Y)$-submodule. Thus the theorem follows from Theorem \ref{thm_hyper_nonaffine_bijections} as $Y$ is locally a hypersurface.

The last statement is a consequence of Corollary \ref{cor_ample_stable}.
\end{proof}

\begin{cor}\label{cor_general_tele}
The relative telescope conjecture holds for the action of $D(Y)$ on $S(X)$. In particular if $\str_\mathcal{E}(1)$ is ample then the usual telescope conjecture holds for $S(X)$.
\end{cor}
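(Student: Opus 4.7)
The plan is to transport known telescope conjecture results for $S(Y)$ along the equivalence $\Psi = Si_*Sp^*$ of Proposition \ref{cor_equiv}. First I would observe that $\Psi$ is a coproduct preserving equivalence of compactly generated triangulated categories, hence induces a bijection between the lattices of localizing subcategories of $S(X)$ and $S(Y)$ which restricts to a bijection on smashing localizing subcategories and on those generated by compact objects (equivalences preserve compactness). Moreover, by the very definition of the action $\square$ given in the proof of Theorem \ref{thm_general_bijections}, a localizing subcategory $\mathcal{L} \cie S(X)$ is a $D(Y)$-submodule under $\square$ if and only if $\Psi\mathcal{L} \cie S(Y)$ is a $D(Y)$-submodule under $\odot$. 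Consequently $\Psi$ identifies the lattice of smashing $D(Y)$-submodules of $S(X)$ with that of $S(Y)$, preserving the property of being generated by compact objects.

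Next I would apply Theorem \ref{thm_nonaffine_hyper_tele} to $S(Y)$. By its construction $Y$ is the divisor of zeros of a regular section of $\str_{\mathcal{E}}(1)$ on the regular separated noetherian scheme $T'$, so $Y$ is a locally complete intersection of codimension one and therefore has only hypersurface singularities. The theorem thus guarantees that every smashing $D(Y)$-submodule of $S(Y)$ is generated by objects of $S(Y)^c$, and transporting this statement along $\Psi^{-1}$ using the first paragraph yields the first assertion of the corollary.

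For the addendum, suppose in addition that $\str_{\mathcal{E}}(1)$ is ample on $T'$. Then $Y$ is cut out by a regular section of an ample line bundle on a regular ambient scheme, so Corollary \ref{cor_ample_stable} applies and tells us that every localizing subcategory of $S(Y)$ is automatically a $D(Y)$-submodule. Combining this with the relative telescope conjecture just established for $S(Y)$ shows that every smashing subcategory of $S(Y)$ is generated by compact objects, i.e., $S(Y)$ satisfies the usual telescope conjecture. Transporting back along $\Psi^{-1}$ gives the usual telescope conjecture for $S(X)$.

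Beyond the routine verification sketched in the first paragraph, there is essentially no obstacle: the argument is a formal consequence of the fact that $\Psi$ is a coproduct preserving equivalence intertwining the two $D(Y)$-actions, together with Theorem \ref{thm_nonaffine_hyper_tele} and, for the addendum, Corollary \ref{cor_ample_stable}. The only point worth checking with any care is that the defining formula $E\square A = \Psi^{-1}(E\odot \Psi A)$ really does make smashing-ness and compact-generation correspond under $\Psi$, but this is immediate because $\Psi$ and $\Psi^{-1}$ are both exact and coproduct preserving.
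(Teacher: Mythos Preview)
Your proposal is correct and is precisely the argument the paper has in mind: the paper's proof is the single sentence ``This is immediate from the corresponding statements for the action of $D(Y)$ on $S(Y)$ given in Theorem \ref{thm_nonaffine_hyper_tele},'' and you have simply unpacked what ``immediate'' means by spelling out that the equivalence $\Psi$ intertwines the two $D(Y)$-actions and hence matches smashing submodules and compactly generated ones. Your separate appeal to Corollary \ref{cor_ample_stable} for the ample addendum is also fine, since that is exactly how Theorem \ref{thm_nonaffine_hyper_tele} handles its own ample clause.
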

\begin{proof}
This is immediate from the corresponding statements for the action of $D(Y)$ on $S(Y)$ given in Theorem \ref{thm_nonaffine_hyper_tele}.
\end{proof}

\section{Embedding independence}\label{sec_indep}
To prove Theorem \ref{thm_general_bijections} we have relied on the choice of some ambient scheme $T$, vector bundle $\mathcal{E}$, and a regular section $t$ of $\mathcal{E}$. Thus it is not clear that the support theory one produces, via the hypersurface $Y$ associated to this data, is independent of the choices we have made. We now show this is in fact the case: the choices one makes do not matter as far as the support theory is concerned.

The setup will be exactly the same as previously, except we will have two possibly different regular noetherian separated schemes of finite Krull dimension $T_1$ and $T_2$ each carrying a vector bundle $\mathcal{E}_i$ with a regular section $t_i$ for $i=1,2$ such that
\begin{displaymath}
Z(t_1) \iso X \iso Z(t_2).
\end{displaymath}

Thus there are, by Proposition \ref{cor_equiv}, two equivalences
\begin{displaymath}
\Psi_1\colon S(X) \to S(Y_1) \quad \text{and} \quad \Psi_2 \colon S(X) \to S(Y_2)
\end{displaymath}
giving rise to a third equivalence $S(Y_1) \stackrel{\sim}{\to} S(Y_2)$ which we shall denote by $\Theta$.

We first treat the case in which both $\str_{\mathcal{E}_1}(1)$ and $\str_{\mathcal{E}_2}(1)$ are ample.

\begin{lem}\label{lem_ample_independence}
Suppose $\str_{\mathcal{E}_i}(1)$ is ample for $i=1,2$. Then there is a homeomorphism
\begin{displaymath}
\theta\colon \Sing Y_1 \to \Sing Y_2
\end{displaymath}
such that for any $A$ in $S(Y_1)$ we have
\begin{displaymath}
\theta \supp A = \supp \Theta A.
\end{displaymath}
In particular the two support theories for $S(X)$ obtained via the actions of $D(Y_1)$ and $D(Y_2)$ coincide up to this homeomorphism.
\end{lem}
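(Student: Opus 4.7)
The plan is to extract $\theta$ from the equivalence $\Theta\colon S(Y_1) \to S(Y_2)$ by comparing the classifications of localizing subcategories on the two sides. Under the ample hypothesis, Corollary \ref{cor_ample_stable} applies to both $Y_1$ and $Y_2$, so for each $i$ there is an order isomorphism between subsets of $\Sing Y_i$ and localizing subcategories of $S(Y_i)$ via $\s$ and $\t$. In particular, the minimal nonzero localizing subcategories of $S(Y_i)$ are in bijection with points of $\Sing Y_i$, and they are precisely the subcategories $\mathit{\Gamma}_y S(Y_i)$, which are nonzero for $y \in \Sing Y_i$ by Lemma \ref{lem_nonaffine_nontriv}.

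First I would define $\theta$ on points. Since $\Theta$ is a triangulated equivalence it preserves the collection of localizing subcategories and, in particular, the minimal nonzero ones. So for each $y \in \Sing Y_1$ there is a unique $\theta(y) \in \Sing Y_2$ with
\begin{displaymath}
\Theta(\mathit{\Gamma}_y S(Y_1)) = \mathit{\Gamma}_{\theta(y)} S(Y_2).
\end{displaymath}
Applying the same construction to $\Theta^{-1}$ produces an inverse for $\theta$, so $\theta$ is a bijection of sets.

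Next, for support compatibility, I would use the equivalent description $y \in \supp A$ if and only if $\mathit{\Gamma}_y S(Y_1) \cie \langle A \rangle_{\mathrm{loc}}$. The forward implication uses that under the ample hypothesis $\langle A \rangle_{\mathrm{loc}}$ is a $D(Y_1)$-submodule, hence contains $\mathit{\Gamma}_y A \neq 0$, together with the minimality of $\mathit{\Gamma}_y S(Y_1)$; the reverse implication uses $\langle A \rangle_{\mathrm{loc}} = \t(\supp A)$ together with $\supp \mathit{\Gamma}_y S(Y_1) = \{y\}$. Since $\Theta$ is an equivalence, $\Theta\langle A\rangle_{\mathrm{loc}} = \langle \Theta A\rangle_{\mathrm{loc}}$, so applying $\Theta$ and invoking the defining equation of $\theta$ translates this characterization into $\theta(y) \in \supp \Theta A$. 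Hence $\theta(\supp A) = \supp \Theta A$.

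Finally, for the homeomorphism property, I would invoke Lemma \ref{lem_closed_compact_exists}: for each $y \in \Sing Y_1$ there is a compact $E \in S(Y_1)^c$ with $\supp E = \mathcal{V}(y)$. Then $\Theta E$ is compact in $S(Y_2)$ and
\begin{displaymath}
\supp \Theta E = \theta(\supp E) = \theta(\mathcal{V}(y)),
\end{displaymath}
which is closed in $\Sing Y_2$ by Proposition \ref{prop_nonaffine_specclosed}. Since $\theta(y) \in \theta(\mathcal{V}(y))$ this forces $\mathcal{V}(\theta(y)) \cie \theta(\mathcal{V}(y))$, and the symmetric argument for $\Theta^{-1}$ gives the reverse inclusion, yielding $\theta(\mathcal{V}(y)) = \mathcal{V}(\theta(y))$. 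Since $\Sing Y_i$ is a noetherian spectral space whose topology is determined by the closures of its points, this identifies $\theta$ as a homeomorphism. The main obstacle is precisely this last step, promoting the set-theoretic bijection $\theta$ to a homeomorphism; it is essential that $\Theta$ preserves compactness so that the compact objects produced by Lemma \ref{lem_closed_compact_exists} can be transported across the equivalence and their closed supports compared.
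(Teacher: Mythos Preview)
Your proposal is correct and follows essentially the same strategy as the paper: define $\theta$ by transporting minimal localizing subcategories across $\Theta$, deduce support compatibility from the classification, and obtain continuity by pushing compact objects through $\Theta$ and invoking Proposition~\ref{prop_nonaffine_specclosed}. The only cosmetic differences are that the paper phrases the support compatibility via the local-to-global decomposition $\langle A\rangle_{\mathrm{loc}} = \langle \mathit{\Gamma}_y S(Y_1)\mid y\in\supp A\rangle_{\mathrm{loc}}$ rather than your containment criterion, and for the homeomorphism the paper works directly with an arbitrary closed subset $\mathcal{V}\subseteq\Sing Y_1$ (realised as the support of a compact via Lemma~\ref{lem_closed_compact_exists}) rather than first reducing to irreducibles $\mathcal{V}(y)$ and then appealing to soberness.
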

\begin{proof}
We first define $\theta$ and show it is a bijection. Let $y$ be a point of $\Sing Y_1$. By Theorem \ref{thm_general_bijections} the subcategory $\mathit{\Gamma}_y S(Y_1)$ is a minimal localizing subcategory. Thus its essential image $\Theta \mathit{\Gamma}_y S(Y_1)$ is a minimal localizing subcategory of $S(Y_2)$. So by Corollary \ref{cor_ample_stable} the subcategory $\Theta \mathit{\Gamma}_y S(Y_1)$ is necessarily of the form $\mathit{\Gamma}_{\theta(y)}S(Y_2)$. This defines a function $\theta\colon \Sing Y_1 \to \Sing Y_2$ which is a bijection as $\Theta$ is an equivalence.

Let us now show that $\theta$ is compatible with supports. If $A$ is an object of $S(Y_1)$ then by Corollary \ref{cor_ample_stable} and Theorem \ref{thm_general_ltg} we have
\begin{displaymath}
\langle A \rangle_\mathrm{loc} = \langle \mathit{\Gamma}_yS(Y_1) \; \vert \; y\in \supp A \rangle_\mathrm{loc}.
\end{displaymath}
Applying $\Theta$ gives two sets of equalities, namely
\begin{align*}
\Theta \langle A \rangle_\mathrm{loc} = \langle \Theta A \rangle_\mathrm{loc} = \langle \mathit{\Gamma}_wS(Y_2) \; \vert \; w\in \supp \Theta A\rangle_\mathrm{loc}
\end{align*}
and
\begin{align*}
\Theta \langle A \rangle_\mathrm{loc} &= \Theta\langle \mathit{\Gamma}_yS(Y_1) \; \vert \; y\in \supp A\rangle_\mathrm{loc} \\
&= \langle \mathit{\Gamma}_{\theta(y)}S(Y_2) \; \vert \; y\in \supp A\rangle_\mathrm{loc}.
\end{align*}
We thus obtain $\theta \supp A = \supp \Theta A$ which shows that $\theta$ respects the support.

Finally, let us show that $\theta$ is a homeomorphism. Let $\mathcal{V}$ be a closed subset of $\Sing Y_1$. Then it follows from Lemma \ref{lem_closed_compact_exists} that there exists a compact object $c$ in $S(Y_1)$ whose support is $\mathcal{V}$. Hence
\begin{displaymath}
\theta \mathcal{V} = \theta \supp c = \supp \Theta c
\end{displaymath}
is closed by Proposition \ref{prop_nonaffine_specclosed} as $\Theta$ is an equivalence and so preserves compactness. The whole argument works just as well reversing the roles of $Y_1$ and $Y_2$ so $\theta^{-1}$ is also closed and thus $\theta$ is a homeomorphism.
\end{proof}

By working locally we are now able to extend this to arbitrary $X$ admitting a suitable embedding.

\begin{prop}\label{prop_indep}
Suppose we are given regular noetherian separated schemes of finite Krull dimension $T_1$ and $T_2$ each carrying a vector bundle $\mathcal{E}_i$ with a regular section $t_i$ for $i=1,2$ such that
\begin{displaymath}
Z(t_1) \iso X \iso Z(t_2).
\end{displaymath}
Then there is a homeomorphism $\theta\colon \Sing Y_1 \to \Sing Y_2$ satisfying
\begin{displaymath}
\theta \supp A = \supp \Theta A
\end{displaymath}
for any $A$ in $S(Y_1)$. In particular the two support theories for $S(X)$ obtained via the actions of $D(Y_1)$ and $D(Y_2)$ coincide up to this homeomorphism.
\end{prop}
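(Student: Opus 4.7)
The strategy is to reduce to the ample case handled by Lemma~\ref{lem_ample_independence} by localizing on $X$. A local computation as in Remark~\ref{rem_regular} shows that $\Sing Y_i \subseteq Z_i$, and since $Z_i$ maps onto $X$ via $p_i$, the singular loci fiber over $X$ through $\pi_i|_{\Sing Y_i}\colon \Sing Y_i \to X$. Consequently the desired homeomorphism $\theta$ can be constructed, and its support-compatibility verified, locally over $X$.

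The first step is to cover $X$ by open affines $U$ with the property that $U$ is simultaneously realized as $V^{(i)}\cap X$ for some open affine $V^{(i)} \subseteq T_i$ for both $i=1,2$. Given $x \in X$, pick open affine neighborhoods $\Spec Q_i\subseteq T_i$ of $j_i(x)$; the closed immersions $j_i$ give surjections $Q_i \twoheadrightarrow R_i$ whose spectra are open affine neighborhoods of $x$ in $X$. After shrinking to a principal open $U \subseteq (\Spec R_1)\cap(\Spec R_2)$ containing $x$ which is simultaneously principal in both $\Spec R_i$, one lifts $U$ to principal opens $V^{(i)} \subseteq \Spec Q_i$ satisfying $V^{(i)}\cap X = U$ using the surjections $Q_i \twoheadrightarrow R_i$. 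These refined opens $U$ cover $X$.

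Over each such $U$, the diagram (\ref{eq_setup}) restricts to its analogue with ambient schemes $V^{(i)}$ and common zero scheme $U$; crucially the restricted line bundles $\str_{\mathcal{E}_i}(1)|_{\pi_i^{-1}(V^{(i)})}$ are ample since the bases $V^{(i)}$ are affine. By naturality of the construction of Proposition~\ref{cor_equiv} under restriction to opens, the equivalences $\Psi_i$ restrict to equivalences $S(U) \xrightarrow{\sim} S(Y_i\cap \pi_i^{-1}(V^{(i)}))$, whose composite realizes the restriction of $\Theta$. Applying Lemma~\ref{lem_ample_independence} to this local ample situation yields support-compatible homeomorphisms
\[
\theta_U\colon \Sing Y_1 \cap \pi_1^{-1}(U) \xrightarrow{\;\sim\;} \Sing Y_2 \cap \pi_2^{-1}(U).
\]

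Finally one must glue the $\theta_U$ and verify global support-compatibility; this is where the main technical obstacle lies. Each $\theta_U$ is uniquely determined by the requirement that $\Theta$ send the restriction to $U$ of $\mathit{\Gamma}_y S(Y_1)$ to that of $\mathit{\Gamma}_{\theta_U(y)}S(Y_2)$, so on overlaps $U\cap U'$ the maps $\theta_U$ and $\theta_{U'}$ must agree and together produce a well-defined $\theta\colon \Sing Y_1\to\Sing Y_2$. Computing supports along the cover $\{U\}$ via Remark~A.8.9, for both the $D(Y_1)$- and $D(Y_2)$-actions on $S(X)$, then upgrades the local identities $\theta_U\supp(\cdot) = \supp\Theta(\cdot)$ to the desired global identity; reversing the roles of $Y_1$ and $Y_2$ shows that $\theta$ is a homeomorphism.
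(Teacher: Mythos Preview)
Your proposal is correct and follows essentially the same strategy as the paper: reduce to the ample situation of Lemma~\ref{lem_ample_independence} by passing to open affines over $X$, then glue the resulting local homeomorphisms using that each is uniquely determined by the restriction of $\Theta$. The only cosmetic difference is that you construct a single affine cover of $X$ simultaneously adapted to both embeddings, whereas the paper takes separate affine covers of $T_1$ and $T_2$ and works on the pairwise intersections $U_1^j \cap U_2^k$ of their traces on $X$.
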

\begin{proof}
Let $\{W_1^j\}_{j=1}^n$ and $\{W_2^k\}_{k=1}^m$ be open affine covers of $T_1$ and $T_2$. Denote by $\{U_1^j\}_{j=1}^n$ and $\{U_2^k\}_{k=1}^m$ the two open affine covers of $X$ obtained by restriction. For any of the open affines $W_i^l$ we may consider $\mathcal{E}_i\vert_{W_i^l}$ and $t_i\vert_{W_i^l}$; the zero scheme of $t_i\vert_{W_i^l}$ is precisely the open subscheme $U_i^l$ so each of the opens in the two covers satisfies the set up for Proposition \ref{cor_equiv} to apply. We denote by $Y_i^l$ the associated hypersurface. Furthermore, as $W_i^l$ is affine the canonical line bundle on $\mathbb{P}(\mathcal{E}_i\vert_{W_i^l}^\vee)$ is ample so Lemma \ref{lem_ample_independence} applies.

Now fix one of the $U_1^j \cie X$ and cover it by the open affines $U_{12}^{jk} = U_1^j \intersec U_2^k$ for $k=1,\ldots,m$. There are diagrams
\begin{displaymath}
\xymatrix{
& S(U_1^j) \ar[r]^{\Psi^j_1}_\sim & S(Y^j_1) \\
S(U_{12}^{jk}) \ar[ur] \ar[dr] & & \\
& S(U_2^k) \ar[r]^{\Psi^k_2}_\sim & S(Y^k_2)
}
\end{displaymath}
where the equivalences are the restrictions of $\Psi_1$ and $\Psi_2$ and the diagonal maps are inclusions. We thus get an equivalence
\begin{displaymath}
\Theta^{jk}\colon \Psi^j_1S(U_{12}^{jk}) \to \Psi_2^kS(U_{12}^{jk})
\end{displaymath}
restricting $\Theta$, and so as in Lemma \ref{lem_ample_independence} a support preserving homeomorphism
\begin{displaymath}
\theta^{jk} \colon \Sing Y_1^{jk} \to \Sing Y_2^{jk}
\end{displaymath}
where $Y_1^{jk}$ is the subset corresponding to $\Psi^j_1S(U_{12}^{jk})$ and $Y_2^{jk}$ corresponds to $\Psi^k_2S(U_{12}^{jk})$.

We have produced support preserving homeomorphisms $\theta^{jk}$ for each \\ $j = 1,\ldots,n$ and $k = 1,\ldots,m$ and the $Y_i^{jk}$ cover the singular locus of $Y_i$ for $i=1,2$. It just remains to note that these glue to the desired homeomorphism $\Sing Y_1 \to \Sing Y_2$; the required compatibility on overlaps is immediate as the $\theta^{jk}$ are defined via restrictions of $\Theta$.
\end{proof}

\section{Local complete intersections}\label{sec_lci}

Let us now restrict our attention to the case of local complete intersection rings. Theorem \ref{thm_general_bijections} applies in this case and we will explicitly describe the singular locus of the associated hypersurface $Y$; this can be done working with any choice of embedding as the associated support theory is invariant by the last subsection.

Suppose $(R,\mathfrak{m},k)$ is a local complete intersection of codimension $c$ i.e., $R$ is the quotient of a regular local ring $Q$ by an ideal generated by a regular sequence and
\begin{displaymath}
\cx_R k = \dim_k \mathfrak{m}/\mathfrak{m}^2 - \dim R = c,
\end{displaymath}
where $\cx_R$ is the complexity as defined in Section \ref{ssec_cx}. Replacing $Q$ by a quotient if necessary we may assume that the kernel of $Q\to R$ is generated by a regular sequence of length precisely $c$. To see this is the case suppose the kernel is generated by a regular sequence $\{q_1,\ldots, q_r\}$ with $r>c$. Then by considering the effect on the embedding dimension and the dimension of successive quotients by the $q_i$ we see that $r-c$ of the $q_i$ must lie in $\mathfrak{n}\setminus \mathfrak{n}^2$ where $\mathfrak{n}$ is the maximal ideal of $Q$. By \cite{MatsuRing} Theorem 16.3 any permutation of the $q_i$ is again a regular sequence so we may rearrange to first take the quotient by the $r-c$ of the $q_i$ not in $\mathfrak{n}^2$. This quotient is again regular, surjects onto $R$ and this surjection has kernel generated by a regular sequence of length $c$.

Set $X = \Spec R$, $T = \Spec Q$, $\mathcal{E} = \str_{T}^{\oplus c}$, and $t = (q_1,\ldots,q_c)$ where the $q_i$ are a regular sequence generating the kernel of $Q\to R$. Let $Y$ be the hypersurface defined by the section $\Sigma_{i=1}^c q_i x_i$ of $\str_{\mathbb{P}^{c-1}_Q}(1)$ where the $x_i$ are a basis for the free $Q$-module $H^0(\mathbb{P}^{c-1}_Q, \str_{\mathbb{P}^{c-1}_Q}(1))$. In summary we are concerned with the following commutative diagram
\begin{displaymath}
\xymatrix{
\mathbb{P}^{c-1}_R \ar[r]^(0.7)i \ar[d]_p & Y \ar[r]^(0.3)u \ar[dr]^{\pi} & \mathbb{P}^{c-1}_Q \ar[d]^{q} \\
X \ar[rr]_j && T.
}
\end{displaymath}


Let us first make the following trivial observation about the singular locus of $\mathbb{P}^{c-1}_R$.

\begin{lem}\label{lem_sing_projR}
There is an equality
\begin{displaymath}
\Sing \mathbb{P}^{c-1}_R = p^{-1} \Sing R.
\end{displaymath}
\end{lem}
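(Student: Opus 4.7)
The plan is to reduce the statement to a local question about the projection $p\colon \mathbb{P}^{c-1}_R \to X$ and then exploit the fact that $p$ is smooth, so that regularity transfers faithfully between the base and the total space.

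First I would cover $\mathbb{P}^{c-1}_R$ by the $c$ standard open affines $U_i \iso \mathbb{A}^{c-1}_R = \Spec R[x_1,\dots,x_{c-1}]$, so that the statement becomes local: given a point $z \in U_i$ corresponding to a prime $\mathfrak{q} \ideal R[x_1,\dots,x_{c-1}]$, and setting $\mathfrak{p} = \mathfrak{q} \intersec R = p(z)$, I need to show that the local ring $R[x_1,\dots,x_{c-1}]_\mathfrak{q}$ is regular if and only if $R_\mathfrak{p}$ is regular. The forward direction is straightforward: $R_\mathfrak{p}[x_1,\dots,x_{c-1}]$ is a localization of a polynomial ring over $R_\mathfrak{p}$, so it is regular whenever $R_\mathfrak{p}$ is, and the further localization at $\mathfrak{q}$ remains regular.

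For the reverse direction I would invoke the standard fact that in a faithfully flat extension $A \to B$ of noetherian local rings, if $B$ is regular then so is $A$; this applies because $R_\mathfrak{p} \to R[x_1,\dots,x_{c-1}]_\mathfrak{q}$ is flat (polynomial rings are free, and localization is flat) with a nonzero residue extension, making it faithfully flat once one passes to the maximal ideal. Equivalently, one observes directly that $p$ is a smooth (in fact projective-bundle) morphism, and smooth morphisms both preserve and reflect regularity of stalks.

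Combining the two directions yields $z \in \Sing \mathbb{P}^{c-1}_R$ precisely when $p(z) \in \Sing R$, giving the set-theoretic equality $\Sing \mathbb{P}^{c-1}_R = p^{-1}\Sing R$. The only subtle point is the descent of regularity under the faithfully flat map, but this is entirely standard (see e.g.\ Matsumura), so no real obstacle is expected.
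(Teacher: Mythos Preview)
Your argument is correct and is exactly the standard one; the paper in fact omits the proof entirely, calling the lemma a ``trivial observation,'' so there is nothing to compare against beyond noting that your smoothness/faithfully-flat-descent argument is precisely what makes it trivial. One minor wording slip: your ``forward'' and ``reverse'' labels are swapped relative to the biconditional as you stated it, and $R_\mathfrak{p}[x_1,\dots,x_{c-1}]$ \emph{is} a polynomial ring over $R_\mathfrak{p}$ rather than a localization of one, but neither affects the substance.
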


Now we show that the singular locus of $Y$ can not be any bigger than the singular locus of $\mathbb{P}^{c-1}_R$.

\begin{lem}\label{lem_sing_Y}
The singular locus of $Y$, $\Sing Y$, is contained in $i(\Sing \mathbb{P}^{c-1}_R)$.
\end{lem}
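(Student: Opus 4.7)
The plan is to work locally in an affine chart of $\mathbb{P}^{c-1}_Q$, say $U_c = \{x_c = 1\}$, so that $Y \cap U_c$ is the closed subscheme of $\mathbb{A}^{c-1}_Q = \Spec Q[x_1, \ldots, x_{c-1}]$ cut out by $s = \sum_{i<c} q_i x_i + q_c$. By symmetry between the standard charts it suffices to show $\Sing Y \cap U_c \subseteq i(\Sing \mathbb{P}^{c-1}_R) \cap U_c$. Fix $y \in \Sing Y \cap U_c$ corresponding to a prime $\tilde P \subset Q[x_1, \ldots, x_{c-1}]$ and set $A = Q[x]_{\tilde P}$, $\mathfrak{n} = \tilde P A$, $\mathfrak{p} = \tilde P \cap Q$. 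Since $A$ is regular local and $\str_{Y,y} = A/(s)$, the assumption $y \in \Sing Y$ is equivalent to $s \in \mathfrak{n}^2$.

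The first step is to show $y \in i(\mathbb{P}^{c-1}_R)$, i.e.\ that every $q_i$ lies in $\mathfrak{p}$. For this I would invoke the $Q$-derivation $\partial/\partial x_j$ of $Q[x_1, \ldots, x_{c-1}]$, which extends uniquely to a derivation of the localization $A$ and preserves $\mathfrak{n}^2 \subseteq \mathfrak{n}$ by the Leibniz rule. Applying this to $s \in \mathfrak{n}^2$ and using $\partial s/\partial x_j = q_j$ for $j < c$ forces $q_j \in \mathfrak{n} \cap Q = \mathfrak{p}$ for every such $j$; then $q_c = s - \sum_{j<c} q_j x_j \in \mathfrak{n} \cap Q = \mathfrak{p}$ as well.

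The second step is to show $p(y) \in \Sing R$, which combined with Lemma \ref{lem_sing_projR} gives $y \in \Sing \mathbb{P}^{c-1}_R$. Writing $\kappa = \kappa(\tilde P)$, all residue classes $\bar q_i$ now vanish in $\kappa$, so applying the ``differential'' $\mathfrak{n} \to \mathfrak{n}/\mathfrak{n}^2$ term by term and using the Leibniz rule gives
\begin{displaymath}
[s] = \sum_{i<c} \bar x_i \, [q_i] + [q_c] \quad \text{in } \mathfrak{n}/\mathfrak{n}^2.
\end{displaymath}
Since $s \in \mathfrak{n}^2$, this produces a nontrivial $\kappa$-linear relation (the coefficient of $[q_c]$ is $1$) among $[q_1], \ldots, [q_c]$.

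Finally I would transport this relation down to $\mathfrak{p} Q_\mathfrak{p}/\mathfrak{p}^2 Q_\mathfrak{p}$. Choosing a regular system of parameters $\pi_1, \ldots, \pi_r$ of $Q_\mathfrak{p}$ and extending by lifts of a regular system of parameters of the regular local ring $A/\mathfrak{p} A$ yields a regular system of parameters of $A$; reading off bases then shows the natural $\kappa$-linear map
\begin{displaymath}
(\mathfrak{p} Q_\mathfrak{p}/\mathfrak{p}^2 Q_\mathfrak{p}) \otimes_{\kappa(\mathfrak{p})} \kappa \hookrightarrow \mathfrak{n}/\mathfrak{n}^2
\end{displaymath}
is injective, and each $[q_i]$ lies in its image. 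The relation above therefore pulls back to a $\kappa$-linear dependence in the tensor product, which in turn (since a field extension $\kappa(\mathfrak{p}) \to \kappa$ preserves the rank of any matrix with entries in $\kappa(\mathfrak{p})$) forces $q_1, \ldots, q_c$ to have $\kappa(\mathfrak{p})$-linearly dependent images in $\mathfrak{p} Q_\mathfrak{p}/\mathfrak{p}^2 Q_\mathfrak{p}$. Hence $R_\mathfrak{p} = Q_\mathfrak{p}/(q_1, \ldots, q_c)$ has embedding dimension strictly greater than $\dim Q_\mathfrak{p} - c = \dim R_\mathfrak{p}$, so $p(y) \in \Sing R$. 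The main technical obstacle is verifying injectivity of this cotangent map — without it a linear dependence upstairs need not force the $q_i$ to fail to be part of a regular system of parameters of $Q_\mathfrak{p}$.
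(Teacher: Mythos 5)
Your proof is correct, but it takes a genuinely different route from the paper for the second half of the argument, and it is worth comparing the two.

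The paper's proof of the first inclusion ($\Sing Y \subseteq i(\mathbb{P}^{c-1}_R)$) is essentially the same observation you make, phrased geometrically: away from $q^{-1}X$ some $q_i$ is a unit, so the local equation of $Y$ is not in the square of the maximal ideal. Your derivation argument (applying $\partial/\partial x_j$ and observing $Ds \in \mathfrak{n}$ whenever $s \in \mathfrak{n}^2$) is a clean way of carrying this out, and actually does double duty: it simultaneously shows that all $q_i$ must lie in $\mathfrak{p}$ when $y$ is singular and it furnishes the linear relation you exploit in the second step. For the second inclusion ($\Sing Y \subseteq i(\Sing \mathbb{P}^{c-1}_R)$), the two proofs diverge completely. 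The paper appeals to Remark \ref{rem_regular}, which shows $i$ is a regular closed immersion, and then quotes two nontrivial results from the literature (a complexity inequality for regular quotients from Avramov--Gasharov--Peeva, and Avramov's theorem that a local ring is singular precisely when its residue field has positive complexity) to conclude that $z$ is singular in $\mathbb{P}^{c-1}_R$ whenever $i(z)$ is singular in $Y$. You instead work entirely with cotangent spaces: the vanishing of $[s]$ in $\mathfrak{n}/\mathfrak{n}^2$ gives a nontrivial $\kappa$-linear relation among $[q_1],\dots,[q_c]$, and you transport that relation down to $\mathfrak{p}Q_\mathfrak{p}/\mathfrak{p}^2 Q_\mathfrak{p}$ via the injectivity of $(\mathfrak{p}Q_\mathfrak{p}/\mathfrak{p}^2 Q_\mathfrak{p})\otimes_{\kappa(\mathfrak{p})}\kappa \to \mathfrak{n}/\mathfrak{n}^2$ (correctly justified by extending a regular system of parameters of $Q_\mathfrak{p}$ to one of $A$ along the flat map $Q_\mathfrak{p}\to A$ with regular fibre $A/\mathfrak{p}A$) and the fact that matrix rank is stable under field extension, concluding that $R_\mathfrak{p}$ has embedding dimension strictly larger than $\dim R_\mathfrak{p}$. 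Your argument is more elementary and self-contained, exposing exactly why linear dependence of the $q_i$ in the cotangent space of the fibre forces singularity of the base; the paper's is shorter in prose but hides the mechanics behind two citations. Both are valid proofs.
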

\begin{proof}
We first show the singular locus of $Y$ is contained in the image of $i$. The image of $i$ is precisely $Y\intersec q^{-1}X$, so we want to show that away from $q^{-1}X$ the scheme $Y$ is regular. Let $\mathfrak{p}\in T \setminus X$, so the section $t = (q_1,\ldots, q_c)$ is not zero at $k(\mathfrak{p})$.  Thus in a neighbourhood of any point of $q^{-1}(\mathfrak{p})$ the section defining $Y\intersec q^{-1}(\mathfrak{p})$ is just a linear polynomial with invertible coefficients and so $Y$ is regular along its intersection with $q^{-1}(\mathfrak{p})$. Thus $\Sing Y \cie i(\mathbb{P}^{c-1}_R)$ as claimed.




Next let us prove that $\Sing Y$ is in fact contained in $i(\Sing\mathbb{P}^{c-1}_R)$. Given $z\in \mathbb{P}^{c-1}_R$ such that $i(z)\in \Sing Y$ we need to show $z\in \Sing \mathbb{P}^{c-1}_R$. By Remark \ref{rem_regular} the surjection
\begin{displaymath}
\str_{Y,i(z)} \to \str_{\mathbb{P}^{c-1}_R,z}
\end{displaymath}
has kernel generated by a regular sequence. From \cite{AGP} Proposition 5.2 we get inequalities
\begin{displaymath}
\cx_{\;\str_{\mathbb{P}^{c-1}_R,z}} k(z) \geq \cx_{\;\str_{Y,i(z)}} i_*k(z) = \cx_{\;\str_{Y,i(z)}}k(i(z)) > 0
\end{displaymath}
where we have also used \cite{AvramovExtremal} Theorem 3, so $z\in \Sing \mathbb{P}^{c-1}_R$.
\end{proof}

In fact the part of the singular locus of $Y$ corresponding to $\mathfrak{m}$ can not be any smaller than $p^{-1}(\mathfrak{m})$ either.

\begin{lem}\label{lem_sing_m}
Every point in $ip^{-1}(\mathfrak{m})$ is contained in $\Sing Y$.
\end{lem}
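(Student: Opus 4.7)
The plan is to show that in a suitable affine chart around $ui(z)$, the section of $\str_{\mathbb{P}^{c-1}_Q}(1)$ defining $Y$ lies in the square of the maximal ideal of the ambient regular local ring, from which singularity of $\str_{Y, i(z)}$ follows by an embedding-dimension versus Krull-dimension comparison.

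First I would observe that, since the presentation $R = Q/(q_1,\ldots,q_c)$ was chosen to have length exactly equal to the codimension $c$, every $q_i$ must lie in $\mathfrak{n}^2$. Indeed, $\dim R = \dim Q - c$ because $\{q_1,\ldots,q_c\}$ is a regular sequence in $Q$, so using $c = \dim_k\mathfrak{m}/\mathfrak{m}^2 - \dim R$ one gets $\dim_k\mathfrak{m}/\mathfrak{m}^2 = \dim Q$. Combining with the isomorphism $\mathfrak{m}/\mathfrak{m}^2 \iso \mathfrak{n}/(\mathfrak{n}^2 + (q_1,\ldots,q_c))$ then forces the $q_i$ to map to zero in $\mathfrak{n}/\mathfrak{n}^2$.

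Next, given $z\in p^{-1}(\mathfrak{m})$, I would choose a standard affine chart $x_k \neq 0$ of $\mathbb{P}^{c-1}_Q$ containing $ui(z)$, with coordinates $y_i = x_i/x_k$ for $i\neq k$. In this trivialization the defining section of $Y$ becomes $s = q_k + \sum_{i\neq k} q_i y_i \in Q[y]$. Let $A$ denote the local ring of $\mathbb{P}^{c-1}_Q$ at $ui(z)$ with maximal ideal $\mathfrak{a}$; then $A$ is regular, and since $z$ lies over $\mathfrak{m}$ we have $\mathfrak{n}\cie \mathfrak{a}$, so each $q_i \in \mathfrak{n}^2 \cie \mathfrak{a}^2$ and therefore $s\in \mathfrak{a}^2$.

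Finally, as in Remark \ref{rem_regular}, $\str_{Y, i(z)} \iso A/(s)$. The element $s$ is nonzero in the domain $A$, so it is a nonzerodivisor and $\dim A/(s) = \dim A - 1$. On the other hand, the containment $s\in\mathfrak{a}^2$ gives $\mathfrak{a}/(s, \mathfrak{a}^2) = \mathfrak{a}/\mathfrak{a}^2$, so the embedding dimension of $A/(s)$ equals $\dim A$, which is strictly larger than $\dim A/(s)$. Hence $A/(s)$ is not regular, i.e., $i(z)\in \Sing Y$. The one subtle step is the initial observation that all $q_i$ lie in $\mathfrak{n}^2$, which is forced by the minimality of the chosen presentation; everything else is a direct computation.
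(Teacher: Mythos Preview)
Your proof is correct and follows essentially the same approach as the paper: pass to a standard affine chart, use that each $q_i$ lies in $\mathfrak{n}^2$ (by minimality of the presentation) and hence in the square of the maximal ideal of the ambient regular local ring, and conclude that the defining element of $Y$ lies in that square. You supply a bit more detail than the paper in two places---the justification that minimality forces $q_i\in\mathfrak{n}^2$, and the explicit embedding-dimension versus Krull-dimension comparison at the end---but the argument is the same.
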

\begin{proof}
By Lemma \ref{lem_sing_projR} every point in $p^{-1}(\mathfrak{m})$ is singular in $\mathbb{P}^{c-1}_R$. Consider for $z\in p^{-1}(\mathfrak{m})$ the local maps
\begin{displaymath}
\str_{\mathbb{P}^{c-1}_Q,ui(z)} \stackrel{\a}{\to} \str_{Y,i(z)} \stackrel{\b}{\to} \str_{\mathbb{P}^{c-1}_R,z}
\end{displaymath} 
where the kernel of each of these morphisms and the composite is generated by a regular sequence (see Remark \ref{rem_regular}). We have assumed $Q\to R$ minimal i.e., the elements $q_i$ occuring in the regular sequence generating the kernel all lie in $\mathfrak{n}^2$ where $\mathfrak{n}$ is the maximal ideal of $Q$. Thus as $z$ lies over $\mathfrak{m}$ the image of each $q_i$ is in the square of the maximal ideal of $\str_{\mathbb{P}^{c-1}_Q,ui(z)}$.

By passing to a standard open affine in $\mathbb{P}^{c-1}_Q$ containing $ui(z)$ (and reordering the $q_i$ if necessary) we see that the morphism $\a$ has kernel generated by the image of $\sum_{i=1}^{c-1}q_ix_i + q_c$ where the $x_i$ are now coordinates on $\mathbb{A}^{c-1}_Q$. As the image of each $q_i$ is in the square of the maximal ideal of $\str_{\mathbb{P}^{c-1}_Q,ui(z)}$ the element $\sum_{i=1}^{c-1}q_ix_i + q_c$ defining $\str_{Y,i(z)}$ must also lie in the square of the maximal ideal. Hence $i(z)$ lies in $\Sing Y$.
\end{proof}

It follows from this that $\supp_{(D(Y),\square)} \mathit{\Gamma}_\mathfrak{m}S(R) = \mathbb{P}^{c-1}_k$. By Lemma \ref{lem_gen2} the object $I_\l Q_\r k$ generates $\mathit{\Gamma}_\mathfrak{m}S(R)$. Thus its image under $\overline{i}_*\overline{p}^*$, which is precisely $I_\l Q_\r$ of the structure sheaf of $ip^{-1}(\mathfrak{m})$ with the reduced induced scheme structure, generates $Si_*Sp^*\mathit{\Gamma}_\mathfrak{m}S(R)$. By Lemma \ref{lem_closed_compact_exists} this generating object has support, with respect to the $D(Y)$ action on $S(Y)$, precisely $ip^{-1}(\mathfrak{m})$. Thus, identifying the topological spaces $\mathbb{P}^{c-1}_k$ and $ip^{-1}(\mathfrak{m})$, we see $\mathit{\Gamma}_\mathfrak{m}S(R)$ has the claimed support.

We now show the singular locus of $Y$ is composed completely of such projective pieces with dimensions corresponding to the complexities of the residue fields of the points in $\Sing R$. 

\begin{prop}\label{prop_sing_computed}
As a set the singular locus of $Y$ is
\begin{displaymath}
\Sing Y \iso \coprod_{\mathfrak{p}\in \Sing R} \mathbb{P}^{c_\mathfrak{p} -1}_{k(\mathfrak{p})} 
\end{displaymath}
where $c_\mathfrak{p} = \cx_{R_\mathfrak{p}}k(\mathfrak{p})$ is the codimension of $R_\mathfrak{p}$. 
\end{prop}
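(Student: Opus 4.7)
Lemma \ref{lem_sing_Y} together with Lemma \ref{lem_sing_projR} gives $\Sing Y \cie i(p^{-1}(\Sing R))$, and each fibre $p^{-1}(\mathfrak{p})$ is isomorphic to $\mathbb{P}^{c-1}_{k(\mathfrak{p})}$ since $p$ is the bundle projection. The task is therefore to determine, for each $\mathfrak{p} \in \Sing R$, precisely which points of $p^{-1}(\mathfrak{p})$ are sent into $\Sing Y$ by $i$, and to identify this locus with $\mathbb{P}^{c_\mathfrak{p}-1}_{k(\mathfrak{p})}$. Lemma \ref{lem_sing_m} already settles the case $\mathfrak{p} = \mathfrak{m}$; the argument is a localised version of that one.

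Fix $\mathfrak{p} \in \Sing R$ with preimage $\mathfrak{p}' \cie Q$, and let $z \in p^{-1}(\mathfrak{p})$. Pass to an affine chart on $\mathbb{P}^{c-1}_Q$ containing $ui(z)$ (for notational convenience, the chart $x_c = 1$), so that $Y$ is cut out locally by $s = \sum_{j=1}^{c-1} q_j x_j + q_c$. Since $A := \str_{\mathbb{P}^{c-1}_Q,ui(z)}$ is regular, the quotient $\str_{Y,i(z)} = A/(s)$ is regular precisely when the image $\bar s$ of $s$ in $\mathfrak{M}/\mathfrak{M}^2$ is non-zero, where $\mathfrak{M}$ denotes the maximal ideal of $A$. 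Write $(\alpha_1, \ldots, \alpha_{c-1})$ for the coordinates of $z$ in this chart and set $\alpha_c = 1$. A basis for $\mathfrak{M}/\mathfrak{M}^2$ is then obtained by combining any basis of $\mathfrak{p}'/\mathfrak{p}'^2$ with $\{x_j - \alpha_j \mid 1 \le j \le c-1\}$. The identity
\begin{displaymath}
s \;=\; \sum_{j=1}^{c-1} q_j (x_j - \alpha_j) \;+\; \sum_{j=1}^c \alpha_j q_j
\end{displaymath}
shows that the first sum lies in $\mathfrak{p}' \cdot \mathfrak{M} \cie \mathfrak{M}^2$, so $\bar s$ equals the image of $\sum_{j=1}^c \alpha_j \bar q_j$ in the summand $\mathfrak{p}'/\mathfrak{p}'^2 \hookrightarrow \mathfrak{M}/\mathfrak{M}^2$. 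Consequently $i(z) \in \Sing Y$ if and only if $\sum_{j=1}^c \alpha_j \bar q_j = 0$ in $\mathfrak{p}'/\mathfrak{p}'^2$; since this condition is homogeneous in the $\alpha_j$, it is independent of the chart chosen and defines a projective linear subvariety of $p^{-1}(\mathfrak{p}) \iso \mathbb{P}^{c-1}_{k(\mathfrak{p})}$.

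It remains to compute its dimension. Set $r := \dim_{k(\mathfrak{p})}\langle \bar q_1, \ldots, \bar q_c\rangle \cie \mathfrak{p}'/\mathfrak{p}'^2$. Using the presentation $R_\mathfrak{p} = Q_{\mathfrak{p}'}/(q_1,\ldots,q_c)$ and the regularity of $Q_{\mathfrak{p}'}$, the embedding dimension of $R_\mathfrak{p}$ equals $\dim Q_{\mathfrak{p}'} - r$, while $\dim R_\mathfrak{p} = \dim Q_{\mathfrak{p}'} - c$ since the $q_j$ form a regular sequence in $\mathfrak{p}'$. Hence the codimension of $R_\mathfrak{p}$, which by Gulliksen's theorem coincides with $c_\mathfrak{p} = \cx_{R_\mathfrak{p}} k(\mathfrak{p})$, is equal to $c - r$. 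The kernel of the $k(\mathfrak{p})$-linear map $k(\mathfrak{p})^c \to \mathfrak{p}'/\mathfrak{p}'^2$ sending $(\alpha_j) \mapsto \sum \alpha_j \bar q_j$ therefore has dimension $c_\mathfrak{p}$, and its projectivization is $\mathbb{P}^{c_\mathfrak{p}-1}_{k(\mathfrak{p})}$. Taking the disjoint union over $\mathfrak{p} \in \Sing R$ gives the claimed decomposition, and as a sanity check the extremal cases reproduce known facts: if $R_\mathfrak{p}$ is regular then $r = c$ and the fibre is empty, while at $\mathfrak{p} = \mathfrak{m}$ the minimality of the presentation forces $r = 0$ so that the whole fibre lies in $\Sing Y$, recovering Lemma \ref{lem_sing_m}. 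The only subtle point is the embedding-dimension bookkeeping — specifically, that the global minimality assumption on $Q \to R$ does not prevent the $\bar q_j$ from developing linear dependencies in $\mathfrak{p}'/\mathfrak{p}'^2$ when localising at a non-closed $\mathfrak{p}$, and that these dependencies are exactly measured by the drop $c - c_\mathfrak{p}$.
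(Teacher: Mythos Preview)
Your argument is correct and takes a genuinely different route from the paper. The paper's proof is indirect: it invokes the classification Theorem~\ref{thm_general_bijections} to write $\Sing Y$ as $\coprod_{\mathfrak{p} \in \Sing R} \supp_{D(Y)} \mathit{\Gamma}_\mathfrak{p} S(R)$, and then uses the embedding-independence results of Section~\ref{sec_indep} to compute each piece over $R_\mathfrak{p}$ with respect to a \emph{new} minimal presentation of codimension $c_\mathfrak{p}$, whereupon Lemma~\ref{lem_sing_m} and the paragraph following it give $\mathbb{P}^{c_\mathfrak{p}-1}_{k(\mathfrak{p})}$ directly. You instead compute the singular fibre explicitly via the Jacobian criterion, never touching the classification or independence machinery. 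Your approach is more elementary and self-contained, and actually identifies each fibre scheme-theoretically as a linear subspace; the paper's approach is shorter once the machinery is in place and makes transparent that the proposition is a formal consequence of what has already been proved.

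One small wrinkle worth tightening: the displayed identity $s = \sum q_j(x_j - \alpha_j) + \sum \alpha_j q_j$ and the claim that the $x_j - \alpha_j$ extend a basis of $\mathfrak{p}'/\mathfrak{p}'^2$ to one of $\mathfrak{M}/\mathfrak{M}^2$ are not literally well-formed when $\alpha_j \in k(z)$ does not lift canonically into $A$ (e.g.\ for non-closed $z$, or in mixed characteristic). The repair is routine --- choose arbitrary lifts $\tilde{\alpha}_j \in A$ and run the same computation, or simply note that $\overline{q_j x_j} = \alpha_j \cdot \overline{q_j}$ in the $k(z)$-vector space $\mathfrak{M}/\mathfrak{M}^2$ --- but what you genuinely use and should state is the injectivity of $\mathfrak{p}'Q_{\mathfrak{p}'}/(\mathfrak{p}'Q_{\mathfrak{p}'})^2 \otimes_{k(\mathfrak{p})} k(z) \to \mathfrak{M}/\mathfrak{M}^2$, which follows from flatness of $Q_{\mathfrak{p}'} \to A$ together with regularity of the fibre by the standard dimension count.
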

\begin{proof}
We can write $\Sing Y$, using the classification of Theorem \ref{thm_general_bijections}, as
\begin{displaymath}
\Sing Y \iso \coprod_{\mathfrak{p} \in \Sing R} \supp_{(D(Y),\square)} \mathit{\Gamma}_\mathfrak{p}S(R).
\end{displaymath}
Again using the classification theorem and the independence results of the previous subsection we may compute the $D(Y)$-support of $\mathit{\Gamma}_\mathfrak{p}S(R) = \mathit{\Gamma}_\mathfrak{p}S(R_\mathfrak{p})$ over $R_\mathfrak{p}$. By the discussion before the proposition this is precisely $\mathbb{P}^{c_\mathfrak{p} -1}_{k(\mathfrak{p})}$. 
\end{proof}

This gives a refined version of Theorem \ref{thm_general_bijections} for local complete intersection rings. Before stating the result let us indicate an interesting special case. Suppose $E$ is an elementary abelian $p$-group and $k$ is a field of characteristic $p$. Then the group ring $kE$ is a local complete intersection and the following corollary essentially contains \cite{BIKstrat} Theorem 8.1 as a special case.

\begin{cor}\label{cor_ci_win}
Suppose $(R,\mathfrak{m},k)$ is a local complete intersection. Then there are order preserving bijections
\begin{displaymath}
\left\{ \begin{array}{c}
\text{subsets of}\; \\ \coprod_{\mathfrak{p}\in \Sing R}\limits \mathbb{P}^{c_\mathfrak{p} -1}_{k(\mathfrak{p})}
\end{array} \right\}
\xymatrix{ \ar[r]<1ex>^{\tau} \ar@{<-}[r]<-1ex>_{\sigma} &} \left\{
\begin{array}{c}
\text{localizing subcategories of} \; S(R) \\
\end{array} \right\} 
\end{displaymath}
and
\begin{displaymath}
\left\{ \begin{array}{c}
\text{specialization closed} \\ \text{subsets of}\; \Sing Y
\end{array} \right\}
\xymatrix{ \ar[r]<1ex>^{\tau} \ar@{<-}[r]<-1ex>_{\sigma} &} \left\{
\begin{array}{c}
\text{localizing subcategories} \\ \text{of} \; S(R) \; \text{generated by} \\ \text{objects of} \; S(R)^c
\end{array} \right\}.
\end{displaymath}
Furthermore the telescope conjecture holds for $S(R)$.
\end{cor}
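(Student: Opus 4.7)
The plan is to assemble the corollary as a direct specialization of Theorem \ref{thm_general_bijections} and Corollary \ref{cor_general_tele} once the concrete embedding constructed at the start of Section \ref{sec_lci} is in place. The setup is that $R = Q/(q_1,\ldots,q_c)$ for a regular local $Q$ and a regular sequence of length equal to the codimension $c = \cx_R k$, and we take $T = \Spec Q$, $\mathcal{E} = \str_T^{\oplus c}$, $t = (q_1,\ldots,q_c)$, so that $Y$ is the hypersurface in $\mathbb{P}^{c-1}_Q$ cut out by $\sum_i q_i x_i$.

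First I would invoke Theorem \ref{thm_general_bijections}: it provides order preserving bijections between subsets (resp.\ specialization closed subsets) of $\Sing Y$ and localizing $D(Y)$-submodules (resp.\ compactly generated $D(Y)$-submodules) of $S(R)$. The theorem further upgrades the bijection to one with \emph{all} localizing subcategories whenever $\str_{\mathcal{E}}(1)$ is ample, and this hypothesis is automatic in our situation because $T = \Spec Q$ is affine, so $\str_{\mathcal{E}}(1) = \str_{\mathbb{P}^{c-1}_Q}(1)$ is ample on $\mathbb{P}^{c-1}_Q$. Thus every localizing subcategory of $S(R)$ is a $D(Y)$-submodule, and the two bijections of the corollary follow provided we rewrite the indexing set.

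The rewriting of the indexing set is exactly the content of Proposition \ref{prop_sing_computed}, which identifies $\Sing Y$ as a set with
\[
\coprod_{\mathfrak{p}\in \Sing R} \mathbb{P}^{c_\mathfrak{p}-1}_{k(\mathfrak{p})},
\]
where $c_\mathfrak{p} = \cx_{R_\mathfrak{p}} k(\mathfrak{p})$. Substituting this identification into the two bijections supplied by Theorem \ref{thm_general_bijections} yields exactly the two displayed bijections of the corollary. I note that the second bijection is phrased in terms of specialization closed subsets of $\Sing Y$ rather than of the disjoint union of projective spaces, which is the honest statement since the topology on $\Sing Y$ coming from its embedding in $Y$ is in general finer than that of the set-theoretic disjoint union; this is the only subtle point and is essentially built into the way Proposition \ref{prop_sing_computed} is formulated.

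Finally, the telescope conjecture for $S(R)$ is immediate from Corollary \ref{cor_general_tele}: the relative telescope conjecture with respect to the $D(Y)$-action always holds in this setting, and since the ample hypothesis is in force, every smashing subcategory of $S(R)$ is automatically a smashing $D(Y)$-submodule, so the relative version degenerates to the usual telescope conjecture. There is no real obstacle to overcome; the main work has already been done in Sections \ref{sec_schemes}--\ref{sec_indep} and in Proposition \ref{prop_sing_computed}, and this corollary is the crystallization of that machinery in the affine complete intersection case.
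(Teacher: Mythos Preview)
Your proposal is correct and follows essentially the same approach as the paper's own proof: apply Theorem \ref{thm_general_bijections} with the explicit embedding data $T=\Spec Q$, $\mathcal{E}=\str_T^{\oplus c}$, $t=(q_1,\ldots,q_c)$, note that ampleness of $\str_{\mathcal{E}}(1)$ is automatic since $T$ is affine, invoke Proposition \ref{prop_sing_computed} to rewrite $\Sing Y$ as the disjoint union of projective spaces, and deduce the telescope conjecture from Corollary \ref{cor_general_tele}. Your additional remark about why the second bijection retains the phrasing ``specialization closed subsets of $\Sing Y$'' rather than of the disjoint union is a nice clarification that the paper leaves implicit.
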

\begin{proof}
We apply Theorem \ref{thm_general_bijections} setting $X = \Spec R$, $S = \Spec Q$, $\mathcal{E} = \str_{S}^{\oplus c}$, and $s = (q_1,\ldots,q_c)$ where the $q_i$ are a regular sequence generating the kernel of\\ $Q\to R$. The line bundle $\str_\mathcal{E}(1)$ is ample on $\mathbb{P}^{c-1}_Q$ so we obtain a complete classification of the localizing subcategories of $S(R)$ in terms of $\Sing Y$. By Proposition \ref{prop_sing_computed} the singular locus of $Y$ is, as a set, precisely the given disjoint union of projective spaces. The final statement is Corollary \ref{cor_general_tele}.
\end{proof}

\begin{rem}
A similar result has been announced by Iyengar \cite{IyengarLCI} for locally complete intersection rings essentially of finite type over a field.
\end{rem}

\begin{rem}
As in \cite{Buchweitzunpub} there is an equivalence $D_{\mathrm{Sg}}(R) \cong \underline{MCM}(R)$ where $\underline{MCM}(R)$ is the stable category of maximal Cohen-Macaulay modules over $R$. Thus the equivalence gives a classification of thick subcategories of the stable category of maximal Cohen-Macaulay modules.
\end{rem}

  \bibliography{greg_bib}

\end{document}